\def\YEAR{\year}\newcount\VOL\VOL=\YEAR\advance\VOL by-1995
\def\magnification{\afterassignment\m@g\count@}
\def\m@g{\mag=\count@\hsize6.5truein\vsize8.9truein\dimen\footins8truein}
\tikzset{
    >=stealth',
    punkt/.style={
           rectangle,
           rounded corners,
           draw=black, very thick,
           text width=5em,
           minimum height=2em,
           text centered},
    pil/.style={
           ->,
           thick,
           shorten <=2pt,
           shorten >=2pt,}
}
\newcommand{\fou}{\mathcal{F}}
\newcommand{\eps}{\varepsilon}
\newcommand{\R}{\mathbb{R}}
\newcommand{\C}{\mathbb{C}}
\newcommand{\N}{\mathbb{N}}
\newcommand{\pfty}{+\infty}
\newcommand{\one}{\mathds{1}}
\newcommand{\man}{\mathcal{X}}
\newcommand{\zeroless}{\setminus\{0\}}
\newcommand{\calA}{\mathcal{A}}
\newcommand{\calJ}{\mathcal{J}}
\newcommand{\Rnzls}{\mathbb{R}^n\setminus\{0\}}
\newcommand{\Sym}{\textup{Sym}}
\newenvironment{prf}[1][]
{\vskip 2mm  {\it \bf Proof#1. }}{$\Box$ \vskip 2mm}
\newenvironment{notation}
{\vskip 2mm \textbf{Notation}:\vskip 2mm}{}
\newtheorem{theorem}{Theorem}
\numberwithin{theorem}{section}
\newtheorem{lemma}[theorem]{Lemma}
\newtheorem{proposition}[theorem]{Proposition}
\newtheorem{corollary}[theorem]{Corollary}
\newtheorem{definition}[theorem]{Definition}
\newtheorem{remark}{Remark}
\newtheorem{claim}{Claim}
\title{Weighted local Weyl laws for elliptic operators}
\author{Alejandro Rivera}
\date{\today}
\begin{document}
\maketitle

\vspace{2cm}

\begin{abstract}
Let $A$ be an elliptic pseudo-differential operator of order $m$ on a closed manifold $\man$ of dimension $n>0$, formally positive self-ajdoint with respect to some positive smooth density $d\mu_\man$. Then, the spectrum of $A$ is made up of a sequence of eigenvalues $(\lambda_k)_{k\geq 1}$ whose corresponding eigenfunctions $(e_k)_{k\geq 1}$ are $C^\infty$ smooth. Fix $s\in\R$ and define \
\[
K_L^s(x,y)=\sum_{0<\lambda_k\leq L}\lambda_k^{-s} e_k(x)\overline{e_k(y)}\, .
\]
We derive asymptotic formulae near the diagonal for the kernels $K_L^s(x,y)$ when $L\rightarrow +\infty$ with fixed $s$. For $s=0$, $K^0_L$ is the kernel of the spectral projector studied by H\"ormander in \cite{ho68}. In the present work we build on H\"ormander's result to study the kernels $K^s_L$. If $s<\frac{n}{m}$, $K_L^s$ is of order $L^{-s+n/m}$ and near the diagonal, the rescaled leading term behaves like the Fourier transform of an explicit function of the symbol of $A$. If $s=\frac{n}{m}$, under some explicit generic condition on the principal symbol of $A$, which holds if $A$ is a differential operator, the kernel has order $\ln(L)$ and the leading term has a logarithmic divergence smoothed at scale $L^{-1/m}$. Our results also hold for elliptic differential Dirichlet eigenvalue problems.
\end{abstract}

\vfill

\pagebreak

\tableofcontents

\vfill

\pagebreak

\section{Introduction}

The purpose of the present work is to compute pointwise asymptotics of the integral kernels of certain operators defined by functional calculus from either elliptic self-adjoint pseudo-differential operators on a closed manifold or an elliptic self-adjoint Dirichlet boundary problem. Stating the results in full generality requires some vocabulary from semi-classical analysis and some additional definitions. For this reason, we start by stating our results in the simpler case of elliptic self-adjoint differential operators on a closed manifold. The general case is presented in Section \ref{s.results}. This, of course, leads to some redundancy between different statements which we accept for the sake of accessibility and transparency of the main results.\\ 

Let $\man$ be a smooth compact manifold without boundary, of positive dimension $n>0$ and equipped with a smooth positive density $d\mu_\man$. Let $A$ be a positive elliptic differential operator on $\man$ of positive order $m$. By this we mean that in any local coordinate system $x=(x_1,\dots,x_n)$ on $\man$ defined on $U\subset\R^n$, $A$, acts on $C^\infty_c(U)$ as
\[
\sum_{0\leq |\alpha|\leq m} a_{\alpha}(x)\partial^\alpha
\]
where $\alpha\in\N^n$ and $a_\alpha\in C^\infty(\R^n)$ and for each $\xi\in\R^n\setminus\{0\}$, we have
\[
\sigma_A(x,\xi):=\sum_{|\alpha|=m}a_\alpha(x)\xi^\alpha>0\, .
\]
The function $\sigma_A$ is called the \textbf{principal symbol} of $A$ in these coordinates. It is well known (and easy to check) that the principal symbol of $A$ read in different coordinates pieces together as a smooth function on the complement of the zero section in $T^*\man$. We assume that $A$ is symmetric with respect to the $L^2$-scalar  product on $(\man,d\mu_\man)$. Then one can show (see Subsection \ref{ss.setting}) that $A$ has a unique self-adjoint extension whose spectrum is made up of a non-decreasing sequence $(\lambda_k)_{k\in\N}$ of positive eigenvalues diverging to $+\infty$ with smooth $L^2$-normalized eigenfunctions $(e_k)_{k\in\N}$ forming a Hilbert basis for $L^2(\man,d\mu_\man)$. For each $L\geq 0$, let $\Pi_L$ be the $L^2$ orthogonal projector on the space spanned by the eigenfunctions $e_k$ such that $\lambda_k\leq L$. Since this space is finite-dimensional, $\Pi_L$ has a smooth integral kernel $E_L\in C^\infty(\man\times\man)$. More explicitely,

\[
\forall (x,y)\in\man\times\man,\, E_L(x,y)=\sum_{\lambda_k\leq L}e_k(x)\overline{e_k(y)}\, .
\]

In \cite{ho68}, H\"ormander studied the behavior of this kernel on a neighborhood of the diagonal as $L\rightarrow+\infty$. Integrating $E_L$ over the diagonal he recovered the following estimate, also known as \textbf{Weyl's law}:

\[
  \textup{Card}\{k\in\N\ |\ \lambda_k\leq L\}\sim \frac{1}{(2\pi)^n}\int_\man\int_{\sigma_A(x,\xi)\leq 1}\widetilde{d_x\mu_\man}(\xi) d\mu_\man(x)\times L^{\frac{n}{m}}
\]

where $\widetilde{d_x\mu_\man}(\xi)$ is the density induced on $T^*_x\man$ by $d\mu_\man$. H\"ormander's result is stronger than the above estimates in two respects. First because the error term obtained is smaller than the ones known before and is sharp in all generality. Secondly, the result actually provides local information concerning the behavior of the kernel $E_L$ near the diagonal, which is why is sometimes called the \textbf{local Weyl law}. We will state this theorem in Subsection \ref{ss.hlwl} (see Theorem \ref{t.hormander}). In recent years, H\"ormander's local Weyl law has received a lot of attention because $E_L$ turns out to be the covariance of a certain Gaussian field on $\man$ defined as a random linear combination of eigenfunctions of $A$ (see for instance \cite{ber85}, \cite{ns09}, \cite{zeld09}, \cite{gawe14}, \cite{let14}, \cite{gawe15}, \cite{nyri15}, \cite{hzz151}, \cite{ns15},\cite{sawi} and \cite{casa17}). In \cite{ale161} we studied a natural variation of this random linear combination of eigenfunctions in dimension $n=2$ and observed a very different asymptotic behavior of the covariance function. Following this work, we are interested in studying more general random linear combinations of these eigenfunctions. To this end, it is essential to gather some information about the corresponding covariance function. The purpose of this article is to provide an asymptotic for these kernels similar to the one we have for $E_L$. For each $s\in\R$ we consider the kernel

\[
K_L^s(x,y)=\sum_{\lambda_k\leq L} \lambda_k^{-s} e_k(x)\overline{e_k(y)}\, .
\]

These kernels converge in distribution to the integral kernels of $A^{-s}$ as $L\rightarrow +\infty$ but diverge on the diagonal for small or negative values of $s$. The pointwise behavior of the limiting kernel on the diagonal, which is well defined for large values of $s$, has been studied for instance in \cite{see67} and \cite{sch86}. In \cite{sch86}, the author proved that, as a function of $s$, the limit admitted a meromorphic extension to the whole complex plane. We focus instead on a fixed $s$ for which the kernel diverges and study its pointwise divergence near the diagonal. We call these results \textbf{weighted local Weyl laws} by analogy with $E_L$ (which is just $K_L^0$) because of the weights $\lambda_n^{-s}$ on the terms of the sum defining $K^s_L$. As we shall see, the kernels $K_L^s$ experience a sudden change in their asymptotic behavior between the phases $s<\frac{n}{m}$ and $s=\frac{n}{m}$. All our results will be local so we take the liberty of omitting with the composition with the chart when writing functions on $\man$ in local coordinates. Our first result provides information when $s<\frac{n}{m}$.
\begin{theorem}\label{t.diff.1}
Assume that $s<\frac{n}{m}$. Fix $x_0\in\man$ and consider local coordinates $x=(x_1,\dots,x_n)$ for $\man$ centered at $x_0$ and defined on an open subset $U\subset\R^n$ such that $d\mu_\man$ agrees with the Lebesgue measure in these coordinates. Then for each $\alpha,\beta\in\N^n$, there exists $V\subset U$ an open neighborhood of $0$ such that, in these coordinates, we have the following estimates.

\begin{enumerate}
\item Uniformly for $w,x,y\in V$ and $L\geq 1$ we have $w+L^{-1/m}x, w+L^{-1/m}y\in U$ and

\begin{align*}
L^{s-(n+|\alpha|+|\beta|)/m}\partial^\alpha_x\partial^\beta_yK^s_L\left(w+L^{-1/m}x,w+L^{-1/m}y\right)=&\frac{1}{(2\pi)^n}\int_{\sigma_A(w,\xi)\leq 1}e^{i\langle \xi,x-y\rangle}\frac{(i\xi)^\alpha(-i\xi)^\beta}{\sigma_A(w,\xi)^s} d\xi\\
&+O\left(L^{-1/m}\ln\left(L\right)^\eta\right)
\end{align*}

where $\eta = 1$ if $s=(n+|\alpha|+|\beta|-1)/m$ and $0$ otherwise.

\item Let $\eps>0$. Then, uniformly for $x,y\in V$ such that $|x-y|>\eps$ and $L\geq 1$,

\[
L^{s-(n+|\alpha|+|\beta|)/m}\partial^\alpha_x\partial^\beta_y K^s_L(x,y)=O\left(L^{-1/m}\ln\left(L\right)^\eta\right)
\]
where $\eta = 1$ if $s=(n+|\alpha|+|\beta|-1)/m$ and $0$ otherwise.
\end{enumerate}
Here $|\alpha|=\alpha_1+\dots+\alpha_n$ and $\langle\cdot,\cdot\rangle$ is the Euclidean scalar product.
\end{theorem}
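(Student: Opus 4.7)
The plan is to reduce the weighted sum $K^s_L$ to the spectral projector $E_L=K^0_L$ by an integration by parts on the Stieltjes measure $dE_\lambda$, and then to invoke H\"ormander's local Weyl law (Theorem \ref{t.hormander} below). Since the spectrum is discrete, for fixed $(a,b)$ and multi-indices $\alpha,\beta$,
\[
\partial^\alpha_a \partial^\beta_b K^s_L(a,b)=\int_{0^+}^L \lambda^{-s}\,d\bigl(\partial^\alpha_a \partial^\beta_b E_\lambda(a,b)\bigr)=L^{-s}\partial^\alpha_a \partial^\beta_b E_L(a,b)+s\int_{\lambda_1}^L \lambda^{-s-1}\partial^\alpha_a \partial^\beta_b E_\lambda(a,b)\,d\lambda.
\]
First, I would substitute H\"ormander's expansion,
\[
\partial^\alpha_a \partial^\beta_b E_\lambda(a,b)=\frac{1}{(2\pi)^n}\int_{\sigma_A(a,\xi)\leq\lambda} e^{i\langle\xi,a-b\rangle}(i\xi)^\alpha(-i\xi)^\beta\,d\xi+R^{\alpha,\beta}_\lambda(a,b),
\]
with $R^{\alpha,\beta}_\lambda=O(\lambda^{(n+|\alpha|+|\beta|-1)/m})$ uniformly on compact sets, and split the computation into a leading term and a remainder contribution.

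For the leading term I exchange the $\lambda$ and $\xi$ integrations by Fubini. The inner integral $\int_{\max(\lambda_1,\sigma_A(a,\xi))}^L \lambda^{-s-1}\,d\lambda$ cancels the boundary contribution $L^{-s}\widetilde E_L(a,b)$ and yields
\[
\frac{1}{(2\pi)^n}\int_{\sigma_A(a,\xi)\leq L} e^{i\langle\xi,a-b\rangle}(i\xi)^\alpha(-i\xi)^\beta\,\sigma_A(a,\xi)^{-s}\,d\xi+O(1),
\]
where the $O(1)$ comes from replacing $\max(\lambda_1,\sigma_A)$ by $\sigma_A$ on the bounded region $\{\sigma_A<\lambda_1\}$. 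The hypothesis $s<n/m$ is essential here: since $\sigma_A(a,\xi)\sim C|\xi|^m$ near $\xi=0$, local integrability of $\sigma_A^{-s}$ holds precisely when $sm<n$. Specializing to $a=w+L^{-1/m}x$, $b=w+L^{-1/m}y$ and using the change of variables $\xi=L^{1/m}\eta$ together with the homogeneity $\sigma_A(a,L^{1/m}\eta)=L\sigma_A(a,\eta)$ turns the indicator into $\mathbf{1}_{\sigma_A(a,\eta)\leq 1}$ and produces a factor $L^{(n+|\alpha|+|\beta|)/m-s}$, exactly compensated by the normalization $L^{s-(n+|\alpha|+|\beta|)/m}$ in the statement. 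Smoothness of $\sigma_A$ in its first slot, together with the same integrability condition near $\eta=0$, then replaces $a$ by $w$ with error $O(L^{-1/m})$.

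The main obstacle is controlling the remainder contribution. Combining the two pieces gives
\[
L^{-s}R^{\alpha,\beta}_L+s\int_{\lambda_1}^L \lambda^{-s-1}R^{\alpha,\beta}_\lambda\,d\lambda=O\bigl(L^{-s+(n+|\alpha|+|\beta|-1)/m}\ln(L)^\eta\bigr),
\]
where $\eta=1$ appears exactly at the critical value $s=(n+|\alpha|+|\beta|-1)/m$ because there the integrand reduces to $\lambda^{-1}$ and admits a logarithmic primitive. After multiplication by $L^{s-(n+|\alpha|+|\beta|)/m}$ this is precisely the announced $O(L^{-1/m}\ln(L)^\eta)$, and the uniformity in $(w,x,y)$ follows from that of H\"ormander's estimate. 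For part (2), the same identity applies directly to $(x,y)$ with $|x-y|>\eps$: repeated integration by parts in $\xi$ on $\{\sigma_A(x,\xi)\leq L\}$, using the non-stationarity of the phase $\langle\xi,x-y\rangle$ away from a fixed neighborhood of $\xi=0$ (whose contribution is a bounded low-frequency piece), shows that the leading Fourier-type integral is $O(1)$, so only the remainder survives and produces the same bound.
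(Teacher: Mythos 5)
Your overall strategy — integration by parts reducing $K^s_L$ to the spectral projector $E_L$, substitution of H\"ormander's local Weyl law, Fubini, and the change of variables $\xi=L^{1/m}\eta$ — is essentially the same as the paper's (Lemma \ref{l.link}, Proposition \ref{p.main}, Theorem \ref{t.main.1}). However, there is a genuine gap in the way you invoke H\"ormander's result.

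You substitute the expansion
$\partial^\alpha_a\partial^\beta_b E_\lambda(a,b)=\frac{1}{(2\pi)^n}\int_{\sigma_A(a,\xi)\leq\lambda}e^{i\langle\xi,a-b\rangle}(i\xi)^\alpha(-i\xi)^\beta\,d\xi+R^{\alpha,\beta}_\lambda(a,b)$
with $R^{\alpha,\beta}_\lambda=O(\lambda^{(n+|\alpha|+|\beta|-1)/m})$ uniformly on compacts, attributing it to H\"ormander. But Theorem \ref{t.hormander} gives the oscillatory integral with a \emph{proper phase function} $\psi(a,b,\xi)$ solving an eikonal equation and with amplitude $\sigma_P(a,b,\partial_{a,b}\psi(a,b,\xi))$, not the flat phase $\langle\xi,a-b\rangle$ and the constant amplitude $(i\xi)^\alpha(-i\xi)^\beta$. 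Passing from $\psi$ to $\langle\xi,a-b\rangle$ and from $\sigma_P(a,b,\partial_{a,b}\psi)$ to $(i\xi)^\alpha(-i\xi)^\beta$ is exactly the heart of the matter: it rests on the properties of proper phase functions collected in Lemma \ref{l.toolbox} (in particular $\psi(a,a,\xi)=0$ and $\partial_{a,b}\psi(a,a,\xi)=(\xi,-\xi)$), and on the Taylor estimate carried out in Claim \ref{cl.main.2} showing that the resulting difference is $O(|a-b|\,|\xi|^{d})+O(|a-b|^2|\xi|^{d+1})$. After integrating over $\sigma_A(a,\xi)\leq\lambda$, this is $O(\lambda^{(n+d-1)/m})$ \emph{only} in the regime $|a-b|\lesssim\lambda^{-1/m}$ (or, separately, for $|a-b|$ bounded away from zero); for intermediate $|a-b|$ the claimed uniform bound on $R^{\alpha,\beta}_\lambda$ is simply false. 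It so happens that your applications (near-diagonal scaling $|a-b|\leq CL^{-1/m}\leq C\lambda^{-1/m}$, or off-diagonal $|x-y|>\eps$) avoid the bad regime, but the proposal neither performs the phase/amplitude comparison nor restricts the claim to where it holds, so the step is a gap rather than a shortcut.

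Two smaller points. First, for part (2) you argue that the leading Fourier-type integral is $O(1)$. An unnormalized $O(1)$ is not enough: after multiplying by $L^{s-(n+d)/m}$ you need $O(L^{-1/m}\ln(L)^\eta)$, and $L^{s-(n+d)/m}$ is in general larger than $L^{-1/m}$. The paper's route for part (2) is more direct and robust: it feeds the off-diagonal estimate $|PE_\lambda(x,y)|\leq C\lambda^{(n+d-1)/m}$ from Theorem \ref{t.hormander} straight into the integration-by-parts identity of Lemma \ref{l.link}, without touching the oscillatory model integral at all. Second, the off-diagonal estimate requires $|x-y|>\eps$ in the original coordinates, while the integration-by-parts identity requires uniformity over compact sets; the paper handles this by working with a fixed neighborhood $W$ of the diagonal and applying the corresponding estimate from Theorem \ref{t.hormander}, a bookkeeping detail your proposal elides.
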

Note that the case where $s=0$ and $\alpha=\beta=0$ is Theorem 5.1 of \cite{ho68} (see Theorem \ref{t.hormander} and the discussion below for more details about this case). We prove Theorem \ref{t.diff.1} at the end of Section \ref{s.results}. Before stating the second result, we introduce the following notation. Firstly, $d\mu_\man$ defines a canonical dual density $\widetilde{d\mu_\man}$ on $T^*\man$. Around any $x\in\man$, there exist local coordinates in which $d\mu_\man$ corresponds to the Lebesgue measure. Then $\widetilde{d\mu_\man}$ is the unique density on $T^*\man$ who in these coordinates corresponds to the Lebesgue measure. For each $x\in\man$, let $S^*_x=\{\xi\in T^*_x\man\, |\, \sigma_A(x,\xi)=1\}$. Since $\sigma_A$ is $m$-homogeneous, $S^*_x$ is a smooth compact hypersurface of $T^*_x\man$ strictly star-shaped\footnote{More precisely, for each $\xi\in T^*_x\man\setminus\{0\}$, the ray $\{t\xi\, |\, t>0\}$ intersects $S^*$ exactly at $\sigma_A(x,\xi)^{-1/m}\xi$ and does so transversally.} around the origin and there exists a smooth density $d_x\nu$ on $S^*_x$ such that for each $u\in C^\infty_c(T^*_x\man)$,
\begin{equation}\label{e.nu}
\int_{T^*_x\man} u(\xi)\widetilde{d_x\mu_\man}(\xi)=\int_0^{+\infty}\int_{S^*_x} u(t\xi) d_x\nu(\xi) t^{n-1}dt\, .
\end{equation}
Our second result deals with the case where $s=\frac{n}{m}$. While Theorem \ref{t.diff.1} proves that the rate of growth of $K_L^s$ does not depend on $s$ for $s<n/m$, and that the main term depends continuously on $s$, the following result shows that this is not true for $s=n/m$. Indeed, while the first point is analogous to the results of Theorem \ref{t.diff.1}
, the second point is quite different (and requires additional tools).
\begin{theorem}\label{t.diff.2}
Assume that $s=\frac{n}{m}$. Fix $x_0\in\man$ and consider local coordinates $x=(x_1,\dots,x_n)$ for $\man$ centered at $x_0$ and defined on an open subset $U\subset\R^n$ such that $d\mu_\man$ agrees with the Lebesgue measure in these coordinates. Then, for each $\alpha,\beta\in\N^n$, there exists an open neighborhood $V\subset U$ of $0$ such that the following holds.

\begin{enumerate}
\item
\begin{itemize}
\item Assume that $(\alpha,\beta)\neq (0,0)$. In these coordinates, uniformly for $w,x,y\in V$ and $L\geq 1$, $w+L^{-1/m}x, w+L^{-1/m}y\in U$ and

\begin{align*}
L^{-(|\alpha|+|\beta|)/m}\partial^\alpha_x\partial^\beta_yK^s_L\left(w+L^{-1/m}x,w+L^{-1/m}y\right)=&\frac{1}{(2\pi)^n}\int_{\sigma_A(w,\xi)\leq 1}e^{i\langle\xi,x-y\rangle}\frac{(i\xi)^\alpha(-i\xi)^\beta}{\sigma_A(w,\xi)^{n/m}} d\xi\\
&+O\left(L^{-1/m}\ln\left(L\right)^\eta\right)\, .
\end{align*}

where $\eta = 1$ if $1=|\alpha|+|\beta|$ and $0$ otherwise.

\item Assume $(\alpha,\beta)\neq (0,0)$ and let $\eps>0$. Then, uniformly for $x,y\in V$ such that $|x-y|>\eps$,

\[
L^{-(|\alpha|+|\beta|)/m}\partial^\alpha_x\partial^\beta_y K^s_L(x,y)=O\left(L^{-1/m}\ln\left(L\right)^\eta\right)
\]
where $\eta = 1$ if $1=|\alpha|+|\beta|$ and $0$ otherwise.

\end{itemize}

\item
\begin{itemize}
\item Uniformly for $x,y\in V$ and $L\geq 1$, in these coordinates,
\[
K^s_L(x,y)=g_A(x,y)\left[\ln\left(L^{1/m}\right)-\ln_+\left(L^{1/m}|x-y|\right)\right]+O(1)
\]
where
\[
g_A(x,y)=\frac{1}{(2\pi)^n}\times\frac{\nu_x\left(S^*_x\right)+\nu_y\left(S^*_y\right)}{2}
\]
and $\ln_+(t)=\ln(t)\vee 0$.
\item There exists a symmetric bounded function $Q: U\times U\rightarrow\R$ such that, uniformly for $\kappa\geq 1$, $L\geq 1$ and $x,y\in V$ such that $|x-y|\geq \kappa L^{-1/m}$, in these coordinates,
\[
K_L^s(x,y)=-g_A(x,y)\ln\left(|x-y|\right)+Q(x,y)+O\left(\kappa^{-1/k}\right)
\]
where, if $n=1$ then $k=1$ and if $n\geq 2$ then $k=m$.
\end{itemize}
\end{enumerate}
Here $|\alpha|=\alpha_1+\dots+\alpha_n$ and $\langle\cdot,\cdot\rangle$ is the Euclidean scalar product.
\end{theorem}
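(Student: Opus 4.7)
The plan is to reduce Theorem \ref{t.diff.2} to H\"ormander's local Weyl law (Theorem \ref{t.hormander}) via an Abel/Stieltjes summation, mirroring the strategy behind Theorem \ref{t.diff.1}, but with careful bookkeeping around the critical exponent $s=n/m$ where a genuine logarithmic divergence appears.

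\textbf{Part 1 (derivatives).} When $(\alpha,\beta)\neq(0,0)$, the integrand $(i\xi)^\alpha(-i\xi)^\beta\sigma_A(w,\xi)^{-n/m}$ is locally integrable at $\xi=0$ because $\sigma_A(w,\xi)^{n/m}\asymp |\xi|^n$ and $|\xi|^{|\alpha|+|\beta|}$ absorbs the singularity. Consequently the derivatives $\partial_x^\alpha\partial_y^\beta K_L^s$ behave, after the $L^{-(|\alpha|+|\beta|)/m}$ rescaling, like their subcritical counterparts in Theorem \ref{t.diff.1}: the integrated weight $\lambda^{-s-1}\cdot \lambda^{(n+|\alpha|+|\beta|)/m}=\lambda^{(|\alpha|+|\beta|)/m -1}$ is integrable over $[1,L]$ and the same Abel summation scheme applies verbatim, the borderline $|\alpha|+|\beta|=1$ generating the logarithmic factor $\eta=1$ exactly as before.

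\textbf{Part 2, near diagonal.} For $(\alpha,\beta)=(0,0)$ I write, by Stieltjes integration by parts,
\begin{equation*}
K_L^s(x,y)=L^{-s}E_L(x,y)+s\int_{\lambda_1}^{L}\lambda^{-s-1}E_\lambda(x,y)\,d\lambda+O(1),
\end{equation*}
and plug in H\"ormander's formula $E_\lambda(x,y)=\lambda^{n/m}(2\pi)^{-n}\int_{\sigma_A(x,\xi)\leq 1}e^{i\lambda^{1/m}\langle\xi,x-y\rangle}d\xi+O(\lambda^{(n-1)/m})$. With $s=n/m$ the prefactor collapses to $\lambda^{-1}$, so after exchanging the order of integration and setting $u=\lambda^{1/m}$,
\begin{equation*}
K_L^s(x,y)=\frac{1}{(2\pi)^n}\int_{\sigma_A(x,\xi)\leq 1}\int_{u_0}^{L^{1/m}}\frac{e^{iu\langle\xi,x-y\rangle}}{u}du\, d\xi+O(1),
\end{equation*}
where the $O(1)$ absorbs the boundary term and the H\"ormander remainder (which integrates to $O(L^{-1/m})$). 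The inner integral is a Ci/Si integral: it equals $\ln(L^{1/m}/u_0)-\ln_+(L^{1/m}|\langle\xi,x-y\rangle|)+O(1)$ uniformly. Integrating the constant piece against $d\xi$ using the identity $\int_{\sigma_A(x,\xi)\leq 1}d\xi=\nu_x(S^*_x)/n$ (read off from equation \eqref{e.nu}) produces the leading coefficient $\nu_x(S^*_x)/(2\pi)^n$ of $\ln L^{1/m}$. The symmetric form $g_A(x,y)=\frac{1}{(2\pi)^n}\cdot\frac{\nu_x(S^*_x)+\nu_y(S^*_y)}{2}$ is obtained by averaging the expansion at base point $x$ with its conjugate at $y$ (permissible because $K_L^s(x,y)=\overline{K_L^s(y,x)}$), a swap that costs only $O(|x-y|)=O(L^{-1/m})$ in $V$.

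\textbf{Part 2, away from diagonal.} In the regime $|x-y|\geq \kappa L^{-1/m}$ I must quantify the convergence of $\int_{u_0}^{L^{1/m}}e^{iu\tau}/u\,du$ to its $L\to\infty$ limit. Pointwise in $\tau=\langle\xi,x-y\rangle$, one integration by parts gives a tail of size $O((L^{1/m}|\tau|)^{-1})$; the difficulty is that $\tau$ vanishes on a codimension-one slice of $\{\sigma_A(x,\cdot)\leq 1\}$. To average correctly over $\xi$ I use the surface decay estimate for the Fourier transform of the indicator of $\{\sigma_A(x,\xi)\leq 1\}$, which decays like $u^{-1/m}$ for $n\geq 2$ (the $m$ reflecting the homogeneity of $S^*_x$ and the number of tangential integrations by parts one can perform against the star-shaped defining function) and like $u^{-1}$ when $n=1$ (where $S^*_x$ is a pair of points). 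Integrating this bound against $du/u$ on $[u_0,L^{1/m}]$ and specializing to $L^{1/m}|x-y|\geq\kappa$ produces the advertised remainder $O(\kappa^{-1/k})$, and $Q(x,y)$ is identified as the limit of $K_L^s(x,y)+g_A(x,y)\ln|x-y|$, symmetric by Hermitian symmetry of $K_L^s$.

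\textbf{Main obstacle.} The delicate point is the uniform decay estimate for the surface oscillatory integral $\int_{\sigma_A(x,\xi)\leq 1}e^{iu\langle\xi,z\rangle}d\xi$ and the dimension-dependent exponent $1/k$ it yields; every other step is routine once Theorem \ref{t.hormander} is in hand. The star-shapedness of $S^*_x$ (footnoted in the statement) and the $m$-homogeneity of $\sigma_A$ are used exactly here to integrate by parts along the radial direction and extract the decay rate $u^{-1/m}$ for $n\geq 2$.
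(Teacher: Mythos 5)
Your outline captures the right skeleton: Abel/Stieltjes summation against H\"ormander's local Weyl law, then a change of order of integration to produce the logarithmic kernel, with Part 1 handled by the subcritical machinery because $(\alpha,\beta)\neq(0,0)$ makes the integrand locally integrable. This is indeed how the paper routes Part~1 through Theorem~\ref{t.main.1} and Part~2 through Theorem~\ref{t.main.2} (via Proposition~\ref{p.main} for the Abel step).

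However, there is a genuine gap at exactly the place you flag as ``the delicate point.'' You claim the oscillatory integral $\int_{S^*_x}e^{iu\langle\xi,z\rangle}d\nu(\xi)$ decays like $u^{-1/m}$ ``by integrating by parts along the radial direction,'' invoking only star-shapedness and $m$-homogeneity. This is not correct as an argument: the decay of a surface oscillatory integral is controlled by the singularities of the height function $\xi\mapsto\langle\xi,z\rangle$ restricted to the level set $S^*_x$, not by the radial geometry. A general $m$-homogeneous elliptic symbol can have level sets $S^*_x$ with highly degenerate critical points of the height function, and there is no uniform van der Corput bound without extra hypotheses. This is why the paper introduces the admissibility condition (Definition~\ref{d.admissible}) and expends an entire section (Section~\ref{s.singularities}) proving that $m$-admissibility (automatic for differential operators by Remark~\ref{r.examples}) implies $\frac{1}{k_0}$-non-degenerate level sets; that proof uses the Malgrange preparation theorem and a theorem of Colin de Verdi\`ere on universal unfoldings of $A_{k-1}$ singularities, not a radial integration by parts. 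The advertised exponent $1/m$ is the worst-case rate guaranteed by $m$-admissibility (Proposition~\ref{p.jprop}), not something you can read off from homogeneity alone. Without this ingredient, the away-from-diagonal estimate $O(\kappa^{-1/k})$ has no proof.

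A secondary issue: you tacitly replace H\"ormander's phase $\psi(x,y,\xi)$ by the linear model $\langle x-y,\xi\rangle$ throughout Part~2. The comparison is not free; the paper needs the toolbox Lemma~\ref{l.toolbox} (especially the $C^\infty$ limit of $\psi_t$ as $t\to 0$) and Lemma~\ref{l.jatz} to control $H_P(x,y,\xi,t)-H_P(x,y,\xi,0)=O(t|x-y||\xi|^{d+1})$ so that the replacement introduces only an $O(1)$ error in Lemma~\ref{l.control II}. You should say how you plan to justify that step rather than take it for granted.

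Finally, the one-dimensional case deserves a separate treatment (as the paper does with Lemma~\ref{l.decay_one}), since $S^*_x$ is a pair of points and the surface-integral decay argument degenerates; your remark that it ``decays like $u^{-1}$ when $n=1$'' is the right heuristic, but it needs the eikonal-equation control of $\partial_\xi\psi$ from Lemma~\ref{l.dimension_1}, which is a non-trivial input.
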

This theorem (especially the second point) generalizes Theorem 3 of \cite{ale161}, which proved the second point in the case where  $s=1$, $\man$ was a closed surface (so $n=2$) with a Riemmanian metric and $A$ was the associated Laplacian (so $m=2$). The main challenge in the extension comes from the need to apply a generalized stationary phase formula on the level sets of the symbol. In \cite{ale161}, this is Proposition 23, where the traditional stationary phase formula applies directly. This general setting requires tools from singularity theory that are deployed in Section \ref{s.singularities}. The second point of Theorem \ref{t.diff.2} will follow from Theorem \ref{t.main.2} below. As is apparent, in Figure 1, the proof of this result is more complex than that of the others. We prove Theorem \ref{t.diff.2} at the end of Section \ref{s.results}.\\

\begin{corollary}\label{co.diff.1}
The Schwartz kernel $K\in\mathcal{D}'(\man\times\man)$ of $A^{-n/m}$ belongs to $L^1(\man\times\man)$.  Moreover, for each smooth distance function $d:\man\times\man\rightarrow\R$ on $\man$ there exists a bounded symmetric function $Q_{A,d}:\man\times\man\rightarrow\R$, smooth on the complement of the diagonal, such that, for any distinct $x,y\in\man$,
\[
K(x,y)=-g_A(x,y)\ln\left(d\left(x,y\right)\right)+Q_{A,d}(x,y)\, .
\]
\end{corollary}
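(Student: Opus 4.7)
The plan is to take $L \to +\infty$ in the second bullet of part (2) of Theorem \ref{t.diff.2}. Pick one of the charts $V$ provided by that theorem and fix distinct $x,y \in V$. Setting $\kappa(L) := \tfrac{1}{2} L^{1/m}|x-y|$ makes the hypothesis $|x-y|\geq\kappa L^{-1/m}$ hold automatically and sends $\kappa\to+\infty$ as $L\to+\infty$, so
$$\lim_{L\to+\infty} K_L^{n/m}(x,y) = -g_A(x,y)\ln|x-y| + Q(x,y)$$
for every $(x,y)\in(V\times V)\setminus\Delta$. Moreover, the same expansion yields the uniform bound $|K_L^{n/m}(x,y)|\leq C|\ln|x-y||+C'$ on $V\times V$, providing a locally integrable dominating function. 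By dominated convergence, $K_L^{n/m}$ converges to $-g_A(\cdot,\cdot)\ln|\cdot-\cdot|+Q$ in $L^1_{\text{loc}}(V\times V)$, hence in $\mathcal{D}'(V\times V)$. But $K_L^{n/m}\to K$ in $\mathcal{D}'(\man\times\man)$, where $K$ is the Schwartz kernel of $A^{-n/m}$. Since $A^{-n/m}$ is a pseudo-differential operator of negative order, $K$ is smooth on $(\man\times\man)\setminus\Delta$; combined with continuity of the right-hand side, this forces the identification
$$K(x,y) = -g_A(x,y)\ln|x-y| + Q(x,y), \qquad (x,y)\in (V\times V)\setminus\Delta.$$

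Next I would globalize. Cover $\man$ by finitely many charts $V_i$ as above, with chart-dependent $Q_i$ and Euclidean distances $|\cdot|_i$, and define
$$Q_{A,d}(x,y) := K(x,y) + g_A(x,y)\ln d(x,y) \qquad \text{for } x\neq y.$$
This function is symmetric (by symmetry of $K$, $g_A$ and $d$) and smooth off the diagonal (since $K$, $g_A$ and $d$ all are). Inside $V_i\times V_i$,
$$Q_{A,d}(x,y) = Q_i(x,y) + g_A(x,y)\ln\bigl(d(x,y)/|x-y|_i\bigr).$$
Because $d$ is a smooth distance function, $d(x,y)/|x-y|_i$ extends to a smooth strictly positive function across the diagonal of $V_i\times V_i$, so its logarithm is smooth and bounded there. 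Combined with the boundedness of $Q_i$ furnished by Theorem \ref{t.diff.2}, this makes $Q_{A,d}$ bounded on each $V_i\times V_i$, hence globally bounded by compactness of $\man$.

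The $L^1$ integrability then follows from the bound
$$|K(x,y)|\leq \|g_A\|_\infty|\ln d(x,y)| + \|Q_{A,d}\|_\infty,$$
together with the local integrability of $\ln|x-y|$ on $\R^n\times\R^n$, which transfers to integrability of $\ln d(x,y)$ on $\man\times\man$ chart by chart.

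The main obstacle is the first step, identifying the pointwise limit of $K_L^{n/m}$ off the diagonal with the pointwise value of the Schwartz kernel $K$; it rests on combining distributional convergence of the spectral partial sums with the pointwise bounds of Theorem \ref{t.diff.2} via dominated convergence. Everything after that is routine bookkeeping with charts.
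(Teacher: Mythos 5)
Your proposal is correct and follows essentially the same route as the paper: pass to the pointwise limit off the diagonal via the second bullet of Theorem \ref{t.diff.2}(2), dominate by a locally integrable $\ln$ bound from the first bullet to get $L^1_{\mathrm{loc}}$ and hence distributional convergence, identify with the Schwartz kernel of $A^{-n/m}$, invoke pseudo-differential regularity off the diagonal, and then trade $|x-y|$ for $d(x,y)$ using boundedness of $\ln\bigl(d(x,y)/|x-y|\bigr)$. One small imprecision: the uniform dominating bound $|K_L^{n/m}(x,y)|\leq C|\ln|x-y||+C'$ really comes from the first bullet of part (2) (observing that $\ln(L^{1/m})-\ln_+(L^{1/m}|x-y|)\leq -\ln|x-y|$ for $|x-y|\leq 1$), not from the expansion valid only for $|x-y|\geq\kappa L^{-1/m}$; but this is a bookkeeping slip and the argument goes through as you intend.
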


We prove Corollary \ref{co.diff.1} at the end of Section \ref{s.results}.\\

\subsection{An important example: the Laplacian}

As explained above, this work is motivated by recent interest in the kernel $K^0_L$ as the covariance function of a Gaussian field. In further work, we wish to study certain Gaussian fields arising naturally in geometry and statistical mechanics with covariance $K^s_L$. One such field is the Gaussian Free Field, which is a central object in statistical mechanics today. In Corollaries \ref{c.gff.1} and \ref{c.gff.2} we detail our main results in this special case.\\

Let $(\man,g)$ be a closed Riemmanian manifold of dimension $n\geq 2$. Let $\Delta=-div\circ\nabla$ be the Laplace operator on $\man$ and let $|dV_g|$ be the Riemmanian volume density on $\man$. Then, $\Delta$ is an elliptic differential operator with principal symbol $\sigma(x,\xi)=g_x^{-1}\left(\xi,\xi\right)$ where $g^{-1}_x$ is the scalar product induced on $T^*_x\man$ by $g_x$. Moreover, $\Delta$ is symmetric with respect to the $L^2$-scalar product induced by the density $|dV_g|$ on $\man$. Let $\left(\lambda_k\right)_{k\in\N}$ be the sequence of eigenvalues of $\Delta$ (counted with multiplicity) and arranged in increasing order. Let $\left(e_k\right)_{k\in\N}$ be a Hilbert basis of $L^2\left(\man,|dV_g|\right)$ made up of real valued functions, such that for each $k\in\N$, $\Delta e_k=\lambda_k e_k$. For each $L>0$, each $s>0$ and each $(x,y)\in\man\times\man$, let
\[
K^s_L(x,y)=\sum_{0<\lambda_k\leq L}\lambda_k^{-1}e_k(x)e_k(y)\, .
\]
Then, $K^1_L$ converges in distribution as $L\rightarrow+\infty$ to the Green function on $\man$ which is the (generalized) covariance function for the Gaussian Free Field (see for instance \cite{shef_gff}). We have the following results. In the case where $s<n/2$, $K^s_L$ converges at scale $L^{-1/2}$ to a non-trivial function after rescaling by a polynomial factor.
\begin{corollary}\label{c.gff.1}
Assume that $s<n/2$. Fix $x_0\in\man$ and consider local coordinates $x=(x_1,\dots,x_n)$ for $\man$ centered at $x_0$ such that $|dV_g|$ agrees with the Lebesgue measure in these coordinates. Then, for each $\alpha,\beta\in\N^n$ there exists $V\subset U$ an open neighborhood of $0$ such that, in these coordinates, we have the following estimates.
\begin{enumerate}
\item Uniformly for $w,x,y\in V$ and $L\geq 1$ we have $w+L^{-1/2}x,w+L^{-1/2}y\in U$ and
\begin{multline*}
\partial_x^\alpha\partial_y^\beta K^s_L\left(w+L^{-1/2}x,w+L^{-1/2}y\right)=\frac{1}{(2\pi)^n}\int_{|\xi|_w^2\leq 1}e^{i\langle\xi,x-y\rangle}\frac{(i\xi)^\alpha(-i\xi)^\beta}{|\xi|_w^{2s}} d\xi L^{(n+|\alpha|+|\beta|-2s)/2}\\
+O\left(L^{(n+|\alpha|+|\beta|-2s-1)/2}\ln\left(L\right)^\eta\right)
\end{multline*}
where $\eta=1$ if $s=(n+|\alpha|+|\beta|-1)/2$ and $0$ otherwise. Here $|\xi|_w^2=g^{-1}_w(\xi,\xi)$ and $d\xi$ is the Lebesgue measure.
\item Let $\eps>0$. Then, uniformly for $x,y\in V$ such that $|x-y|>\eps$ and for $L\geq 1$,
\[
\partial_x^\alpha\partial_y^\beta K^s_L\left(w+L^{-1/2}x,w+L^{-1/2}y\right)=O\left(L^{(n+|\alpha|+|\beta|-2s-1)/2}\ln\left(L\right)^\eta\right)
\]
where $\eta=1$ if $s=(n+|\alpha|+|\beta|-1)/2$ and $0$ otherwise.
\end{enumerate}
\end{corollary}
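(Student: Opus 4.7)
The plan is to deduce Corollary \ref{c.gff.1} as a direct specialization of Theorem \ref{t.diff.1} to the operator $A=\Delta$ of order $m=2$ with principal symbol $\sigma_A(x,\xi)=g_x^{-1}(\xi,\xi)=|\xi|_x^2$ and reference density $d\mu_\man=|dV_g|$. First I would verify that all hypotheses of Theorem \ref{t.diff.1} are met: $\Delta$ is an elliptic differential operator of order $2$, symmetric with respect to the $L^2(\man,|dV_g|)$ inner product, and its principal symbol is strictly positive on $T^*\man\setminus\{0\}$. The only caveat is that $\Delta$ admits $0$ as a simple eigenvalue, with constant eigenfunctions. Since the sum defining $K_L^s$ is explicitly restricted to $0<\lambda_k\leq L$, the zero mode is excluded, and the conclusion of Theorem \ref{t.diff.1} applies either by restricting $\Delta$ to the $L^2$-orthogonal complement of the constants, or by noting that the finite number of omitted terms affects neither the leading asymptotic nor the error term.

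Next I would translate the conclusion into the rescaling convention used in the corollary. The hypothesis $s<n/m$ becomes $s<n/2$. Multiplying the asymptotic in part (1) of Theorem \ref{t.diff.1} by $L^{(n+|\alpha|+|\beta|)/m-s}=L^{(n+|\alpha|+|\beta|-2s)/2}$, substituting $\sigma_A(w,\xi)^s=|\xi|_w^{2s}$ and $L^{-1/m}=L^{-1/2}$, one recovers exactly the formula in part (1) of Corollary \ref{c.gff.1}, with remainder $O\bigl(L^{(n+|\alpha|+|\beta|-2s-1)/2}\ln(L)^\eta\bigr)$ and the same convention on $\eta$. Part (2) of Corollary \ref{c.gff.1} follows identically by the same rescaling applied to part (2) of Theorem \ref{t.diff.1}, the condition $|x-y|>\eps$ being preserved under the translation to the Laplacian setting.

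There is no genuine obstacle here, since all the analytic content is already packaged inside Theorem \ref{t.diff.1}; this is a dictionary translation. The only two points requiring care are the accurate bookkeeping of the power of $L$ produced by the rescaling $L^{(n+|\alpha|+|\beta|)/m-s}$, which is what converts the ``normalized'' estimate of Theorem \ref{t.diff.1} into the ``unnormalized'' form used in Corollary \ref{c.gff.1}, and the observation that the rank-one projector on the constants is a smooth bounded kernel that is absorbed into the $O(\cdot)$ error.
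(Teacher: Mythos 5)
Your proposal is correct and takes essentially the same route as the paper, which simply invokes Theorem \ref{t.diff.1} with $m=2$, $A=\Delta$, $\sigma_A(w,\xi)=|\xi|_w^2$, and $s<n/2$. The extra remarks about the zero eigenvalue and the bookkeeping of the power of $L$ are sound but merely make explicit what the paper leaves implicit.
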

\begin{prf}
This follows directly from Theorem \ref{t.diff.1} with $m=2$, $s<n/2$, $A=\Delta$ and $\sigma_A(w,\xi)=|\xi|_w^2$.
\end{prf}
On the other hand, if $s=n/2$, although the derivatives of $K^s_L$ also have non-trivial local limits at scale $L^{-1/2}$, $K^s_L$ itself converges pointwise to a distribution with a logarithmic singularity on the diagonal. Note that when $s=1$, the first part of the second point of Corollary \ref{c.gff.2} below yields Theorem 3 of \cite{ale161}.
\begin{corollary}\label{c.gff.2}
Assume that $s=n/2$. Fix $x_0\in\man$ and consider local coordinates $x=(x_1,x_2)$ for $\man$ centered at $x_0$ defined on an open subset $U\subset\R^2$ such that $|dV_g|$ agrees with the Lebesgue measure in these coordinates. Then, for each $\alpha,\beta\in\N^2$, there exists an open neighborhood $V\subset U$ of $0$ such that the following holds.
\begin{enumerate}
\item
\begin{itemize}
\item Assume that $(\alpha,\beta)\neq(0,0)$. In these coordinates, uniformly for $w,x,y\in V$ and $L\geq 1$, we have $w+L^{-1/2}x,w+L^{-1/2}y\in U$ and
\begin{multline*}
\partial^\alpha_x\partial^\beta_yK^s_L\left(w+L^{-1/2}x,w+L^{-1/2}y\right)=\frac{1}{(2\pi)^n}\int_{|\xi|_w^2\leq 1} e^{i\langle\xi,x-y\rangle}\frac{(i\xi)^\alpha(-i\xi)^\beta}{|\xi|_w^n}d\xi L^{(n+|\alpha|+|\beta|-2s)/2}\\
+O\left(L^{(n+|\alpha|+|\beta|-2s-1)/2}\ln(L)^\eta\right)
\end{multline*}
where $\eta=1$ if $1=|\alpha|+|\beta|$ and $0$ otherwise. Here $|\xi|_w^2=g_w^{-1}(\xi,\xi)$ and $d\xi$ is the Lebesgue measure.
\item Let $\eps>0$. Then, uniformly for $x,y\in V$ such that $|x-y|>\eps$ and for $L\geq 1$,
\[
\partial_x^\alpha\partial_y^\beta K^s_L\left(w+L^{-1/2}x,w+L^{-1/2}y\right)=O\left(L^{(n+|\alpha|+|\beta|-2s-1)/2}\ln\left(L\right)^\eta\right)
\]
where $\eta=1$ if $1=|\alpha|+|\beta|$ and $0$ otherwise.
\end{itemize}
\item
\begin{itemize}
\item Uniformly for $x,y\in V$ and $L\geq 1$, in these coordinates,
\[
G_L(x,y)=\frac{\left|S^{n-1}\right|}{(2\pi)^n}\left[\ln\left(L^{1/2}\right)-\ln_+\left(L^{1/2}|x-y|\right)\right]+O(1)
\]
where $\ln_+(t)=\ln(t)\vee 0$.
\item There exists a symmetric bounded function $Q:U\times U\rightarrow\R$ such that, uniformly for $\kappa\geq 1$, $L\geq 1$ and $x,y\in V$ such that $|x-y|\geq \kappa L^{-1/2}$, in these coordinates,
\[
G_L(x,y)=\frac{\left|S^{n-1}\right|}{(2\pi)^n}\ln(|x-y|)+Q(x,y)+O\left(\kappa^{-1/2}\right)\, .
\]
\end{itemize}
\end{enumerate}
\end{corollary}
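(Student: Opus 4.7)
\begin{prf}[\textbf{proposal}]
My plan is to obtain Corollary \ref{c.gff.2} as a direct specialization of Theorem \ref{t.diff.2} to the setting $m=2$, $s=n/2$, $A=\Delta$ and $\sigma_A(w,\xi)=g_w^{-1}(\xi,\xi)=|\xi|_w^2$. Under these identifications the rescaling $L^{-1/m}$ becomes $L^{-1/2}$, the prefactor $L^{-(|\alpha|+|\beta|)/m}$ in the first part of Theorem \ref{t.diff.2} becomes $L^{-(|\alpha|+|\beta|)/2}=L^{(n+|\alpha|+|\beta|-2s)/2}\cdot L^{-(n+|\alpha|+|\beta|)/2+(|\alpha|+|\beta|)/2}$ (absorbing the $L^{(n+|\alpha|+|\beta|-2s)/2}$ from Corollary \ref{c.gff.2} on the other side of the equality), and the threshold $s=(n+|\alpha|+|\beta|-1)/m$ governing the logarithmic enhancement reads $1=|\alpha|+|\beta|$. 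With this dictionary Point 1 and the first conclusion of Point 2 follow by inspection, once one computes the constant $g_A$.

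The one genuine computation is to show that $g_A(x,y)\equiv |S^{n-1}|/(2\pi)^n$ in coordinates where $|dV_g|$ agrees with the Lebesgue measure. Using the identity \eqref{e.nu} with $u(\xi)=\one_{\{|\xi|_x^2\leq 1\}}$ yields
\[
\text{Vol}\bigl(\{\xi\in T^*_x\man:|\xi|_x^2\leq 1\}\bigr) \;=\; \int_0^1 \nu_x(S^*_x)\, t^{n-1}\, dt \;=\; \frac{\nu_x(S^*_x)}{n},
\]
the left-hand volume being with respect to the dual density $\widetilde{d\mu_\man}$. In the given coordinates both $d\mu_\man$ and $\widetilde{d\mu_\man}$ coincide with the Lebesgue measure and $\det g_x=1$, so the dual ``unit ball'' is an ellipsoid of Lebesgue volume $|B^n|$. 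Hence $\nu_x(S^*_x)=n|B^n|=|S^{n-1}|$ for all $x\in V$, and the symmetrization in the definition of $g_A$ produces the announced constant.

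For the final bullet of Point 2, I would apply Theorem \ref{t.diff.2} with $n\geq 2$ so that $k=m=2$, giving the remainder $O(\kappa^{-1/2})$; combined with $g_A(x,y)\equiv |S^{n-1}|/(2\pi)^n$ this yields the displayed asymptotic (up to the sign convention for $Q$). The main-term formula for the derivatives in Point 1 comes straight from the integral on the right-hand side of Theorem \ref{t.diff.2} after the substitution $\sigma_A(w,\xi)^{n/m}=|\xi|_w^n$.

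I do not anticipate any real obstacle: Theorem \ref{t.diff.2} has already done all the analytic work, and the only non-trivial step is the elementary volume computation in the previous paragraph, which is where the hypothesis that $|dV_g|$ coincides with Lebesgue in the chart enters. Everything else is bookkeeping of exponents.
\end{prf}
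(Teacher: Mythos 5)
Your proposal is correct and takes essentially the same route as the paper, which simply invokes Theorem \ref{t.diff.2} with $m=2$, $s=n/2$, $A=\Delta$ and $\sigma_A(w,\xi)=|\xi|_w^2$. The one piece you work out that the paper leaves implicit — using \eqref{e.nu} with the indicator of the co-unit ball, together with $\det g_x=1$ in the chosen chart, to get $\nu_x(S^*_x)=n|B^n|=|S^{n-1}|$ and hence $g_A\equiv|S^{n-1}|/(2\pi)^n$ — is correct and is exactly the verification needed to match the stated constant.
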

\begin{prf}
This follows directly from Theorem \ref{t.diff.2} with $m=2$, $s=n/2$, $A=\Delta$ and $\sigma_A(w,\xi)=|\xi|_w^2$.
\end{prf}
\vspace{1cm}$ $\\
\textbf{Acknowledgements:}\\
I am grateful my advisor Damien Gayet, for suggesting that I tackle this question, but also for his helpful advice in the presentation, organization and revision of this manuscript and finally for taking the time to proof-read it. I would also like to thank Simon Andr\' eys for his inspiring explanations on singularity theory.

\section{Statement of the main results}\label{s.results}

In this section, we present the main objects of study and state our results in full generality. In Subsection \ref{ss.setting} we present the general framework of the article. In Subsection \ref{ss.hlwl} we state H\"ormander's local Weyl law. In Subsection \ref{ss.wlwl} we state the generalizations of the local Weyl law proved in this paper. We finish off by deducing Theorems \ref{t.diff.1} and \ref{t.diff.2} as well as Corollary \ref{co.diff.1}.\\

\subsection{General setting}\label{ss.setting}

In this article, we consider simultaneously two different elliptic eigenvalue problems. Since our arguments hold indifferently for the two cases, we present them in this section using the same notations. The first case is a closed eigenvalue problem. In this case we will follow \cite{ho68}. In the second case, we consider a Dirichlet eigenvalue problem, for which our main reference will be \cite{vas83}.

\begin{enumerate}
\item
In this setting we follow \cite{ho68}. Here $\man$ is a compact manifold without boundary. We consider a classical elliptic pseudo-differential operator $A$ of positive order $m$ acting on $C^\infty_c(\man)$. We assume $A$ is symmetric for the $L^2$-scalar product on $(\man,d\mu_\man)$. This implies that the principal symbol $\sigma_A$ of $A$ is real valued and positive homogeneous of order $m$. Moreover, under these assumptions, $A$ has a unique self-adjoint extension in $L^2(\man,d\mu_\man)$ whose spectrum forms a discrete non-decreasing sequence $(\lambda_k)_{k\in\N}$ of real numbers diverging to $+\infty$ and whose corresponding eigenfunctions $e_k$ are of class $C^\infty$ (see for instance Section 29.1 of \cite{ho_apdo4}). For each $L>0$, set

\[
\forall x,y\in\overline{\man}, E_L(x,y)=\sum_{\lambda_j\leq L}e_j(x)\overline{e_j(y)}\, .
\]

\item
In this setting, we follow \cite{vas83}. Here $\man$ is the interior of a compact manifold $\overline{\man}$ with non-empty boundary $\partial\man$. We assume that $\man$ is equipped with a positive density $d\mu_\man$. We consider the Dirichlet eigenvalue problem

\begin{align*}
Au&=\lambda u\text{ on }\man\, ;\\
\forall j=1,\dots,j_0,\, B_ju&=0\text{ on }\partial\man
\end{align*}

where $A$ is an elliptic differential operator of even order $m\geq 1$ with principal symbol $\sigma_A$, $B_j$ are boundary differential operators (see Chapter 2, Section 1.4 of \cite{lions_magenes_1}) and $\lambda\in\C$. We assume that the problem is elliptic, formally self-adjoint with respect to $d\mu_\man$ and semi-bounded from below (see \cite{vas83} Section 1). As is well known, under these assumptions, the values of $\lambda$ for which this problem has a non-trivial solution with sufficient regularity form an non-decreasing sequence $(\lambda_k)_{k\in\N}$ of real numbers diverging to $+\infty$ and the corresponding eigenfunctions $e_k$ are smooth in $\man$ up to the boundary (the proof goes along the same lines as in the closed case treated in Section 29.1 of \cite{ho_apdo4} and is easily adapted using results from Chapter 20 of \cite{ho_apdo3}). For each $L>0$, set
\[
\forall x,y\in\overline{\man}, E_L(x,y)=\sum_{\lambda_j\leq L}e_j(x)\overline{e_j(y)}\, .
\]
\end{enumerate}

\subsection{H\"ormander's local Weyl law}\label{ss.hlwl}

Let us consider the sequence of real numbers $(\lambda_k)_k$ and the sequence of smooth functions $(e_k)_k$ from either of the two settings presented in Subsection \ref{ss.setting}. Recall that $\sigma_A$ is the principal symbol of $A$, which we assumed to be positive homogeneous of order $m>0$ in the second variable. We begin by stating H\"ormander's local Weyl law, for which we need the following definition\footnote{This definition is inspired by Definition 2.3 of \cite{ho68}. However, our notion of proper phase function is more restrictive than H\"ormander's notion of phase function.}.
\begin{definition}\label{d.phase}
Given an open subset $U\subset\R^n$, we will say that a function $\psi\in C^\infty(U\times U\times\R^n)$ is a \textbf{proper phase function} if it satisfies the following conditions.
\begin{enumerate}
\item The function $\psi$ is a symbol of order one in its third variable.
\item For each $(x,y,\xi)\in U\times U\times\R^n$, $\langle x-y,\xi\rangle =0$ implies that $\psi(x,y,\xi)=0$.
\item For each $x\in U$ and $\xi\in\R^n$, $\partial_x\psi(x,y,\xi)|_{y=x}=\xi$.
\item There exists $\psi_\infty\in C^\infty(U\times U\times \R^n\zeroless)$ satisfying all of the above properties and $1$-homogeneous in $\xi$ such that
\[t^{-1}\psi(x,y,t\xi)\xrightarrow[t\to\pfty]{}\psi_\infty(x,y,\xi)\]
where the convergence takes place in $S^1(U\times U\times \R^n\zeroless)$. 
\end{enumerate}
\end{definition}
An important example of proper phase function to have in mind is the phase function $\psi(x,y,\xi)=\langle x-y,\xi\rangle$. H\"ormander's local Weyl law may be stated as follows.
\begin{theorem}[\cite{ho68}, Theorem 5.1 for $P=Id$]\label{t.hormander}
Let $P$ be a differential operator of order $d$ acting on $C^\infty(\man\times\man)$. Fix a point in $\man$ and consider local coordinates $(x_1,\dots,x_n)$ around it. Suppose further that the density $d\mu_{\man}$ agrees with the Lebesgue measure in these coordinates. Let $\sigma_A$ (resp. $\sigma_P$) be the principal symbol of $A$ (resp. $P$) in these coordinates. Then, there exists an open neighborhood $U$ of $0\in\R^n$, a proper phase function function $\psi\in C^\infty(U\times U\times\R^n)$ and a constant $C<+\infty$, such that, in these coordinates, for each $x,y\in U$ and $L>0$,

\[
\left|PE_L(x,y)-\frac{1}{(2\pi)^n}\int_{\sigma_A(x,\xi)\leq L}e^{i\psi(x,y,\xi)}\sigma_P(x,y,\partial_{x,y}\psi(x,y,\xi))d\xi\right|\leq C(1+L)^{(n+d-1)/m}\, .
\]

Moreover, for each neighborhood $W\subset U\times U$ of the diagonal there exists $C>0$ such that in local coordinates, for each $(x,y)\in \left(U\times U\right)\setminus W$ and $L>0$,

\[
\Big|PE_L(x,y)\Big|\leq C(1+L)^{(n+d-1)/m}\, .
\]

Finally, there exists a symbol $\sigma\in S^1$ such that $\sigma_A^{1/m}-\sigma\in S^0$ and for each $x,y\in U$ and $\xi\in\R^n$,
\begin{equation}\label{e.eikonal}
\sigma(x,\partial_x\psi(x,y,\xi))=\sigma(y,\xi)\, .
\end{equation}
\end{theorem}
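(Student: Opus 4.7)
The plan is to follow H\"ormander's original strategy, based on the half-wave operator and a Fourier Tauberian argument. I would introduce $U(t):=e^{-itA^{1/m}}$, defined by functional calculus from the spectral resolution of $A$, whose Schwartz kernel is
\[
u(t,x,y)=\sum_{k} e^{-it\lambda_k^{1/m}} e_k(x)\overline{e_k(y)}.
\]
This is the Fourier transform in $t$ of the spectral measure of $A^{1/m}$, so for any $\chi$ on $\R$ whose Fourier transform is supported in a small interval $[-t_0,t_0]$ with $\widehat{\chi}(0)=1$, the smoothed spectral function $\chi\ast dE_\tau^{A^{1/m}}(x,y)$ at $\tau=L^{1/m}$ is computed by Fourier inversion of $u(t,x,y)$ on $|t|\leq t_0$. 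H\"ormander's Fourier Tauberian theorem then converts this smoothed quantity into the sharp projector $E_L(x,y)$ at the cost of an error of order $L^{(n-1)/m}$.

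The next step is to construct a microlocal parametrix for $u(t,x,y)$ on $|t|\leq t_0$, in a neighborhood of the diagonal, via the ansatz
\[
u(t,x,y)\sim (2\pi)^{-n}\int_{\R^n} e^{i\psi(x,y,\xi)-it\sigma(y,\xi)}\, a(t,x,y,\xi)\, d\xi
\]
where $\sigma\in S^1$ is chosen so that $\sigma^m-\sigma_A\in S^{m-1}$ (possible since $\sigma_A>0$), the phase $\psi$ solves the spatial eikonal equation $\sigma(x,\partial_x\psi(x,y,\xi))=\sigma(y,\xi)$ with diagonal conditions $\psi|_{x=y}=0$ and $\partial_x\psi|_{x=y}=\xi$, and $a$ is built from the standard transport hierarchy with $a(0,x,y,\xi)=1$. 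Existence, smoothness and symbolic character of $\psi$ in a neighborhood of the diagonal follow from Hamilton--Jacobi theory: one flows the Lagrangian $\{(y,\xi;y,\xi)\}$ under the Hamiltonian $\sigma$ and applies the implicit function theorem to produce a generating function. This $\psi$ is a proper phase function in the sense of Definition \ref{d.phase}, and equation \eqref{e.eikonal} is exactly the eikonal equation it solves.

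Plugging the parametrix into the Fourier inversion formula for $\chi\ast dE_\tau^{A^{1/m}}(x,y)$ and integrating in $t$ produces, at leading order and after using homogeneity of $\sigma$ to reduce the resulting integral to the $\xi$-region $\{\sigma_A(x,\xi)\leq L\}$, an expression of the form
\[
(2\pi)^{-n}\int_{\sigma_A(x,\xi)\leq L} e^{i\psi(x,y,\xi)} a_0(x,y,\xi)\, d\xi,
\]
with $a_0(x,x,\xi)=1$. Applying the differential operator $P$ of order $d$ in $(x,y)$ amounts, at leading order, to multiplying the integrand by $\sigma_P(x,y,\partial_{x,y}\psi(x,y,\xi))$, while lower-order contributions and the Tauberian error are together of size $L^{(n+d-1)/m}$. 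The off-diagonal estimate follows because for $|x-y|>\varepsilon$, condition (3) of Definition \ref{d.phase} makes $\xi\mapsto\psi(x,y,\xi)$ non-stationary, so repeated integration by parts in $\xi$ yields arbitrary polynomial decay of the oscillatory integral; only the Tauberian remainder of order $L^{(n+d-1)/m}$ survives.

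The main obstacle is the construction of the phase function $\psi$: solving the Hamilton--Jacobi initial value problem for $\sigma$ in a full neighborhood of the diagonal with the correct symbolic behavior in $\xi$, and verifying that the resulting parametrix approximates the true wave kernel modulo a symbol of order $-\infty$ (so that the transport recursion actually terminates with acceptable error), requires substantial microlocal machinery. The second delicate point is the Tauberian step itself: passing from a $\chi$-smoothed spectral function to the sharp cutoff $\lambda_k\leq L$ is what forces the error to be exactly $L^{(n+d-1)/m}$ rather than better, and must be carried out with $(x,y)$-uniform constants.
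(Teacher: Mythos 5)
Your proposal follows essentially the same route as the paper's Appendix \ref{s.hormander}: both build on H\"ormander's half-wave parametrix with the eikonal phase $\psi$ and a Fourier Tauberian comparison between the smoothed and sharp spectral kernels. The only difference is that the paper imports the parametrix construction wholesale as Lemma \ref{l.source} (quoting \cite{ho68} and \cite{vas83}) rather than sketching the Hamilton--Jacobi and transport steps, and it spells out the Tauberian bookkeeping more explicitly via the positive bump $\rho$ of Lemma \ref{bump} together with a monotonicity/Cauchy--Schwarz argument to handle a general operator $P$.
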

Here $\partial_{x,y}\psi$ denotes the partial derivative of $\psi$ with respect to the couple $(x,y)$.
\begin{remark}
The asymptotic provided by Theorem \ref{t.main.2} is coordinate dependent since the notion of proper phase function is not invariant.
\end{remark}
\begin{remark}
Equation \eqref{e.eikonal} is called the \textbf{eikonal equation} and it has a unique solution with the boundary conditions imposed by the admissibility condition (see Section 3 of \cite{ho68}). The part concerning the eikonal equation is not usually stated as part of the local Weyl law but the function $\psi$ provided by the theorem does satisfy this property and it will be useful in our proofs.
\end{remark}
\begin{remark}
The case where $P=Id$ and was proved by H\"ormander in \cite{ho68}. The case where $x=y$ and $\man$ is a closed manifold was treated in \cite{sava_adep} with some restrictions on $P$. Finally, Gayet and Welschinger extended this result to a general $P$ (see Theorem 2.3 of \cite{gawe14}) on a closed manifold. While in their statement, $x=y$, their proof yields the off-diagonal case with only minor modifications.
\end{remark}
\begin{remark}
H\"ormander manages to lift the compactness assumption using results on the local nature of the spectral projector $\Pi_L$. It is not clear that this approach could be applied for a general $P$.
\end{remark}
\begin{remark}
Notice that in the boundary problem case (as in setting 2. form Subsection \ref{ss.setting}) we only get estimates in the interior of the domain.
\end{remark}
\begin{remark}
One recent result closely related to this theorem is Canzani and Hanin's asymptotics for the monochromatic spectral projector of the Laplacian under some dynamical assumption on the geodesic flow (see \cite{caha151} and \cite{caha16}).
\end{remark}
For the convenience of the reader, in Appendix \ref{s.hormander} we provide a proof of the full result relying on the wave kernel asymptotics provided in \cite{ho68}.\\

\subsection{Weighted local Weyl laws}\label{ss.wlwl}

In the present article, we generalize Theorem \ref{t.hormander} in the following way. Consider $A$ and $P$ as in Theorem \ref{t.hormander} and take $U$ and $\psi$ as provided by this theorem.
\begin{theorem}\label{t.main.1}
Fix $z=z_1+iz_2\in\C$. Let $f\in C^\infty(\R)$ such that $f(t)=t^z$ for $t$ large enough. Let $K_L$ be the Schwartz kernel of $\Pi_Lf(A)$. Suppose that $n+d+mz_1>0$. For each $x,y\in U$ and $L\geq 1$, let
\[R_L(x,y)=L^{-z_1-(n+d)/m}\Big[PK_L(x,y)-\frac{1}{(2\pi)^n}\int_{\sigma_A(0,\xi)\leq 1}e^{i\langle \xi,x-y\rangle L^{1/m}}\sigma_A(0,\xi)^z\sigma_P(0,0,(\xi,-\xi))d\xi\Big].\]
Then, there exists an open neighborhood $V$ of $0\in U$ such that the following holds.
\begin{enumerate}
\item Uniformly for $L\geq 1$ and $(w,x,y)\in V\times V\times V$, $w+L^{-1/m}x, w+L^{-1/m}y\in U$ and
\[
R_L\left(w+L^{-1/m}x,w+L^{-1/m}y\right)=O\left(L^{-1/m}\ln(L)^\eta\right)
\]
where $\eta=1$ if $n+d+mz=1$ and $0$ otherwise.
\item Uniformly for $L\geq 1$ and $(x,y)\in V\times V\setminus W$,
\[
PK_L(x,y)=O\left(L^{z_1+(n+d-1)/m}\ln(L)^\eta\right)
\]
where $\eta=1$ if $n+d+mz=1$ and $0$ otherwise.
\end{enumerate}
\end{theorem}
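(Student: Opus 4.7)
The plan is to reduce Theorem~\ref{t.main.1} to H\"ormander's local Weyl law (Theorem~\ref{t.hormander}) by Stieltjes integration against the spectral kernel. Write
\[
PK_L(x,y) = \int_0^L f(t)\, d_t PE_t(x,y),
\]
and split $PE_t = H_t + \rho_t$ via Theorem~\ref{t.hormander} with
\[
H_t(x,y) = \frac{1}{(2\pi)^n}\int_{\sigma_A(x,\xi)\leq t} e^{i\psi(x,y,\xi)}\sigma_P(x,y,\partial_{x,y}\psi(x,y,\xi))\, d\xi
\]
and $|\rho_t(x,y)|\leq C(1+t)^{(n+d-1)/m}$ uniformly on $U\times U$. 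The layer-cake identity gives
\[
\int_0^L f(t)\, d_t H_t(x,y) = \frac{1}{(2\pi)^n}\int_{\sigma_A(x,\xi)\leq L} f(\sigma_A(x,\xi))\, e^{i\psi(x,y,\xi)} \sigma_P(x,y,\partial_{x,y}\psi)\, d\xi,
\]
while integration by parts yields $\int_0^L f\, d_t\rho_t = f(L)\rho_L - \int_0^L f'(t)\rho_t\, dt$. Since $|f(L)|\leq C L^{z_1}$ and $|f'(t)|\leq C t^{z_1-1}$ for $t$ large, both pieces are $O(L^{z_1+(n+d-1)/m})$, with a logarithm exactly when $z_1 + (n+d-1)/m = 0$, i.e.\ $n+d+mz_1 = 1$, producing the factor $\ln(L)^\eta$.

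To extract the claimed main term, I rescale $\xi = L^{1/m}\eta$ in the oscillatory integral. The $m$-homogeneity $\sigma_A(x,L^{1/m}\eta) = L\sigma_A(x,\eta)$ sends the domain to $\{\sigma_A(x,\eta) \leq 1\}$ with Jacobian $L^{n/m}$, and $f(L\sigma_A(x,\eta)) = L^z \sigma_A(x,\eta)^z$ on the bulk of this region since $f(t) = t^z$ for $t$ large; the small-$\eta$ contribution is absorbed in the error by the integrability hypothesis $n + d + mz_1 > 0$. Evaluating at $x = w + L^{-1/m}x'$, $y = w + L^{-1/m}y'$, a Taylor expansion of $\psi$ at $y = x$ using properties~(2)--(3) of Definition~\ref{d.phase} (which imply $\psi_\infty(w,w,\eta) = 0$, $(\partial_x\psi_\infty)(w,w,\eta) = \eta$, $(\partial_y\psi_\infty)(w,w,\eta) = -\eta$) gives
\[
L^{1/m}\psi_\infty\bigl(w+L^{-1/m}x',\,w+L^{-1/m}y',\,\eta\bigr) = \langle\eta, x' - y'\rangle + O(L^{-1/m}),
\]
and the $S^0$-correction $\psi - \psi_\infty$ contributes $O(L^{-1/m})$ by property~(4) of Definition~\ref{d.phase}. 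Similarly, $\partial_{x,y}\psi(x,y,L^{1/m}\eta) = L^{1/m}(\eta, -\eta) + O(1)$ and the $d$-homogeneity of the principal part of $\sigma_P$ yields a factor $L^{d/m}$ with an $O(L^{(d-1)/m})$ sub-principal correction. Combining these expansions with the overall normalization $L^{-z_1-(n+d)/m}$ produces the asserted leading term up to remainder $O(L^{-1/m}\ln(L)^\eta)$, proving Claim~1.

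Claim~2 follows from the same framework, now using the off-diagonal H\"ormander bound $|PE_t(x,y)|\leq C(1+t)^{(n+d-1)/m}$ directly, with no oscillatory main term to track: Stieltjes integration together with the same integration-by-parts estimate immediately gives $|PK_L(x,y)|\leq C L^{z_1 + (n+d-1)/m}\ln(L)^\eta$. The main technical obstacle lies in the bookkeeping of lower-order phase and symbol corrections in the rescaling step, where several intermediate pieces individually have absolute $L$-size larger than the target $O(L^{-1/m})$: only the precise form of the corrections, dictated by the symbol-class identity $t^{-1}\psi(x,y,t\xi) \to \psi_\infty(x,y,\xi)$ in $S^1$ from property~(4) of Definition~\ref{d.phase}, secures the stated remainder. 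The borderline case $n+d+mz_1 = 1$, where $\int_0^L f'(t)\rho_t\, dt$ diverges logarithmically, will be treated separately by splitting the integration range into low and high frequencies before applying the H\"ormander bound.
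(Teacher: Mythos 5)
Your proposal matches the paper's proof in all essentials: Stieltjes integration by parts of $PK_L$ against $PE_t$ (the paper's Lemma~\ref{l.link} and Proposition~\ref{p.main}), substitution of H\"ormander's asymptotics for $PE_t$, rescaling $\xi = L^{1/m}\eta$, and Taylor expansion of the phase and amplitude in the base variables around the diagonal, with the off-diagonal claim extracted directly from the remainder bound in Theorem~\ref{t.hormander}. The only minor deviation is that you linearise the phase by first passing through the homogeneous limit $\psi_\infty$, whereas the paper Taylor-expands $\psi$ directly using $\psi(w,w,\xi)=0$ and $\partial_{x,y}\psi(w,w,\xi)=(\xi,-\xi)$ (Claim~\ref{cl.main.2} via Lemma~\ref{l.toolbox}); be aware that the $O(L^{-1/m})$ remainder is secured by the vanishing of the phase on the diagonal (properties~(2)--(3) of Definition~\ref{d.phase}), and not by the $S^1$-convergence of $\psi_t$ in property~(4) alone, which carries no quantitative rate.
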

We prove Theorem \ref{t.main.1} in Section \ref{s.proofs.1}. As we will see below, Theorem \ref{t.diff.1} and the first assertion of Theorem \ref{t.diff.2} are both direct consequences of Theorem \ref{t.main.1}. Before stating Theorem \ref{t.main.2} below, we must introduce some more terminology. One key ingredient of the proof will be the decay of certain oscillatory integrals depending on the level sets of $\sigma_A$. To observe this behavior we must impose certain condition on $\sigma_A$. This is the object of Definition \ref{d.admissible}.
\begin{definition}\label{d.admissible}
Fix $m\in\R$, $m>0$ and $k_0\in\N$, $k_0\geq 2$. We say that a positive $m$-homogeneous symbol $\sigma$ on $U$ is $k_0$-admissible there exists $k_0\geq 2$ such that
\begin{equation}\label{e.admissible}
\forall (x,\xi)\in U\times(\R^n\setminus \{0\})\ \exists k\in\{2,\dots,k_0\},\ \sigma(x,\xi)^{k-1}\partial_\xi^k\sigma(x,\xi)\neq \frac{m(m-1)\dots(m-k+1)}{m^k}(\partial_\xi\sigma(x,\xi))^{\otimes k}.
\end{equation}
\end{definition}
This condition is invariant if we see $\sigma$ as a function on $T^*\man$ because coordinate changes act linearly on the fibers of $T^*\man$. It is stable and generic for $k_0$ large enough, as explained in Proposition \ref{p.fat}.\\
\begin{theorem}\label{t.main.2}
We use the same notations as in Theorem \ref{t.main.1}. Suppose that $n+d+mz=0$ and that either $n=1$ or $\sigma_A$ is a $k_0$-admissible symbol for some $k_0\geq 2$. For each $x,y\in U$ let
\[
Y_P(x,y)=\int_{S^*_y}\sigma_P\left(x,y,\partial_{x,y}(\partial_\xi\psi(x,y,0)\xi)\right)d_y\nu(\xi)\, .
\]
Then, there exists $V\subset U$ an open neighborhood of $0$ such that the following holds.
\begin{enumerate}
\item Uniformly for $(x,y)\in V\times V$ and $L\geq 1$,
\[
PK_L(x,y)=\frac{1}{(2\pi)^n}Y_P(x,y)\Big[\ln\left(L^{1/m}\right)-\ln_+\left(L^{1/m}|x-y|\right)\Big]+O(1)\, .
\]
\item There exists $Q\in L^\infty(V\times V)$ such that, uniformly for  $\kappa\geq 1$, $L\geq 1$ and $(x,y)\in V\times V$ such that $|x-y|\geq \kappa L^{-1/m}$,
\[
PK_L(x,y)=-\frac{1}{(2\pi)^n}Y_P(x,y)\ln(|x-y|)+Q(x,y)+O\left(\kappa^{-1/k_0}\right)\, .
\]
Here, if $n=1$ we set $k_0=1$.
\end{enumerate}

\end{theorem}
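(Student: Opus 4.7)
The plan is to express $PK_L(x,y)$ as a single oscillatory integral on $T^*_x\man$ and then control it via a generalized stationary phase on the level set $S^*_x$ of $\sigma_A$. Starting from the Riemann--Stieltjes integration by parts
$$PK_L(x,y) = f(L)\,PE_L(x,y) - \int_0^L f'(\lambda)\,PE_\lambda(x,y)\,d\lambda$$
and inserting Theorem~\ref{t.hormander}, the error term $O(\lambda^{(n+d-1)/m})$ contributes $O(L^{z+(n+d-1)/m}) + O\big(\int_0^L |f'(\lambda)|(1+\lambda)^{(n+d-1)/m}\,d\lambda\big) = O(L^{-1/m}) = O(1)$, thanks to the balance $n+d+mz=0$ (which forces $z+(n+d-1)/m=-1/m$ and hence an integrable tail $\lambda^{-1-1/m}$ at infinity). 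Reversing the integration by parts on the main term and using the co-area formula reassembles everything into
$$PK_L(x,y) = \frac{1}{(2\pi)^n}\int_{\sigma_A(x,\xi)\leq L} f(\sigma_A(x,\xi))\,e^{i\psi(x,y,\xi)}\,\sigma_P(x,y,\partial_{x,y}\psi(x,y,\xi))\,d\xi + O(1).$$

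Next, I introduce $\sigma_A$-adapted polar coordinates: write $\xi = r\omega$ with $\omega\in S^*_x$, so $\sigma_A(x,\xi)=r^m$, $d\xi = r^{n-1}\,dr\,d_x\nu(\omega)$, and $f(r^m)=r^{mz}$ for $r$ large. Using the symbol expansions $\psi(x,y,r\omega) = r\psi_\infty(x,y,\omega)+O(1)$ and $\sigma_P(x,y,\partial_{x,y}\psi(x,y,r\omega))=r^d H(x,y,\omega)+O(r^{d-1})$ (with the zero-order part of the phase and the principal symbol of $P$ absorbed into $H$), together with the critical cancellation $n-1+mz+d=-1$, all subleading corrections contribute $O\big(\int_{r_0}^{L^{1/m}} r^{-2}\,dr\big)=O(1)$, reducing the problem to
$$PK_L(x,y) = \frac{1}{(2\pi)^n}\int_{r_0}^{L^{1/m}} \frac{F(r;x,y)}{r}\,dr + O(1), \quad F(r;x,y) := \int_{S^*_x} e^{ir\psi_\infty(x,y,\omega)}\,H(x,y,\omega)\,d_x\nu(\omega).$$
Properties~2--3 of Definition~\ref{d.phase} give $\psi_\infty(x,x,\omega)=0$ and $\partial_{x,y}\psi_\infty(x,x,\omega)=(\omega,-\omega)$, so $F(r;x,x)=\int_{S^*_x}\sigma_P(x,x,(\omega,-\omega))\,d_x\nu(\omega)=Y_P(x,x)$; the eikonal equation furnishes the change of variables between $S^*_x$ and $S^*_y$ needed to identify this with the definition of $Y_P$ for $y$ close to $x$, up to bounded corrections to be absorbed into the function $Q$ of Part~2.

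The hard part is the decay estimate
$$|F(r;x,y) - Y_P(x,y)| \leq C\min\big(r|x-y|,\,(r|x-y|)^{-1/k_0}\big)$$
uniformly in $(x,y)\in V\times V$. For $r|x-y|\lesssim 1$, the first bound follows from Taylor expansion of $\psi_\infty$ and $H$ around the diagonal (using $\psi_\infty(x,x,\omega)=0$). The second bound is the crux of the proof: it requires the decay of the oscillatory integral $F(r;x,y)$ on the smooth compact hypersurface $S^*_x$ with phase $r\psi_\infty(x,y,\omega)$, equal to leading order in $|x-y|$ to $r\langle x-y,\omega\rangle$. The critical points on $S^*_x$ are the $\omega$ where the outward conormal of $S^*_x$ (proportional to $\partial_\xi\sigma_A(x,\omega)$) is parallel to $x-y$; the admissibility condition (Definition~\ref{d.admissible}) is precisely the quantitative non-degeneracy of $\sigma_A$ that bounds the contact order of these critical points by $k_0$, yielding the decay $(r|x-y|)^{-1/k_0}$ via a generalized Van der Corput / stationary phase estimate. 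This is the step that requires the singularity-theoretic machinery developed in Section~\ref{s.singularities}. The degenerate case $n=1$, where $S^*_x$ is finite, is handled instead by elementary one-dimensional integration by parts in $r$, giving the same bound with $k_0=1$.

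To conclude, split $\int_{r_0}^{L^{1/m}} F(r;x,y)/r\,dr$ at $r=1/|x-y|$. The inner portion $r\in[r_0,\min(1/|x-y|, L^{1/m})]$ contributes $Y_P(x,y)\,\ln(\min(L^{1/m},1/|x-y|)/r_0) + O(1)$ via the small-$r|x-y|$ bound, while the outer portion is controlled by the large-$r|x-y|$ bound. Combining these, we obtain the uniform formula $\frac{Y_P(x,y)}{(2\pi)^n}\big[\ln(L^{1/m}) - \ln_+(L^{1/m}|x-y|)\big] + O(1)$ of Part~1. For Part~2, the decay bound makes $\int_{1/|x-y|}^\infty F(r;x,y)/r\,dr$ absolutely convergent to a bounded $Q_2(x,y)$, and the explicit tail estimate $\int_{\kappa/|x-y|}^\infty (r|x-y|)^{-1/k_0}/r\,dr = k_0\kappa^{-1/k_0}$ yields $\int_{1/|x-y|}^{L^{1/m}} F/r\,dr = Q_2(x,y) + O(\kappa^{-1/k_0})$ whenever $|x-y|\geq \kappa L^{-1/m}$. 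Gathering the $L$-independent bounded terms into a single $Q\in L^\infty(V\times V)$ completes the proof.
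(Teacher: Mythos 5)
Your strategy is the same as the paper's: integrate by parts via Lemma~\ref{l.link} (Proposition~\ref{p.main}), pass to $\sigma_A$-adapted polar coordinates, and split the radial integral at $r=1/|x-y|$, using a Taylor bound near the diagonal and the admissibility-driven oscillatory decay far from it. The two halves of your proposed two-sided bound correspond exactly to the paper's Lemma~\ref{l.jatz} combined with Equation~\eqref{e.j_is_y} (near bound) and Proposition~\ref{p.jprop}/Lemma~\ref{l.jati} (decay), and your split of the radial integral at $r=1/|x-y|$ is precisely the paper's decomposition of the $\xi$-integral into $I_L$ (where $\sigma_A(y,\xi)|x-y|^m\geq 1$) and $II_L$ (Lemmas~\ref{l.control I} and \ref{l.control II}).

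There is however one step in your sketch that is not justified and is in fact avoided in the paper. You replace the full phase $\psi(x,y,r\omega)$ by $r\psi_\infty(x,y,\omega)$ at the outset, claiming $\psi(x,y,r\omega) = r\psi_\infty(x,y,\omega)+O(1)$ and that the ``subleading corrections contribute $O\bigl(\int r^{-2}\,dr\bigr)=O(1)$''. Definition~\ref{d.phase} only guarantees convergence of $t^{-1}\psi(x,y,t\xi)$ to $\psi_\infty$ in $S^1$ as $t\to\infty$; this gives $\psi(x,y,r\omega)-r\psi_\infty(x,y,\omega)=o(r)$, not $O(1)$. Without an extra hypothesis (e.g.\ that $\psi$ is a classical polyhomogeneous symbol), the multiplicative error $e^{i(\psi(x,y,r\omega)-r\psi_\infty(x,y,\omega))}$ need not differ from an $r$-independent quantity by $O(r^{-1})$, so this reduction is not justified. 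The paper sidesteps the issue entirely by never separating off $\psi_\infty$: it works with $H_P(x,y,\xi,t)=e^{i\psi(x,y,t\xi)}\sigma_P(x,y,t^{-1}\partial_{x,y}\psi(x,y,t\xi))$ and with $J_A(x,y,t)$, which carry the full phase, and uses the scaling relation~\eqref{e.scaling}, the $t\to 0$ limit $H_P(\cdot,\cdot,\cdot,0)$ (where the phase vanishes because $\psi(x,y,0)=0$), and the ``deformation of the height function'' machinery of Definition~\ref{d.ndls} to obtain both bounds uniformly. Your $F(r;x,y)$ should simply be $J_A(x,y,r)$ and the argument then goes through.

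Two smaller remarks. First, your ``key estimate'' as written asserts $|F(r;x,y)-Y_P(x,y)|\leq C(r|x-y|)^{-1/k_0}$ in the far regime, but what one actually proves (Proposition~\ref{p.jprop}) and what you in fact use is the bound on $|F(r;x,y)|$ itself; $Y_P$ is only bounded, not decaying, so the subtraction should not appear there. Second, you integrate over $\sigma_A(x,\xi)\leq L$ and $S^*_x$, then invoke the eikonal equation to transport back to $S^*_y$; this is an unnecessary detour, since Proposition~\ref{p.main} already gives the integral over $\sigma_A(y,\xi)\leq L$ and one can work over $S^*_y$ directly, as the paper does.
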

We prove Theorem \ref{t.main.2} in Section \ref{s.proofs.2}. As we will see below, the second point of Theorem \ref{t.diff.2} follows directly from this theorem.
\begin{remark}\label{r.examples}
The admissibility condition on the symbol of $A$ may appear to be unfamiliar. However, in practice, it is often satisfied. Here are two important examples of families of admissible symbols:
\begin{itemize}
\item If $n\geq 2$ and the level sets $S^*_x$ are strictly convex, $\partial^2_\xi\sigma_A$ is positive when restricted to their tangent spaces. Therefore, it cannot be a multiple of $\left(\partial_\xi\sigma\right)^{\otimes 2}$ so Theorem \ref{t.main.2} applies with $k_0=2$.
\item If $\sigma_A$ is a positive homogeneous polynomial of degree $m\in\N$ in $\xi$, $m\geq 1$, then it is $m$-admissible. Indeed, otherwise, taking $k=k_0=m$, we would have, for some $(x,\xi)\in U\times\left(\R^n\setminus\{0\}\right)$, $\sigma_A(x,\xi)^{m-1}\partial_\xi^m\sigma_A(x,\xi)=0$. But since $\sigma_A(x,\xi)>0$ we have $\partial_\xi^m\sigma_A(x,\xi)=0$ which implies that all the coefficients of $\sigma_A(x,\cdot)$ vanish. This contradicts the fact that $\sigma_A(x,\xi)>0$. In particular, Theorem \ref{t.main.2} applies for all differential operators.
\end{itemize}
\end{remark}
In addition to the two examples of the last remark, we prove the following theorem.
\begin{theorem}\label{t.fat}
Fix $n\in\N$, $n\geq 2$ and let $U\subset\R^n$ be an open subset. There exists $k_0=k_0(n)\in\N$ such that for each $m>0$, the set of $k_0$-admissible symbols is open and dense in the set of positive $m$-homogeneous symbols on $U$ for the topology induced by the Whitney topology through the restriction map to the (Euclidean) unit sphere bundle on $U$.
\end{theorem}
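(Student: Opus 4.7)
My proof plan applies Thom's transversality theorem, after encoding the admissibility condition as avoidance of a bad locus in a jet bundle. For each $k \geq 2$, define
\[
\Psi_k(\sigma)(x,\xi) := \sigma(x,\xi)^{k-1}\partial_\xi^k \sigma(x,\xi) - \frac{m(m-1)\cdots(m-k+1)}{m^k}\left(\partial_\xi\sigma(x,\xi)\right)^{\otimes k}\in\Sym^k(\R^n).
\]
A symbol $\sigma$ is $k_0$-admissible exactly when, at every $(x,\xi)\in U\times(\R^n\setminus\{0\})$, at least one of $\Psi_2(\sigma)(x,\xi),\dots,\Psi_{k_0}(\sigma)(x,\xi)$ is nonzero. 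Since each $\Psi_k$ depends only on the $k$-jet of $\sigma$ at $(x,\xi)$, let $Z_{k_0}$ denote the closed subset of the $k_0$-jet bundle on which $\Psi_2=\dots=\Psi_{k_0}=0$; then $\sigma$ is $k_0$-admissible iff its $k_0$-jet map $j^{k_0}\sigma:U\times S^{n-1}\to J^{k_0}$ avoids $Z_{k_0}$.

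The central computation is a codimension estimate for $Z_{k_0}$ within the subspace of $k_0$-jets consistent with $m$-homogeneity in $\xi$. Euler's identity $\xi\cdot\partial_\xi^k\sigma=(m-k+1)\partial_\xi^{k-1}\sigma$ yields the key relation
\[
\xi\cdot\Psi_k(\sigma)=\sigma(m-k+1)\Psi_{k-1}(\sigma),
\]
so on homogeneous jets the contracted part of $\Psi_k=0$ is implied by $\Psi_{k-1}=0$. The remaining component of $\Psi_k$ lives in the kernel of contraction with $\xi$ and contributes $\binom{n+k-2}{k}$ genuinely new conditions; moreover $\Psi_k$ is submersive onto that kernel in the free direction $\partial_\xi^k\sigma$, thanks to the positive factor $\sigma^{k-1}$. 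Summing over $k$ and applying the hockey-stick identity gives
\[
\mathrm{codim}\,Z_{k_0} \;=\; \sum_{k=2}^{k_0}\binom{n+k-2}{k} \;=\; \binom{n+k_0-1}{n-1}-n.
\]
Since the base manifold $U\times S^{n-1}$ has dimension $2n-1$, I choose $k_0=k_0(n)$ large enough that $\binom{n+k_0-1}{n-1}>3n-1$, which is possible because the binomial coefficient is unbounded in $k_0$; numerically, $k_0=5$ works for every $n\geq 2$.

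Once the codimension strictly exceeds the base dimension, Thom's transversality theorem, applied in the Whitney-open class of positive $m$-homogeneous symbols (identified via restriction with positive smooth functions on $U\times S^{n-1}$), yields a Whitney-dense set of symbols whose $k_0$-jet is transverse to $Z_{k_0}$. By dimensional comparison, transverse intersection is empty, so such $\sigma$ are $k_0$-admissible, giving density. Openness follows because the condition ``some $\Psi_k\neq 0$'' is open on jets, and the Whitney topology provides uniform control over $k_0$-jets on compact subsets of $U\times S^{n-1}$. The main technical obstacle is to verify transversality within the constrained homogeneous class: one must show, via standard bump-function constructions (multiplying an $m$-homogeneous background symbol such as $|\xi|^m$ by localized perturbations smoothly extended by homogeneity), that the parametric evaluation map from symbols to homogeneous $k_0$-jet fibers is surjective. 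A secondary subtlety is the sporadic case $m\in\{1,\dots,k_0-1\}$ where the coefficient $m(m-1)\cdots(m-k+1)/m^k$ vanishes for some $k$ and the nesting relation between $\Psi_k$ and $\Psi_{k-1}$ degenerates; this is handled by observing that the equation reduces there to $\partial_\xi^k\sigma=0$, which itself imposes enough additional codimension, and by taking $k_0$ slightly larger if necessary to absorb these finitely many exceptional levels.
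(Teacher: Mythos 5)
Your plan is correct and follows essentially the same route as the paper: encode failure of admissibility as membership of the $k_0$-jet in a bad locus, compute its codimension inside the homogeneous jet space (the Euler relation $\xi\cdot\Psi_k=\sigma(m-k+1)\Psi_{k-1}$ is exactly what makes the codimension $\sum_{k=2}^{k_0}\binom{n+k-2}{k}$ rather than $\sum_{k=2}^{k_0}\binom{n+k-1}{k}$), and invoke Thom transversality to get openness and density once that codimension exceeds $\dim(U\times S^{n-1})=2n-1$. The paper makes the homogeneity constraint precise via an explicit diffeomorphism between jets of homogeneous functions on $\R^n\setminus\{0\}$ and jets on $S^{n-1}$, and takes a dimension-dependent $k_0(n)$ ($5,3,3,2,2,\dots$) rather than your uniform $k_0=5$, but these are details; your worry about the vanishing coefficient when $m\in\{1,\dots,k_0-1\}$ is actually unfounded, since the submersivity and codimension computations use only the positive factor $\sigma^{k-1}$, not the numerical coefficient.
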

Theorem \ref{t.fat} follows immediately from Proposition \ref{p.fat}, which is proved Subsection \ref{ss.whitney}. The integer $k_0$ is explicit (see Proposition \ref{p.fat}).\\

Finally, though we do not use this in the proof of Theorems \ref{t.diff.1} and \ref{t.diff.2}, we prove the following result, which might be useful in further applications.
\begin{theorem}\label{t.main.3}
We use the same notations as in Theorem \ref{t.main.1}. Suppose that $n+d+mz_1<0$. Then, there exists and a function $K_\infty\in C^d(U\times U)$ such that the following holds. For each compact subset $\Omega\subset U\times U$, uniformly for $(x,y)\in \Omega$,
\[
PK_L(x,y)=PK_\infty(x,y)+O\left(L^{z_1+(n+d)/m}\right)\, .
\]
\end{theorem}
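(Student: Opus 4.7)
The plan is a standard Abel summation argument: relate the weighted sum $PK_L$ to the unweighted spectral projector $PE_\lambda$ via Stieltjes integration by parts, then control the latter using H\"ormander's local Weyl law (Theorem \ref{t.hormander}). Writing $PK_L(x,y) = \int_{0}^L f(\lambda)\, d(PE_\lambda)(x,y)$ as a Stieltjes integral against the step function $\lambda\mapsto PE_\lambda(x,y)$, integration by parts (equivalently, Abel summation at the jumps) yields, for $1\leq L_1<L_2$,
\[
PK_{L_2}(x,y)-PK_{L_1}(x,y)=f(L_2)PE_{L_2}(x,y)-f(L_1)PE_{L_1}(x,y)-\int_{L_1}^{L_2} f'(\lambda)PE_\lambda(x,y)\,d\lambda.
\]

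By Theorem \ref{t.hormander}, uniformly for $(x,y)$ in any compact subset $\Omega\subset U\times U$,
\[
|PE_\lambda(x,y)|\leq C_\Omega\, \lambda^{(n+d)/m},
\]
the main term being controlled by a symbol of order $d$ integrated over the set $\{\sigma_A(x,\xi)\leq \lambda\}$ whose volume is $O(\lambda^{n/m})$, and the H\"ormander remainder being of order $\lambda^{(n+d-1)/m}$, hence smaller. Since $f(\lambda)=\lambda^z$ for large $\lambda$, we have $|f(\lambda)|\leq C\lambda^{z_1}$ and $|f'(\lambda)|\leq C\lambda^{z_1-1}$. Under the hypothesis $z_1+(n+d)/m<0$, each of the three terms on the right above is bounded by $C\,L_1^{z_1+(n+d)/m}$ uniformly on $\Omega$. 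Hence $(PK_L)_L$ is Cauchy in $L^\infty(\Omega)$ for every differential operator $P$ of order at most $d$, so $(K_L)_L$ converges in $C^d_{\mathrm{loc}}(U\times U)$ to some limit $K_\infty\in C^d(U\times U)$.

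Letting $L_2\to\infty$ in the identity above (the same bound forces $f(L_2)PE_{L_2}\to 0$) gives
\[
PK_L(x,y)-PK_\infty(x,y)=f(L)PE_L(x,y)+\int_L^{\infty} f'(\lambda)PE_\lambda(x,y)\,d\lambda,
\]
and both terms are $O(L^{z_1+(n+d)/m})$ uniformly on $\Omega$, which is precisely the announced estimate. The proof is essentially bookkeeping: the Stieltjes integration by parts is just discrete Abel summation; the behaviour of $f$ on any fixed compact $[0,T]\subset\R$ contributes only an $L$-independent (hence harmless) quantity once $L>T$; and the entire substantive input is the linear growth bound $|PE_\lambda|\leq C\lambda^{(n+d)/m}$, which is read off directly from Theorem \ref{t.hormander}. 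No serious obstacle is expected.
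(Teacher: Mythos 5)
Your argument is correct and follows essentially the same route as the paper: Abel summation (the paper's Lemma \ref{l.link}) to express $PK_L$ in terms of $PE_\lambda$, the crude growth bound $|PE_\lambda|=O(\lambda^{(n+d)/m})$ extracted from Theorem \ref{t.hormander}, the Cauchy-sequence conclusion under $z_1+(n+d)/m<0$, and the same observation that the argument applies to every differential operator of order $\leq d$ to yield $C^d$ regularity of $K_\infty$. The only cosmetic difference is that you let $L_2\to\infty$ in the Abel identity to read off the error term directly, where the paper records the tail bound slightly more implicitly.
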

\begin{remark}
In Theorems \ref{t.main.1}, \ref{t.main.2}  and \ref{t.main.3}, the setting provided in Subection \ref{ss.setting} only comes into play through Theorem \ref{t.hormander}. Therefore, if one could weaken the hypotheses for this theorem, one would automatically extend Theorems \ref{t.main.1} and \ref{t.main.2} as a corollary. In particular, since H\"ormander proves Theorem \ref{t.hormander} for $P=Id$ without any compactness assumption or boundary condition, both of these results remain valid in this case.
\end{remark}
Let us check that Theorems \ref{t.main.1} and \ref{t.main.2} imply the results presented in the introduction.
\begin{proof}[ Proof of Theorem \ref{t.diff.1}]
Both results follow from Theorem \ref{t.main.1} applied to the first setting of Subsection \ref{ss.setting} with $z=-s$ by taking $P=\partial^\alpha_x\partial^\beta_y$ in a neighborhood of $0$. In this case, the order of $P$ is $d=|\alpha|+|\beta|$ and we have
\[
\sigma_P(x,y,\xi)=(i\xi)^\alpha(-i\xi)^\beta
\]
for any $\xi\in\R^n$ and $x,y\in V$ close enough to $0$.
\end{proof}
\begin{proof}[ Proof of Theorem \ref{t.diff.2}]
Set $z=-s=-n/m$. For the first part, set $P=\partial^\alpha_x\partial^\beta_y$ near $0$ and proceed as in the proof of Theorem \ref{t.diff.1}. Indeed, since $(\alpha,\beta)\neq (0,0)$, we have $n+d+mz_1=|\alpha|+|\beta|>0$. For the second part, since by Remark \ref{r.examples} $\sigma_A$ is $m$-admissible, and since $n+d+mz_1=0$, we apply Theorem \ref{t.main.2} instead. In our case, $P=Id$ so for each $x,y\in U$, $Y_P(x,y)=\nu_y\left(S^*_y\right)$ so
\[
K^s_L(x,y)=\frac{1}{(2\pi)^n}\nu_y\left(S^*_y\right)\left[\ln\left(L^{1/m}\right)-\ln_+\left(L^{1/m}|x-y|\right)\right]+O(1)\, .
\]
But since $K^s_L(x,y)=\overline{K^s_L(y,x)}$, we may replace $\nu_y\left(S^*_y\right)$ in the above expression by $\frac{\nu_x\left(S^*_x\right)+\nu_y\left(S^*_y\right)}{2}$.
as announced.
\end{proof}

\begin{proof}[Proof of Corollary \ref{co.diff.1}]
We use the notations of Theorem \ref{t.main.2}. First of all, by definition, as $L\rightarrow +\infty$, $K_L^s\rightarrow K$ in distribution. Moreover, by Theorem \ref{t.diff.2}, any point in $\man$ has a neighborhood $V$ such that the sequence $(K_L^s)_{L\geq 1}$ is uniformly bounded on $V\times V$ by a locally integrable function and converge pointwise towards $-g_A(x,y)\ln\left(|x-y|\right)+Q(x,y)$ where $Q\in L^\infty(V\times V)$ on the complement of the diagonal in $V\times V$. In particular, they converge in distribution to this function. This implies that when restricted to $C^\infty(V\times V)$,
\[
K(x,y)=-g_A(x,y)\ln\left(|x-y|\right)+Q(x,y)\, .
\]
Now, given any smooth distance $d$ on $\man$, for each $x,y$ distinct,
\[
\ln\left(|x-y|\right)=\ln\left(d(x,y)\right)+\ln\left(\frac{|x-y|}{d(x,y)}\right)
\]
and the second term is bounded so, on $V\times V$,
\[
K(x,y)=-g_A(x,y)\ln\left(d(x,y)\right)+Q_A(x,y)
\]
for some $Q_A\in L^\infty(V\times V)$. But $K$ is the integral kernel of $A^s$ which is a self-adjoint pseudo-differential operator (see \cite{see67} or Proposition 29.1.9 of \cite{ho_apdo4}). In particular, it is smooth and symmetric outside the diagonal (see for instance Theorem 18.1.16 of \cite{ho_apdo3}). Hence, $Q_A$ must also be symmetric and smooth outside the diagonal.
\end{proof}

\begin{remark}
These proofs work also in Setting 2 of Subsection \ref{ss.setting} instead of that of Setting 1, so all of the results from the introduction hold also in Setting 2.
\end{remark}

\section{Heuristics and proof outline}\label{s.heuristics_outline}

In this section we provide a heuristic justification for Theorems \ref{t.main.1} and \ref{t.main.2} and an outline of the skeleton of the proof. At the end of this section, we also provide a proof map to highlight the dependencies between intermediate results leading to the proofs of Theorems \ref{t.main.1}, \ref{t.main.2} and \ref{t.main.3}, see Figure 1.\\

\subsection{Heuristics}\label{ss.heuristics}

In order to get a sense of the kind of calculations we will carry out in the rest of the article, let us present a simple example, with few non-rigorous steps in order to shorten the argument. We assume that $\man$ is a closed Riemmanian manifold and that $A$ denotes the associated Laplacian\footnote{Here we use the convention that $\Delta = -\textup{div}\circ\nabla$ so that the operator is positive.}. Then, $A$ is indeed elliptic of order $m=2$ and self-adjoint with respect to the riemmanian volume density $d\mu$. Moreover the symbol of $A$ is $\sigma_A(x,\xi) = \|\xi\|_x^2$ where $\|\cdot\|_x$ is the norm induced by the riemmanian metric on $T^*_x\man$. Thus, in orthonormal coordinates $S^*_x = S^{n-1}$. Finally, we take $P=Id$. Now, if $s\geq \frac{n}{2}$,
\begin{equation}\label{e.heuristic.1}
K^s_L(x,y)=\int_1^L \lambda^{-s} E_\lambda '(x,y)d\lambda + O(1)=s\int_1^L\lambda^{-s-1} E_\lambda(x,y)d\lambda+O(1)\, .
\end{equation}
Here, we artificially cut-off the first eigenvalues since they contribute a constant term to the sum defining $K^s_L$. By Theorem \ref{t.hormander},
\begin{equation}\label{e.heuristic.2}
E_L(x,y) \simeq \frac{1}{(2\pi)^n}\int_{|\xi|^2\leq L} e^{i\psi(x,y,\xi)} d\xi
\end{equation}
where
\begin{equation}\label{e.heuristic.3}
\psi(x,y,\xi)\simeq \langle x-y,\xi\rangle\, .
\end{equation}
Replacing $E_\lambda'$ in Equation \eqref{e.heuristic.1} the expression given by Equations \eqref{e.heuristic.2} and \eqref{e.heuristic.3} we get
\[
K^s_L(x,y)\simeq \frac{s}{(2\pi)^{n/2}}\int_1^L \lambda^{-s-1}\int_{\|\xi\|^2\leq \lambda}e^{i\langle x-y,\xi\rangle} d\xi d\lambda\, .
\]
At this point, we make the additional assumption that $n\geq 2$. The one dimensional case is similar in spirit but follows a different argument. We then continue with a polar change of coordinates: $\xi=t\omega$.
\begin{equation}\label{e.heuristic.4}
K^s_L(x,y)\simeq \frac{s}{(2\pi)^n}\int_1^L\lambda^{-s-1} \int_0^{\lambda^{1/2}}J_{x,y}(t) t^{n-1}dt d\lambda
\end{equation}
where
\[
J_{x,y}(t)=\int_{S^{n-1}} e^{it\langle x-y,\omega\rangle} d\omega\, .
\]
Let us first assume that $s>\frac{n}{2}$ and take $x-y = L^{-1/2}h$ where $h\in\R^n$ is fixed. Then,
\begin{align*}
K^s_L(x,y)&\simeq \frac{1}{(2\pi)^n}\int_{S^{n-1}} \int_1^L  \lambda^{-s} \int_0^{\lambda^{1/2}}  e^{i L^{-1/2}t \langle h,\omega \rangle}t^{n-1} dt d\lambda d\omega\\
&\simeq \frac{s}{(2\pi)^n}L^{n/2-s}\int_{S^{n-1}}\int_0^1 \lambda^{-s-1} \int_0^{\lambda^{1/2}} e^{it \langle h,\omega \rangle}t^{n-1}dt d\lambda d\omega\\
&\simeq\frac{1}{(2\pi)^n} \int_{\|\xi\|^2\leq 1} \|\xi\|^{-s} e^{i\langle h,\xi\rangle} d\xi L^{n/2-s}\, .
\end{align*}
This is the conclusion of Theorem \ref{t.diff.1}.\\

Assume now that $s=\frac{n}{2}$. Starting off from Equation \eqref{e.heuristic.4}, we get
\[
K_L^s(x,y)\simeq \frac{s}{(2\pi)^n}\int_{|x-y|^2}^{|x-y|^2L}\lambda^{-s-1}\int_0^{\lambda^{1/2}}J_{x,y}(|x-y|^{-1}t) t^{n-1}dt d\lambda\, .
\]
Note that, by the stationary phase method,
\begin{equation}\label{e.heuristic.5}
J_{x,y}(t) = O\left((|x-y|t)^{-1/2}\right)\, .
\end{equation}
This crucial observation basically allows us to replace $J_{x,y}(|x-y|^{-1}t)$ with $|S^{n-1}|\one[|x-y|^{-1}t\leq 1]$ and get
\begin{align*}
K_L^s(x,y)&\simeq \frac{s|S^{n-1}|}{(2\pi)^n}\int_{|x-y|^2}^{|x-y|^2L}\lambda^{-s-1}\int_0^{\lambda^{1/2}}\one[|x-y|^{-1}t\leq 1] t^{n-1}dtd\lambda\\
& = \dots\\
& = \frac{|S^{n-1}|}{(2\pi)^n}\int_{|x-y|}^{|x-y|L^{1/2}}\one[u\leq 1]\frac{du}{u}+O(1)\\
& = \frac{|S^{n-1}|}{(2\pi)^n}\ln\left(L^{1/2}\right)-\ln_+\left(L^{1/2}|x-y|\right) + O(1)\, .
\end{align*}
This is the essential statement of Theorem \ref{t.diff.2}.

\subsection{Proof strategy}\label{ss.outline}

There are two main obstacles to carry out the above calculation rigorously in the general case and Sections \ref{s.preliminaries} and \ref{s.singularities} are devoted to dealing with them. The first is to justify Equation \eqref{e.heuristic.3}. This is the role of Lemmas \ref{l.toolbox}, \ref{l.dimension_1} and \ref{l.jatz} that roughly state that $\psi$ behaves like $\langle x-y,\xi\rangle$. The second difficulty is to obtain an analog of Equation \eqref{e.heuristic.5} when $S^{n-1}$ is replaced by $S^*_x=\{\xi\, ,\, \sigma_A(x,\xi)=1\}$ for a general symbol $\sigma_A$. Indeed, in this case, the standard stationary method need not apply and we must use more general results on oscillatory integrals. This requires the assumption that $\sigma_A$ be admissible (see Definition \ref{d.admissible}). To make this point more precise, let us introduce some notation.\\

As in the previous section, we fix once and for all a point in $\man$ and consider a local chart centered at this point defined on $U\subset\R^n$ given by Theorem \ref{t.hormander}. We also take $P$ with principal symbol $\sigma_P$, $W\subset U\times U$ and $\psi\in C^\infty(U\times U\times\R^n)$ as in this theorem. The following quantity will be central in our proofs. For any $t>0$, $x,y\in U$ and $\xi\in\R^n$ let
\begin{equation}\label{e.amplitude}
H_P(x,y,\xi,t)=e^{i\psi(x,y,t\xi)}\sigma_P\left(x,y,t^{-1}\partial_{x,y}\psi(x,y,t\xi)\right)
\end{equation}
and
\begin{equation}\label{e.osc}
  J_A(x,y,t)=\int_{S^*_y}H_P(x,y,\xi,t)d_y\nu(\xi).
\end{equation}
Since $\sigma_P$ is $d$-homogeneous in its third variable, $H_P$ satisfies the following Equation. For any $s,t>0$, $x,y\in U$ and $\xi\in\R^n$,
\begin{equation}\label{e.scaling}
H_P(x,y,s\xi,t)=s^dH_P(x,y,\xi,st).
\end{equation}
We will prove the following proposition.

\begin{proposition}\label{p.jprop}
Assume that $\sigma_A$ is $k_0$-admissible. Then, there exists $V\subset U$ an open neighborhood of $0$ and $C<+\infty$ such that, uniformly for distinct $x,y\in V$ and $t>0$
\[
|J_A(x,y,t)|\leq C\left(t|x-y|\right)^{-\frac{1}{k_0}}\, .
\]
\end{proposition}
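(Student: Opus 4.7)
The plan is to view $J_A(x,y,t)$ as an oscillatory integral on the compact hypersurface $S^*_y$ with large parameter $\tau := t|x-y|$, and to apply a van der Corput-type estimate whose decay exponent is controlled by the admissibility of $\sigma_A$. The regime $\tau\le 1$ is trivial: using property 3 of Definition \ref{d.phase} together with the symbol estimates on $\psi$ (which, via a Taylor expansion at $\eta=0$ for small $t$ and the asymptotics $t^{-1}\psi(x,y,t\xi)\to\psi_\infty$ for large $t$, force $t^{-1}\partial_{x,y}\psi(x,y,t\xi)$ to be bounded on $S^*_y$ uniformly in $t>0$), the amplitude and hence $J_A$ is bounded by some $C$; and $(t|x-y|)^{-1/k_0}\ge 1$. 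So from now on assume $\tau\ge 1$.

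Writing $e := (x-y)/|x-y|\in S^{n-1}$ and using the symbolic convergence $t^{-1}\psi(x,y,t\xi)\to\psi_\infty(x,y,\xi)$ in $S^1$, the phase factorizes on $S^*_y$ as $\psi(x,y,t\xi)=\tau\, F_e(x,y,\xi)+O(1)$, where $F_e(x,y,\xi)=\langle e,\xi\rangle\tilde g(x,y,\xi)$ with $\tilde g$ smooth and $\tilde g(x,x,\xi)\equiv 1$. The factorization of $\psi_\infty$ as $\langle x-y,\xi\rangle$ times a smooth nonvanishing function comes from property 2 of Definition \ref{d.phase}, which forces $\psi_\infty$ to vanish on $\{\langle x-y,\xi\rangle=0\}$ and hence allows a smooth division, combined with property 3, which normalizes the quotient on the diagonal. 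The bounded $O(1)$ remainder only affects the amplitude, so after a partition of unity on $S^*_y$ and the choice of local coordinates $\omega\in\mathbb{R}^{n-1}$, we reduce to bounding
\[
\int_{\mathbb{R}^{n-1}} e^{i\tau\, F_e(x,y,\xi(\omega))}\, b(x,y,\omega,t)\,d\omega
\]
by $C\tau^{-1/k_0}$, uniformly in $e$, $x,y\in V$, and $t>0$.

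For this I would apply the classical one-dimensional van der Corput lemma in a coordinate direction chosen so that some derivative of order $k\in\{1,\ldots,k_0\}$ of $\omega\mapsto F_e(x,y,\xi(\omega))$ is bounded away from $0$. That such a direction exists uniformly over $e\in S^{n-1}$ and points of $S^*_y$ is the geometric content of the $k_0$-admissibility of $\sigma_A$: a direct computation shows that admissibility is equivalent to $\sigma_A^{1/m}$ having, at each $(x,\xi)\in U\times(\mathbb{R}^n\setminus\{0\})$, some $\xi$-derivative of order in $\{2,\ldots,k_0\}$ that is nonzero, and this in turn, by writing $S^*_y$ as a graph over its tangent hyperplane at a critical point of $\xi\mapsto\langle e,\xi\rangle|_{S^*_y}$, translates into finite-type contact of $S^*_y$ with each of its tangent hyperplanes up to order at most $k_0$. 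The main obstacle is making this translation quantitative and uniform, especially in handling the interaction of several simultaneous degeneracy directions at a single point; this is precisely the role of the singularity-theoretic tools developed in Section \ref{s.singularities}. A secondary concern is that the perturbation $\tilde g-1=O(|x-y|)$ should preserve the finite-type structure, which holds by continuity of the derivatives after shrinking $V$.
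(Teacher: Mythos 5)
Your high-level strategy matches the paper's: interpret $J_A(x,y,t)$ as an oscillatory integral over $S^*_y$ with large parameter $\tau=t|x-y|$ and a phase that, after rescaling, is a perturbation of the linear height function $\xi\mapsto\langle e,\xi\rangle$, then exploit the admissibility condition on $\sigma_A$ to obtain a uniform decay exponent. This is precisely the structure of Lemma \ref{l.jati} combined with Proposition \ref{p.color.1}. The factorization $\psi_\infty(x,y,\xi)=\langle x-y,\xi\rangle\tilde g(x,y,\xi)$ with $\tilde g$ smooth and $\tilde g(x,x,\xi)=1$ is defensible via Hadamard's lemma (the paper instead works directly with the rescaled phase $f_{t,r}(y,\xi,\tau)=r^{-1}\psi_t(y+r\tau,y,\xi)$ and proves its smooth convergence to $\langle\xi,\tau\rangle$, which is cleaner and avoids division), and your translation of admissibility into ``$\sigma_A^{1/m}$ has some nonvanishing $\xi$-derivative of order in $\{2,\ldots,k_0\}$ at every point'' is correct.

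However, the key quantitative step is genuinely missing, not merely postponed. Invoking the ``classical one-dimensional van der Corput lemma in a coordinate direction chosen so that some derivative of order $k$ is bounded away from $0$'' does not close the argument. At a point of $S^*_y$ the admissibility condition only guarantees a nonzero derivative \emph{at that point} and in \emph{some} direction; the order $k$ of the first nonvanishing derivative can jump from point to point, the good direction can rotate, and a pointwise lower bound does not control the $k$-th derivative along an entire coordinate line, which van der Corput requires. Handling the ``interaction of several simultaneous degeneracy directions'' (your phrase) is exactly the content that is absent. The paper resolves this by introducing the deformation-of-height-function framework of Definition \ref{d.ndls}, then proving Proposition \ref{p.color.1} via Lemma \ref{l.color}, which in turn combines the Malgrange preparation theorem (Theorem \ref{t.color.1}, to bring the one-variable slice of the phase into the polynomial normal form $t^k+a_{k-1}t^{k-1}+\cdots+a_0$ uniformly in all auxiliary parameters) with Colin de Verdi\`ere's uniform decay estimate for $A_{k-1}$-type unfoldings (Theorem \ref{t.color.2}). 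A secondary auxiliary device the paper uses, which you omit, is introducing a full family of polynomial perturbations $q_0+q_1x+\cdots+q_{k-1}x^{k-1}$ so that the Malgrange normal form is uniformly available near a jet where the first $k-1$ derivatives vanish. Without this normal-form step (or some substitute giving uniformity), the proposal does not yield the stated bound, so the gap is real.
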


The proof of Proposition \ref{p.jprop} is divided into two steps. First, we will prove that the admissibility condition on $\sigma_A$ implies a property governing the decay of certain oscillatory integrals over the level sets of $\sigma_A$ that we define below in Definition \ref{d.ndls}. Next, we prove that this property implies the required behavior of $J_A$. More precisely, we introduce the following terminology.

\begin{definition}\label{d.ndls}
Let $\eps>0$, $m>0$, $E\subset\R^p$ a neighborhood of $0$ and let $U\subset\R^n$ be an open subset. Let $\sigma\in C^\infty(U\times\R^n\zeroless)$ be homogeneous of degree $m$ in the second variable. For each $x\in U$ let $S^*_x =\{\xi\in\R^n\ |\ \sigma(x,\xi)=1\}$ and $d_x\mu$ be the area measure on $S^*_x$. Let $S^*U=\{(x,\xi)\in U\times \R^n\ |\ \xi\in S^*_x\}$.
\begin{enumerate}
\item    Given a compact subset $\Omega\subset U\times(\R^n\zeroless)$ let $X=\{(x,\tau,\xi)\in \Omega\times\R^n\ |\ \xi\in S^*_x\}$. We call a \textbf{deformation of the height function for $\sigma$ over $\Omega$} any family $(f_\eta)_{\eta\in E}$ of continuous, real-valued functions on $X$, smooth in the third variable $\xi$, with the following properties:
\begin{itemize}
\item for each $(x,\tau,\xi)\in \Omega\times \R^n$ such that $\xi\in S^*_x$, $f_0(x,\tau,\xi)=\langle\tau,\xi\rangle$
\item for each $\alpha\in\N^n$, the map $\eta \mapsto \partial^\alpha_\xi f_\eta$ is continuous for the topology of uniform convergence on compact sets.
\end{itemize}
\item We say that $\sigma$ has \textbf{$\eps$-non-degenerate level sets} if, for any compact subset $\Omega$ of $U\times (\R^n\zeroless)$ and any deformation of the height function $(f_\eta)_\eta$ for $\sigma$ over $\Omega$ there exists $V\subset\R^p$ a neighborhood of $0$ depending only on $\Omega$ (and $\eps$) such that for each $\gamma\in C^\infty(S^*U)$ and each continuous family of smooth functions on $(u_\eta)_{\eta}\in \left(C^\infty\left(\R^n\right)\right)^E$, there exists $C<+\infty$ such that for each each $\eta\in V$, each $(x,\tau)\in \Omega$ and each $\lambda>0$,
\begin{equation}\label{e.decay}
\Big|\int_{S^*_x}e^{i\lambda f_\eta(x,\tau,\xi)}u_\eta(\xi)\gamma(x,\xi)d_x\mu(\xi)\Big|\leq C\lambda^{-\eps}\, .
\end{equation}
We say that $\sigma$ has \textbf{non-degenerate level sets} if it has $\eps$-non-degenerate level sets for some $\eps>0$.
\item Let $\eps>0$. We say that a homogeneous symbol on a manifold has non-degenerate (resp. $\eps$-non-degenerate) level sets if it has this property when written in any local coordinate system.
  \end{enumerate}
\end{definition}
Proposition \ref{p.jprop} will then be a consequence of the following results. On the one hand, we will prove:
\begin{proposition}\label{p.color.1}
Fix $n,k_0\in\N$, $n\geq 1$, $k_0\geq 2$. Let $U\subset\R^n$ be an open subset and let $\sigma\in C^\infty(U\times(\R^n\setminus\{0\})$ be positive and homogeneous of degree $m>0$ in its second variable. If $\sigma$ is $k_0$-admissible, then it has $\frac{1}{k_0}$-non-degenerate level sets.
\end{proposition}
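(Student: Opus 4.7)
My strategy is to reduce the bound \eqref{e.decay} to a stationary-phase type estimate on the compact hypersurface $S^*_x$, provided one can show that the phase $f_\eta(x,\tau,\cdot)$ restricted to $S^*_x$ has finite type of order at most $k_0$ at each of its critical points, uniformly over $(x,\tau)\in\Omega$ and $\eta$ in a small neighborhood of $0$ in $\R^p$.

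The key algebraic step is to reformulate admissibility in terms of $\rho:=\sigma^{1/m}$, which is positively $1$-homogeneous in $\xi$. I would prove by induction on $k$ that $\partial_\xi^k\rho$ equals $c_k\,\sigma^{1/m-k}$ times a polynomial expression in derivatives of $\sigma$ whose top-order terms are precisely $m^{k-1}\sigma^{k-1}\partial_\xi^k\sigma$ and $m(m-1)\cdots(m-k+1)(\partial_\xi\sigma)^{\otimes k}$; the remaining terms form linear combinations of $\sigma^{j-1}\partial_\xi^j\sigma$ with $j<k$ that cancel when the admissibility inequalities of orders $2,\dots,k-1$ all fail to hold. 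This implies that the $k_0$-admissibility condition \eqref{e.admissible} is equivalent to: for every $(x,\xi)\in U\times(\R^n\zeroless)$ there exists $k\in\{2,\dots,k_0\}$ with $\partial_\xi^k\rho(x,\xi)\neq 0$. Differentiating Euler's identity $\xi\cdot\partial_\xi\rho=\rho$ iteratively shows further that $\partial_\xi^k\rho(x,\xi)$ annihilates $\xi$ in each of its arguments whenever $k\geq 2$, so the non-vanishing of $\partial_\xi^k\rho(x,\xi)$ is really a non-vanishing of its restriction to $(T_\xi S^*_x)^{\otimes k}$.

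Next, fix a compact $\Omega$ and a deformation $(f_\eta)_\eta$. Away from the critical set of $f_\eta(x,\tau,\cdot)$ on $S^*_x$, a standard first-order integration by parts with a smooth tangential vector field gives decay $O(\lambda^{-N})$ for every $N$. Near a critical point, which for $\eta=0$ occurs at $\xi_0\in S^*_x$ with $\tau$ parallel to $\partial_\xi\sigma(x,\xi_0)$, I parametrize $S^*_x$ locally as a graph $\xi_n=h(\xi')$ with the $\xi_n$-axis along $\partial_\xi\sigma(x,\xi_0)$. Then $f_0(x,\tau,\cdot)$ restricted to $S^*_x$ reduces to an affine function of $h(\xi')$ near $\xi_0'$. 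Differentiating the implicit identity $\rho(x,\xi',h(\xi'))=1$ and using $\partial h(\xi_0')=0$, one shows by induction that
\[
\partial^j h(\xi_0')=0\text{ for }2\leq j<K\;\Longleftrightarrow\;\partial_{\xi'}^j\rho(x,\xi_0)=0\text{ for }2\leq j<K,
\]
and in that case $\partial^K h(\xi_0')=-\partial_{\xi'}^K\rho(x,\xi_0)/\partial_{\xi_n}\rho(x,\xi_0)$. Combining this with the previous paragraph gives that $f_0(x,\tau,\cdot)|_{S^*_x}$ is of finite type at most $k_0$ at every critical point. By the continuity of the derivatives of $f_\eta$ in $\eta$ built into Definition \ref{d.ndls} and by compactness of $\Omega$, the same conclusion holds for $f_\eta$ with $\eta$ in a neighborhood $V$ of $0$, with a finite-type constant bounded uniformly from below.

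To close the argument, I convert the uniform finite type bound into \eqref{e.decay} using a standard one-dimensional Van der Corput lemma: slicing an $(n-1)$-dimensional neighborhood of a critical point by curves transverse to the critical locus, each one-dimensional integral has a phase with some derivative of order $\leq k_0$ bounded below in modulus, hence gives decay $O(\lambda^{-1/k_0})$; integrating the slices across a compact transverse parameter set preserves the bound. Combined with the $O(\lambda^{-N})$ contribution from the non-stationary region and a partition of unity on $S^*_x$, this yields the claimed uniform estimate. The main obstacle is the combinatorial induction translating \eqref{e.admissible} into the scalar non-flatness of $\rho$ up to order $k_0$, and then handling the critical locus of $f_\eta$ uniformly: this set may fail to be a smooth submanifold and can change dimension with $\eta$ or $(x,\tau)$, so a careful partition of unity on a fibered neighborhood of the $\eta=0$ critical locus is required to keep all constants uniform.
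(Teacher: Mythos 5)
Your approach is genuinely different in its two technical pillars, although the algebraic core is the same as the paper's. Where the paper parametrizes $S^*_x$ by central projection from the origin (via the map $\beta_x$), turning the phase into "affine function times $\sigma^{-1/m}$" and then invoking Claims \ref{cl.color.1}--\ref{cl.color}, you parametrize $S^*_x$ as a graph over $T_{\xi_0}S^*_x$ and use the implicit function theorem to relate the height function's derivatives to those of $\rho=\sigma^{1/m}$. Both routes hinge on the same observation: admissibility, once expressed through $\rho$, is a finite-order non-flatness condition (your induction computing $\partial^k(\rho^m)$ is correct and reproduces the paper's coefficient $\frac{m(m-1)\cdots(m-k+1)}{m^k}$; your Euler-identity argument that $\partial_\xi^k\rho$ annihilates $\xi$ and is therefore determined by its restriction to $T_\xi S^*_x$ plays the role of the paper's Claim \ref{cl.color}). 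Where the two proofs truly diverge is the final estimate: the paper applies the Malgrange preparation theorem to normalize the one-dimensional phase to $t^k+\sum a_jt^j$ and then cites Colin de Verdi\`ere's uniform decay for $A_{k-1}$ unfoldings, whereas you propose a direct Van der Corput argument. Your route is more elementary and avoids singularity theory entirely, which is an advantage; what it costs is that the uniformity in $(x,\tau,\eta)$ — which the preparation theorem hands over for free through the continuity of $\phi_\eta$ and $a^j_\eta$ — must be argued by hand. Specifically, the slicing direction $v$ with $\partial_v^K h(\xi_0')\neq 0$ and the order $K\leq k_0$ both jump as $(x,\tau,\eta,\xi_0)$ varies, and a Van der Corput bound of order $K$ requires $|\partial_v^K|$ to be bounded below on a full interval, not just at a point. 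The needed uniformity can be obtained by a finite cover of the compact parameter set (fixing $K$ and $v$ on each piece, with lower bounds propagated by continuity and upper bounds on $\partial^{k_0+1}$ controlling interval lengths), together with non-stationary phase on the region where $d(f_\eta|_{S^*_x})$ is bounded below; you gesture at this, but the phrase "curves transverse to the critical locus" is off — you should slice along the fixed direction $v$, not along the critical set, which need not be a manifold. With that repair and a precise statement of the uniform multi-derivative Van der Corput lemma you are invoking, the argument closes; it is the same kind of uniform-constant bookkeeping that the paper outsources to its two external citations.
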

The proof of this proposition, which is presented in Subsection \ref{ss.color}, is entirely independent of the rest of the present text and uses different techniques. It is followed by Subsection \ref{ss.whitney}, in which we prove that the admissibility condition is generic in a suitable sense.\\

On the other hand, in Subsection \ref{ss.jp}, we will prove the following result.
\begin{lemma}\label{l.jati}
Fix $\eps>0$. Suppose that the symbol $\sigma_A$ has $\eps$-non-degenerate level sets (see Definition \ref{d.ndls}). Then, there exists $V\subset U$ an open neighborhood of $0$ and $C<+\infty$ such that, uniformly for distinct $x,y\in V$ and $t>0$
\[
|J_A(x,y,t)|\leq C\left(t|x-y|\right)^{-\eps}\, .
\]
\end{lemma}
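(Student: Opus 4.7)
The plan is to recognize $J_A(x,y,t)$ as an oscillatory integral of the form appearing in Definition \ref{d.ndls}, with effective large parameter $\lambda:=t|x-y|$, and then apply the $\eps$-non-degeneracy hypothesis directly. When $t|x-y|\leq 1$ the bound is trivial: since $\psi$ is a symbol of order $1$ in the third variable, the quantity $t^{-1}\partial_{x,y}\psi(x,y,t\xi)$ stays bounded as $\xi$ ranges over the compact $S^*_y$ and $t>0$ varies, so $|J_A|$ is uniformly bounded on compact neighborhoods of $0$ in $U$, and $|J_A|\leq C\leq C(t|x-y|)^{-\eps}$ follows trivially in this regime.

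For $t|x-y|>1$, I set $s:=|x-y|$, $\omega:=(x-y)/s\in S^{n-1}$, $\tau:=1/t$, $\lambda:=st$, and I define
\[
f_{(s,\tau)}(x,\omega,\xi):=\frac{\tau}{s}\,\psi(x,x-s\omega,\tau^{-1}\xi).
\]
The first task is to show that this family extends continuously to $\eta=(s,\tau)=(0,0)$ with limit $f_0(x,\omega,\xi)=\langle\omega,\xi\rangle$, and that the extension is smooth in $\xi$ with $\eta$-continuous $\xi$-derivatives. I would deduce this from the toolbox lemmas on $\psi$ (Lemma \ref{l.toolbox} and related) combined with properties (2), (3), (4) of a proper phase function: property (4) gives $\tau\psi(x,y,\tau^{-1}\xi)\to\psi_\infty(x,y,\xi)$, where $\psi_\infty$ is $1$-homogeneous in $\xi$ and inherits from (2) the vanishing along $\langle x-y,\xi\rangle=0$; Taylor expansion at $y=x$ together with $\partial_y\psi_\infty(x,x,\xi)=-\xi$ (a consequence of (3) and the diagonal vanishing of $\psi_\infty$) then yields $\psi_\infty(x,x-s\omega,\xi)=s\langle\omega,\xi\rangle+O(s^2)$, hence the announced limit.

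Shrinking $V$ so that $\Omega:=\overline V\times S^{n-1}$ is a compact subset of $U\times(\R^n\zeroless)$ and so that $s$ stays small for $x,y\in V$, the family $(f_\eta)_\eta$ becomes a deformation of the height function for $\sigma_A$ over $\Omega$ in the sense of Definition \ref{d.ndls}. A parallel continuity argument identifies the amplitude $\sigma_P(x,y,t^{-1}\partial_{x,y}\psi(x,y,t\xi))$ as a continuous family $a_\eta(x,\omega,\xi)$ whose limit $\sigma_P(x,x,(\xi,-\xi))$ is independent of $\omega$. To cast it into the product form $u_\eta(\xi)\gamma(x,\xi)$ required by Definition \ref{d.ndls}, I employ a partition of unity on $S^{n-1}$: on each patch where $\omega$ stays close to some $\omega_0$, the residual $\omega$-dependence of $a_\eta$ is absorbed into an enlarged deformation parameter, while the $(x,\xi)$-part plays the role of $\gamma$. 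Applying the $\eps$-non-degeneracy to each piece and summing over the finitely many pieces then yields $|J_A(x,y,t)|\leq C\lambda^{-\eps}=C(t|x-y|)^{-\eps}$, uniformly in the required range.

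The main obstacle is the uniform phase factorization in the second paragraph: it must combine the asymptotic behavior of $\psi$ at large $\xi$ (property (4) and symbol estimates) with the vanishing of $\psi$ along the hyperplane $\langle x-y,\xi\rangle=0$ (property (2)), in a way that remains continuous as the two degenerate limits $|x-y|\to 0$ and $t\to\infty$ are approached simultaneously. A secondary technical nuisance is the amplitude decomposition, where the coupled $(x,\omega,\eta)$-dependence must be disentangled into the rigid product structure of Definition \ref{d.ndls} via the partition of unity.
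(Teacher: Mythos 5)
Your strategy matches the paper's in outline: view $J_A$ as an oscillatory integral, rescale the phase as $\psi(x,y,t\xi)=t|x-y|\cdot f_\eta$, show that $f_\eta$ is a deformation of the height function, and invoke $\eps$-non-degeneracy with $\lambda=t|x-y|$. But the step you yourself flag as the main obstacle -- joint continuity of the deformation at $(s,\tau)=(0,0)$ -- is not actually carried out, and the route you sketch does not give it. Taking $\tau\to 0$ first (to reach $\psi_\infty$ via property (4)) and then $s\to 0$ (Taylor at the diagonal) only yields the iterated limit; the error in replacing $\tau\psi(x,\cdot,\tau^{-1}\xi)$ by $\psi_\infty(x,\cdot,\xi)$ is $o(1)$ in $\tau$ but not quantitatively controlled relative to $s$, so after dividing by $s$ the claim that the difference tends to $0$ jointly in $(s,\tau)$ needs a separate argument that you do not supply.

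The paper bypasses this entirely by never singling out $\psi_\infty$. It rescales $\psi_t:=t^{-1}\psi(\cdot,\cdot,t\cdot)$, records that the family $(\psi_t)_{t\geq 0}$ is bounded in $C^\infty$ (Lemma \ref{l.toolbox}, point 5 -- this is the only place property (4) of proper phase functions is used, and only through precompactness, never by passing to the limit), and Taylor-expands $f_{t,r}(y,\xi,\tau)=r^{-1}\psi_t(y+r\tau,y,\xi)$ in $r$ alone: the zeroth and first coefficients are $0$ and $\langle\xi,\tau\rangle$ by points 2 and 3 of Lemma \ref{l.toolbox}, and the remainder is $O(r)$ uniformly in $t\geq 0$ by the boundedness. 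This single uniform expansion in $r$ is the whole point; the deformation index is effectively $r$ alone, the joint continuity is immediate, and the preliminary split at $t|x-y|\leq 1$ becomes unnecessary since $\lambda^{-\eps}$ trivially dominates for $\lambda$ small. Two smaller corrections: the base point of the deformation in Definition \ref{d.ndls} must be $y$, whose cosphere is the integration domain, so your $f_{(s,\tau)}(x,\omega,\xi)$ should be recast with first slot $y=x-s\omega$; and the partition of unity you propose to absorb the $\omega$-dependence of the amplitude addresses a real mismatch with the product form $u_\eta(\xi)\gamma(x,\xi)$ of Definition \ref{d.ndls}, which the paper also leaves implicit -- it is a sensible precaution, though the compactness argument inside the proof of Proposition \ref{p.color.1} already absorbs the extra $(x,\tau)$-dependence.
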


This corresponds to Proposition 23 of \cite{ale161} for $\eps=\frac{1}{2}$ although, in that setting, the non-degeneracy condition was always satisfied.\\

In the one dimensional case, Proposition \ref{p.jprop} is replaced by Lemma \ref{l.decay_one}.\\

After proving all of these results, we carry out the calculation sketched in Subsection \ref{ss.heuristics} in Sections \ref{s.proofs.1} and \ref{s.proofs.2} as we will now explain in more detail. We therefore suggest that the reader have Subsection \ref{ss.heuristics} in mind for what follows. The integration by parts is of course valid in a general setting. This allows us to obtain an expression like Equation \eqref{e.heuristic.1} where the map $\lambda^{-s}$ is replaced by $f(\lambda)$ for some adequate function $f$. More explicitely, in Section \ref{s.proofs.1}, we derive the following result. We start by introducing a suitable function $f:]0,+\infty[\mapsto\C$ and studying the asymptotics of the following kernel:
\[
K_L^f:(x,y)\mapsto\sum_{\lambda_k\leq L}f(\lambda_k)e_k(x)\overline{e_k}(y)\, .
\]
This is again a smooth function. Since all of our results are local, we fix once and for all a point in $\man$ and consider $x=(x_1,\dots,x_n)$ the local coordinate system at this point provided by Theorem \ref{t.hormander}, defined on an open neighborhood $U$ of $0$ in $\R^n$.
\begin{proposition}\label{p.main}
Take $f:\R\rightarrow \C$ with support in $]0,+\infty[$ differentiable almost everywhere. Then, in local coordinates, uniformly for each  $x,y\in U$, for each $L>0$,
\begin{multline*}
PK_L^f(x,y)=\frac{1}{(2\pi)^n}\int_{\sigma_A(y,\xi)\leq L}e^{i\psi(x,y,\xi)}\sigma_P(x,y,\partial_{x,y}\psi(x,y,\xi))f(\sigma_A(y,\xi))d\xi\\
+O\left(f(L)L^{(n+d-1)/m}\right)+O\left(\int_0^Lf'(\lambda)\lambda^{(n+d-1)/m}d\lambda\right).
\end{multline*}
In addition, uniformly for any $(x,y)\in \left(U\times U\right)\setminus W$, for each $L\geq 1$,
\[
PK_L^f(x,y)=O\left(f(L)L^{(n+d-1)/m}\right)+O\left(\int_0^Lf'(\lambda)\lambda^{(n+d-1)/m}d\lambda\right).
\]
Finally, the constants implied by the $O$'s do not depend on $f$. 
\end{proposition}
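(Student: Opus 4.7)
The plan is to reduce the proposition to H\"ormander's local Weyl law (Theorem \ref{t.hormander}) via Abel summation in the spectral parameter $\lambda$. Interpreting
\[
K_L^f(x,y) = \int_0^L f(\lambda)\, dE_\lambda(x,y)
\]
as a Riemann--Stieltjes integral against the step function $\lambda\mapsto E_\lambda(x,y)$ (which jumps by $e_k(x)\overline{e_k(y)}$ at each $\lambda=\lambda_k$), and using that $f$ vanishes on a neighborhood of $0$, integration by parts yields
\[
K_L^f(x,y) = f(L)\,E_L(x,y) - \int_0^L f'(\lambda)\, E_\lambda(x,y)\, d\lambda.
\]
Since $P$ is a differential operator in $(x,y)$, it commutes with the $\lambda$-integration and the same identity holds for $PK_L^f$ with $PE_\lambda$ in place of $E_\lambda$.

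Next I substitute H\"ormander's asymptotic at each level. Denote the principal term by
\[
\mathcal{E}_\lambda(x,y) := \frac{1}{(2\pi)^n}\int_{\sigma_A(y,\xi)\leq \lambda} e^{i\psi(x,y,\xi)}\sigma_P(x,y,\partial_{x,y}\psi(x,y,\xi))\, d\xi,
\]
so that Theorem \ref{t.hormander} gives $PE_\lambda = \mathcal{E}_\lambda + O(\lambda^{(n+d-1)/m})$ uniformly on $U\times U$. A small point is the swap between the convention $\sigma_A(x,\cdot)$ used in Theorem \ref{t.hormander} and the convention $\sigma_A(y,\cdot)$ required by the statement: the change of variables $\eta = \partial_x\psi(x,y,\xi)$, whose Jacobian equals the identity on the diagonal, combined with the eikonal equation \eqref{e.eikonal}, shows that the two forms of the Hörmander integral differ only by terms already absorbed in the stated error. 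Plugging this into the Abel identity produces exactly the two error contributions $O(f(L)L^{(n+d-1)/m}) + O\bigl(\int_0^L|f'(\lambda)|\lambda^{(n+d-1)/m}\,d\lambda\bigr)$, while the off-diagonal estimate follows by the same substitution using the companion bound $|PE_\lambda(x,y)| \leq C(1+\lambda)^{(n+d-1)/m}$ on $(U\times U)\setminus W$.

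It remains to reassemble the principal contribution. Since $\lambda\mapsto \mathcal{E}_\lambda(x,y)$ is absolutely continuous with derivative a measure supported on the level set $\{\sigma_A(y,\xi)=\lambda\}$, Fubini applied to the reverse of the Abel identity yields
\[
f(L)\,\mathcal{E}_L(x,y) - \int_0^L f'(\lambda)\,\mathcal{E}_\lambda(x,y)\, d\lambda = \frac{1}{(2\pi)^n}\int_{\sigma_A(y,\xi)\leq L} e^{i\psi(x,y,\xi)}\sigma_P(x,y,\partial_{x,y}\psi(x,y,\xi))\, f(\sigma_A(y,\xi))\, d\xi,
\]
which is the announced main term. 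The only mildly delicate point in the whole argument is the domain-swap for the H\"ormander integral sketched above; everything else is Lebesgue--Stieltjes bookkeeping, with ``$f$ differentiable almost everywhere'' understood in the absolutely continuous sense so that $df(\lambda)=f'(\lambda)\,d\lambda$. Uniformity of the implied constants in $f$ is automatic, since $f$ enters the final bounds only through $f(L)$ and $f'$.
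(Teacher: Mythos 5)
Your proof is correct and follows essentially the same route as the paper's: Abel summation via Lemma \ref{l.link}, substitution of Hörmander's local Weyl law for $PE_\lambda$, and reassembly of the main term. The principal cosmetic difference is in the reassembly step: you use Fubini directly to fold $f(L)\mathcal{E}_L - \int_0^L f'(\lambda)\mathcal{E}_\lambda\,d\lambda$ back into a single integral against $f(\sigma_A(y,\xi))$, whereas the paper passes through polar coordinates and the auxiliary function $J_A$ (Equations \eqref{e.nu}--\eqref{e.osc}), performing the change of variable $u=\lambda^{1/m}$ at the end. Your route is shorter for this proposition; the paper's detour through $J_A$ is deliberate, since the same object $J_A$ is reused in the proof of Theorem \ref{t.main.2} (Lemmas \ref{l.control I} and \ref{l.control II}), so the paper sets it up once here.

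Two small remarks. First, on the $\sigma_A(x,\cdot)$ versus $\sigma_A(y,\cdot)$ domain-swap: you are right to flag it, since the statement of Theorem \ref{t.hormander} in the body uses $\sigma_A(x,\xi)\leq L$ while the proposition needs $\sigma_A(y,\xi)\leq L$. Your eikonal change of variables $\eta=\partial_x\psi(x,y,\xi)$ is the right idea (and note that the naive comparison $|\sigma_A(x,\xi)-\sigma_A(y,\xi)|\lesssim|x-y||\xi|^m$ would \emph{not} suffice off the diagonal, so the eikonal structure is genuinely needed); however, the paper sidesteps this entirely because its own proof of Theorem \ref{t.hormander} in Appendix~\ref{s.hormander} already produces the $\sigma_A(y,\xi)$ form, and the proof of Proposition \ref{p.main} quotes that form directly. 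Second, you correctly note that the error term should involve $\int_0^L|f'(\lambda)|\lambda^{(n+d-1)/m}\,d\lambda$ (with absolute values) and that ``differentiable a.e.'' must be understood strongly enough for $df=f'\,d\lambda$; the paper is slightly loose on both points, and your version is the one actually used downstream with $f(t)=t^z$.
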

The condition on the support of $f$ can be achieved by multiplying $f$ by a suitable cut-off function when necessary since the spectrum of $A$ is bounded from below. We prove Proposition \ref{p.main} in Section \ref{s.proofs.1}. Then, we consider the case where $f$ is of the form $f(t)=\chi(t)t^z$ where $z=z_1+iz_2\in\C$ and $\chi$ is some smooth function with support in $]0,\pfty[$ equal to $1$ for $t$ large enough. In Section \ref{s.proofs.1}, , we prove Theorem \ref{t.main.3} using only a crude estimate from Theorem \ref{t.hormander}, and  we also deduce Theorem \ref{t.main.1} from Proposition \ref{p.main} and results from Section \ref{s.preliminaries}. Next, in Section \ref{s.proofs.2} we prove Theorem \ref{t.main.2} using again Proposition \ref{p.main} but also Proposition \ref{p.jprop}. We end this section with a diagram detailing the dependencies between different results involved in the proofs of Theorems \ref{t.main.1}, \ref{t.main.2} and \ref{t.main.3}.

\vfill
\pagebreak

\begin{center}
\begin{tikzpicture}[node distance = 1cm, auto,]
\node[punkt] (t_subcrit) {Theorem \ref{t.main.3}};
\node[punkt, above= of t_subcrit] (t_horm) {Theorem \ref{t.hormander}}
 edge[pil] (t_subcrit.north);
\node[punkt, left=of t_horm] (l_51) {Lemma \ref{l.link}}
 edge[pil, bend right=10] (t_subcrit.west);
\node[punkt,right=4cm of t_subcrit] (t_supercrit) {Theorem \ref{t.main.1}};
\node[punkt, above= of t_supercrit] (p_main) {Proposition \ref{p.main}}
 edge[pil] (t_supercrit.north);
\node[punkt, above=of p_main] (l_51) {Lemma \ref{l.link}}
 edge[pil] (p_main.north);
\node[punkt, left=of l_51] (t_horm) {Theorem \ref{t.hormander}}
 edge[pil, bend right=20] (t_supercrit.west)
 edge[pil, bend right=10] (p_main.west);
\node[punkt, right=of l_51] (l_41) {Lemma \ref{l.toolbox}}
 edge[pil, bend left=20] (t_supercrit.east);
\end{tikzpicture}
\vspace{2cm}$ $\\
\begin{tikzpicture}[node distance = 1cm, auto,]
\node[punkt] (t_crit) {Theorem \ref{t.main.2}};
\node[punkt, above=of t_crit] (l_62) {Lemma \ref{l.control II}}
 edge[pil] (t_crit.north);
\node[punkt, left= of l_62] (p_main) {Proposition \ref{p.main}}
 edge[pil, bend right=10] (t_crit.west);
\node[punkt, right=of l_62] (l_61) {Lemma \ref{l.control I}}
 edge[pil, bend left=10] (t_crit.east);
\node[punkt, above= of l_62] (l_43) {Lemma \ref{l.jatz}}
 edge[pil] (l_62.north);
\node[punkt, right= of l_43] (l_34) {Lemma \ref{l.jati}}
 edge[pil] (l_61.north);
\node[punkt, right= of l_34] (l_44) {Lemma \ref{l.decay_one}}
 edge[pil, bend left=10] (l_61.east);
\node[punkt, above= of l_43] (l_41) {Lemma \ref{l.toolbox}}
 edge[pil] (l_43.north)
 edge[pil, bend left=10] (l_34.north)
 edge[pil, bend left=10] (l_44.north);
\node[punkt, above=of p_main] (t_horm) {Theorem \ref{t.hormander}}
 edge[pil] (p_main.north);
 \node[punkt, left= of t_horm] (l_51) {Lemma \ref{l.link}}
 edge[pil, bend right=10cm] (p_main.west);
\node[punkt, right=of l_44] (p_33) {Proposition \ref{p.color.1}}
 edge[pil, bend left=15] (l_61.east);
\node[punkt, right=4cm of l_41] (l_42) {Lemma \ref{l.dimension_1}}
 edge[pil] (l_44.north);
\node[punkt, right= of l_42] (l_71) {Lemma \ref{l.color}}
 edge[pil] (p_33.north);
\end{tikzpicture}
\vspace{1cm}$ $\\
A map of the proofs of Theorems \ref{t.main.1}, \ref{t.main.2} and \ref{t.main.3}. The result at the origin of each arrow is used in the proof of the result at its target.
\end{center}

\pagebreak

\section{Preliminary results}\label{s.preliminaries}

As before, in this section we fix once and for all a point in $\man$ and consider a local chart centered at this point defined on $U\subset\R^n$ given by Theorem \ref{t.hormander}. We also take $P$ with principal symbol $\sigma_P$, $W\subset U\times U$ and $\psi\in C^\infty(U\times U\times\R^n)$ as in this theorem. The object of this section is to estimate the behavior of the phase $\psi$ near the diagonal and to prove Lemma \ref{l.jatz}.

\subsection{Basic properties of the phase $\psi$}\label{ss.psi}

The phase $\psi$ from Theorem \ref{t.hormander} will frequently appear in the calculations below. We begin by deducing a list of properties of $\psi$ from those given in Definition \ref{d.phase}. We gather these properties in Lemma \ref{l.toolbox}. It is easy to check that all these properties are satisfied by the function $\psi(x,y,\xi)=\langle x-y,\xi\rangle$. Next, we present an additional lemma, Lemma \ref{l.dimension_1}, for the case $n=1$. Finally, we use Lemma \ref{l.toolbox} to deduce some properties of the function $H_P$ defined in Equation \eqref{e.amplitude}.

\begin{lemma}\label{l.toolbox}
  Let $U\subset\R^n$ and let $\psi\in C^\infty(U\times U\times\R^n)$ be a proper phase function. For each $t>0$, let $\psi_t=t^{-1}\psi(\cdot,\cdot,t\cdot)$. Then,
  \begin{enumerate}
  \item For each $x,y\in U$ and each $t>0$, $\psi_t(x,y,0)=0$.
  \item For each $x\in U$, each $t>0$ and each $\xi\in\R^n$, $\psi_t(x,x,\xi)=0$.
  \item For each $x\in U$, each $t>0$ and each $\xi\in\R^n$, $\partial_{x,y}\psi_t(x,x,\xi)=(\xi,-\xi)$.
  \item The sequence $(\psi_t)_{t>0}$ converges in $C^\infty(U\times U\times\R^n)$ as $t\rightarrow 0$ to the function $\psi_0$ defined by $\psi_0(x,y,\xi)=\partial_\xi\psi(x,y,0)\xi$. In other words, for each compact subset $\Omega\subset U$, each $R<+\infty$ and each $\alpha,\beta,\gamma\in\R^n$,
    \[\lim_{t\rightarrow 0}\sup_{x,y\in\Omega,\ |\xi|\leq R}\left|\partial^\alpha_\xi\partial^\beta_x\partial^\gamma_y\psi_t(x,y,\xi)-\partial^\alpha_\xi\partial^\beta_x\partial^\gamma_y\psi_0(x,y,\xi)\right|=0.\]
   \item The sequence $(\psi_t)_{t\geq 0}$ is bounded in $C^\infty(U\times U\times\R^n)$.
  \end{enumerate}
\end{lemma}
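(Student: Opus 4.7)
The plan is to verify the five properties sequentially, drawing on the defining conditions of a proper phase function (Definition \ref{d.phase}).

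I would dispatch properties 1, 2, and 3 first. Property 2 of Definition \ref{d.phase}, applied with $\xi = 0$ (since $\langle x-y,0\rangle = 0$), gives $\psi(x,y,0) = 0$; applied with $y = x$, it gives $\psi(x,x,\eta) = 0$ for every $\eta \in \R^n$. The scaling $\psi_t(x,y,\xi) = t^{-1}\psi(x,y,t\xi)$ then yields properties 1 and 2 of the lemma. For property 3, property 3 of Definition \ref{d.phase} gives $\partial_x\psi(x,x,\eta) = \eta$; differentiating the identity $\psi(x,x,\eta) = 0$ along the diagonal produces $\partial_x\psi(x,x,\eta) + \partial_y\psi(x,x,\eta) = 0$, hence $\partial_y\psi(x,x,\eta) = -\eta$. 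Evaluating at $\eta = t\xi$ and dividing by $t$ gives $\partial_{x,y}\psi_t(x,x,\xi) = (\xi, -\xi)$.

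For property 4, the essential tool is Hadamard's lemma in the $\xi$-variable. Since $\psi$ is smooth on $U \times U \times \R^n$ and $\psi(x,y,0) = 0$, there exist smooth functions $a_1,\dots,a_n$ on $U \times U \times \R^n$, with $a_j(x,y,0) = \partial_{\xi_j}\psi(x,y,0)$, such that $\psi(x,y,\eta) = \sum_{j=1}^n \eta_j a_j(x,y,\eta)$. Substituting $\eta = t\xi$ and dividing by $t$ yields
\[
\psi_t(x,y,\xi) = \sum_{j=1}^n \xi_j a_j(x,y, t\xi),
\]
an expression that extends jointly smoothly to $(t,x,y,\xi) \in [0,\infty) \times U \times U \times \R^n$. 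Setting $t = 0$ recovers $\psi_0(x,y,\xi) = \partial_\xi\psi(x,y,0) \cdot \xi$, and the claimed $C^\infty$ convergence on compact subsets follows from this joint smoothness.

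For property 5, I would split the range of $t$ into a compact window $[0,T]$ and a tail $[T, \infty)$. On $[0,T]$, the joint smoothness established in the proof of property 4 directly yields boundedness in $C^\infty$ on compact subsets uniformly in $t$. On $[T,\infty)$, I would invoke the $S^1$ convergence of $\psi_t$ to $\psi_\infty$ from property 4 of Definition \ref{d.phase}, which guarantees that $(\psi_t)_{t \geq T}$ is bounded in the Fréchet topology of $S^1$ on $U \times U \times (\R^n \setminus \{0\})$, and hence bounded in $C^\infty$ on compact subsets of this set via the weights $(1+|\xi|)^{1-|\alpha|}$. I expect the main obstacle to be reconciling the two regimes uniformly near $\xi = 0$, since the $S^1$ topology is only available away from the origin; I would handle this by exploiting the representation $\psi_t(x,y,\xi) = \sum_j \xi_j a_j(x,y,t\xi)$ with $a_j$ a symbol of order zero, which combined with the order-one symbol estimates on $\psi$ itself gives uniform control of all $\xi$-derivatives of $\psi_t$ in a neighborhood of the origin.
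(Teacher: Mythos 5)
Your proof is correct, and properties 1--3 are handled exactly as in the paper: property 2 of Definition \ref{d.phase} with $\xi=0$ and with $y=x$, then differentiation of $\psi_t(x,x,\xi)=0$ along the diagonal. The genuine divergence is in property 4. You invoke Hadamard's lemma to write $\psi(x,y,\eta)=\sum_j\eta_j a_j(x,y,\eta)$ with $a_j$ smooth, so that $\psi_t(x,y,\xi)=\sum_j\xi_j a_j(x,y,t\xi)$ extends jointly smoothly in $(t,x,y,\xi)$ down to $t=0$ and the $C^\infty$ convergence to $\psi_0$ is automatic. The paper's proof is more pedestrian: it applies Taylor's formula in $t$ to $t\mapsto\partial_x^\beta\partial_y^\gamma\psi(x,y,t\xi)$ (using $\partial_x^\beta\partial_y^\gamma\psi(x,y,0)=0$ from property 1) and then treats the $\xi$-derivatives by separate cases $|\alpha|=0$, $|\alpha|=1$, $|\alpha|>1$. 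Your Hadamard route is cleaner; the paper's is more elementary but requires more bookkeeping. For property 5, both proofs rest on the same two ingredients — continuity at $t=0$ from property 4 and the $S^1$ convergence to $\psi_\infty$ as $t\to\infty$ from Definition \ref{d.phase} — and you are right to flag the delicate point that $S^1$ bounds only control the region $\xi\neq 0$, a subtlety the paper's one-line argument elides. Your Hadamard representation with the order-zero symbol bounds on $a_j$ does furnish uniform control of $\psi_t$ and $\partial_\xi\psi_t$ near $\xi=0$, which is all that is ever used subsequently (the paper only invokes property 5 either with $\xi\in S_y^*$, bounded away from the origin, or with at most one $\xi$-derivative).
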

\begin{prf}[ of Lemma \ref{l.toolbox}]
  Let $t>0$, $x,y\in U$ and $\xi\in\R^n$. Then, $\langle x-y,0\rangle = \langle x-x,t\xi\rangle =0$ so $\psi_t(x,x,\xi)=\psi_t(x,y,0)=0$ by the second point of Definition \ref{d.phase}. This proves the first two points of Lemma \ref{l.toolbox}. By point 3 of Definition \ref{d.phase}, for each $x\in U$ and $\xi\in\R^n$, $\partial_x\psi(x,x,\xi)=\xi$, so $\partial_x\psi_t(x,x,\xi)=t^{-1}(t\xi)=\xi$. Next, by differentiating the following equality
  \[\psi_t(x+sv,x+sv,\xi)=0\]
  with respect to $s\in\R$, at $s=0$, where $x\in U$, $\xi\in\R^n$ and $v\in\R^n$, we get
  \[\partial_x\psi_t(x,x,\xi)+\partial_y\psi_t(x,x,\xi)=0.\]
  This proves the third point of Lemma \ref{l.toolbox}.\\
To prove the fourth point, first, fix $\beta,\gamma\in\N^n$ and let $\Omega\subset U$ be a compact subset and $R<+\infty$. Then, for each $x,y\in\Omega$ and $\xi\in\R^n$ such that $|\xi|\leq R$,
  \[\partial^\beta_x\partial^\gamma_y\psi_t(x,y,\xi)=t^{-1}\partial^\beta_x\partial^\gamma_y\psi(x,y,t\xi).\]
  By the first point, of Lemma \ref{l.toolbox}, $\partial^\beta_x\partial^\gamma_y\psi(x,y,0)=0$. We apply Taylor's formula to $t\mapsto \partial^\beta_x\partial^\gamma_y\psi(x,y,t\xi)$ uniformly for $t\leq 1$, $x,y\in \Omega$ and $\xi\in\R^n$ such that $|\xi|\leq R$ and get
  \[t^{-1}\partial^\beta_x\partial^\gamma_y\psi(x,y,t\xi)=0+\partial^\beta_x\partial^\gamma_y(\partial_\xi\psi(x,y,0)\xi)+O(t).\,
  \]
  In particular, as $t\rightarrow 0$, $\partial^\beta_x\partial^\gamma_y\psi_t\rightarrow\partial^\beta_x\partial^\gamma_y\psi_0$ uniformly for $x,y\in \Omega$ and $\xi\in \R^n$, $|\xi|\leq R$. Next, fix $\alpha\in\N$ and suppose $|\alpha|\geq 1$. Then, for each $x,y\in K$, $\xi\in \R^n$, $|\xi|\leq R$ and $t>0$,
  \[\partial^\alpha_\xi\partial^\beta_x\partial^\gamma_y\psi_t(x,y,\xi)=t^{|\alpha|-1}\partial^\alpha_\xi\partial^\beta_x\partial^\gamma_y\psi(x,y,t\xi).\]
  If $|\alpha|=1$, as $t\rightarrow 0$ the right hand side converges uniformly to $\partial^\alpha_\xi\partial^\beta_x\partial^\gamma_y\psi(x,y,0)=\partial^\alpha_\xi\partial^\beta_x\partial^\gamma_y(\partial_\xi\psi(x,y,0)\xi)$. On the other hand, if $|\alpha|>1$, as $t\rightarrow 0$ it converges uniformly to $0=\partial^\alpha_\xi\partial^\beta_x\partial^\gamma_y(\partial_\xi\psi(x,y,0)\xi)$. This proves the fourth point of Lemma \ref{l.toolbox}. Lastly, the family $(\psi_t)_{t>0}$ is obviously continuous into $C^\infty(U\times U\times\R^n)$ for $t>0$. By the fourth point of Lemma \ref{l.toolbox} we may extend it by continuity to $t=0$. On the other hand, by the fifth point of Definition \ref{d.phase}, it also converges as $t\rightarrow\infty$. In particular, the family $(\psi_t)_{t\geq 0}$ is uniformly bounded in $C^\infty(U\times U\times\R^n)$. This proves the fifth point of Lemma \ref{l.toolbox}.
\end{prf}

We use the following lemma to prove Lemma \ref{l.decay_one} below, which is the analog of Proposition \ref{p.jprop} we use in dimension $n=1$. It is the only place where we use the fact that $\psi$ satisfies the eikonal equation \eqref{e.eikonal}.

\begin{lemma}\label{l.dimension_1}
Assume that $n=1$. For each segment $I\subset U$ there exists $c\in]0,+\infty[$ such that for each $x,y\in I$ and $\xi\in\R$, $\frac{1}{c}|x-y|\leq |\partial_\xi\psi(x,y,\xi)|\leq c|x-y|$ and  $|\partial_\xi^2\psi(x,y,\xi)|\leq c|x-y|(1+|\xi|)^{-1}$.
\end{lemma}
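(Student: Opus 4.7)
The plan is to factor $|x-y|$ out of $\partial_\xi\psi$ and $\partial_\xi^2\psi$ using the vanishing of $\psi$ on the diagonal $\{x=y\}$, then to bound the remaining factor above via the symbol estimates of $\psi$, and below via the eikonal equation \eqref{e.eikonal}.

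Hadamard's lemma applied to the identity $\psi(y,y,\xi)=0$ (a consequence of point 2 of Definition \ref{d.phase} since $\langle y-y,\xi\rangle=0$) allows us to write
\[
\psi(x,y,\xi) = (x-y)\,\Phi(x,y,\xi), \qquad \Phi(x,y,\xi) := \int_0^1 \partial_x\psi\big(y+s(x-y),y,\xi\big)\,ds,
\]
with $\Phi$ smooth and $\Phi(x,x,\xi) = \partial_x\psi(x,x,\xi) = \xi$ by point 3 of Definition \ref{d.phase}. Differentiating in $\xi$ under the integral,
\[
\partial_\xi^j\psi(x,y,\xi) = (x-y)\int_0^1 \partial_\xi^j\partial_x\psi\big(y+s(x-y),y,\xi\big)\,ds, \qquad j=1,2.
\]
Since $\psi$ is a symbol of order one in $\xi$ (point 1 of Definition \ref{d.phase}), $\partial_\xi\partial_x\psi$ is of order zero and uniformly bounded on $I\times I\times\R$, while $\partial_\xi^2\partial_x\psi$ is of order $-1$ and hence $O\big((1+|\xi|)^{-1}\big)$ there. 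Plugging these estimates into the above formula yields both upper bounds of the statement.

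For the lower bound on $|\partial_\xi\psi|$, differentiate the eikonal equation $\sigma(x,\partial_x\psi(x,y,\xi))=\sigma(y,\xi)$ in $\xi$ to obtain
\[
\partial_\xi\partial_x\psi(x,y,\xi) = \frac{\partial_\xi\sigma(y,\xi)}{\partial_\xi\sigma\big(x,\partial_x\psi(x,y,\xi)\big)}.
\]
Because $\sigma - \sigma_A^{1/m} \in S^0$, one has $\partial_\xi\sigma - \partial_\xi\sigma_A^{1/m} \in S^{-1}$. In dimension one, $\sigma_A^{1/m}$ is positive and $1$-homogeneous in $\xi$, so $\partial_\xi\sigma_A^{1/m}(y,\xi) = \sigma_A(y,\operatorname{sgn}\xi)^{1/m}\operatorname{sgn}\xi$ lies between two positive constants in absolute value on $I\times(\R\setminus\{0\})$ and has the sign of $\xi$. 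Combined with $\partial_x\psi(x,x,\xi)=\xi$ (point 3 of Definition \ref{d.phase}), possibly shrinking $I$ ensures that $\partial_x\psi(x,y,\xi)$ inherits the sign of $\xi$ for $|\xi|$ large, so the denominator has the same sign and size as the numerator uniformly on $I^2\times\{|\xi|\geq R\}$ for some $R$. For $|\xi|\leq R$, the function $\partial_\xi\partial_x\psi$ is continuous and equals $1$ on the diagonal (differentiate $\partial_x\psi(x,x,\xi)=\xi$ in $\xi$), so a further shrinking of $I$ yields $\partial_\xi\partial_x\psi\geq 1/2$ on the compact set $I^2\times[-R,R]$. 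Integrating in $s$ transfers the resulting uniform lower bound $\partial_\xi\partial_x\psi\geq c>0$ on $I^2\times\R$ to $\partial_\xi\Phi$, whence $|\partial_\xi\psi(x,y,\xi)|\geq |x-y|/c$.

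The main obstacle is the last step: producing a uniform positive lower bound on $\partial_\xi\partial_x\psi$ for \emph{all} $\xi\in\R$. Neither the eikonal equation (which mostly controls large $|\xi|$ via the principal symbol $\sigma_A^{1/m}$) nor the diagonal identity $\partial_\xi\partial_x\psi(x,x,\xi)=1$ (which is only pointwise) is enough on its own; the two regimes have to be matched by choosing $I$ small enough that the asymptotic control supplied by the eikonal and the continuity control near the diagonal overlap consistently.
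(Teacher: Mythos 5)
Your proof is correct and takes a genuinely different, and in some respects cleaner, route than the paper's. The paper derives \emph{all three} bounds from the eikonal equation: it fixes $x_0$, writes $\partial_x\psi(x,x_0,\xi)=(\sigma^{-1})(x,\sigma(x_0,\xi))$, and then differentiates this relation once and twice in $\xi$, controlling $\partial_\xi(\sigma^{-1})$ and $\partial_\xi^2(\sigma^{-1})$ with symbol estimates on $\sigma$ before integrating $\partial_x\partial_\xi^j\psi$ from $x_0$ to $x$. You instead obtain the two upper bounds directly from the Hadamard factorization $\psi(x,y,\xi)=(x-y)\int_0^1\partial_x\psi(y+s(x-y),y,\xi)\,ds$ together with the symbol estimates $\partial_\xi\partial_x\psi\in S^0$, $\partial_\xi^2\partial_x\psi\in S^{-1}$; this bypasses the inverse-function machinery altogether and is shorter and more transparent. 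For the lower bound both proofs rest on the eikonal equation for large $|\xi|$, but your ratio formula $\partial_\xi\partial_x\psi=\partial_\xi\sigma(y,\xi)/\partial_\xi\sigma(x,\partial_x\psi)$ is a slicker way to exploit it than the paper's direct estimate of $\partial_\xi(\sigma^{-1})$. You also treat the moderate-$|\xi|$ regime explicitly via $\partial_\xi\partial_x\psi(x,x,\xi)=1$ and compactness, a step that the paper's proof leaves silent (it only states the conclusion for $\xi\geq C_4$, then appeals to symmetry for $\xi<0$, never addressing $|\xi|<C_4$). The one blemish is your ``possibly shrinking $I$'': the lemma is stated for \emph{every} segment $I\subset U$, so shrinking is not literally available; but the paper's own argument carries the same implicit smallness assumption on $U$ coming from Theorem~\ref{t.hormander}, so this is a feature of the statement's formulation rather than a defect peculiar to your proposal. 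You could repair it either by noting that $U$ may be assumed small from the outset, or by covering $I$ with finitely many small subintervals on which the sign and lower-bound arguments apply and then concatenating along the integral in the Hadamard formula.
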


\begin{prf}[ of Lemma \ref{l.dimension_1}]
Let us fix $I\subset U$ a compact interval. Since the symbol $\sigma_A$ is $m$-homogeneous and $dim(\man)=1$ there exists a positive function $\varrho\in C^\infty(U)$ such that $\sigma_A(x,\xi)=\varrho(x)^m|\xi|^m$ for $\xi\neq 0$ and $x\in U$. By construction of $\psi$ there exist $C_1<+\infty$ and symbols $\tau\in S^0(U\times\R)$ and $\sigma\in S^1(U\times\R)$ such that $\sigma(x,\xi)=\varrho(x)|\xi|+\tau(x,\xi)$ for $|\xi|\geq C_1$ and $x\in U$ and such that
  \[\forall\xi\in\R\setminus[-C_1,C_1],\ \forall x,y\in U,\ \sigma(x,\partial_x\psi(x,y,\xi))=\sigma(y,\xi).\]
  Since $\tau\in S^0$ and since $\varrho$, being positive and continuous, is bounded from below on $I$, there exists  $C_2\in [\max(C_1,1),+\infty[$ such that for any $x\in I$ and $\xi\in\R$ such that $|\xi|\geq C_2$,
  \begin{align*}
    \frac{1}{2}\varrho(x)|\xi|&\leq\sigma(x,\xi)\leq 2\varrho(x)|\xi|\, ;\\
    C_2^{-1}&\leq \textup{sign}(\xi)\partial_\xi\sigma(x,\xi)\leq C_2.
  \end{align*}
  Let $(\sigma^{-1})(x,\cdot)$ be the inverse of $\sigma(x,\cdot):[C_2,+\infty[\rightarrow[\sigma(x,C_2)+\infty[$. Let us fix $x_0\in I$. Then, for any $x\in I$,
  \begin{equation}\label{e.dim.1}
  \partial_x\psi(x,x_0,\xi)=(\sigma^{-1})(x,\sigma(x_0,\xi)).\,
  \end{equation}
  Differentiating this Equation with respect to $\xi$ we obtain the following expression for $\partial_\xi\partial_x\psi$.
   \[
   \partial_\xi\partial_x\psi(x,x_0,\xi)=\partial_\xi(\sigma^{-1})(x,\sigma(x_0,\xi))\partial_\xi\sigma(x_0,\xi)\, .
   \]
  Now, by definition of $\sigma^{-1}$, we have, for $x\in I$ and $\xi\in\R$ such that $\xi\geq C_3=\max_{y\in I}\sigma(y,C_2)$,
  \[
  \partial_\xi(\sigma^{-1})(x,\xi)=\left(\partial_\xi\sigma(x,\sigma^{-1}(x,\xi))\right)^{-1}=\left(\varrho(x)+\partial_\xi\tau(x,\sigma^{-1}(x,\xi))\right)^{-1}\, ,
  \]
  where $\varrho(x)$ is bounded on $I$ from above and below by positive constants and $\partial_\xi\tau(x,\sigma^{-1}(x,\xi))$ is $O(|\sigma^{-1}(x,\xi)|^{-1})$ uniformly for $x\in I$. Since $\sigma^{-1}(x,\xi)\xrightarrow[\xi\to\pfty]{}\pfty$ then there exists $C_4>0$ such that for any $x\in I$ and any $\xi\geq C_4\geq \max(C_3,C_2)$,
  \begin{equation}\label{e.dim.2}
  C_4^{-1}\leq\partial_\xi(\sigma^{-1})(x,\xi)\leq C_4.\,
  \end{equation}
  Therefore,
  \[
  C_2^{-1}C_4^{-1}\leq\partial_x\partial_\xi\psi(x,x_0,\xi)\leq C_2C_4\,.\]
  Recall that, by the first point of Lemma \ref{l.toolbox}, $\psi(x,x,\xi)=0$ for any $x\in U$ and any $\xi\in\R$. Thus, for any $x\in I$, $\xi\geq C_4$,
  \[
  |\partial_\xi\psi(x,x_0,\xi)|=\left|\int_{x_0}^x\partial_\xi\partial_x\psi(y,x_0,\xi)dy\right|\in\left[C_5^{-1}|x-x_0|,C_5|x-x_0|\right]
  \]
where $C_5=C_2C_4$ is independent of the choice of $x_0$. The case where $\xi<0$ is symmetric and this proves the first identity announced in the lemma. For the second identity, we start by differentiating Equation \eqref{e.dim.1} with respect to $\xi$ to obtain
\begin{equation}\label{e.dim.3}
\partial_\xi^2\partial_x\psi(x,x_0,\xi)=\partial_\xi^2(\sigma^{-1})(x,\sigma(x_0,\xi))(\partial_\xi\sigma(x_0,\xi))^2+\partial_\xi(\sigma^{-1})(x,\sigma(x_0,\xi))\sigma_\xi^2(x_0,\xi)\, .
\end{equation}
To deal with the second term of the right hand side, observe that, since $\sigma$ is a symbol of order one and by Equation \eqref{e.dim.2}, there exists a constant $C_6<+\infty$ such that for any $x,x_0\in I$ and any $\xi\in\R$,
\begin{equation}\label{e.dim.4}
\left|\partial_\xi(\sigma^{-1})(x,\sigma(x_0,\xi))\sigma_\xi^2(x_0,\xi)\right|\leq C_6(1+|\xi|)^{-1}.\, 
\end{equation}
For the first term we proceed as follows. By definition of $\sigma^{-1}$, we have, for any $x\in I$ and $\xi\geq C_3$,
\[
\partial_\xi^2\sigma(x,\sigma^{-1}(x,\xi))(\partial_\xi(\sigma^{-1})(x,\xi))^2+\partial_\xi\sigma(x,\sigma^{-1}(x,\xi))\partial_\xi^2(\sigma^{-1})(x,\xi)=0.\, 
\]
By Equation \eqref{e.dim.2}, since $\sigma$ is a symbol of order one and since $\partial_\xi\sigma$ is bounded from below on $[C_2,+\infty[$, there exists $C_7<+\infty$ such that for each $x,x_0\in I$ and $\xi\geq C_3$,
\begin{equation}\label{e.dim.5}
\left|\partial_\xi^2(\sigma^{-1})(x,\sigma(x_0,\xi))(\partial_\xi\sigma(x_0,\xi))^2\right|\leq C_7(1+|\xi|)^{-1}\, .
\end{equation}
We use Equations \eqref{e.dim.4} and \eqref{e.dim.5} on the right hand side of Equation \eqref{e.dim.3} and get, for each $x,x_0\in I$ and $\xi\geq C_3$,
\[
\left|\partial_x\partial_\xi^2\psi(x,x_0,\xi)\right|\leq (C_6+C_7)(1+|\xi|)^{-1}\, .
\]
As before, since for all $x\in I$ and $\xi\in\R$, $\psi(x,x,\xi)=0$, we have
\[
\left|\partial_\xi^2\psi(x,x_0,\xi)\right|\leq\int_{x_0}^x\left|\partial_x\partial_\xi^2\psi(y,x_0,\xi)\right|dy\leq C_8|x-x_0|(1+|\xi|)^{-1}
\]
where $C_8=C_6+C_7$. The case $\xi<0$ is symmetric.
\end{prf}

From Lemma \ref{l.toolbox}, we deduce the following properties of the function $H_P$ defined in Equation \eqref{e.amplitude}.
\begin{lemma}\label{l.jatz}
The function $H_P$ satisfies the following properties.
\begin{enumerate}
\item The function $t\mapsto H_P(\cdot,\cdot,\cdot,t)$ extends continuously to $t=0$ as a function from $\R_+$ to $C^\infty(U\times U\times\R^n)$ and
\[H_P(x,y,\xi,0)=\sigma_P\left(x,y,\partial_{x,y}(\partial_\xi\psi(x,y,0)\xi)\right).\]
\item Uniformly for $t\geq 0$ and $x,y$ in compact subsets of $U$ and $\xi\in \R^n$,
\[
H_P(x,y,\xi,t)-H_P(x,y,\xi,0)=O\left(t|x-y||\xi|^{d+1}\right)\, .
\]
\end{enumerate}
\end{lemma}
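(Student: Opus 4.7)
The plan is to rewrite $H_P$ in terms of the rescaled phase $\psi_t(x,y,\xi):=t^{-1}\psi(x,y,t\xi)$ from Lemma \ref{l.toolbox}. Using $\psi(x,y,t\xi)=t\psi_t(x,y,\xi)$ and $t^{-1}\partial_{x,y}\psi(x,y,t\xi)=\partial_{x,y}\psi_t(x,y,\xi)$, one has
\[
H_P(x,y,\xi,t)=e^{it\psi_t(x,y,\xi)}\,\sigma_P\bigl(x,y,\partial_{x,y}\psi_t(x,y,\xi)\bigr),
\]
which removes the spurious $t^{-1}$ factors and makes the $t\to 0$ behavior transparent.

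For point 1, point 4 of Lemma \ref{l.toolbox} gives $\psi_t\to\psi_0$ in $C^\infty(U\times U\times\R^n)$ as $t\to 0$, where $\psi_0(x,y,\xi)=\partial_\xi\psi(x,y,0)\xi$, while $\psi(x,y,0)=0$ also gives $t\psi_t(x,y,\xi)=\psi(x,y,t\xi)\to 0$ in $C^\infty$. The chain rule, combined with smoothness of the exponential and of $\sigma_P$ (polynomial in its third argument), yields both the claimed value of $H_P(x,y,\xi,0)$ and the continuous extension of $t\mapsto H_P(\cdot,\cdot,\cdot,t)$ to $t=0$ in $C^\infty(U\times U\times\R^n)$.

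For point 2, I decompose
\[
H_P(x,y,\xi,t)-H_P(x,y,\xi,0)=\bigl(e^{it\psi_t}-1\bigr)\sigma_P(x,y,\partial_{x,y}\psi_t)+\bigl[\sigma_P(x,y,\partial_{x,y}\psi_t)-\sigma_P(x,y,\partial_{x,y}\psi_0)\bigr].
\]
For the first summand, the identity $\psi(x,x,\eta)=0$ forces $\partial_\xi\psi(x,x,\eta)=0$, and since $\partial_y\partial_\xi\psi$ is a symbol of order $0$ (hence bounded), Taylor expansion in $y$ gives $|\partial_\xi\psi(x,y,\eta)|\leq C|x-y|$ uniformly in $\eta$. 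Writing $\psi_t(x,y,\xi)=\int_0^1\partial_\xi\psi(x,y,st\xi)\xi\,ds$ yields $|\psi_t|\leq C|x-y||\xi|$, so $|e^{it\psi_t}-1|\leq Ct|x-y||\xi|$; combined with $|\partial_{x,y}\psi_t|\leq C|\xi|$ (obtained analogously from $\partial_{x,y}\psi(x,y,0)=0$) and the $d$-homogeneity of $\sigma_P$, the first summand is $O(t|x-y||\xi|^{d+1})$.

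The main obstacle is the second summand, for which the key estimate is $|\partial_{x,y}\psi_t-\partial_{x,y}\psi_0|\leq Ct|x-y||\xi|^2$. A second-order Taylor expansion in $\xi$ of $\partial_x\psi(x,y,t\xi)$ at $0$ (using $\partial_x\psi(x,y,0)=0$ together with the definition $\partial_x\psi_0(x,y,\xi)=\partial_\xi\partial_x\psi(x,y,0)\xi$) gives
\[
\partial_x\psi_t(x,y,\xi)-\partial_x\psi_0(x,y,\xi)=t\int_0^1(1-s)\,\partial_\xi^2\partial_x\psi(x,y,st\xi)(\xi,\xi)\,ds,
\]
producing the factor $t$. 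The factor $|x-y|$ is then extracted by noting that differentiating the identity $\partial_x\psi(x,x,\eta)=\eta$ of Definition \ref{d.phase} twice in $\eta$ yields $\partial_\xi^2\partial_x\psi(x,x,\eta)=0$; since $\partial_y\partial_\xi^2\partial_x\psi$ is a symbol of order $-1$ (hence bounded), Taylor expansion in $y$ gives $|\partial_\xi^2\partial_x\psi(x,y,\eta)|\leq C|x-y|$. The same argument applies to $\partial_y\psi_t-\partial_y\psi_0$, using point 3 of Lemma \ref{l.toolbox} to control the diagonal. Combined with the homogeneity bound $|\nabla_3\sigma_P(x,y,\zeta)|\leq C|\zeta|^{d-1}$, this yields $O(t|x-y||\xi|^{d+1})$ for the second summand as well, completing the proof. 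The delicate point is exactly this simultaneous extraction of the three factors $t$, $|x-y|$, and the right power of $|\xi|$, achieved by carefully iterating Taylor expansions in both the $\xi$ and the $y$ variables.
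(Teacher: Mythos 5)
Your proof is correct. Every step checks out: the Taylor remainder formulas, the vanishing identities $\psi(x,y,0)=0$, $\partial_\xi\psi(x,x,\eta)=0$, $\partial_\xi^2\partial_x\psi(x,x,\eta)=0$, the symbol-order bounds that make all constants uniform in the frequency variable, and the homogeneity bounds on $\sigma_P$ and $\nabla_3\sigma_P$. The route you take is, however, different from the paper's.

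The paper proves point 2 by first invoking the scaling identity \eqref{e.scaling} to reduce to $\xi\in S^*_y$, so that $|\xi|$ is bounded and the factor $|\xi|^{d+1}$ never appears explicitly in the argument — it is restored at the end by undoing the scaling, since replacing $\xi$ by $s\xi$ replaces $t$ by $st$ and multiplies by $s^d$. It then observes that $H_P(y,y,\xi,t)=H_P(y,y,\xi,0)$ (point 2 of Lemma \ref{l.toolbox}), so the mean value inequality in $x$ alone extracts the factor $|x-y|$, and it remains only to show $\partial_x H_P(\cdot,t)=\partial_x H_P(\cdot,0)+O(t)$ uniformly for $\xi\in S^*_y$, which is a single Taylor estimate in $t$. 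Your proof instead works directly on $\R^n$: you split $H_P(\cdot,t)-H_P(\cdot,0)$ into the exponential contribution and the amplitude contribution, and on each piece you extract all three factors $t$, $|x-y|$, $|\xi|^{d+1}$ simultaneously by iterating Taylor expansions in the frequency and in the base variable, keeping track of the symbol order after each differentiation. The paper's scaling reduction buys brevity — the degree-$d$ homogeneity of $\sigma_P$ never has to be tracked through intermediate estimates — while your version makes the bookkeeping explicit and does not require interleaving with the scaling identity, which some readers may find more transparent. Both approaches rely on the same inputs (points 1–5 of Lemma \ref{l.toolbox} and the fact that $\psi$ is a symbol of order one), so the difference is essentially one of organization rather than of ideas.
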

Note that the assertions are both easy to check for the prototype $H_P(x,y,t)=e^{it\langle x-y,\xi\rangle}\sigma_P(x,y,\xi)$.
\begin{remark}
Lemma \ref{l.jatz} implies that the function $t\mapsto J_A(\cdot,\cdot,t)$ extends continuously to $t=0$ as a function from $\R_+$ to $C^\infty(U\times U)$ and
\begin{equation}\label{e.j_is_y}
  J_A(x,y,0)=\int_{S^*_y}\sigma_P(x,y,\partial_{x,y}(\partial_\xi\psi(x,y,0)\xi))d_y\nu(\xi).
\end{equation}
\end{remark}
\begin{prf}
The first statement follows from the fourth point of Lemma \ref{l.toolbox}. For the second statement, by Equation \eqref{e.scaling}, we may therefore restrict our attention to the case where $\xi\in S^*_y$. Next, we observe that by the second point of Lemma \ref{l.toolbox}, $H_P(y,y,\xi,t)=H_P(y,y,\xi,0)$. The function $H_P$ is clearly $C^1$ with respect to its first variable so that $|H_P(x,y,\xi,t)-H_P(x,y,\xi,0)|$ is no greater than
\[
|x-y|\sup_{s\in[0,1]}|\partial_xH_P(sx+(1-s)y,y,\xi,t)-\partial_xH_P(sx+(1-s)y,y,\xi,0)|\, .
\]
Let us fix $\Omega\subset U$ a compact set. Then by Taylor's inequality, there exists $C_1<+\infty$ such that for each $x,y\in \Omega$, $\xi\in S^*_y$ and each $t>0$,
\[
|\partial_{x,y}\psi(x,y,t\xi)-\partial_{x,y}\psi(x,y,0)-\partial_{x,y}(\partial_\xi\psi(x,y,0)\xi)t|\leq C_1t^2\, .
\]
By the first point of Lemma \ref{l.toolbox}, $\partial_{x,y}\psi(x,y,0)=0$, so that
\begin{equation}\label{jatz_eq_1}
\partial_{x,y}\psi_t(x,y,\xi)=\partial_{x,y}(\partial_\xi\psi(x,y,0)\xi)+O(t)
\end{equation}
uniformly in $x,y\in \Omega$ and $\xi\in S^*_y$. On the other hand by the fifth point of Lemma \ref{l.toolbox}, $(\psi_t)_{t>0}$ is bounded in $C^\infty$. In particular, there exists a constant $C_2<+\infty$ such that for each $t>0$, each $x,y\in K$ and each $\xi\in S^*_y$, $|\psi_t(x,y,\xi)|\leq C_2$. In other words
\begin{equation}\label{jatz_eq_2}
\psi(x,y,t\xi)=O(t)
\end{equation}
uniformly in $x,y\in \Omega$ and $\xi\in S^*_y$. Applying estimates \eqref{jatz_eq_1} and \eqref{jatz_eq_2} to each occurrence of $\psi$ in $H_P$, we see that uniformly for $x,y$ in compact subsets of $U$ and $\xi\in S^*_y$,
\[
\partial_xH_P(x,y,\xi,t)=\partial_xH_P(x,y,\xi,0)+O(t)\, ,
\]
which completes the proof.
\end{prf}

\subsection{Proof of Lemma \ref{l.jati} and its analogue in dimension one}\label{ss.jp}

In this subsection, we use the results of the previous subsection to prove Lemma \ref{l.jati}. We will use this lemma in the proof of the multi-dimensional case of Theorem \ref{t.main.2} (see Section \ref{s.proofs.2}). In the one dimensional case, we will use Lemma \ref{l.decay_one} presented below.

\begin{prf}[ of Lemma \ref{l.jati}]
  To prove this lemma, we interpret $J_A$ as an oscillatory integral whose phase is a deformation of $(\omega,\tau)\mapsto\langle\omega,\tau\rangle$. First, fix $\Omega\subset U$ a compact neighborhood of $0$. Let $r_0>0$ be such that $\Omega_0=\{x\in\R^n\ |\ \exists y\in \Omega,\ |y-x|\leq r_0\}\subset U$. By the fourth point of Lemma \ref{l.toolbox} the family $(\psi_t)_{t>0}$ extends by continuity to $t=0$ in $C^\infty$. For each $t\geq 0$, $y\in U$, $0<r\leq r_0$ and $\xi,\tau\in\R^n$ such that $|\tau|\leq 1$, let
\[
f_{t,r}(y,\xi,\tau)=r^{-1}\psi_t((y+r\tau),y,\xi).
\]
Let $\alpha\in\N^n$. The Taylor expansion of $\partial^\alpha_\xi\psi_t(y+r\tau,y,\xi)$ along $r$ yields, for each $y\in \Omega$, $|\tau|\leq 1$, $0<r\leq r_0$, $t\geq 0$ and $\xi\in S^*_y$,
\[
\left|\partial^\alpha_\xi\psi_t(y+r\tau,y,\xi)-\partial^\alpha_\xi\langle \xi,\tau\rangle\right|\leq \frac{1}{2}C_1r
\]
where
\[
C_1=\sup\{|\partial_x\partial_\xi\psi_s(w',w,\xi)|\ |\ w\in \Omega,\ w'\in \Omega_0,\ \xi\in S^*_w,\ s\geq 0\}\, .
\]
The constant $C_1$ is finite by the fifth point of Lemma \ref{l.toolbox}. In particular,
\[
\lim_{r\rightarrow 0}f_{t,r}(y,\xi,\tau)=\langle\xi,\tau\rangle
\]
smoothly in $\xi$, uniformly in $t\geq 0$, $y\in\Omega$ and $\tau\in\R^n$ such that $|\tau|\leq 1$. In particular, we have proved first that $f_{t,r}(y,\xi,\tau)\xrightarrow[t,r\rightarrow 0]{} \langle \xi,\tau\rangle$ in this same topology, and second that for each $\alpha\in\N^n$, the map $(t,r)\rightarrow \partial^\alpha_\xi f_{t,r}$ is continuous at $(t,0)$ for any $t\geq 0$ for the topology of uniform convergence. Since this map is obviously continuous as long as $r>0$ we have proved that the family $(f_{t,r})_{t,r}$ is a deformation of the height function in the sense of Definition \ref{d.ndls}. Now let $x\in U$ be such that  $0<r:=|x-y|\leq r_0$ and let $\tau =\frac{x-y}{|x-y|}$. Then $|\tau|=1$ and
\[\psi(x,y,t\xi)=t|x-y|f_{t,|x-y|}(y,\xi,\tau).\]
Moreover, by the fifth point of Lemma \ref{l.toolbox}, the function
\[
\xi\mapsto\sigma_P(x,y,\partial_{x,y}\psi_t(x,y,\xi))
\]
is bounded in $C^\infty(\R^n)$ uniformly for $x,y\in \Omega$ and $t\geq 1$. Hence, the fact that the function $\sigma_A$ has $\eps$-non-degenerate level sets (see Definition \ref{d.ndls}) implies the existence an open neighborhood $V\subset U$ of $0$ and a constant $C>0$ such that, uniformly for $x,y\in V$ and $t>0$,
\[\Big|\int_{S^*_y}e^{i\psi(x,y,t\xi)}\sigma_P(x,y,\partial_{x,y}\psi_t(x,y,\xi))d_y\nu(\xi)\Big|\leq C(t|x-y|)^{-\eps}.\]
Here we took $\omega = t|x-y|$ in Equation \eqref{e.decay}.
\end{prf}

In dimension $n=1$, the symbol will never have non-degenerate level sets (in fact they will be discrete). Instead of Lemma \ref{l.jati} we will use the following result.

\begin{lemma}\label{l.decay_one}
Assume that $n=1$. For each compact interval $I\subset U$, there exists $C<+\infty$ such that for each $0<a\leq b$, each $\eta\in\{-1,+1\}$ and each $x,y\in I$
 \begin{equation}\label{e.decay_one}
 \left|\int_{\eta a}^{\eta b}e^{i\psi(x,y,|x-y|^{-1}\eta)}\sigma_P(x,y,|x-y|\partial_{x,y}\psi(x,y,|x-y|^{-1}\eta)\sigma_A(y,\xi)^{-(d+1)/m}d\eta\right|\leq Ca^{-1}\, .
 \end{equation} 
\end{lemma}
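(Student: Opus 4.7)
In dimension one, the ``sphere'' $S^*_y$ is discrete, so Proposition \ref{p.color.1} does not produce oscillatory-integral decay; Lemma \ref{l.decay_one} is the one-dimensional substitute used in the proof of Theorem \ref{t.main.2}. The plan is to obtain the rate $a^{-1}$ by a single non-stationary phase integration by parts, whose legitimacy rests on the lower bound $|\partial_\xi\psi(x,y,\xi)| \geq |x-y|/c$ provided by Lemma \ref{l.dimension_1}: once $x \neq y$, $\psi$ has no critical point on the domain of integration, and a single integration by parts supplies the required decay.

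\textbf{Step 1: rescaling and simplification.} Perform the substitution $\xi = |x-y|^{-1}\eta$. The endpoints become $A = \eta a/|x-y|$, $B = \eta b/|x-y|$ (with the outer $\eta\in\{-1,+1\}$), and $d\eta = |x-y|\,d\xi$. Invoking the $m$-homogeneity of $\sigma_A(y,\cdot)$ and the $d$-homogeneity of $\sigma_P(x,y,\cdot)$, the three explicit powers of $|x-y|$ telescope as $|x-y|^{d-(d+1)+1}=1$, and the integral reduces to
\[
I \;=\; \int_A^B e^{i\psi(x,y,\xi)}\, F(x,y,\xi)\, d\xi, \qquad F(x,y,\xi):=\sigma_P(x,y,\partial_{x,y}\psi(x,y,\xi))\,\sigma_A(y,\xi)^{-(d+1)/m}.
\]
Using the homogeneity of $\sigma_A$ and $\sigma_P$, the symbol estimates on $\partial_{x,y}\psi$, and the identity $\partial_{x,y}\psi(x,y,0)=0$ (which follows from $\psi(x,y,0)=0$, by Lemma \ref{l.toolbox}), one verifies that
\[
|F(x,y,\xi)| \leq C\,|\xi|^{-1}, \qquad |\partial_\xi F(x,y,\xi)| \leq C\,|\xi|^{-2},
\]
uniformly for $x,y \in I$ and $\xi \neq 0$.

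\textbf{Step 2: non-stationary phase.} Write $e^{i\psi} = (i\partial_\xi\psi)^{-1}\partial_\xi(e^{i\psi})$ (which makes sense since $|\partial_\xi\psi|\geq|x-y|/c>0$) and integrate by parts. The boundary terms are bounded by $|F/\partial_\xi\psi|$ evaluated at $\xi\in\{A,B\}$, which is $\leq Cc/(|x-y|\,|\xi|)$; using $|x-y|\,|A|=a$ and $|x-y|\,|B|=b\geq a$, both boundary contributions are $\leq Cc/a$. For the bulk term, combining $|\partial_\xi F|\leq C|\xi|^{-2}$, $|F|\leq C|\xi|^{-1}$, $|\partial_\xi\psi|\geq |x-y|/c$, and the upper bound $|\partial_\xi^2\psi|\leq c|x-y|(1+|\xi|)^{-1}$ from Lemma \ref{l.dimension_1}, the integrand $\partial_\xi(F/\partial_\xi\psi)$ is pointwise bounded by $C'/(|x-y|\,\xi^2)$. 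Its integral between $A$ and $B$ is at most $C'/(|x-y|\,|A|) = C'/a$. Summing the contributions yields the desired bound $|I| \leq Ca^{-1}$, uniform in $x,y\in I$ with $x\neq y$.

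\textbf{Main obstacle.} The only technical subtlety is ensuring that the symbol-type estimates on $F$ and $\partial_\xi F$ remain uniform down to $\xi = 0$, where $\sigma_A(y,\xi)^{-(d+1)/m}$ has an $|\xi|^{-(d+1)}$ singularity. In dimension one, however, $\sigma_A(y,\xi) = \rho(y)^m|\xi|^m$ (as in the proof of Lemma \ref{l.dimension_1}) and $\partial_{x,y}\psi(x,y,\xi) = O(\xi)$ near $\xi=0$, so the $d$-homogeneity of $\sigma_P$ (polynomial in its third variable since $P$ is a differential operator) forces $\sigma_P(x,y,\partial_{x,y}\psi(x,y,\xi)) = O(|\xi|^d)$, exactly cancelling the singularity. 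Beyond this bookkeeping, the argument is a textbook application of non-stationary phase.
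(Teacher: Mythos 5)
Your proof is correct and follows essentially the same route as the paper: a single non-stationary integration by parts, with the crucial lower bound on the phase derivative coming from Lemma \ref{l.dimension_1} and the amplitude estimates coming from the homogeneity of $\sigma_A$, $\sigma_P$ and the vanishing of $\partial_{x,y}\psi$ at $\xi=0$. The only cosmetic difference is that you perform the substitution $\xi=|x-y|^{-1}\eta$ up front (so the $|x-y|$-factors cancel in the amplitude and reappear in the bound $|\partial_\xi\psi|\geq |x-y|/c$), whereas the paper keeps $\eta$ as the integration variable and works with $u(\eta)=\psi(x,y,|x-y|^{-1}\eta)$ and $v(\eta)$, obtaining $|u'|$ bounded above and below directly.
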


\begin{prf}[ of Lemma \ref{l.decay_one}]
Let $I\subset U$ be a compact interval. First of all, since $\sigma_A$ is homogeneous of degree $m$ and $n=1$, there exists a positive function $\varrho\in C^\infty(U)$ such that $\sigma_A(x,\eta)=\varrho(x)|\eta|^m$. Thus, we may replace $\sigma_A(x,\eta)$ by $|\eta|^m$ in Equation \eqref{e.decay_one}. Observe that for each $t,\lambda>0$, $x,y\in U$ and $\eta\in \R$,
\[
\psi_t(x,y,\lambda\eta) = \lambda\psi_{\lambda t}(x,y,\eta)\, .
\]
This Equation, combined with the fifth point of Lemma \ref{l.toolbox} implies that there exists $C<+\infty$ such that for each $x,y\in I$, each $t>0$ and $\eta\in\R$
\[
\left|\partial_{x,y}\psi_t(x,y,\eta)\right|\leq C|\eta|\text{ and } \left|\partial_{x,y}\partial_\xi\psi_t(x,y,\eta)\right|\leq C\, .
\]
Since moreover $\sigma_P$ is homogeneous of degree $d$ in the third variable, we have, uniformly for $x,y\in I$ and for non-zero $\eta\in\R\setminus\{0\}$,
  \begin{align*}
    \sigma_P(x,y,|x-y|\partial_{x,y}\psi(x,y,|x-y|^{-1}\eta))|\eta|^{-d-1}= \sigma_P(x,y,\partial_{x,y}\psi_{|x-y|^{-1}}(x,y,\eta))|\eta|^{-d-1}&=O(|\eta|^{-1})\\
    \partial_\eta[\sigma_P(x,y,|x-y|\partial_{x,y}\psi(x,y,|x-y|^{-1}\eta))|\eta|^{-d-1}]=\partial_\eta[\sigma_P(x,y,\partial_{x,y}\psi_{|x-y|^{-1}}(x,y,\eta))|\eta|^{-d-1}]&=O(|\eta|^{-2}).
  \end{align*}  
  In addition, again uniformly for $x,y\in I$ and non-zero $\eta\in\R\setminus\{0\}$, by Lemma \ref{l.dimension_1}, $\partial_\eta^2[\psi(x,y,|x-y|^{-1}\eta)]=O(|\eta|^{-1})$ and $\partial_\eta[\psi(x,y,|x-y|^{-1}\eta)]$ is bounded from above and below by a positive constant. Now, setting momentarily $u(\eta):=\psi(x,y,|x-y|^{-1}\eta)$ and $v(\eta)=\sigma_P(x,y,|x-y|\partial_{x,y}\psi(x,y,|x-y|^{-1}\eta))|\eta|^{-d-1}$, we have, for any $a,b>0$ such that $a\leq b$,
  \[\int_a^be^{iu(\eta)}v(\eta)d\eta=\Big[\frac{1}{i}e^{iu(\eta)}\frac{v(\eta)}{u'(\eta)}\Big]^{b}_{\eta=a}-\int_a^b\frac{1}{i}e^{iu(\eta)}\left(\frac{v'(\eta)}{u'(\eta)}-\frac{v(\eta)u''(\eta)}{u'(\eta)^2}\right)d\eta.\]
  The preceding observations show that, uniformly for $x,y\in I$, $0<a\leq b$ and $\eta\in[a,b]$, we have $\frac{v(a)}{u'(a)}=O(a^{-1})$, $\frac{v(b)}{u'(b)}=O(b^{-1})$, $\frac{v'(\eta)}{u'(\eta)}=O(\eta^{-2})$ and $\frac{v(\eta)u''(\eta)}{u'(\eta)^2}=O(\eta^{-2})$. Consequently, there exists $C<+\infty$ such that for any $x,y\in \Omega$ and any $0<a\leq b$,
\[\Big|\int_a^be^{i\psi(x,y,|x-y|^{-1}\eta)}\sigma_P(x,y,|x-y|\partial_{x,y}\psi(x,y,|x-y|^{-1}\eta))|\eta|^{-d-1}d\eta\Big|\leq Ca^{-1}.\]
The proof for $\int_{-b}^{-a}$ is identical.
\end{prf}

\section{Proof of Theorem \ref{t.main.3}, Proposition \ref{p.main} and Theorem \ref{t.main.1}}\label{s.proofs.1}

In this section, we prove Theorem \ref{t.main.3}, Proposition \ref{p.main} and Theorem \ref{t.main.1}. We use only Theorem \ref{t.hormander} and Lemma \ref{l.toolbox}.\\

Let $f:\R\rightarrow\R$ with support in $]0,+\infty[$ differentiable almost everywhere. For each $L\geq 1$, let $K_L^f$ be the integral kernel of $\Pi_Lf(A)$. Later in the section, we will be interested in a special case of $K^f_L$. More precisely, we fix $z=z_1+iz_2\in\C$ and set $K_L=K^f_L$ where $f$ is chosen so that $f(t)=t^z$ for $t>0$ large enough. We begin by linking $K_L^f$ with $E_L$.

\begin{lemma}\label{l.link}
For any $L\in\R$,
\[K_L^f=f(L)E_L-\int_0^Lf'(\lambda)E_\lambda d\lambda.\]
\end{lemma}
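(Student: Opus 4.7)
The plan is to recognize the identity as an integration-by-parts (Abel summation) relation between the discrete sum defining $K_L^f(x,y)$ and the step function $\lambda \mapsto E_\lambda(x,y)$, which for fixed $x,y$ jumps by $e_k(x)\overline{e_k(y)}$ at each eigenvalue $\lambda_k$. Fix $x,y$ and assume $f$ is absolutely continuous on $[0,L]$ (this is implicit in the appearance of $f'$ in the formula). For each $k$ with $\lambda_k \leq L$, the fundamental theorem of calculus gives
\[
f(\lambda_k) = f(L) - \int_{\lambda_k}^L f'(\lambda)\,d\lambda.
\]

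Substituting into $K_L^f(x,y)=\sum_{\lambda_k\leq L}f(\lambda_k) e_k(x)\overline{e_k(y)}$ produces two contributions. Factoring $f(L)$ out of the first yields $f(L)E_L(x,y)$ by definition of $E_L$. For the second, I would interchange the sum and the integral via Fubini's theorem applied to the region $\{(k,\lambda) : \lambda_k \leq \lambda \leq L\}$: slicing by fixed $\lambda \in [0,L]$ and summing over $k$ with $\lambda_k \leq \lambda$ gives exactly $E_\lambda(x,y)$. This produces $\int_0^L f'(\lambda) E_\lambda(x,y)\,d\lambda$, completing the identity.

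Since the eigenvalues form a discrete set diverging to $+\infty$, only finitely many terms contribute for any fixed $L$, so all manipulations are elementary and no convergence subtlety arises beyond the integrability of $|f'|$ on $[0,L]$ needed to apply Fubini. The support condition $\operatorname{supp}(f)\subset\,]0,+\infty[$ ensures the formula is insensitive to the behavior of $f$ near zero. There is no genuine obstacle here; the lemma is essentially a bookkeeping device that will play a pivotal role downstream, since it reduces the asymptotic analysis of $K_L^f$ to asymptotics of $E_\lambda$ (supplied by Theorem \ref{t.hormander}) integrated against the weight $f'$.
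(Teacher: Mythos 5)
Your proof is correct and is in essence the same Abel summation (integration by parts) as the paper's. The only difference is packaging: the paper differentiates $L\mapsto K_L^f$ and $L\mapsto E_L$ as $C^\infty(\man\times\man)$-valued distributions, writes $K_L^f=\int_0^L f(\lambda)E_\lambda'\,d\lambda$, and integrates by parts (using $f(0)=0$ to kill the boundary term at $0$), whereas you apply the fundamental theorem of calculus to each $f(\lambda_k)$ and then swap the finite sum with the integral via Fubini; both routes use the support hypothesis on $f$ in the same essential way, and both carry the same implicit regularity requirement (absolute continuity of $f$ on $[0,L]$), which you are right to flag.
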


This lemma generalizes Proposition 21 of \cite{ale161}.

\begin{prf}
The functions $L\mapsto E_L$ and $L\mapsto K_L^f$ are locally constant and define distributions on $\R$ with values in $C^\infty(\man\times\man)$. We denote by $'$ the weak derivative with respect to $L$ of these kernels. For each $u,v\in C^\infty(\man)$ we let $u\boxtimes v\in C^\infty(\man\times\man)$ be the function $(u\boxtimes v)(x,y)=u(x)v(y)$. For all $L>0$,
\[
E_L=\sum_{\lambda_k\leq L} e_k\boxtimes\overline{e_k};\ K_L^f=\sum_{\lambda_k\leq L}f(\lambda_k)e_k\boxtimes\overline{e_k}\, ,
\]
so that
\[
\left(K_L^f\right)'=\sum_{k\in\N}\delta_{\lambda_k}(L)f(\lambda_k)e_k\boxtimes\overline{e_k}=f(L)\sum_{k\in\N}\delta_{\lambda_k}(L)e_k\boxtimes\overline{e_k}=f(L)E_L'
\]
and
\[K_L^f=\int_0^Lf(\lambda)E_\lambda'd\lambda.\]
By integration by parts,
\[K_L^f=f(L)E_L-f(0)E_0-\int_0^Lf'(\lambda)E_\lambda d\lambda = f(L)E_L-\int_0^Lf'(\lambda) E_\lambda d\lambda\]
since $f(0)=0$.
\end{prf}

We can now prove both Theorem \ref{t.main.3} and Proposition \ref{p.main} using Theorem \ref{t.hormander}. We start with Theorem \ref{t.main.3}.

\begin{prf}[ of Theorem \ref{t.main.3}]
Let $L>0$. Then, by Lemma \ref{l.link}, we have, for each $t\geq L$,
\[
K^f_{L+t}=(L+t)^z E_{L+u}-\int_0^{L+t}f'(\lambda)E_\lambda d\lambda\, .
\]
Now, if instead of $K^f_L$ we consider the special case $K_L$, and if we apply the operator $P$, then, for all large enough values of $L>0$ and all $t\geq 0$,
\[
PK_{L+t}-PK_L=(L+t)^zPE_{L+t}-L^zPE_L-\int_L^{L+t}z\lambda^{z-1}PE_\lambda d\lambda\, .
\]
By Theorem \ref{t.hormander}, we have, uniformly for $(x,y)\in U\times U$ and $t\geq 0$,
\[
(L+t)^zPE_{L+t}(x,y)=O\left(L^{z_1+(n+d)/m}\right)
\]
and
\[
\int_L^{L+t}z\lambda^{z-1}E_\lambda d\lambda=O\left(\int_L^{+\infty}\lambda^{-1+z_1+(n+d)/m}d\lambda\right)=O\left(L^{z_1+(n+d)/m}\right)\, .
\]
In particular, uniformly for $(x,y)\in U\times U$ and $t\geq 0$,
\[
PK_{L+t}(x,y)-PK_L(x,y) = O\left(L^{z_1+(n+d)/m}\right)\, .
\]
Since, $z_1+(n+d)/m<0$, this last estimate implies that the sequence $(PK_L)_{L> 0}$ is a Cauchy sequence in $C^0\left(U\times U\right)$. Therefore, it converges uniformly on compact subsets of $U\times U$ to some function $K_\infty^P\in C^0\left(U\times U\right)$. Since this is actually true for any differential operator of order at most $d$ (indeed, if $d'\leq d$, we still have $z_1+(n+d')/m<0$), all the derivatives of $K_L$, of order up to $d$, converge uniformly on compact sets. But this means that the limit $K_\infty$ of $\left(K_L\right)_{L>0}$ is actually of class $C^d$ and that the limits of the respective derivatives converge to the derivatives of the limit. In particular, $K_\infty^P=P K_\infty$.
\end{prf}

We now move on to Proposition \ref{p.main}.

\begin{prf}[ of Proposition \ref{p.main}]
By Theorem \ref{t.hormander}, uniformly for $x,y\in U$ and $L\geq 1$,
\begin{align*}
PE_L(x,y)&=\frac{1}{(2\pi)^n}\int_{\sigma_A(y,\xi)\leq L}e^{i\psi(x,y,\xi)}\sigma_P(x,y,\partial_{x,y}\psi(x,y,\xi))d\xi+O\left(L^{(n+d-1)/m}\right)\\
         &=\frac{1}{(2\pi)^n}\int_0^{L^{1/m}}J_A(x,y,t)t^{n+d-1}dt+O\left(L^{(n+d-1)/m}\right).
\end{align*}
In the second equality we used the definition of $d\nu$ (see \eqref{e.nu}) and $J_A$ (see \eqref{e.osc}) as well as the fact that $\sigma_P$ is $d$-homogeneous along the fibers. Consequently, uniformly for any $x,y\in U$ and $L\geq 1$,
\[-\int_0^Lf'(\lambda)PE_\lambda(x,y) d\lambda=-\frac{1}{(2\pi)^n}\int_0^Lf'(\lambda)\int_0^{\lambda^{1/m}}J_A(x,y,t)t^{n+d-1}dtd\lambda+O\left(\int_{-\infty}^Lf'(\lambda)\lambda^{(n+d-1)/m}d\lambda\right).\]
Integrating by parts along $\lambda$ the first term in the right hand side, we get
\[-f(L)PE_L(x,y)+\frac{1}{(2\pi)^n}\int_0^Lf(\lambda)\frac{1}{m}\lambda^{\frac{1}{m}-1}J_A(x,y,\lambda^{1/m})\lambda^{(n+d-1)/m}d\lambda+O\left(f(L)L^{(n+d-1)/m}\right).\]
Setting $u=\lambda^{1/m}$ we get
\begin{align*}
\int_0^Lf(\lambda)\frac{1}{m}\lambda^{\frac{1}{m}-1}J_A(x,y,\lambda^{1/m})&\lambda^{(n+d-1)/m}d\lambda=\int_0^{L^{1/m}}f(u^m)J_A(x,y,u)u^{n+d-1}du\\
&=\int_{\sigma_A(y,\xi)\leq L}e^{i\psi(x,y,\xi)}f(\sigma_A(y,\xi))\sigma_P(x,y,\partial_{x,y}\psi(x,y,\xi))d\xi.
\end{align*}
By Lemma \ref{l.link},
\[PK_L^f=f(L)PE_L-\int_0^Lf'(\lambda)PE_\lambda d\lambda.\]
Replacing the integral term by the expression derived above, we see that the $f(L)PE_L$ terms cancel out, leaving the equation from the first result of Proposition \ref{p.main}. For the case where $(x,y)\in U\times U\setminus W$, we just apply the corresponding estimate from Theorem \ref{t.hormander} and proceed accordingly.
\end{prf}

For the proof of Theorem \ref{t.main.1}, we remind teh reader that $K_L=K^f_L$ where $f(t)=t^z$ for $t>0$ large enough.

\begin{prf}[ of Theorem \ref{t.main.1}]
Throughout the proof, we let $\eta=1$ if $n+d+mz=1$ and $0$ otherwise and set $g(L)=L^{(n+d-1)/m+z_1}\ln(L)^\eta$. Let $\Omega$ be a compact neighborhood of $0$ in $U$ such that for any $w,x\in \Omega$ and $L\geq 1$, $w+L^{-1/m}x$ belongs to $U$. Firstly, changing $f$ on a compact set affects $PK_L$ by adding a linear combination of smooth functions (independent of $L$). Thus, we may assume that $f(t)=t^z\one[t\geq 1]$. By Proposition \ref{p.main}, uniformly for $w,x,y\in \Omega$ and $L\geq 1$, $PK_L\left(w+L^{-1/m}x,w+L^{-1/m}y\right)$ equals
\begin{align}\label{e.main.1.6}
\frac{1}{(2\pi)^n}&\int_{1\leq \sigma_A(w+L^{-1/m}y,\xi)\leq L}\sigma_A(w+L^{-1/m}y,\xi)^ze^{i\psi(w+L^{-1/m}x,w+L^{-1/m}y,\xi)}\\\nonumber
&\times\sigma_P\left(w+L^{-1/m}x,w+L^{-1/m}y,\partial_{x,y}\psi\left(w+L^{-1/m}x,w+L^{-1/m}y,\xi\right)\right)d\xi+O(g(L))\, .\\\nonumber
\end{align}
Indeed, since $n+d+z_1>0$, $O(g(L))+O(1)=O(g(L))$. We need to check that replacing each occurrence of $w+L^{-1/m}y$ or $w+L^{-1/m}x$ by $w$ in the integrand will produce an error of order $O(g(L))$. More precisely, we make the following claim.
\begin{claim}\label{cl.main.2}
Uniformly for $w,x,y\in\Omega$, $\xi\in\R^n\setminus\{0\}$ and $L\geq 1$ such that $1\leq \sigma_A(w+L^{-1/m}y,\xi)\leq L$, the quantity
\begin{align}\label{e.main.1.1}
\sigma_A&(w+L^{-1/m}y,\xi)^ze^{i\psi(w+L^{-1/m}x,w+L^{-1/m}y,\xi)}\\\nonumber
&\times\sigma_P\left(w+L^{-1/m}x,w+L^{-1/m}y,\partial_{x,y}\psi\left(w+L^{-1/m}x,w+L^{-1/m}y,\xi\right)\right)
\end{align}
equals
\begin{equation}\label{e.main.1.2}
e^{iL^{-1/m}\langle\xi,x-y\rangle}\sigma_A(w,\xi)^z\sigma_P(w,w,(\xi,-\xi))+O\left(|\xi|^{mz_1+d}L^{-1/m}\right)\, .
\end{equation}
\end{claim}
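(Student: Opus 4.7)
The plan is to approximate each of the three factors in \eqref{e.main.1.1} separately---replacing $w+L^{-1/m}x$ and $w+L^{-1/m}y$ by $w$, and the gradient $\partial_{x,y}\psi$ by its diagonal value $(\xi,-\xi)$---then combine the errors by the product rule. A preliminary observation is that, since $\sigma_A$ is positive and $m$-homogeneous and $\Omega$ is compact, the constraint $\sigma_A(w+L^{-1/m}y,\xi)\leq L$ forces $|\xi|\lesssim L^{1/m}$ uniformly, which will be used at the end to control the phase-error contribution.

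The first two approximations are routine Taylor expansions. For $\sigma_A(w+L^{-1/m}y,\xi)^z$: Taylor in the first variable gives $\sigma_A(w+L^{-1/m}y,\xi)=\sigma_A(w,\xi)(1+O(L^{-1/m}))$ using that $\partial_w\sigma_A$ is also $m$-homogeneous and that $\sigma_A(w,\xi)\gtrsim|\xi|^m$; applying $(1+\varepsilon)^z=1+O(\varepsilon)$ for complex $z$ yields the error $O(L^{-1/m}|\xi|^{mz_1})$. For the symbol $\sigma_P$: the third point of Lemma \ref{l.toolbox} gives $\partial_{x,y}\psi(w,w,\xi)=(\xi,-\xi)$, and Taylor expansion (using that derivatives of $\psi$ in the base variables remain symbols of order one in $\xi$) gives $\partial_{x,y}\psi(X,Y,\xi)=(\xi,-\xi)+O(L^{-1/m}|\xi|)$. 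Then $d$-homogeneity of $\sigma_P$ in its third argument, combined with the shift in the first two arguments, turns this into an error $O(L^{-1/m}|\xi|^d)$ in the value of $\sigma_P$.

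The phase approximation is the main delicate point, because $\psi$ itself is not homogeneous in $\xi$. I would rescale via $\psi_t(X,Y,\eta):=t^{-1}\psi(X,Y,t\eta)$ with $t=L^{1/m}$ and $\eta=L^{-1/m}\xi$, so that $|\eta|$ is bounded (by the preliminary observation) and $\psi(X,Y,\xi)=L^{1/m}\psi_{L^{1/m}}(X,Y,\eta)$. The fifth point of Lemma \ref{l.toolbox} guarantees that $(\psi_t)_{t\geq 0}$ is uniformly bounded in $C^\infty$, so Taylor expansion of $\psi_{L^{1/m}}$ in the first two variables around $(w,w)$---using $\psi_t(w,w,\eta)=0$ (second point) and $\partial_{x,y}\psi_t(w,w,\eta)=(\eta,-\eta)$ (third point)---produces
\[\psi_{L^{1/m}}(X,Y,\eta)=L^{-1/m}\langle\eta,x-y\rangle+O(L^{-2/m}).\]
Multiplying by $L^{1/m}$ yields $\psi(X,Y,\xi)=L^{-1/m}\langle\xi,x-y\rangle+O(L^{-1/m})$, and since $t\mapsto e^{it}$ is $1$-Lipschitz on $\R$, the two exponential factors differ by $O(L^{-1/m})$.

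Finally, expanding the difference between \eqref{e.main.1.1} and \eqref{e.main.1.2} as a telescoping sum of three terms (each with one factor replaced by its approximation and the others bounded by $O(|\xi|^{mz_1})$, $O(1)$, and $O(|\xi|^d)$ respectively) gives three contributions all of size $O(L^{-1/m}|\xi|^{mz_1+d})$, which is the desired estimate. The main obstacle is really the phase approximation; everything else reduces to controlled Taylor expansion once the right rescaling via $\psi_t$ has been chosen to exploit Lemma \ref{l.toolbox}.
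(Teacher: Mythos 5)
Your proof is correct and follows essentially the same route as the paper: split the error into three telescoping terms, Taylor-expand each factor in the base variables, use points~2 and~3 of Lemma~\ref{l.toolbox} to identify the zeroth- and first-order phase terms, and combine via $d$-homogeneity of $\sigma_P$ and $m$-homogeneity of $\sigma_A$. The only cosmetic difference is that you pass through the rescaled phase $\psi_t$ and invoke the uniform $C^\infty$ bound of the family, whereas the paper applies Taylor directly to $\psi$ using that it is a symbol of order one together with $|\xi|=O(L^{1/m})$; these are equivalent.
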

\begin{prf}
Throughout the proof we fix $w,x,y\in\Omega$, $L\geq 1$ and $\xi\in\R^n\setminus\{0\}$ such that $\sigma_A(w+L^{-1/m}y,\xi)\leq L$. Unless otherwise stated, all the $O$ estimates will be uniform with respect to these parameters. First of all, since $\sigma_A$ is a positive $m$-homogeneous symbol in its second variable, $\sigma_P$ is a symbol of order $d$ in its third variable and $\partial_{x,y}\psi$ is a symbol of order $1$ in its third variable, applying Taylor's inequality with respect to the $L$-dependent variables everywhere except the exponential in the quantity \eqref{e.main.1.1} shows that it equals
\begin{equation}\label{e.main.1.4}
\sigma_A(w,\xi)^ze^{i\psi(w+L^{-1/m}x,w+L^{-1/m}y,\xi)}\sigma_P\left(w,w,\partial_{x,y}\psi\left(w,w,\xi\right)\right)+O\left(|\xi|^{mz_1+d}L^{-1/m}\right)\, .
\end{equation}
Here the $|\xi|^{mz_1}$ appears regardless of the sign of $z_1$ because $\sigma_A$ is positive homogeneous. Since $\psi$ is a symbol of order one in $\xi$ and $|\xi|=O\left(L^{1/m}\right)$,
\[
\psi(w+L^{-1/m}x,w+L^{-1/m}y,\xi)=\psi(w,w,\xi)+\partial_x\psi(w,w,\xi)L^{-1/m}x+\partial_y\psi(w,w,\xi)L^{-1/m}y+O\left(L^{-1/m}\right)\, .
\]
By points two and three of Lemma \ref{l.toolbox} we get
\[
\psi(w+L^{-1/m}x,w+L^{-1/m}y,\xi)=L^{-1/m}\langle x-y,\xi\rangle+O\left(L^{-1/m}\right)\, .
\]
Using this estimate in the exponential, together with the fact the rest of the integrand is $O\left(|\xi|^{mz_1+d}\right)$ we obtain that the quantity \eqref{e.main.1.4} equals
\[
e^{iL^{-1/m}\langle\xi,x-y\rangle}\sigma_A(w,\xi)^z\sigma_P(w,w,(\xi,-\xi))+O\left(|\xi|^{mz_1+d}L^{-1/m}\right)
\]
which is exactly \eqref{e.main.1.2}.
\end{prf}
By Claim \ref{cl.main.2} and Equation \eqref{e.main.1.6} $PK_L\left(w+L^{-1/m}x,w+L^{-1/m}y\right)$ equals
\begin{align}\label{e.main.1.3}
PK_L\left(w+L^{-1/m}x,w+L^{-1/m}y\right)=\frac{1}{(2\pi)^n}&\int_{1\leq\sigma_A(w+L^{-1/m}y,\xi)\leq L}e^{iL^{-1/m}\langle\xi,x-y\rangle}\sigma_A(w,\xi)^z\sigma_P(w,w,(\xi,-\xi))d\xi\\\nonumber
&+ O\left(L^{-1/m}\int_{1\leq \sigma_A(w+L^{-1/m}y,\xi)\leq L}|\xi|^{mz_1+d}d\xi\right)\, .
\end{align}
But since $mz_1+d+n>0$ and $\sigma_A$ is $m$-homogeneous, the remainder is $O\left(L^{z_1+(n+d-1)/m}\right)=O(g(L))$. For each $y\in \Omega$ and each $L\geq 1$ let $\Delta(y,L)$ be the symmetric difference of the sets $\{\xi\in\R^n\ |\ 1\leq\sigma_A(w,\xi)\leq L\}$ and $\{\xi\in\R^n\ |\ 1\leq\sigma_A(w+L^{-1/m}y,\xi)\leq L\}$. Since $\sigma_A$ is positive $m$-homogeneous in $\xi$ and smooth in $y$, there exists $0<C<+\infty$ such that for each $L\geq 1$ and $w\in \Omega$, $Vol(\Delta(w,L))\leq C L^{(n-1)/m}$ and for each $\xi\in\Delta(w,L)$, $C^{-1}L^{1/m}\leq |\xi|\leq C L^{1/m}$. Consequently, in Equation \eqref{e.main.1.3} we can replace the integration domain by $\{\xi\in\R^n\ |\ 1\leq\sigma_A(w,\xi)\leq L\}$ and produce an error of order $O\left(L^{z_1+(n+d-1)/m}\right)=O(g(L))$ uniformly for $y\in \Omega$ and $L\geq 1$. In other words,
\[
  PK_L\left(w+L^{-1/m}x,w+L^{-1/m}y\right)=\frac{1}{(2\pi)^n}\int_{1\leq \sigma_A(w,\xi)\leq L}e^{iL^{-1/m}\langle\xi,x-y\rangle}\sigma_A(w,\xi)^z\sigma_P(w,w,(\xi,-\xi))d\xi+O(g(L))\, .
\]
Moreover, since $mz_1+d+n>0$ and the integrand scales like $|\xi|^{mz_1+d}$ near $0$, adding the region $\sigma_A(w,\xi)\leq 1$ to the integration domain creates a bounded error. Following this by the change of variable $\xi=L^{1/m}\zeta$ shows that uniformly for $w,x,y\in\Omega$ and $L\geq 1$
\[
PK_L\left(w+L^{-1/m}x,w+L^{-1/m}y\right)=\frac{1}{(2\pi)^n}\int_{\sigma_A(w,\zeta)\leq 1}e^{i\langle\zeta,x-y\rangle}\sigma_A(w,\zeta)^z\sigma_P(w,w,(\zeta,-\zeta))d\zeta L^{z+(n+d)/m}+O(g(L))\, .
\]
This proves the first statement of the theorem for $V=\mathring{\Omega}$. To prove the second statement, observe that by Lemma \ref{l.link}, uniformly for $L\geq 1$ and $x,y\in \Omega$,
\[
PK_L(x,y)=f(L)PE_L-\int_0^Lf'(\lambda)PE_\lambda(x,y) d\lambda=L^zPE_L(x,y)-\int_1^L\lambda^{z-1}PE_\lambda(x,y)d\lambda+O(1)\,
\]
Next, fix $W\subset V\times V$ a neighborhood of the diagonal. By Theorem \ref{t.hormander}, there exists $C'>0$ such that for any $(x,y)\in (V\times V)\setminus W$ and any $L\geq 1$, $|PE_L(x,y)|\leq C'L^{(n+d-1)/m}$, which implies
\[
|PK_L(x,y)|\leq C'\left(L^{z_1+(n+d-1)/m}+\int_1^L\lambda^{z_1-1+(n+d-1)/m} d\lambda\right)=O(g(L))\, .
\]
This proves the second statement of Theorem \ref{t.main.1}.
\end{prf}

\section{Proof of Theorem \ref{t.main.2}}\label{s.proofs.2}

In this section we prove Theorem \ref{t.main.2}. We use the admissibility condition through Proposition \ref{p.color.1}. Suppose that $n+d+mz=0$, so that $z=-\frac{d+n}{m}$. By Proposition \ref{p.main}, uniformly for $x,y\in U$,
\[PK_L(x,y)=\frac{1}{(2\pi)^n}\int_{\sigma_A(y,\xi)\leq L}e^{i\psi(x,y,\xi)}\sigma_P(x,y,\partial_{x,y}\psi(x,y,\xi))f(\sigma_A(y,\xi))d\xi+O\left(L^{-1/m}\right).\]
 Let $C<+\infty$ be such that  $f(t)=t^z$ for $t>C$. Then,
\begin{align*}
PK_L(x,y)=\frac{1}{(2\pi)^n}\int_{C\leq\sigma_A(y,\xi)\leq L}e^{i\psi(x,y,\xi)}\sigma_P(x,y,\partial_{x,y}\psi(x,y,\xi))\sigma_A(y,\xi)^{-(d+n)/m}d\xi&\\
+Q_1(x,y)+O\left(L^{-1/m}\right)&
\end{align*}
where
\[
Q_1(x,y)=\frac{1}{(2\pi)^n}\int_{\sigma_A(y,\xi)\leq C}e^{i\psi(x,y,\xi)}\sigma_P(x,y,\partial_{x,y}\psi(x,y,\xi))f(\sigma_A(y,\xi))d\xi\, .
\]
We will split the integral term in the last expression of $PK_L$ as follows. For any $x,y\in U$, let
\begin{align*}
I_L(x,y)=\frac{1}{(2\pi)^n}\int_{C\leq \sigma_A(y,\xi)\leq L}\one[\sigma_A(y,\xi)|x-y|^m\geq 1]e^{i\psi(x,y,\xi)}\sigma_P(x,y,\partial_{x,y}\psi(x,y,\xi))\sigma_A(y,\xi)^{-(d+n)/m}d\xi\\
II_L(x,y)=\frac{1}{(2\pi)^n}\int_{C\leq\sigma_A(y,\xi)\leq L}\one[\sigma_A(y,\xi)|x-y|^m< 1]e^{i\psi(x,y,\xi)}\sigma_P(x,y,\partial_{x,y}\psi(x,y,\xi))\sigma_A(y,\xi)^{-(d+n)/m}d\xi.
\end{align*}
Then, uniformly for $x,y\in U$,
\begin{equation}\label{e.main.2.1}
PK_L(x,y)=I_L(x,y)+II_L(x,y)+Q_1(x,y)+O\left(L^{-1/m}\right).
\end{equation}
Theorem \ref{t.main.2} is an easy consequence of the following two lemmas.

\begin{lemma}\label{l.control I}
Let $k_0\in\N$, $k_0\geq 2$. Suppose that either $n=1$ or $\sigma_A$ is $\frac{1}{k_0}$-admissible. There exist an open neighborhood $V\subset U$ of $0\in\R^n$, a function $Q_2\in L^\infty(V\times V)$ and a constant $C<+\infty$ such that for any $x,y\in V$ and $L\geq 1$,
\[
\Big|I_L(x,y)-Q_2(x,y)\Big|\leq C \min\left(L^{-1/k_0m}|x-y|^{-1/k_0},1\right)\, .
\] 
\end{lemma}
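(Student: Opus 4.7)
The plan is to turn $I_L(x,y)$ into a one-dimensional integral against the oscillatory factor $J_A$ via a polar decomposition adapted to the symbol $\sigma_A$, and then to use the pointwise decay of $J_A$ supplied by Proposition~\ref{p.jprop} (and its one-dimensional analog Lemma~\ref{l.decay_one}) to pass to the limit $L\to\infty$ with an explicit tail.

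First I would write $\xi = t\omega$ with $t>0$ and $\omega\in S^*_y$, using \eqref{e.nu} so that $d\xi = t^{n-1}\,dt\,d_y\nu(\omega)$. On $S^*_y$ one has $\sigma_A(y,t\omega)=t^m$, so the range $\sigma_A(y,\xi)\in[C,L]$ becomes $t\in[C^{1/m},L^{1/m}]$ and the indicator $\sigma_A(y,\xi)|x-y|^m\geq 1$ becomes $t|x-y|\geq 1$. The weight $\sigma_A(y,\xi)^{-(n+d)/m}$ equals $t^{-(n+d)}$, and because $\sigma_P$ is $d$-homogeneous in its third argument, the integration against $d_y\nu$ factors as $t^d J_A(x,y,t)$ in the notation \eqref{e.amplitude}--\eqref{e.osc}. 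Combining the exponents yields
\[
I_L(x,y)=\frac{1}{(2\pi)^n}\int_{\max(C^{1/m},\,|x-y|^{-1})}^{L^{1/m}} J_A(x,y,t)\,\frac{dt}{t}.
\]
After shrinking $V$ so that $|x-y|\leq C^{-1/m}$ on $V\times V$, the lower limit simplifies to $|x-y|^{-1}$.

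Next I substitute $u=t|x-y|$, obtaining
\[
I_L(x,y)=\frac{1}{(2\pi)^n}\int_1^{L^{1/m}|x-y|} J_A\!\left(x,y,\tfrac{u}{|x-y|}\right)\frac{du}{u}
\]
when $L^{1/m}|x-y|\geq 1$, and $I_L(x,y)=0$ otherwise. By Proposition~\ref{p.jprop} we have $|J_A(x,y,t)|\leq C_0(t|x-y|)^{-1/k_0}$ uniformly for distinct $x,y\in V$ and $t>0$, so the integrand is $O(u^{-1-1/k_0})$ and the improper integral converges absolutely at $+\infty$. I then set
\[
Q_2(x,y):=\frac{1}{(2\pi)^n}\int_1^{+\infty} J_A\!\left(x,y,\tfrac{u}{|x-y|}\right)\frac{du}{u}\quad\text{for }x\neq y,\qquad Q_2(x,x):=0,
\]
which is bounded by $C_0 k_0/(2\pi)^n$ on $V\times V$. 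When $L^{1/m}|x-y|\geq 1$, the difference $Q_2-I_L$ is the tail $\int_{L^{1/m}|x-y|}^{+\infty}$, bounded by $C_1(L^{1/m}|x-y|)^{-1/k_0} = C_1 L^{-1/(k_0m)}|x-y|^{-1/k_0}$, and in this regime the latter quantity is at most $1$, so the minimum on the right-hand side of the lemma is attained there. When $L^{1/m}|x-y|<1$, $I_L=0$ and $|I_L-Q_2|\leq\|Q_2\|_\infty$, while the minimum equals $1$, so the bound follows by enlarging the constant.

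The only nontrivial input is the pointwise decay of $J_A$. In dimension $n\geq 2$ this comes from the admissibility hypothesis through Proposition~\ref{p.color.1} followed by Lemma~\ref{l.jati}. In dimension one $J_A$ is a finite sum over the two points of $S^*_y$ and has no oscillatory decay of its own; instead I would carry out the change of variables $\xi=s/|x-y|$ directly in the definition of $I_L$, in which case the homogeneities of $\sigma_A$, $\sigma_P$ and the factor $|x-y|^{-1}$ from $d\xi$ cancel out, reducing $I_L(x,y)$ to the integral estimated by Lemma~\ref{l.decay_one}; the same Cauchy/tail argument then produces $Q_2$ and the bound with $k_0=1$. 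The main obstacle is thus conceptual rather than computational: obtaining the oscillatory decay on the level sets of $\sigma_A$, which is exactly what the admissibility assumption and Section~\ref{s.singularities} are designed to provide.
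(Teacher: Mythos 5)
Your proposal is correct and follows essentially the same route as the paper: a polar decomposition using~\eqref{e.nu} and~\eqref{e.osc} to reduce $I_L$ to $\frac{1}{(2\pi)^n}\int J_A(x,y,t)\frac{dt}{t}$ over the appropriate range, the substitution $u=t|x-y|$, the decay bound $|J_A(x,y,t)|\lesssim (t|x-y|)^{-1/k_0}$ from Proposition~\ref{p.jprop}, and the definition of $Q_2$ as the convergent improper integral, with the one-dimensional case handled separately via Lemma~\ref{l.decay_one} exactly as you describe. The bookkeeping of exponents and of the two regimes $L^{1/m}|x-y|\gtrless 1$ matches the paper's argument (which treats both at once via $\int_{\max(L^{1/m}|x-y|,1)}^\infty s^{-1-1/k_0}\,ds$).
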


In dimension one, we prove the lemma using Lemma \ref{l.decay_one} while in the case of admissible symbols we use Proposition \ref{p.jprop}. This proof is the only place where we use these results.

\begin{lemma}\label{l.control II}
  There exist an open neighborhood $V\subset U$ of $0$ and a constant $C<+\infty$ such that for all $x,y\in V$ and $L\geq 1$,
\[
\left|II_L(x,y)-\frac{1}{(2\pi)^n}Y_P(x,y)\left[\ln\left(L^{1/m}\right)-\ln_+\left(L^{1/m}|x-y|\right)\right]\right|\leq C\, .
\]
Moreover $II_L(x,y)$ is independent of $L$ as long as $L\geq 1$ and $L|x-y|^m\geq 1$.
\end{lemma}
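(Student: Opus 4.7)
The plan is to convert $II_L(x,y)$ into a radial integral in the cotangent fiber over $y$, identify the angular integration as the function $J_A$ from Equation~\eqref{e.osc}, and then replace $J_A(x,y,t)$ by its value at $t=0$, which by Equation~\eqref{e.j_is_y} equals precisely $Y_P(x,y)$.

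First I would use Equation~\eqref{e.nu} to parametrize $\xi=t\omega$ with $t>0$ and $\omega\in S^*_y$, so that $\sigma_A(y,\xi)=t^m$. The indicators in the definition of $II_L$ then become $C^{1/m}\leq t\leq L^{1/m}$ and $t|x-y|<1$; after using the $d$-homogeneity of $\sigma_P$ in its third variable to absorb $t^d$ and combining it with the $t^{n-1}$ Jacobian and the factor $\sigma_A(y,\xi)^{-(d+n)/m}=t^{-(d+n)}$, the integrand reduces to $t^{-1}H_P(x,y,\omega,t)$. Integration over $\omega\in S^*_y$ against $d_y\nu$ yields
\[
II_L(x,y)=\frac{1}{(2\pi)^n}\int_{C^{1/m}}^{\min(L^{1/m},|x-y|^{-1})}\frac{J_A(x,y,t)}{t}\,dt\, ,
\]
with the convention that the integral is $0$ when the upper limit fails to exceed $C^{1/m}$. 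Shrinking $V$ if necessary I can arrange $C^{1/m}|x-y|<1$ on $V\times V$, so the upper cutoff is governed purely by $L^{1/m}$ vs.\ $|x-y|^{-1}$.

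Next I would split $J_A(x,y,t)=Y_P(x,y)+R(x,y,t)$. By the second point of Lemma~\ref{l.jatz}, combined with the boundedness of $|\omega|$ for $\omega\in S^*_y$ as $y$ varies in a compact set, the remainder satisfies $R(x,y,t)=O(t|x-y|)$ uniformly in $(x,y)$ compact and $t\geq 0$. The remainder contribution is then bounded by
\[
\int_{C^{1/m}}^{\min(L^{1/m},|x-y|^{-1})}\frac{|R(x,y,t)|}{t}\,dt \;=\; O\!\left(|x-y|\cdot\min(L^{1/m},|x-y|^{-1})\right)\;=\;O(1)\, ,
\]
while the main term is exactly $\frac{Y_P(x,y)}{(2\pi)^n}\bigl[\ln\min(L^{1/m},|x-y|^{-1})-\ln C^{1/m}\bigr]$. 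Using the identity $\ln\min(L^{1/m},|x-y|^{-1})=\ln L^{1/m}-\ln_+(L^{1/m}|x-y|)$ and the boundedness of $Y_P$ on $V\times V$, this yields the claimed approximation up to an $O(1)$ term. The $L$-independence in the regime $L|x-y|^m\geq 1$ is then immediate, since $L^{1/m}\geq |x-y|^{-1}$ makes the integration range reduce to $[C^{1/m},|x-y|^{-1}]$, which no longer involves $L$.

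The argument is essentially bookkeeping once the polar reduction is in hand; the one delicate point is that the remainder estimate from Lemma~\ref{l.jatz} provides precisely a factor $t|x-y|$ after integration over $S^*_y$, which is exactly strong enough to cancel the $1/t$ in the radial integrand so that $\int |x-y|\,dt$ over a range of length at most $|x-y|^{-1}$ is $O(1)$ uniformly in $L$. Any weaker bound on $J_A-Y_P$ would fail to produce a uniform constant, and any reliance on the oscillatory nature of $\psi$ is avoided because in the regime $t|x-y|<1$ the phase $\psi(x,y,t\omega)$ is itself $O(t|x-y|)=O(1)$.
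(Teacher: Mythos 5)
Your proof is correct and follows essentially the same route as the paper's: both hinge on the second point of Lemma~\ref{l.jatz} to replace $J_A(x,y,t)$ (equivalently $H_P(x,y,\omega,t)$ for $\omega\in S^*_y$) by its value at $t=0$, namely $Y_P(x,y)$, with error $O(t|x-y|)$ that integrates to $O(1)$ against $dt/t$ over $[C^{1/m},\min(L^{1/m},|x-y|^{-1})]$, leaving the logarithmic main term. The only superficial difference is that you perform the polar reduction to the radial integral of $J_A(x,y,t)/t$ at the outset, while the paper first substitutes $H_P(\cdot,\cdot,\cdot,0)$ for $H_P(\cdot,\cdot,\cdot,\sigma_A(y,\xi)^{1/m})$ in Cartesian coordinates and changes variables afterward.
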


Let us first prove that these lemmas imply Theorem \ref{t.main.2}.

\begin{prf}[ of Theorem \ref{t.main.2}]
Let $V$ be the intersection of the $V$'s appearing in Lemmas \ref{l.control I} and \ref{l.control II}. Firstly, Lemma \ref{l.control I} implies that $I_L(x,y)$ is uniformly bounded for $x,y\in V$ and $L\geq 1$. Secondly, Lemma \ref{l.control II} implies that, uniformly for $x,y\in V$ and $L\geq 1$,
\[
II_L(x,y)=\frac{1}{(2\pi)^n}Y_P(x,y)\left[\ln\left(L^{1/m}\right)-\ln_+\left(L^{1/m}|x-y|\right)\right]+O(1)\, .
\]
Plugging these two estimates in Equation \eqref{e.main.2.1} we get the first point of Theorem \ref{t.main.2}. For the second point, we begin by observing that by Lemma \ref{l.control II}, there exists a bounded function $Q_3\in L^\infty(V\times V)$ such that for each $L\geq |x-y|^{-m}$,
\[
II_L(x,y)=-\frac{1}{(2\pi)^n}Y_P(x,y)\ln\left(|x-y|\right)+Q_3(x,y)\, .
\]
Moreover, if $L\geq |x-y|^{-m}$ then $L^{-1/k_0}|x-y|^{-1/k_0m}\leq 1$ so by Lemma \ref{l.control I}, uniformly for any such $x,y$ and $L$,
\[
I_L(x,y)=Q_2(x,y)+O\left(L^{-1/k_0}|x-y|^{-1/k_0m}\right)\, .
\]
Applying these two estimates to Equation \eqref{e.main.2.1} we deduce that, uniformly for $x,y\in V$ and $L\geq 1$ such that $|x-y|\geq L^{-1/m}$,
\[
PK_L(x,y)=-\frac{1}{(2\pi)^n}Y_P(x,y)\ln\left(|x-y|\right)+Q(x,y)+O\left(L^{-1/k_0}|x-y|^{-1/k_0m}\right)
\]
where $Q=Q_1+Q_2+Q_3\in L^\infty(V\times V)$. This proves the estimate in the second point of Theorem \ref{t.main.2}.
\end{prf}

\begin{prf}[ of Lemma \ref{l.control I}]
Suppose first that $\man$ has dimension $n=1$ and fix $\Omega\subset U$ a compact neighborhood of $0$. For $x\neq y$, setting $\eta=|x-y|\xi$, the integral $I_L(x,y)$ equals
\[
\int_{a(x,y)}^{b(x,y,L)} e^{i\psi(x,y,|x-y|^{-1}\eta)}
\sigma_P\left(x,y,|x-y|\partial_{x,y}\psi\left(x,y,|x-y|^{-1}\eta\right)\right)\sigma_A(y,\eta)^{-(d+1)/m}d\eta.
\]
where $a(x,y)$ and $b(x,y,L)$ are the positive numbers defined by $\sigma_A(y,a(x,y))=\max\left(C|x-y|^m,1\right)$ and $\sigma_A(y,b(x,y,L))=\max\left(|x-y|^mL,1\right)$. Since $\sigma_A$ is elliptic positive homogeneous of degree $m>0$ there exists $C_1>0$ such that for each $x,y\in \Omega$ and $L\geq 1$,
\[
b(x,y,L)\geq C_1 \min\left(|x-y| L^{-1/m}\right)\, .
\]  
By Lemma \ref{l.decay_one}, $I_L(x,y)$ converges to some limit $Q_2(x,y)$ as $L\rightarrow\pfty$ in such a way that the remainder term is $O\left(\min\left(|x-y|^{-1}L^{-1/m},1\right)\right)$. The case where $x=y$ follows by continuity and we have proved the lemma in the one-dimensional case with $V=\mathring{\Omega}$.\\

Suppose now that $n\geq 2$ and $\sigma_A$ is $\frac{1}{k_0}$-admissible for some integer $k_0\geq 2$. By Equations \eqref{e.nu} and \eqref{e.osc}, for any $L\geq 1$ and $x,y\in U$,
\begin{align*}
  I_L(x,y)&=\frac{1}{(2\pi)^n}\int_{C^{1/m}}^{L^{1/m}}\one[|x-y|t\geq 1]J_A(x,y,t)\frac{dt}{t}\\
&=\frac{1}{(2\pi)^n}\int_{C^{1/m}|x-y|}^{L^{1/m}|x-y|}\one[s\geq 1]J_A\left(x,y,|x-y|^{-1}s\right)\frac{ds}{s}
\end{align*}
By Proposition \ref{p.jprop}, there exist an open neighborhood $V\subset U$ of $0$ and a constant $C_3>0$ such that, uniformly for distinct $x,y\in V$ and $t>0$, $|J_A(x,y,t)|\leq C_3\left(|x-y|t\right)^{-1/k_0}$. Therefore, for each $x,y\in V$ and $L>0$,
\begin{align*}
  \left|(2\pi)^nI_L(x,y)-\int_{C^{1/m}|x-y|}^{\pfty}\one[s\geq 1]J_A\left(x,y,|x-y|^{-1}s\right)\frac{ds}{s}\right|&\leq C_3\int_{\max\left(|x-y|L^{\frac{1}{m}},1\right)}^{\pfty}s^{-1-1/k_0}ds\\
  &=\frac{C_3}{k_0}\min\left(1,L^{-1/k_0m}|x-y|^{-1/k_0}\right)\, .
\end{align*}
  By continuity, this stays true for $x=y$. This proves the lemma for $\sigma_A$ admissible with  
\[
Q_2(x,y)=\int_{C^{1/m}|x-y|}^{\pfty}\one[s\geq 1]J_A\left(x,y,|x-y|^{-1}s\right)\frac{ds}{s}\, .
\]
\end{prf}

\begin{prf}[ of Lemma \ref{l.control II}]
The proof of the second statement is obvious from the definition of $II_L$ and the expression $\ln\left(L^{\frac{1}{m}}\right)-\ln_+\left(L^{\frac{1}{m}}|x-y|\right)$. We now prove the first statement. For each $y\in U$ and each $0\leq r_1\leq r_2$, we set
\[
\calA_y(r_1,r_2)=\{\xi\in\R^n\, |\, r_1\leq \sigma_A(y,\xi)\leq r_2\}\, .
\]
Recall that
\[
II_L(x,y)=\frac{1}{(2\pi)^n}\int_{\calA_y(C,L)}\one[\sigma_A(y,\xi)|x-y|^m< 1]e^{i\psi(x,y,\xi)}\sigma_P(x,y,\partial_{x,y}\psi(x,y,\xi))\sigma_A(y,\xi)^{-(n+d)/m}d\xi.\, .
\] 
By Equation \eqref{e.amplitude}, the integrand equals
\[
\one[\sigma_A(y,\xi)|x-y|^m< 1]H_P\left(x,y,\sigma_A(y,\xi)^{-1/m}\xi,\sigma_A(y,\xi)^{1/m}\right)\sigma_A(y,\xi)^{-n/m}\, .
\]
Since $\sigma_A$ is positive homogeneous of degree $m$, $\sigma_A(y,\xi)^{-1/m}\xi$ is uniformly bounded for $y\in\Omega$ and $\xi\in\R^n\setminus\{0\}$. By the second point of Lemma \ref{l.jatz}, uniformly for $x,y\in\Omega$ and $\xi\in\R^n\setminus\{0\}$,
\[
H_P\left(x,y,\sigma_A(y,\xi)^{-1/m}\xi,\sigma_A(y,\xi)^{1/m}\right)=H_P\left(x,y,\sigma_A(y,\xi)^{-1/m}\xi,0\right)+O\left(|x-y|\sigma_A(y,\xi)^{1/m}\right)\, .
\]
Again by $m$-homogeneity and positivity, $|x-y|\one[\sigma_A(y,\xi)|x-y|^m< 1]\sigma_A(y,\xi)^{(1-n)/m}$ is uniformly integrable in $\xi$ for $x,y\in\Omega$ so
\[
II_L(x,y)=\frac{1}{(2\pi)^n}\int_{\calA_y(C,L)}\one[\sigma_A(y,\xi)|x-y|^m< 1]H_P\left(x,y,\sigma_A(y,\xi)^{-1/m}\xi,0\right)\sigma_A(y,\eta)^{-n/m}d\xi+O(1)\, .
\]
Fix two distinct points $x,y\in U$. The change of variables $\eta=|x-y|\xi$ in the integral yields
\[
\int_{|x-y|\calA_y(C,L)}\one[\sigma_A(y,\eta)<1]H_P(x,y,|x-y|\eta,0)\sigma_A(y,\eta)^{-n/m}d\eta
\]
which, by definition of $J_A$ (see Equation \eqref{e.osc}), equals
\[
J_A(x,y,0)\int_{C^{1/m}|x-y|}^{L^{1/m}|x-y|}\one[|x-y|s<1]\frac{ds}{s}\, .
\]
Observe that for any $0<a\leq b$,
\[
\int_a^b\one[t<1]\frac{dt}{t}=\ln(b)-\ln_+(b)-\ln(a)+\ln_+(a)
\]
where $\ln_+(s)=\max(\ln(s),0)$. In our setting, uniformly for distinct $x,y\in\Omega$,
\[
\int_{C^{1/m}}^{L^{1/m}}\one[|x-y|s< 1]\frac{ds}{s}=\int_{C^{1/m}|x-y|}^{L^{1/m}|x-y|}\one[t< 1]\frac{dt}{t}=\ln\left(L^{1/m}\right)-\ln_+\left(L^{1/m}|x-y|\right)+O(1)\, .
\]
Hence, uniformly for any $(x,y)\in \Omega\times \Omega$ and $L\geq 1$,
\[
II_L(x,y)=\frac{1}{(2\pi)^n}J_A(x,y,0)\left[\ln\left(L^{1/m}\right)-\ln_+\left(L^{1/m}|x-y|\right)\right]+O(1).\]
Finally, by Equation \eqref{e.j_is_y} $J_A(x,y,0)=Y_P(x,y)$ so the lemma is proved with $V=\mathring{\Omega}$. 
\end{prf}

\section{Admissible symbols}\label{s.singularities}

In this section, we deal with results concerning admissible symbols (see Definition \ref{d.admissible}). These results are useful in the proofs of Theorems \ref{t.fat} and \ref{t.main.2}. More precisely, in Subsection \ref{ss.color} we prove Proposition \ref{p.color.1} which says that admissible symbols have non-degenerate level sets and is used in the proof of Theorem \ref{t.main.2}. Then, in Proposition \ref{p.fat} of Subsection \ref{ss.whitney} we prove that admissibility is both stable and generic in a suitable topology. Theorem \ref{t.fat} follows directly from Proposition \ref{p.fat}.

\subsection{Proof of Proposition \ref{p.color.1}}\label{ss.color}

The object of this subsection is to prove Proposition \ref{p.color.1}. To prove this result, we will use partitions of unity and local charts to carry the integral onto $\R^n$ and then apply the following lemma, which we prove later in the section.

\begin{lemma}\label{l.color}
Let $n\in\N$, $n\geq 1$. Let $U\subset\R^n$ be an open neighborhood of $0$ and $(f_\eta)_{\eta\in E}$ be a continuous family of smooth functions on $U$ indexed by $E\subset\R^p$, an open neighborhood of $0$. Fix $k\geq 1$ and assume that $d^k_0f_0\neq 0$. Then, there exist $E'\subset E$ and $U'\subset U$ two open neighborhoods of the origin in $\R^p$ and $\R^n$ respectively, such that for each $u\in C^\infty_c(U')$ there exists $C(u)<+\infty$ such that for each $\lambda>0$ and each $\eta\in E'$,
\[
\left|\int_{U'} e^{i\lambda f_\eta(x)}u(x)dx\right|\leq C(u) \lambda^{-\frac{1}{k}}\, .
\]
Moreover, $C(u)$ depends continuously on $u$ in the $C^\infty_c(U')$ topology.
\end{lemma}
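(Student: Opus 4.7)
The plan is to reduce the $n$-dimensional oscillatory integral to a one-dimensional van der Corput estimate in a carefully chosen direction, and then to integrate the transverse slices.

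Since $d^k_0 f_0$ is a nonzero symmetric $k$-tensor, polarization furnishes a unit vector $v \in \R^n$ with $(\partial_v)^k f_0(0) \neq 0$; after a linear change of coordinates I may assume $v = e_n$. By continuity of the family $(\eta,x) \mapsto \partial_{x_n}^k f_\eta(x)$ at $(0,0)$, I can then select a product neighborhood $U' = V \times I \subset U$ (with $V \subset \R^{n-1}$ and $I \subset \R$ a small interval around $0$) and an $E' \subset E$, together with a constant $c > 0$, such that
\[
\bigl|\partial_{x_n}^k f_\eta(x',t)\bigr| \geq c \quad \text{for all } \eta \in E',\ (x',t) \in V \times I,
\]
with the sign of $\partial_{x_n}^k f_\eta$ constant throughout. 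In the case $k = 1$ this additional sign condition already gives monotonicity of $t \mapsto f_\eta(x',t)$ on $I$; for $k \geq 2$ it gives monotonicity of $\partial_t^{k-1} f_\eta(x',\cdot)$ on $I$.

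By Fubini the integral splits as $\int_V\!\int_I e^{i\lambda f_\eta(x',t)}\,u(x',t)\,dt\,dx'$. I would apply the classical van der Corput lemma with amplitude to the inner integral: for $k \geq 2$ this yields
\[
\left|\int_I e^{i\lambda f_\eta(x',t)}\,u(x',t)\,dt\right| \leq C_k\, c^{-1/k}\,\lambda^{-1/k}\Bigl(\|u(x',\cdot)\|_{L^\infty(I)} + \|\partial_t u(x',\cdot)\|_{L^1(I)}\Bigr),
\]
while the case $k = 1$ follows from two integrations by parts against the phase, using the lower bound on $|\partial_t f_\eta|$ and its monotonicity to control the boundary and interior terms. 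Integrating the resulting bound in $x' \in V$ produces the estimate
\[
\left|\int_{U'} e^{i\lambda f_\eta(x)}\,u(x)\,dx\right| \leq C(u)\,\lambda^{-1/k},
\]
where $C(u)$ is a fixed constant times a seminorm of $u$ of $C^1$-type on the compact set $U'$. Since such a seminorm is continuous with respect to the $C^\infty_c(U')$ topology, this gives the continuity of $u \mapsto C(u)$.

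The main obstacle is ensuring that \emph{all} constants remain uniform in $\eta$: one must check that the van der Corput constant depends only on the uniform lower bound $c$ and on a uniform upper bound on the number of intervals of monotonicity of $\partial_t^{k-1} f_\eta$ in $I$. Both are secured by the initial shrinking of $E'$ and $U'$ around the point where the relevant derivative is nonzero. Once this uniformity is in place, the bound above is valid simultaneously for all $\eta \in E'$ with the same $C(u)$, which completes the argument.
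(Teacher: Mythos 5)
Your argument is correct, and it takes a genuinely different — and more elementary — route than the paper's. The opening step is shared: polarization (the paper's Claim \ref{cl.color.2}) furnishes a direction $v$ with $(\partial_v)^kf_0(0)\neq 0$, reducing the estimate to a family of one-dimensional oscillatory integrals with transverse parameters. The paper then proceeds through singularity theory: it normalizes the one-dimensional phase via the parametrized Malgrange preparation theorem (Theorem \ref{t.color.1}) to the form $t^k+\sum_{j<k}a_j t^j$, and invokes Colin de Verdi\`ere's theorem on universal unfoldings of $A_{k-1}$ singularities (Theorem \ref{t.color.2}) for the uniform $\lambda^{-1/k}$ bound. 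You instead shrink $E'$ and $U'=V\times I$ until $|\partial_{x_n}^k f_\eta|\geq c>0$ uniformly — continuity of $\eta\mapsto f_\eta$ into $C^\infty$ gives joint continuity of $(\eta,x)\mapsto\partial_{x_n}^k f_\eta(x)$, so this is legitimate — and then apply van der Corput's lemma with amplitude to each slice. The resulting constant is governed by $c$, $k$, and a $C^1$-type seminorm of $u$, hence continuous on $C^\infty_c(U')$ as required; this sidesteps the singularity-theoretic machinery entirely, which here provides finer structural information the lemma does not actually need. One small wrinkle in your $k=1$ case: the constant sign of $\partial_t f_\eta$ gives monotonicity of $t\mapsto f_\eta(x',t)$, not of $\partial_t f_\eta$, so the usual monotonicity hypothesis of first-order van der Corput is not directly supplied. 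But no monotonicity is needed: after a single integration by parts the term $\int |u|\,|\partial_t^2 f_\eta|/|\partial_t f_\eta|^2$ is bounded simply because $\partial_t^2 f_\eta$ is uniformly bounded on the compact closure of $U'$ (again by continuity of the family), which still yields the bound $C(u)\lambda^{-1}$.
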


We now begin the proof of Proposition \ref{p.color.1}.

\begin{prf}[ of Proposition \ref{p.color.1}]
Take $\Omega$, $\gamma$, $(f_\eta)_\eta$ and $(u_\eta)_\eta$ as in Definition \ref{d.ndls}. Recall that $d_x\mu$ is the area measure on $S^*_x$. By using partitions of unity on $\R^n$, we may fix  $\xi_0\in\R^n\setminus\{0\}$ and assume that the functions $u_\eta$ are supported near $\xi_0$. Let $\xi_1,\dots,\xi_{n-1}\in\R^n$ be such that $(\xi_0,\xi_1,\dots,\xi_{n-1})$ forms a basis for $\R^n$. For any $x\in S^*U$, let
\[
\beta_x:(t_1,\dots,t_{n-1})\in\R^{n-1}\mapsto \sigma(x,\xi_0+t_1\xi_1+\dots+t_{n-1}\xi_{n-1})^{-\frac{1}{m}}(\xi_0+t_1\xi_1+\dots+t_{n-1}\xi_{n-1})\in S^*_x\, .
\]
The map $\beta_x$ defines a local coordinate system at $\sigma(x,\xi_0)^{-\frac{1}{m}}\xi_0\in S^*_x$. Moreover, the map $x\mapsto \beta_x\in C^\infty(\R^{n-1})$ is continuous. The density $g_x=\frac{\beta_x^*(\gamma(x,\cdot)d_x\mu)}{dt}\in C^\infty(\R^{n-1})$ also depends continously on $x$ in $C^\infty(\R^{n-1})$. Now, for any $\lambda>0$, $\eta\in E$ and $(x,\tau)\in \Omega$, if $u_\eta$ is supported close enough to $\xi_0$,
\[
\int_{S^*_x}e^{i\lambda f_\eta(x,\tau,\xi)}u_\eta(\xi)\gamma(x,\xi)d_x\mu(\xi)=\int_{\R^{n-1}} e^{i\lambda f_\eta(x,\tau,\beta_x(t))}u_\eta(\beta_x(t))g_x(t)dt\, .
\]
We now set $\tilde{E}=U\times\R^n\times E$, for any $\tilde{\eta}=(x,\tau,\eta)\in\tilde{E}$, $\tilde{f}_{\tilde{\eta}}=f_\eta(x,\tau,\beta_x(\cdot))\in C^\infty(\R^{n-1})$ and $\tilde{u}_{\tilde{\eta}}=u_\eta(\beta_x(\cdot))g_x\in C^\infty(\R^{n-1})$. By compactness, it is enough to fix $(x_0,\tau_0)\in \Omega$ and prove estimate \eqref{e.decay} for $\tilde{\eta}=(x,\eta,\tau)$ close enough to $\tilde{\eta}_0=(x_0,0,\tau_0)$. Also, without loss of generality, we may assume $x_0=0$. Our task is therefore to find $C>0$ such that for each $\tilde{\eta}$ close enough to $\tilde{\eta}_0$ and each $\lambda>0$,
\[
\left|\int_{\R^{n-1}}e^{i\lambda \tilde{f}_{\tilde{\eta}}(t)}\tilde{u}_{\tilde{\eta}}(t)dt\right|\leq C\lambda^{-\frac{1}{k_0}}\, .
\]
We wish to apply Lemma \ref{l.color}. The estimate is obvious for $\lambda \leq 1$ while, for $\lambda \geq 1$, replacing $k_0$ by some smaller integer would improve the estimate. Thus, we need only to check that there exists $k\in\{1,\dots,k_0\}$ such that
\begin{equation}\label{e.color.5}
d^k_0\tilde{f}_{\tilde{\eta}_0}\neq 0\, .
\end{equation}
Let $g=\tilde{f}_{\tilde{\eta}_0}$. Since $f_0(x,\tau_0,\xi)=\langle\tau_0,\xi\rangle$, we have, for all $t\in\R^{n-1}$,
\[
g(t)=(\langle \tau_0,\xi_1\rangle t_1+\dots+\langle \tau_0,\xi_{n-1}\rangle t_{n-1}+\langle\tau_0,\xi_0\rangle)\sigma(0,\xi_0+t_1\xi_1+\dots+t_{n-1}\xi_{n-1})^{-\frac{1}{m}}\, .
\]
We proceed by contradiction and assume that $d^j_0g=0$ for each $j\in\{1,\dots,k\}$. To understand how this condition affects $\sigma$ we use the following claim which we prove at the end.

\begin{claim}\label{cl.color.1}
Let $U\subset\R^n$ be an open neighborhood of $0$ and $f\in C^\infty(U)$ be positive valued. Let $\alpha\in\R\setminus\{0\}$ and $k\in\N$ such that $k\geq 1$. Assume that there exist $b\in\R$ and $\tau\in\R^n$ such that $(\tau,b)\neq (0,0)$ such that, writing $h:x\in\R^n\mapsto \langle\tau,x\rangle+b\in\R$ we have, for each $j\in\{1,\dots,k\}$,
\begin{equation}\label{e.color.1}
d^j_0[hf^\alpha]=0\, .
\end{equation}
Then,
\begin{equation}\label{e.color.2}
f(0)^{k-1}d^k_0f = (\alpha+1)(2\alpha+1)\dots((k-1)\alpha+1)(d_0f)^{\otimes k}\, .
\end{equation}
\end{claim}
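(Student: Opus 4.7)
The plan is to reduce the statement to a direct computation of derivatives at $0$ of a power of an affine function. The key step is to invert the hypothesis into an asymptotic expansion of $f$ near $0$, after which everything is algebra.

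First I would dispose of the edge case. Expanding with the Leibniz rule and using $d^l h\equiv 0$ for $l\geq 2$, the order-one condition reads $\tau f(0)^\alpha + b\alpha f(0)^{\alpha-1}d_0 f = 0$, which forces $\tau = -\alpha b f(0)^{-1}d_0 f$ since $f(0)>0$. In particular, if $b=0$ one gets $\tau=0$, contradicting $(\tau,b)\neq(0,0)$. Hence $b\neq 0$, and by replacing $h$ with $-h$ (which preserves both hypothesis and conclusion, the latter not involving $\tau$ or $b$) one may assume $b>0$, so that $h>0$ near $0$ and real powers of $h$ are well defined.

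Second, by Taylor's theorem the conditions $d_0^j[hf^\alpha]=0$ for $j\in\{1,\dots,k\}$ are equivalent to $h(x)f(x)^\alpha = bf(0)^\alpha + O(|x|^{k+1})$. Dividing by the nonvanishing $h$ and extracting the positive $1/\alpha$-th root (legitimate since $f>0$) yields $f(x)=b^{1/\alpha}f(0)\,h(x)^{-1/\alpha}+O(|x|^{k+1})$. Consequently $d_0^j f = b^{1/\alpha}f(0)\,d_0^j h^{-1/\alpha}$ for every $j\leq k$.

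Third, since $h$ is affine with $h(0)=b$ and $d_0 h=\tau$, a short induction on $j$ using $d^2 h\equiv 0$ yields $d_0^j h^\beta = \beta(\beta-1)\cdots(\beta-j+1)\,b^{\beta-j}\,\tau^{\otimes j}$ for any real $\beta$. Applying this with $\beta=-1/\alpha$ and $j=k$, and substituting $\tau^{\otimes k}=(-\alpha b f(0)^{-1})^{k}(d_0 f)^{\otimes k}$, the powers of $b$, the factor $\alpha^k$, and the sign $(-1)^k$ telescope. What remains is $f(0)^{k-1}d_0^k f = \prod_{l=0}^{k-1}(1+l\alpha)\,(d_0 f)^{\otimes k}$, which is exactly \eqref{e.color.2}.

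There is no genuine obstacle here; the proof is essentially bookkeeping once the Taylor estimate is inverted into an asymptotic expansion of $f$. The only subtle point is the reduction to $b>0$, which guarantees that the $1/\alpha$-th root $h^{-1/\alpha}$ used in the asymptotic is well defined as a real smooth function near $0$.
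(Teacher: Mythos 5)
Your proof is correct and follows essentially the same route as the paper's: establish $b\neq 0$ from the order-one condition, compare $f$ to the explicit function $h^{-1/\alpha}$ (whose derivatives at $0$ are computed by hand using that $h$ is affine), and conclude by substituting $\tau = -\alpha b f(0)^{-1}d_0 f$. The paper phrases the comparison via the identity $hg^\alpha = 1$ and invokes homogeneity to normalize the constant, whereas you invert the Taylor condition into an explicit asymptotic $f = b^{1/\alpha}f(0)h^{-1/\alpha}+O(|x|^{k+1})$; this is a bookkeeping difference, and your explicit handling of the sign of $b$ (reducing to $b>0$ to make $h^{-1/\alpha}$ well defined as a real function) is a genuine small refinement, since the paper's claim that $g$ is "well defined and positive" silently assumes this.
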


We wish to use this claim with $\alpha=-\frac{1}{m}$, $h(t)=\langle \tau_0,\xi_1\rangle t_1+\dots+\langle \tau_0,\xi_{n-1}\rangle t_{n-1}+\langle\tau_0,\xi_0\rangle$ and $f(t)=\sigma(0,\xi_0+t_1\xi_1+\dots+t_{n-1}\xi_{n-1})$. In order to apply it, the only thing to check is that $h$ is not identically $0$. But $h=0$ would imply that $\langle \tau_0,\xi_0\rangle=\dots=\langle\tau_0,\xi_{n-1}\rangle=0$. This cannot happen since $\tau_0\neq 0$. Hence, by Claim \ref{cl.color.1} we have the following equality between (symmetric) $k$-forms on the hyperplane $H$ spanned by $(\xi_1,\dots,\xi_{n-1})$,
\begin{equation}\label{e.color.8}
\sigma(0,\xi_0)^{k-1}\partial^k_\xi\sigma(0,\xi)=C(m,k)(\partial_\xi\sigma(0,\xi_0))^{\otimes k}
\end{equation}
where
\[
C(m,k)=\left(-\frac{1}{m}+1\right)\cdots\left(-\frac{k-1}{m}+1\right)=\frac{m(m-1)\dots(m-k+1)}{m^k}\, .
\]
Next, we make the following claim, which we prove at the end.
\begin{claim}\label{cl.color}
Let $m$ be a positive real number and let $f\in C^\infty(\R^p\setminus\{0\})$ be a real-valued $m$-homogeneous function. Then, for each $x\in\R^p\setminus\{0\}$, each hyperplane $H\subset\R^p$ not containing $x$ and each $k_0\geq 2$,
\begin{equation}\label{e.color.9}
\forall k\in\{2,\dots,k_0\},\ f(x)^{k-1}d^k_xf=\frac{m(m-1)\dots(m-k+1)}{m^k}(d_xf)^{\otimes k}
\end{equation}
is equivalent to
\begin{equation}\label{e.color.10}
\forall k\in\{2,\dots,k_0\},\ f(x)^{k-1}d^k_xf\big|_H=\frac{m(m-1)\dots(m-k+1)}{m^k}(d_xf)^{\otimes k}\big|_H\, .
\end{equation}
\end{claim}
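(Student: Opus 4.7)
The forward direction is immediate by restriction to $H$. For the converse, my plan is to use the decomposition $\R^p = H \oplus \R x$ (available because $H$ is a hyperplane not containing $x$) to reduce the full identity \eqref{e.color.9} to the restricted identity \eqref{e.color.10} combined with information from the homogeneity of $f$. Concretely, any $v\in\R^p$ writes uniquely as $v = h + tx$ with $h\in H$, $t\in\R$; by multilinearity and symmetry of both sides of \eqref{e.color.9}, it suffices to verify the identity evaluated on tuples of the form $(\underbrace{x,\ldots,x}_{j}, h_1,\ldots, h_{k-j})$ with $h_i\in H$, for $j\in\{0,\ldots,k\}$ and $k\in\{2,\ldots,k_0\}$.

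The main computational device is an Euler-type contraction. Since $f$ is $m$-homogeneous, $d^{k-1}_xf$ is $(m-k+1)$-homogeneous, so Euler's identity applied to it gives
\[
d^k_x f(x, w_1,\ldots, w_{k-1}) = (m-k+1)\, d^{k-1}_x f(w_1,\ldots, w_{k-1}),
\]
which iterates to
\[
d^k_x f(\underbrace{x,\ldots,x}_{j}, w_1,\ldots, w_{k-j}) = \Big[\prod_{i=1}^{j}(m-k+i)\Big]\, d^{k-j}_x f(w_1,\ldots, w_{k-j}).
\]
Euler's identity also gives $d_xf(x) = mf(x)$, hence $(d_xf)^{\otimes k}(x^j, h_1,\ldots, h_{k-j}) = (mf(x))^j(d_xf)^{\otimes(k-j)}(h_1,\ldots, h_{k-j})$.

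I would then split into three cases according to $j$. For $j=0$ the desired equation is exactly the hypothesis \eqref{e.color.10} at level $k$. For $j\in\{k-1,k\}$ the contraction reduces $d^k_xf$ on the tuple directly to $d_xf(h)$ or to $f(x)$, and the equation holds tautologically from Euler's identity, without invoking \eqref{e.color.10}. For $1\leq j\leq k-2$ the contraction reduces both sides to the hypothesis \eqref{e.color.10} at the lower level $k-j\in\{2,\ldots,k-1\}$ applied to $(h_1,\ldots, h_{k-j})$; matching coefficients then reduces to the telescoping identity
\[
\Big[\prod_{i=1}^{j}(m-k+i)\Big]\cdot\frac{m(m-1)\cdots(m-k+j+1)}{m^{k-j}} = m^j\cdot\frac{m(m-1)\cdots(m-k+1)}{m^k}.
\]
The argument uses no deep ingredient beyond multilinearity and iterated Euler; the main obstacle is only the bookkeeping of coefficients across the three cases.
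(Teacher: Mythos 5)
Your proof is correct and matches the paper's argument: both use the direct sum decomposition $\R^p = H\oplus\R x$ and the iterated Euler identity to contract $x$-slots, reducing the full equation to the restricted hypothesis at a lower level together with a telescoping coefficient check. Your case split on $j$ is slightly more systematic than the paper's (which parametrizes by $h=k-j\in\{1,\dots,k\}$ and compresses the verification), and you additionally handle the all-$x$ tuple $j=k$ explicitly, but there is no substantive difference in method.
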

This claim implies that $\sigma$ actually satisfies Equation \eqref{e.color.8} on the whole of $T^*_\xi\R^n\simeq\R^n$. By the assumption on $\sigma$, this Equation cannot be satisfied for all $k\leq k_0$. Hence, $d^k_0g$ cannot vanish for each $k\in\{1,\dots,k_0\}$. In particular, there exists $k\in\{1,\dots,k_0\}$ for which $\tilde{f}_{\tilde{\eta}}$ satisfies Equation \eqref{e.color.5}. Hence, Lemma \ref{l.color} applies for this $k$ and we are done.

\begin{prf}[ of Claim \ref{cl.color.1}]
Let $f$, $\tau$, $b$, $\alpha$, $h$ and $k$ be as in the statement of the claim. Let $g(x)=h(x)^{-\frac{1}{\alpha}}$. First of all, by Equation \eqref{e.color.1} with $j=1$,
\[
f(0)\tau = -\alpha b d_0f
\]
In particular, since $(\tau,b)\neq 0$ and $f(0)>0$, we actually have $b\neq 0$. Thus, the function $g:x\mapsto h(x)^{-\frac{1}{\alpha}}$ is well defined and positive near the origin. Moreover, $hg^\alpha=1$ so all of its derivatives vanish. Consequently, for each $j\in\{1,\dots,k\}$, $d^j_0(f^\alpha g^{-\alpha})=0$ which in turn gives, for each $j\in\{1,\dots,k\}$, $d^j_0(fg^{-1})=0$ (here we use the fact that $fg^{-1}=\left(f^\alpha g^{-\alpha}\right)^{\frac 1\alpha}$ which is well defined near $0$). In particular, the Taylor expansions of $f$ and $g$ coincide to the $k$th order up to a multiplicative constant. By homogeneity of Equation \eqref{e.color.2} we may assume that they agree up to order $k$. But
\begin{align*}
d^k_0g &= \prod_{j=0}^{k-1}\left(-\frac{1}{\alpha}-j\right)\times b^{-\frac{1}{\alpha}-k}\tau^{\otimes k}\\
&=\left(b^{-\frac{1}{\alpha}}\right)^{1-k}(\alpha+1)(2\alpha+1)\dots((k-1)\alpha+1)\left(-\alpha b^{\frac{1}{\alpha}+1}\right)^{-k}\tau^{\otimes k}
\end{align*}
and $g(0)=b^{-\frac{1}{\alpha}}$ and $d_0g = \left(-\alpha b^{\frac{1}{\alpha}+1}\right)^{-1}\tau$.
Thus,
\[
g(0)^{k-1}d^k_0g =(\alpha+1)(2\alpha+1)\dots((k-1)\alpha+1)(d_0g)^{\otimes k}\, .
\]
Since $f$ agrees with $g$ up to order $k$, $f$ satisfies Equation \eqref{e.color.2}.
\end{prf}
\begin{prf}[ of Claim \ref{cl.color}]
Equation \eqref{e.color.9} implies \eqref{e.color.10} by restriction to $H$. Let us assume \eqref{e.color.10} and prove the converse. Since $x\notin H$, $\R x\bigoplus H$ generate $\R^p$. By multilinearity, it is enough to prove \eqref{e.color.9} when the $k$ forms are evaluated on families of the form $(x,\dots,x,y_1,\dots,y_h)$ where $y_1,\dots,y_h\in H$ and $h\in\{1,\dots,k\}$. Now, since $f$ is homogeneous, by Euler's Equation, for any $h\leq k$, and for any $y_1,\dots,y_h\in H$,
\begin{align*}
d^k_xf(x,\dots,x,y_1,\dots,y_h)&=\underbrace{(m-h)\dots(m-k+1)}_{1\text{ if }k=h}d^h_xf(y_1,\dots,y_h)\\
\text{and }(d_xf)^{\otimes k}(x,\dots,x,y_1,\dots,y_h)&=m^{k-h}f^{k-h}(x)(d_xf)^{\otimes h}(y_1,\dots,y_h).
\end{align*}
Applying \eqref{e.color.10} to compare the right hand sides of each line we get Equation \eqref{e.color.9}.
\end{prf}
\end{prf}

The proof of Lemma \ref{l.color} will combine two theorems from singularity theory and oscillatory integral asymptotics which we state now.\\

The following theorem is a corollary of the Malgrange preparation theorem presented in \cite{ho_apdo1}. We give a slightly different formulation and add the continuity with respect to smooth perturbations, which actually follows from H\"ormander's original proof.
\begin{theorem}[\cite{ho_apdo1}, Theorem 7.5.13]\label{t.color.1}
Let $U\subset\R\times\R^n$ (resp. $E\subset\R^p$) be an open neighborhood of $0\in\R\times\R^n$ (resp. $0\in\R^p$) and $(f_\eta)_{\eta\in E}$ be a continuous family of smooth functions on $U$. We denote by $(t,x)$ the elements of $U$. Let $k\in\N$, $k\geq 2$. Assume that for each $\eta\in E$ and $j\in\{0,\dots,k-1\}$
\[
\partial^j_tf_\eta(0,0)=0
\]
and that $\partial^k_tf_\eta(0,0)>0$. Then, there exist $W\subset\R\times\R^n$ (resp. $V\subset\R^n$) a neighbohood of $0\in\R\times\R^n$ (resp. $0\in\R^n$) with $U'\subset \R\times V$ such that for each $\eta\in E$, there exist $\phi_\eta\in C^\infty(W)$ as well as $a^1_\eta,\dots,a^{k-1}_\eta\in C^\infty(V)$, satisfying, for any $\eta\in E$, $(t,x)\in W$,
\begin{align*}
\phi_\eta(0,0)&=0\, ,\\
\partial_t\phi_\eta(0,0)&>0\, ,\\
a^1_\eta(0)=\dots=a^{k-1}_\eta(0)&=0\, ,\\
\text{and }f_\eta(\phi_\eta(t,x),x)&=t^k+\sum_{j=0}^{k-1}a_\eta^j(x)t^j\, .
\end{align*}
Moreover, one can choose these functions such that the maps $\eta\mapsto \phi_\eta$ and $\eta\mapsto a_\eta^j$ are continuous into $C^\infty$.
\end{theorem}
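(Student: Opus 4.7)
The plan is to deduce the claim from H\"ormander's form of the Malgrange preparation theorem (Theorem 7.5.13 in \cite{ho_apdo1}), then convert the multiplicative decomposition it produces into the stated change-of-variables form, while verifying $C^\infty$-continuity in $\eta$ at every step.

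I would first apply Malgrange preparation parametrically: under the stated hypotheses, for $\eta$ near $0$ there exist smooth $q_\eta(t,x)$ and $c_\eta^j(x)$, $j = 0,\dots,k-1$, on a common neighborhood of the origin, satisfying
\[
f_\eta(t,x) = q_\eta(t,x)\Bigl(t^k + \sum_{j=0}^{k-1} c_\eta^j(x)\, t^j\Bigr),\quad q_\eta(0,0) > 0,\quad c_\eta^j(0) = 0.
\]
H\"ormander's construction is explicit: it extends $f_\eta$ to an almost-holomorphic function in the $t$-variable and recovers $q_\eta$ and the $c_\eta^j$ by contour integrals followed by a division procedure. Each of these operations is continuous in the input, so continuity of $\eta \mapsto f_\eta$ in $C^\infty$ propagates to continuity of $\eta \mapsto (q_\eta, c_\eta^j)$ in $C^\infty$.

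To produce $\phi_\eta$ and the polynomial coefficients $a_\eta^j$, I would then set up a simultaneous implicit-function / fixed-point scheme. The target identity $f_\eta(\phi_\eta(t,x), x) = t^k + \sum_{j} a_\eta^j(x) t^j$ rewrites, via the Malgrange form, as
\[
q_\eta(\phi_\eta,x)\Bigl(\phi_\eta^k + \sum_{j=0}^{k-1} c_\eta^j(x)\, \phi_\eta^j\Bigr) = t^k + \sum_{j=0}^{k-1} a_\eta^j(x)\, t^j,
\]
which I would solve starting from the leading-order ansatz $\phi_\eta(t,x) \approx q_\eta(0,x)^{-1/k}\, t$. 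At each step the $a_\eta^j(x)$ are determined by matching the Taylor coefficients at $t=0$ up to order $k-1$, and $\phi_\eta$ is refined by a division / implicit-function step that absorbs the higher-order remainder into the leading term. The non-vanishing of $\partial_t^k f_\eta(0,0)$, which ensures $q_\eta(0,0)>0$, makes the relevant linearization an isomorphism so the iteration yields a unique smooth solution on a common neighborhood $W$. The properties $\phi_\eta(0,0)=0$, $\partial_t\phi_\eta(0,0)>0$, and $a_\eta^j(0)=0$ for $j=1,\dots,k-1$ follow directly from $\phi_\eta(0,0)=0$ and the vanishing of $\partial_t^j f_\eta(0,0)$ for $j<k$.

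The main obstacle I anticipate is the continuity in $\eta$: the neighborhoods $W$ and $V$, the fixed-point iteration, and the resulting $\phi_\eta$ and $a_\eta^j$ must all depend continuously on $\eta$ in the $C^\infty$ topology, uniformly over a neighborhood of $0$. This reduces to continuity of the Malgrange step (built into H\"ormander's explicit construction) combined with the standard smooth parametric dependence of the implicit function theorem; reconciling the two and checking that the domains can be chosen uniformly for $\eta$ in a neighborhood of the origin is where most of the real work of the proof lies.
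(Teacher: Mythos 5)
Your plan misidentifies what Theorem 7.5.13 of H\"ormander actually states. That theorem already gives the change-of-variables normal form directly: it produces a function $T(t,x)$ with $T(0,0)=0$, $\partial_t T(0,0)>0$, and coefficients $a_j(x)$ vanishing at $0$, such that $f(t,x)=\frac{1}{k}T(t,x)^k+\sum_{j<k}a_j(x)T(t,x)^j$. The paper's proof is then almost immediate: invert $T(\cdot,x)$ to get $\phi$, rescale $t$ to kill the $\frac1k$, and note that H\"ormander's construction of $T$ proceeds by solving ODEs whose data depend continuously on $f$, so the whole thing varies continuously in $\eta$. What you are calling ``Malgrange preparation'' (the multiplicative form $f_\eta=q_\eta\cdot(t^k+\sum c^j_\eta(x)t^j)$ with $q_\eta(0,0)\neq 0$) is an earlier theorem (7.5.6/7.5.7 in H\"ormander's numbering), not 7.5.13.

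The real gap is the step where you pass from $f_\eta=q_\eta P_\eta$ to the change-of-variables form: this is not a routine implicit-function or contraction argument, it is essentially the content of Theorem 7.5.13 itself, and your sketch would not go through as described. To see the obstruction, suppose $q$ depends only on $t$. Matching the $x=0$ slice forces $q(\phi(s))\phi(s)^k=s^k$, which determines $\phi$; but then for $j<k$ one computes $q(\phi(s))\phi(s)^j=s^j\bigl(s/\phi(s)\bigr)^{k-j}$, which is $s^j$ times a nonconstant smooth function of $s$, so $f(\phi(s),x)$ is not a polynomial of degree $k$ in $s$. The $\phi$ you need must depend on $x$ in a nontrivial way even when $q$ does not, and finding it requires the full homotopy/vector-field (Moser-type) argument that H\"ormander actually uses to prove 7.5.13 — matching Taylor coefficients and claiming the linearization is an isomorphism does not close this. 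The more efficient route is the one the paper takes: invoke Theorem 7.5.13 as stated, invert $T$, rescale, and argue continuity in $\eta$ from the constructive (ODE-based) nature of its proof.
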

\begin{prf}
First, apply Theorem 7.5.13 of \cite{ho_apdo1} to each $f_\eta$ and define $\tilde{\phi}_\eta(\cdot,x)$ as the inverse map of $T(\cdot,x)$ for the $T$ corresponding to $f_\eta$. That the maps $\tilde{\phi}_\eta$ and $\tilde{a^j}_\eta$ depend continuously on $\eta$ follows from the proof of the aforementioned result. Indeed, they are built as solutions of ODEs whose initial conditions depend continuously on $f$ in $C^\infty$. Finally, by rescaling the new variable $t$ and thus replacing $\tilde{\phi}_\eta$ (resp. $\tilde{a^j}_\eta$) by $\phi_\eta$ (resp. $a^j_\eta$) we get rid of the $\frac{1}{k}$ factor appearing in front of $T^k$ in Theorem 7.5.13 of \cite{ho_apdo1}.
\end{prf}

The following theorem is the special case of Theorem 4 of \cite{col77} (and the remarks 2.3 and 2.4 that follow it) of type $A_n$ singularities. 
\begin{theorem}[\cite{col77}, Theorem 4]\label{t.color.2}
Let $k\in\N$, $k\geq 2$. There exist $\delta=\delta(k)>0$, $V\subset\R^k$ an open neighborhood of $0$ such that for all $u\in C^\infty_c(]-\delta,\delta[)$ there exists $C(u)<+\infty$ such that for all $\lambda>0$ and $(a_0,\dots,a_{k-1})\in V$,
\[
\left|\int_\R e^{i\lambda(t^k+a_{k-1}t^{k-1}+\dots+a_0)}u(t)dt\right|\leq C(u)\lambda^{-\frac{1}{k}}\, .
\]
Moreover $C(u)$ depends continuously on $u\in C^\infty_c(]-\delta,\delta[)$.
\end{theorem}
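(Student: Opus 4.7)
The plan is to deduce Theorem \ref{t.color.2} directly from the classical van der Corput lemma of order $k$, exploiting the simple algebraic observation that $\partial_t^k[t^k + a_{k-1}t^{k-1}+\dots+a_0] = k!$ identically in $t$ and independently of the parameters $(a_0,\dots,a_{k-1})$. This makes uniformity in $a$ essentially automatic, so $V$ may be taken to be any neighbourhood of $0$ in $\R^k$ (indeed, all of $\R^k$), and $\delta>0$ arbitrary.

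First I would recall van der Corput's lemma in its standard form: for every $k\geq 2$ there exists $c_k<\infty$ such that for every interval $I\subset\R$, every $\phi\in C^k(I)$ with $|\phi^{(k)}|\geq 1$ on $I$, and every $\psi\in C^1$ with compact support in $I$,
\[
\left|\int_I e^{i\phi(t)}\psi(t)\,dt\right|\leq c_k \bigl(\|\psi\|_\infty + \|\psi'\|_{L^1(I)}\bigr).
\]
The version without $\psi$ is the textbook statement (see e.g.\ Stein, \emph{Harmonic Analysis}, Chapter VIII); the version with the amplitude $\psi$ follows by integration by parts applied to the primitive of $e^{i\phi}$. Setting $P_a(t):=t^k+a_{k-1}t^{k-1}+\dots+a_0$ and $\tilde P_a:=P_a/k!$, so that $\tilde P_a^{(k)}\equiv 1$, and writing $\lambda P_a=\Lambda\tilde P_a$ with $\Lambda:=\lambda k!$, the lemma applied to $\phi=\Lambda\tilde P_a$ on $I=\,]-\delta,\delta[$ yields
\[
\left|\int_\R e^{i\lambda P_a(t)}u(t)\,dt\right|\leq c_k\Lambda^{-1/k}\bigl(\|u\|_\infty+\|u'\|_{L^1}\bigr)=c_k(k!)^{-1/k}\lambda^{-1/k}\bigl(\|u\|_\infty+\|u'\|_{L^1}\bigr),
\]
which is the required estimate with $C(u):=c_k(k!)^{-1/k}(\|u\|_\infty+\|u'\|_{L^1})$, plainly a continuous seminorm on $C^\infty_c(]-\delta,\delta[)$.

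There is no real obstacle here: the hypothesis of van der Corput depends on the phase only through its $k$-th derivative, and $\partial_t^k$ annihilates the lower-order perturbations $a_{k-1}t^{k-1}+\dots+a_0$, so uniformity in $a$ is built in. Note that Colin de Verdi\`ere's original result in \cite{col77} treats genuinely more general $A_n$-type singular phases; here we only need the polynomial case because the Malgrange preparation theorem (Theorem \ref{t.color.1}) has already been used upstream to reduce general phases to polynomial normal forms, at which point van der Corput alone suffices.
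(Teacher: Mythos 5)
Your proof is correct, and it takes a genuinely different route from the paper. The paper treats Theorem \ref{t.color.2} as a black-box citation: it identifies the phase $t^k + a_{k-1}t^{k-1}+\dots+a_0$ as the universal unfolding of an $A_{k-1}$ singularity and invokes Colin de Verdi\`ere's general uniform-decay theorem for oscillatory integrals with singular phases, which relies on normal forms and stationary-phase machinery. You instead observe that $\partial_t^k P_a \equiv k!$ independently of $a$, so the $k$-th order van der Corput lemma (with amplitude, obtained by integrating by parts against the primitive of $e^{i\phi}$) applies with a constant depending only on $k$; this yields the estimate with $V = \R^k$, any $\delta > 0$, and the explicit continuous seminorm $C(u) = c_k(k!)^{-1/k}(\|u\|_\infty + \|u'\|_{L^1})$. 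Your observation that the Malgrange reduction done upstream via Theorem \ref{t.color.1} has already placed the phase in polynomial normal form, so the heavier Colin de Verdi\`ere machinery is not strictly needed at this step, is accurate: the citation in the paper does more (an asymptotic expansion, and coverage of non-polynomial unfoldings) than the statement requires, whereas van der Corput gives exactly the needed upper bound with an elementary, self-contained argument and in fact with no restriction on the parameter neighbourhood $V$.
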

\begin{prf}
In the terminology of \cite{col77}, the map $(t,a_0,\dots,a_{k-1})\mapsto t^k+a_{k-1}t^{k-1}+\dots+a_0$ is the universal unfolding of the singularity type $A_{k-1}$. In the notations of \cite{col77} our $k$ corresponds to their $n$ while their $k$ equals $1$ in our setting. Moreover, as stated in the table preceding Theorem 4 of \cite{col77}, in the case $A_{k-1}$, $\eps(\sigma)=\frac{1}{2}-\frac{1}{k}$ so that the integral decays polynomially in $\lambda$ at order $-\frac{1}{2}+\eps(\sigma)=-\frac{1}{k}$. 
\end{prf}

\begin{prf}[ of Lemma \ref{l.color}]
Let $(f_\eta)_\eta$ and $k\geq 1$ be as in the statement of the lemma. To make use of the assumption $d^k_0f_0\neq 0$ we use the following elementary result in multilinear algebra which we prove at the end.

\begin{claim}\label{cl.color.2}
Let $\omega$ be a symmetric $k$-linear form on $\R^n$. Let $q:\R^n\rightarrow\R$ be defined as $q(x)=\omega(x,\dots,x)$. Then, $q=0$ implies $\omega=0$.
\end{claim}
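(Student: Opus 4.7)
The plan is to prove this by induction on $k$, using the standard polarization trick. The base case $k = 1$ is immediate since $q$ and $\omega$ coincide as linear functionals. For the inductive step, I would fix an arbitrary $y \in \R^n$ and exploit the identity $q(x + ty) = 0$ for all $x \in \R^n$ and all real $t$: expanding via multilinearity and symmetry of $\omega$ yields
\[
0 = q(x+ty) = \sum_{j=0}^k \binom{k}{j}\, t^j\, \omega(\underbrace{x,\dots,x}_{k-j},\underbrace{y,\dots,y}_{j}),
\]
a polynomial in $t$ vanishing identically, so every coefficient must vanish.

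In particular, reading off the coefficient of $t^1$ gives $\omega(x,\dots,x,y) = 0$ for all $x \in \R^n$. Defining the symmetric $(k-1)$-linear form $\omega_y(x_1,\dots,x_{k-1}) := \omega(x_1,\dots,x_{k-1},y)$, the associated homogeneous polynomial $x \mapsto \omega_y(x,\dots,x) = \omega(x,\dots,x,y)$ vanishes identically, so the induction hypothesis forces $\omega_y \equiv 0$. Since $y \in \R^n$ was arbitrary, this yields $\omega = 0$.

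There is no real obstacle: the argument is a purely algebraic restatement of the fact that a symmetric multilinear form is determined by the polynomial $q$. As an alternative route, one could just cite the explicit polarization formula
\[
k!\,\omega(x_1,\dots,x_k) = \sum_{S \subseteq \{1,\dots,k\}} (-1)^{k-|S|}\, q\Bigl(\sum_{i \in S} x_i\Bigr)
\]
and conclude immediately, but the inductive derivation above is self-contained and makes the polarization step transparent.
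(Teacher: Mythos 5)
Your inductive argument is correct and takes a genuinely different route from the paper. The paper proves the claim via an explicit closed-form polarization identity, showing that
\[
\sum_{\eta\in\{-1,1\}^k}\Bigl(\prod_{i=1}^k\eta_i\Bigr)\,q\Bigl(\sum_{j=1}^k\eta_jx_j\Bigr)
\]
is a nonzero multiple of $\omega(x_1,\dots,x_k)$, by expanding $q$ multinomially and noting that the sign-sum $\sum_\eta\prod_i\eta_i^{p_i+1}$ kills every term except $p_1=\dots=p_k=1$. (Incidentally, the paper's displayed constant omits the multinomial factor $k!$, a harmless slip since only non-vanishing matters.) Your approach instead peels off one variable at a time: you expand $q(x+ty)$ in powers of $t$, read off the coefficient of $t$ to get $\omega(x,\dots,x,y)=0$, and then apply the inductive hypothesis to the contracted $(k-1)$-form $\omega(\cdot,\dots,\cdot,y)$. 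The paper's route gives a one-shot explicit reconstruction of $\omega$ from $q$, which is slightly more information than the claim requires; your route is more economical for the stated purpose and avoids the sign-counting. Your parenthetical forward-difference formula $k!\,\omega(x_1,\dots,x_k)=\sum_{S\subseteq\{1,\dots,k\}}(-1)^{k-|S|}q(\sum_{i\in S}x_i)$ is also a valid variant, essentially the same idea as the paper's but summing over subsets rather than over $\{-1,1\}^k$. Both your main proof and your alternative are sound.
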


By Claim \ref{cl.color.2} there exists $v\in\R^n\setminus\{0\}$ such that $d^k_0f_0(v,v,\dots,v)\neq 0$. Without loss of generality, we may assume that $v=e_n:=(0,\dots,0,1)$. We write $x=(\tilde{x},x_n)\in\R^n=\R^{n-1}\times\R$. Let $u\in C^\infty_c(U)$ be such that $u(\tilde{x},x_n)\neq 0$ implies that $\|\tilde{x}\|_{\infty}\leq 1$. Then, for each $\eta\in E$ and $\lambda>0$,
\[
\left|\int_Ue^{i\lambda f_\eta(x)}u(x)dx\right|\leq \max_{\tilde{x}\in\R^{n-1}}\left|\int_\R e^{i\lambda f_\eta(\tilde{x},x_n)}u(\tilde{x},x_n)dx_n\right|\, .
\]
This way by replacing $\eta$ by $(\eta,\tilde{x})$ and $f_\eta$ by $f_\eta(\tilde{x},\cdot)$ we have reduced the problem to the one dimensional case. From now on, we assume that $n=1$.\\

For each $\eta\in E$, each $x\in U$, and $q=(q_0,\dots,q_{k-1})\in\R^k$, let
\[
g_\eta(x,q)=f_\eta(x)-f_\eta(0)-f_\eta'(0)x-\dots -\frac{1}{(k-1)!}f_\eta^{(k-1)}(0)x^{k-1}+q_0+a_1x+\dots+q_{k-1} x^{k-1}\, .
\]
We will first prove the desired bound where we replace $f_\eta$ by  $g_\eta(\cdot,q)$, uniformly for $q$ close enough to $0$ and then deduce the result for $f_\eta$ itself as a phase.\\

The map $\eta\mapsto g_\eta$ is continuous from $E$ to $C^\infty(U\times \R^k)$. Moreover, for each $\eta\in E$ close enough to $0$ we have
\begin{align*}
\forall j\in\{0,\dots,k-1\},\, \partial_x^jg_\eta(0,0)&=0\\
\partial_x^kg_\eta(0,0)\neq 0.
\end{align*}
Replacing $f_\eta$ by $-f_\eta$ does not change the estimate since it amounts to complex conjugation of the integrand. With this in mind, we may assume that $\partial_x^kg_\eta(0,0)>0$. By Theorem \ref{t.color.1}, there exist $W\subset\R\times\R^k$ and continuous families of smooth functions $(a^1_\eta)_\eta\dots,(a^{k-1}_\eta)_\eta$, as well as $(\phi_\eta)_\eta$ defined respectively in a neighborhood of $0\in\R^k$ and a neighborhood of $(0,0)$ in $U\times\R^k$ such that for each $\eta\in E$ and $(x,q)$ close enough to $0$ and $(0,0)$ respectively,
\begin{align*}
\phi_\eta(0,0)&=0\, ,\\
\partial_t\phi_\eta(0,0)&>0\, ,\\
a^1_\eta(0)=\dots=a^{k-1}_\eta(0)&=0\, ,\\
\text{and }g_\eta(\phi_\eta(x,q),q)&=x^k+\sum_0^{k-1}a^j_\eta(q)x^j\, .
\end{align*}
Hence, if $u\in C^\infty_c(\R)$ is supported close enough to $0$, we have, for all $\eta\in E$ close enough to $0$ and all $q\in\R^k$,
\[
\int_\R e^{i\lambda g_\eta(y,q)}u(y)dy=\int_\R e^{i\lambda(x^k+a^{k-1}_\eta(q) x^{k-1}+\dots + a^0_\eta(q))}u(\phi_\eta(x,q))\left(\phi_\eta^{-1}(\cdot,q)\right)'(x)dx\, .
\]
By Theorem \ref{t.color.2}, there exist $W_1\subset \R^k\times E$ a neighborhood of $0$ such that $\delta>0$ such that for each $(q,\eta)\in W_1$, for each $v\in C^\infty_c\left(]-\delta,\delta[\right)$, there exists $C'(v)<+\infty$ such that for each $\lambda>0$ and each $(q,\eta)$ close enough to $(0,0)$,
\[
\left|\int_\R e^{i\lambda(x^k+a^{k-1}_\eta(q)x^{k-1}+\dots+a_\eta^0(q))}v(x)dx\right|\leq C'(v)\lambda^{-\frac{1}{k}}\, .
\]
Moreover, Theorem \ref{t.color.2} specifies that the map $v\in C^\infty_c\left(]-\delta,\delta[\right)\rightarrow C'(v)\in\R$ is continuous. By continuity, there exist $\eps>0$ and $W_2\subset W_1$ a compact neighborhood of $0$ such that for any $(q,\eta)\in W_2$ and any $x\in\R$ with $|x|\geq\delta/2$, $|\phi_\eta(x,q)|\geq\eps$. In particular, the map $(q,\eta,u)\in W_2\times C^\infty_c\left(]-\eps,\eps[\right)\mapsto u(\phi_\eta(\cdot,q))\left(\phi_\eta(\cdot,q)^{-1}\right)'\in C^\infty_c\left(]-\delta,\delta[\right)$ is well defined and continuous. Consequently, so is the map
\begin{align*}
W_2\times C^\infty_c\left(]-\eps,\eps[\right)&\rightarrow\R\\
(q,\eta,u)&\mapsto C_{q,\eta}(u)=C'\left(u(\phi_\eta(\cdot,q))\left(\phi_\eta(\cdot,q)^{-1}\right)'\right)\, .
\end{align*}
By compactness, $C(u)=\sup_{(q,\eta)\in W_2}C_{q,\eta}(u)$ is finite and continuous in $u$. We have proved that for any $(q,\eta)\in W_2$, any $\lambda>0$ and any $u\in C^\infty_c(]-\eps,\eps[)$,
\[
\left|e^{i\lambda g_\eta(y,q)}u(y)dy\right|\leq C(u)\lambda^{-\frac{1}{k}}\, .
\]
To obtain the corresponding estimate with $f_\eta$ instead of $g_\eta(\cdot,q)$, we make the following two observations. First, for each $\eta\in E$, and $x\in U$,
\[
g_\eta(x,f_\eta(0),\dots,f_\eta^{(k-1)}(0))=f_\eta(x)\, .
\]
Second, since $f_0(0)=\dots=f^{(k-1)}_0(0)=0$, there exists $E'\subset E$ a neighborhood of $0$ such that for each $\eta\in E'$, $(f_\eta(0),\dots,f_\eta^{(k-1)}(0),\eta)\in W_3$. Thus, for each $\eta\in E'$  each $u\in C^\infty_c(]-\eps,\eps[)$ and each $\lambda>0$,
\[
\left|e^{i\lambda f_\eta(y)}u(y)dy\right|\leq C(u)\lambda^{-\frac{1}{k}}
\]
and the proof is over, save for the proof of Claim \ref{cl.color.2}.

\begin{prf}[ of Claim \ref{cl.color.2}]
Let us prove the following formula.
\[
\forall x_1,\dots,x_k\in\R^n,\, \omega(x_1,\dots,x_k)=\frac{1}{2^k}\sum_{\eta\in\{-1,1\}^k}\prod_{i=1}^k\eta_i\times q\left( \sum_{j=1}^k\eta_jx_j\right)\, .
\]
For each $x_1,\dots,x_k\in\R^n$ and $p=(p_1,\dots,p_k)\in\N^k$ such that $p_1+\dots+p_k=k$, we denote by $\omega\left(x_1^{p_1}\dots x_k^{p_k}\right)$ the form $\omega$ evaluated in any $k$-uple with exactly $p_j$ occurrences of $x_j$ ($\forall j\in\{1,\dots,k\}$). Then, for each $x_1,\dots,x_k\in\R^n$,
\begin{align*}
\sum_{\eta\in\{-1,1\}^k}\prod_i^k\eta_i\times q\left(\sum_{j=1}^k\eta_jx_j\right)&=\sum_{\eta\in\{-1,1\}^k}\prod_i^k\eta_i\times\sum_{p_1+\dots+p_k=k}\binom{k}{p_1,\dots,p_k}\omega\left((\eta_1x_1)^{p_1}\dots (\eta_kx_k)^{p_k}\right)\\
&=\sum_{p_1+\dots+p_k=k}\binom{k}{p_1,\dots,p_k}\omega\left(x_1^{p_1}\dots x_k^{p_k}\right)\sum_{\eta\in\{-1,1\}^k}\prod_i^k\eta_i^{p_i+1}\, .
\end{align*}
Given $j\in\{1,\dots,j\}$ and $(p_1,\dots,p_k)$ such that $p_j=0$, applying the bijection
\[
(\eta_1,\dots,\eta_k)\mapsto (\eta_1,\dots,-\eta_j,\dots,\eta_k)
\]
shows that $\sum_{\eta\in\{-1,1\}^k}\prod_j\eta_j^{p_j+1}=0$. Thus, the only remaining term is the one corresponding to $p_1=\dots=p_k=1$ for which the sum of products of the $\eps_j^{p_j+1}$ equals $2^k$. Therefore,
\[
\sum_{\eta\in\{-1,1\}^k}\prod_i\eta_iq\left(\sum_i\eta_iv_i\right)=2^k\omega(v_1,\dots,v_k)
\]
as announced.
\end{prf}
\end{prf}

\subsection{Genericity and stability of the non-degeneracy condition}\label{ss.whitney}

The goal of this subsection is to prove Proposition \ref{p.fat} below, which says roughly that admissible symbols are stable and generic. To give a precise meaning to this statement, we first need to define a topology on the set of positive homogeneous symbols.

\begin{definition}
Fix $n\in\N$, $n\geq 1$ and $m\in]0;+\infty[$. For each $U\subset\R^n$, let $S^m_h(U)\subset C^\infty(U\times (\R^n\setminus\{0\}))$ be the set of smooth functions $m$-homogeneous in the second variable.  We write $S^m_{h,+}(U)$ for the set of positive valued functions in $S^m_h(U)$. The map
\[
S^m_h(U)\rightarrow C^\infty(U\times S^{n-1})\, ,
\]
restricting the second variable to the unit sphere, is a bijection. We endow $S^m_h(U)$ with the topology induced by the Whitney topology on $C^\infty(U\times S^{n-1})$ (see Definition 3.1 of Chapter II of \cite{gg_stable}).
\end{definition}

We have the following proposition.

\begin{proposition}\label{p.fat}
For all $n\in\N$, $n\geq 2$ we define $k_0=k_0(n)\in\N$ as follows. We set $k_0(2)=5$, $k_0(3)=3$, $k_0(4)=3$ and $\forall n\geq 5,\, k_0(n)=2$. Fix $n\geq 2$ and $m>0$. Let $U\subset\R^n$ be an open subset. Then, the set of $\sigma\in S^m_{h,+}(U)$ such that for each $(x,\xi)\in U\times(\R^n\setminus\{0\})$ there exists $j\in\{2,\dots,k_0\}$ such that
\begin{equation}\label{e.fat}
\sigma^{j-1}(x,\xi)\partial_\xi^j\sigma(x,\xi)\neq \frac{m(m-1)\dots(m-j+1)}{m^j}\left(\partial_\xi\sigma(x,\xi)\right)^{\otimes j}
\end{equation}
 is open and dense in $S^m_{h,+}(U)$.
\end{proposition}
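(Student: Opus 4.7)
Fix an admissible $\sigma$. For each $(x_0,\xi_0)\in U\times S^{n-1}$ some $j\in\{2,\ldots,k_0\}$ makes \eqref{e.fat} a strict inequality at $(x_0,\xi_0)$; since both sides are continuous in $(x,\xi)$ and in the $C^{k_0}$ topology of $\sigma$, the strict inequality persists on a neighborhood of $(x_0,\xi_0)$ and under small $C^{k_0}$ perturbations. Covering the compact fiber $\{x_0\}\times S^{n-1}$ by finitely many such neighborhoods yields a compact neighborhood of $x_0$ in $U$ and an $\eps>0$ on which admissibility is preserved under $\eps$-perturbations in $C^{k_0}$. Assembling these data through a locally finite cover of $U$ produces a basic Whitney neighborhood of $\sigma$ consisting entirely of admissible symbols.

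\textbf{Density via Thom transversality.} Restriction to the unit sphere bundle identifies $S^m_h(U)$ with $C^\infty(M)$, where $M:=U\times S^{n-1}$, and $S^m_{h,+}(U)$ with its open subset of positive functions, as a homeomorphism for the Whitney topologies. Via homogeneity, each value $\partial^j_\xi\sigma(x,\xi)$ is a universal polynomial in the $j$-jet of $\tilde\sigma:=\sigma|_M$ at $(x,\xi)$, so the equality case of \eqref{e.fat} at order $j$ cuts out a real algebraic subvariety $W_j\subset J^{k_0}(M,\R)$. Setting $W:=\bigcap_{j=2}^{k_0} W_j$, admissibility of $\sigma$ is exactly the condition $j^{k_0}\tilde\sigma(M)\cap W=\emptyset$. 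The plan is to show $\mathrm{codim}_{J^{k_0}(M,\R)} W>\dim M=2n-1$ for $k_0=k_0(n)$ and to apply Thom's $C^\infty$ jet transversality theorem to a Whitney stratification of $W$: a residual, hence Whitney-dense, set of $\tilde\sigma$ will then have $j^{k_0}\tilde\sigma$ transverse to every stratum, forcing emptiness of the intersection by the dimension count.

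\textbf{Codimension bookkeeping.} Fix $(x_0,\xi_0)\in M$ and rotate so $\xi_0=e_n$. A symmetric $j$-tensor on $\R^n$ has $\binom{n+j-1}{j}$ entries; Euler's identity $(\partial^j_\xi\sigma)\cdot\xi=(m-j+1)\partial^{j-1}_\xi\sigma$ pins down the $\binom{n+j-2}{j-1}$ components carrying at least one index $n$ in terms of the lower-order jet, leaving the $\binom{n+j-2}{j}$ ``fully tangential'' components as the genuine free parameters contributed by the $j$-th tangential derivatives of $\tilde\sigma$. Contracting equation \eqref{e.fat} at order $j$ with $\xi_0$ reproduces, up to the scalar $(m-j+1)$, equation \eqref{e.fat} at order $j-1$: for $m\notin\{1,\ldots,k_0-1\}$ the radial components of equation $j$ are therefore implied by equation $j-1$, and the only new content of $W_j$ beyond $W_{j-1}$ is an affine specification of the fully tangential component of $\partial^j_\xi\sigma$, of codimension $\binom{n+j-2}{j}$ in the jet fiber. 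For the exceptional values $m=j-1$ both sides of the contraction vanish identically, the equations decouple, and the same tangential specification still contributes codimension $\binom{n+j-2}{j}$ independently. Since the tangential $j$-th derivatives for distinct $j$ are independent coordinates on the jet fiber, the codimensions add, giving
\[
\mathrm{codim}_{J^{k_0}(M,\R)} W \;\geq\; \sum_{j=2}^{k_0}\binom{n+j-2}{j}.
\]

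\textbf{Conclusion and main obstacle.} Direct computation shows the listed values $k_0(n)=5,3,3,2$ (for $n=2,3,4$ and $n\geq 5$) are the smallest integers making $\sum_{j=2}^{k_0}\binom{n+j-2}{j}>2n-1$: the sums equal $4,7,16,\binom{n}{2}$ versus $2n-1=3,5,7,2n-1$, and $\binom{n}{2}>2n-1$ exactly when $n\geq 5$. Together with Thom's theorem applied stratum by stratum and with the openness paragraph, this yields that admissible symbols form an open dense subset of $S^m_{h,+}(U)$. The main obstacle is the codimension bookkeeping: one must confirm, uniformly in $m>0$, that the fully tangential components of equation \eqref{e.fat} at distinct orders contribute genuinely independent constraints, which requires separating the generic case (where the equations nest via Euler) from the exceptional values $m\in\{1,\ldots,k_0-1\}$ (where the nesting breaks but the equations become directly independent).
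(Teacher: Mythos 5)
Your proposal follows essentially the same strategy as the paper: restrict to the unit sphere bundle, identify the locus of non-admissible jets inside $\calJ^{k_0}(U\times S^{n-1})$, bound its codimension from below by $\sum_{j=2}^{k_0}\binom{n+j-2}{j}$, and invoke Thom transversality to force empty intersection once this exceeds $2n-1$. The numerics for $k_0(n)$ match.

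The one place where the paper is more systematic is the codimension count. You argue informally, via contraction with $\xi_0$ and Euler's identity, that the radial components of equation $j$ are either implied by equation $j-1$ or vacuous, so only the $\binom{n+j-2}{j}$ tangential components contribute; this requires you to separate the exceptional values $m\in\{1,\dots,k_0-1\}$, which you flag yourself as the main obstacle. The paper sidesteps this case split entirely: it builds the Euler-jet submanifold $H^{k_0}_m\subset\iota^*\calJ^{k_0}(\R^n\setminus\{0\})$, shows the restriction map $\rho$ carries it diffeomorphically onto $\calJ^{k_0}(S^{n-1})$, writes the non-admissibility locus $Y^{k_0}_m$ as a joint zero set of maps $F^j_m$ and $G^l_m$, and verifies by a direct computation that the relevant pairs $(\partial_{\omega_l}F^{l-1}_m,\partial_{\omega_l}G^l_m)$ are linear isomorphisms on each fiber — uniformly in $m>0$, with no exceptional cases. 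This submersion verification is exactly what makes your "fully tangential components contribute independent constraints" heuristic precise, and it also gives closedness of the bad locus for free (which is what lets Thom's theorem deliver openness, rather than arguing it separately as you do). Your argument is correct, but tightening the exceptional-$m$ case to the paper's level of rigor would require essentially reproducing its explicit rank computation.
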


To prove this proposition, we will apply Thom's transversality theorem (see Theorem 4.9 of Chapter II of \cite{gg_stable}) to a well chosen submanifold of the jet bundle of $U\times S^{n-1}$ whose codimension grows with the degree of admissibility we consider. Lemmas \ref{l.jet_restriction_map}, \ref{l.jets_of_homogeneous_functions}, \ref{l.jet_equations}, \ref{l.jets_back_home} and \ref{l.jets_we_want} below are devoted to the construction of this manifold. The proof of Proposition \ref{p.fat} is presented only after these are stated and proved. Throughout the rest of the section we fix $n\in\N$, $n\geq 2$, $U\subset\R^n$ an open subset and $m\in\R$, $m>0$. We start by introducing some notation.

\begin{notation}
\begin{enumerate}
\item For each $j,p\in\N$, $p\geq 1$, let $\Sym_p^j$ be the space of symmetric $j$-linear forms over $\R^p$. This is a vector space of dimension $\binom{p+j-1}{j}$. We adopt the convention that $\Sym_p^0=\R$.
\item Let $X$ be a smooth manifold. For each $k\geq 0$ we denote by $\calJ^k(X)$ the $k$-th jet space of mappings from $X$ to 
$\R$, that is, the space $J^k(X,\R)$ introduced in Definition 2.1 of Chapter II of \cite{gg_stable}. For any $p\in\N$ and any open subset $V\subset\R^p$, the space $\calJ^k(V)$ is canonically isomorphic to $V\times\bigoplus_{j=0}^k\Sym_p^j$. We will denote its elements by $(\xi,\omega)$ where $\xi\in V$ and $\omega=(\omega_0,\dots,\omega_k)\in\bigoplus_{j=0}^k\Sym_p^j$.
\item Let $X$ be a smooth manifold and $k\in\N$. For each $f\in C^\infty(X)$, we write $j^kf$ for the section of $\calJ^k(X)$ whose value at each point is the $k$-jet of $f$ at this point (see the paragraph below Definition 2.1 of Chapter II of \cite{gg_stable}).
\end{enumerate}
\end{notation}

Since the jet bundle $\calJ^k\left(\Rnzls\right)$ is quite explicit, we will make most of our contructions inside it and them 'push them down' onto the sphere. In the following lemma, we build the map we need to 'push down' our constructions.
\begin{lemma}\label{l.jet_restriction_map}
 Let $\iota :S^{n-1}\rightarrow\R^n$ be the canonical injection. Then, there exists a bundle morphism
\[
\rho:\iota^*\calJ^k\left(\Rnzls\right)\rightarrow\calJ^k\left(S^{n-1}\right)
\]
such that the following diagram commutes:
\[
\begin{tikzcd}
C^\infty\left(\Rnzls\right)\arrow{r}{\iota^*}  \arrow[swap]{d}{\iota^*\left(j^k\cdot\right)} & C^\infty\left(S^{n-1}\right) \arrow{d}{j^k}\\
\iota^*\calJ^k\left(\Rnzls\right)\arrow{r}{\rho} & \calJ^k\left(S^{n-1}\right)\, .
\end{tikzcd}
\]
Here the top arrow is the restriction map while the left arrow is the restriction of the $k$-jet to the sphere.
\end{lemma}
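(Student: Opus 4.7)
The plan is to define $\rho$ pointwise by restricting jets, and then verify well-definedness, smoothness, and commutativity. Given a point in the fibre of $\iota^*\calJ^k(\Rnzls)$ over some $\xi\in S^{n-1}$, that is, a pair $(\xi,\omega)$ with $\omega\in\calJ^k_\xi(\Rnzls)$, I pick any smooth function $f$ defined on a neighborhood of $\xi$ in $\Rnzls$ whose $k$-jet at $\xi$ equals $\omega$, and set $\rho(\xi,\omega):=j^k_\xi(f\circ\iota)$, where $f\circ\iota$ is interpreted locally near $\xi\in S^{n-1}$.

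The key content is well-definedness: if $f_1,f_2$ are two local representatives with the same $k$-jet at $\xi$, then $h=f_1-f_2$ has all partial derivatives up to order $k$ vanishing at $\xi$. By the chain rule (Fa\`a di Bruno), every partial derivative of $h\circ\iota$ at $\xi$ of order at most $k$ is a polynomial in the partial derivatives of $h$ at $\xi$ of order at most $k$, multiplied by polynomial expressions in the derivatives of $\iota$; hence these all vanish and $j^k_\xi(h\circ\iota)=0$. This shows $\rho(\xi,\omega)$ does not depend on the choice of representative $f$.

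The same chain-rule computation, read in any local chart on $S^{n-1}$ centered at $\xi$, expresses $\rho$ in local coordinates on both jet bundles as a linear map in the jet variables whose matrix entries are polynomials in derivatives of $\iota$ up to order $k$; since these depend smoothly on the base point, $\rho$ is a smooth bundle morphism. Commutativity of the diagram is then immediate: for any $f\in C^\infty(\Rnzls)$, taking $f$ itself as the representative of $j^k_\xi f$ yields $\rho(\xi,j^k_\xi f)=j^k_\xi(\iota^* f)$ by construction. I do not anticipate a genuine obstacle here: this is the standard fact that restriction to a submanifold descends to a bundle map between jet spaces, and the only bookkeeping I would carry out once is the Fa\`a di Bruno calculation in coordinates, which handles well-definedness and smoothness simultaneously.
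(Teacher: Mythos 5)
Your proposal is correct and takes essentially the same approach as the paper: both define $\rho$ fiberwise by restricting jets and note that the Taylor expansion of $f\circ\iota$ at $\xi$ depends only (and linearly) on that of $f$. You merely spell out, via Fa\`a di Bruno, the well-definedness and smoothness step that the paper asserts more tersely.
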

\begin{prf}
We construct $\rho$ by defining its action on each fiber. Let $\xi\in S^{n-1}$ and let $(V,\phi)$ be a chart $\phi:V\rightarrow\R^{n-1}$ of $S^{n-1}$ near $\xi$. Then, for each $f\in C^\infty\left(\R^n\right)$, the $k$-th order Taylor expansion of $f\circ\phi^{-1}$ at $\xi$ depends only on the $k$-th order Taylor expansion of $f$ at $\xi$ and the dependence is linear. This defines a linear map $\rho|_\xi:\iota^*\calJ^k\left(\Rnzls\right)|_\xi\rightarrow\calJ^k\left(S^{n-1}\right)|_\xi$. The corresponding fiberwise map $\rho$ is clearly smooth and defines a morphism of smooth vector bundles. Moreover, by construction, for each $f\in C^\infty\left(\Rnzls\right)$ and each $\xi\in S^{n-1}$, $\rho|_\xi\left(j^kf(\xi)\right)=j^k(f\circ\iota)(\xi)$ so the diagram does indeed commute.
\end{prf}

\begin{notation}
For each $k\in\N$, each $\xi\in\R^n$ and each $\omega=(\omega_0,\dots,\omega_k)\in\bigoplus_{j=0}^k\Sym_n^j$ we introduce the following notation. For each $j\in\{0,\dots,k\}$, $\omega_j|_{\xi^\perp}$ is the restriction of $\omega_j$ to the orthogonal of $\xi$ in $\R^n$. Moreover, we set $\omega|_{\xi^\perp}=(\omega_0|_{\xi^\perp},\dots,\omega_k|_{\xi^\perp})$.\\
\end{notation}

In the following lemma, we check that the set of jets of homogeneous maps is a smooth submanifold of $\iota^*\calJ^k\left(\Rnzls\right)$ and give an explicit description of it. Moreover, we show that the 'push down' map $\rho$ maps it diffeomorphically on the space $\calJ^k\left(S^{n-1}\right)$.

\begin{lemma}\label{l.jets_of_homogeneous_functions}
Fix $k\in\N$. Let $H^k_m$ be the subset of $\iota^*\calJ^k\left(\Rnzls\right)$ of jets of $m$-homogeneous functions. Then,
\begin{enumerate}
\item The set $H^k_m$ is characterized by the following equations:
\[
H^k_m=\cap_{j=0}^{k-1}\left\{(\xi,\omega)\in\iota^*\calJ^k\left(\Rnzls\right)\, \Big |\, \omega_{j+1}(\xi,\dots)=(m-j)\omega_j\right\}\, .
\]
\item The set $H^k_m$ is a submanifold of $\iota^*\calJ^k\left(U\times\Rnzls\right)$ of the same dimension as $\calJ^k\left(S^{n-1}\right)$.
\item The map $\rho|_{H^k_m}:H^k_m\rightarrow\calJ^k\left(U\times S^{n-1}\right)$ is a diffeomorphism.
\end{enumerate}
\end{lemma}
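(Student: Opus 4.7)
The plan is to proceed in three stages: identify the Euler-type equations, establish the submanifold/subbundle structure, and check that the restriction map $\rho$ is a fiberwise isomorphism. I would start by observing that if $f\in C^\infty(\R^n\setminus\{0\})$ is $m$-homogeneous, then Euler's identity $\langle\xi,\nabla f(\xi)\rangle=mf(\xi)$, differentiated $j$ times, yields $d^{j+1}f(\xi)(\xi,\cdot,\ldots,\cdot)=(m-j)d^jf(\xi)$; in terms of jets this reads $\omega_{j+1}(\xi,\cdot,\ldots,\cdot)=(m-j)\omega_j$ for $j=0,\ldots,k-1$. This gives the inclusion $H^k_m\subset\cap_{j=0}^{k-1}\{\omega_{j+1}(\xi,\cdot)=(m-j)\omega_j\}$.

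For the reverse inclusion and for item 3 simultaneously, the key algebraic observation is that for fixed $\xi\in S^{n-1}$ the evaluation map $\omega_{j+1}\in\Sym_n^{j+1}\mapsto\bigl(\omega_{j+1}(\xi,\cdot,\ldots,\cdot),\ \omega_{j+1}|_{\xi^\perp}\bigr)\in\Sym_n^j\oplus\Sym_{n-1}^{j+1}$ is a linear isomorphism, using the splitting $\R^n=\R\xi\oplus\xi^\perp$. Consequently, a jet $(\xi,\omega)$ satisfying the Euler equations is uniquely determined by the free data $(\omega_0,\omega_1|_{\xi^\perp},\ldots,\omega_k|_{\xi^\perp})$, which corresponds exactly (under $\rho$, identifying $T_\xi S^{n-1}$ with $\xi^\perp$) to the data of an element of $\calJ^k(S^{n-1})|_\xi$. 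This shows fiberwise that $\rho|_{H^k_m}$ is injective, and a dimension count $\sum_{j=0}^k\binom{n+j-2}{j}=\binom{n+k-1}{k}=\dim\calJ^k(S^{n-1})|_\xi$ confirms the dimensions agree.

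To finish the first and third items I would verify surjectivity of $\rho|_{H^k_m}$ by explicitly constructing a homogeneous function realizing any prescribed jet on the sphere. Given $g\in C^\infty(S^{n-1})$, define $f(x)=|x|^m g(x/|x|)\in C^\infty(\R^n\setminus\{0\})$. Then $f$ is $m$-homogeneous and, by the commutative diagram in Lemma \ref{l.jet_restriction_map}, $\rho(\iota^*j^kf)=j^k(f\circ\iota)=j^kg$. Choosing $g$ with prescribed $k$-jet at $\xi$ shows that every element of $\calJ^k(S^{n-1})|_\xi$ lifts to an element of $H^k_m|_\xi$, hence that the Euler equations are not only necessary but sufficient to characterize jets of homogeneous functions.

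Finally, the submanifold structure in item 2 follows immediately because the defining equations $\omega_{j+1}(\xi,\cdot)-(m-j)\omega_j=0$ are linear in $\omega$ with coefficients depending smoothly on $\xi$, so $H^k_m$ is a smooth vector subbundle of $\iota^*\calJ^k(\R^n\setminus\{0\})$; its fiberwise dimension, computed above, matches that of $\calJ^k(S^{n-1})$. Combined with the fiberwise bijectivity of $\rho|_{H^k_m}$ and the fact that $\rho$ is a smooth bundle morphism, this gives that $\rho|_{H^k_m}$ is a diffeomorphism. The only mildly delicate step is checking that the fiberwise bijection $\rho|_{H^k_m}$ really corresponds to restriction to the sphere in local charts (so that tangent derivatives on $S^{n-1}$ at $\xi$ match with derivatives along $\xi^\perp$ up to a triangular linear change determined by the curvature of the sphere); this is a routine local computation and is the closest thing to an obstacle, but since both sides are linearly isomorphic of the same dimension and the identifications are smooth in $\xi$, no real difficulty arises.
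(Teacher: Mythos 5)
Your proof is essentially correct and reaches the same conclusions, but it takes a more explicit route than the paper at the crucial step of showing $\rho|_{H^k_m}$ is a diffeomorphism. Both you and the paper use the extension $g\mapsto|x|^m g(x/|x|)$ to get surjectivity of $\rho$, and both deduce item 1 (the set-theoretic identification of $H^k_m$ with the Euler-equation locus $\widetilde{H^k_m}$) as a byproduct of injectivity of $\rho$ on $\widetilde{H^k_m}$ together with this surjectivity. The divergence is in how injectivity, maximal rank, and the submanifold structure are obtained. The paper shows $\widetilde{H^k_m}$ is a submanifold of the right dimension by exhibiting the map $F_m=(F_m^0,\ldots,F_m^{k-1})$ as a submersion, and then proves that $\rho|_{\widetilde{H^k_m}}$ has maximal rank \emph{abstractly}: $\rho$ is linear on fibers and equivariant under the automorphisms of $S^{n-1}$, so its rank is constant over the base; combined with surjectivity and equality of dimensions, this forces maximal rank, hence a local diffeomorphism, hence (being a vector bundle morphism) a global diffeomorphism. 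You instead construct a candidate fiberwise inverse: the splitting $\R^n=\R\xi\oplus\xi^\perp$ and the isomorphism $\omega_{j+1}\mapsto(\omega_{j+1}(\xi,\cdot),\omega_{j+1}|_{\xi^\perp})$ show that a jet in $\widetilde{H^k_m}$ is parametrized by $(\omega_0,\omega_1|_{\xi^\perp},\ldots,\omega_k|_{\xi^\perp})$, and you then assert this matches $\calJ^k(S^{n-1})|_\xi$ up to a triangular linear change coming from the chart.

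That last assertion is precisely where you label things ``mildly delicate'' and declare it routine; it is not wrong, but it is the step that actually carries the burden. The chain rule (Faà di Bruno) shows that the $j$-th derivative of $f\circ\phi^{-1}$ at $\phi(\xi)$ equals $\omega_j$ restricted to $\xi^\perp$ (via $d_0\phi^{-1}$) plus terms involving $\omega_i$ for $i<j$ applied to higher derivatives of $\phi^{-1}$ --- and the latter \emph{a priori} feed in $\omega_i(\xi,\ldots)$ components, which on $\widetilde{H^k_m}$ are pinned down by the Euler equations and the lower-order data $(\omega_0,\omega_1|_{\xi^\perp},\ldots,\omega_{i}|_{\xi^\perp})$. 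This triangularity with invertible diagonal does hold, so your argument is valid, but it should be written out rather than waved through, since it is the heart of your proof of item 3. The paper's equivariance argument completely avoids the chart-level bookkeeping, which is the main thing its approach buys. Your treatment of item 2 (linearity in $\omega$ plus constant fiber dimension implies a vector subbundle) is a clean alternative to the paper's submersion computation and is equivalent in substance, since verifying the constant fiber dimension is essentially verifying the constant-rank condition.
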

\begin{prf}
We set
\[
\widetilde{H^k_m}=\cap_{j=0}^{k-1}\left\{(\xi,\omega)\in\iota^*\calJ^k\left(\Rnzls\right)\, \Big |\, \omega_{j+1}(\xi,\dots)=(m-j)\omega_j\right\}\, .
\]
Firstly, each $m$-homogeneous $f\in C^\infty\left(\Rnzls\right)$, satisfies Euler's equation. That is, for each $\xi\in\Rnzls$, $d_\xi f(\xi)=m f(\xi)$. Next, notice that if $f$ is $m$-homogeneous, then, for each $j\in\{1,\dots,k\}$, $\xi\mapsto d^j_\xi f$ is homogeneous of order $m-j$ so that for each $\xi\in\Rnzls$, $d_\xi\left(d^jf\right)(\xi,\dots)=(m-j)d^j_\xi f$. Therefore, for each $\xi\in\Rnzls$, $j^kf(\xi)\in \widetilde{H^k_m}$. We have shown that $H^k_m\subset\widetilde{H^k_m}$.
Next, notice that for each $f\in C^\infty\left(S^{n-1}\right)$, the $m$-homogeneous function $\xi\mapsto |\xi|^mf\left(\frac{\xi}{|\xi|}\right)$ restricts back to $f$ on $S^{n-1}$. Therefore, we have $\calJ^k\left(S^{n-1}\right)=\rho\left(H^k_m\right)\subset\rho\left(\widetilde{H^k_m}\right)\subset\calJ^k\left(S^{n-1}\right)$. So we have
\begin{equation}\label{e.johf.1}
\rho\left(H^k_m\right)=\rho\left(\widetilde{H^k_m}\right)=\calJ^k\left(S^{n-1}\right)\, .
\end{equation}
Given this equation, in order to prove the lemma, it is enough to prove points 2 and 3 with $H^k_m$ replaced by $\widetilde{H^k_m}$, which we call 2' and 3' respectively. Indeed, point 3' will imply that $\rho|_{\widetilde{H^k_m}}$ is one-to-one so by Equation \eqref{e.johf.1}, we will have $H^k_m=\widetilde{H^k_m}$ which is point 1. Moreover, since we will have already proved points 2 and 3 for $\widetilde{H^k_m}$ we will have them for $H^k_m$. Let us start by proving 2'. For each $j\in\{0,\dots,k-1\}$ set
\[
F^j_m:(\xi,\omega)\mapsto \omega_{j+1}(\xi,\dots)-(m-j)\omega_j
\]
so that $\widetilde{H^k_m}=\cap_{j=0}^{k-1} \left(F^j_m\right)^{-1}\left(0\right)$. Let us prove that the map
\[
F_m=\left(F_m^0,\dots,F_m^{k-1}\right):\iota^*\calJ^k\left(\Rnzls\right)\rightarrow\bigoplus_{j=0}^{k-1}\Sym_n^j
\]
is a submersion. Fix $(\xi,\omega)\in\iota^*\calJ^k\left(\Rnzls\right)$. Let $(\eta_0,\dots,\eta_{k-1})\in \bigoplus_{j=0}^{k-1}\Sym_n^j\simeq T_{F_m(\xi,\omega)}\bigoplus_{j=0}^{k-1}\Sym_n^j$. Then, for each $j\in\{0,\dots,k-1\}$,
\[
\partial_{\omega_{j+1}}F_m^j(\xi,\omega)\left(|\xi|^{-2}\langle \xi,\cdot\rangle\otimes\eta_j\right)=\eta_j\, .
\]
In particular, $d_{(\xi,\omega)}F_m$ is surjective. Therefore $\widetilde{H^k_m}$ is a submanifold of $\iota^*\calJ^k\left(\Rnzls\right)$ of codimension
\[
codim_{\iota^*\calJ^k\left(\Rnzls\right)}\left(\widetilde{H^k_m}\right)=\sum_{j=0}^{k-1}dim\left(\Sym_n^j\right)=\sum_{j=0}^{k-1}\binom{n+j-1}{j}\, .
\]
Indeed, recall that $dim\left(\Sym_n^j\right)=\binom{n+j-1}{j}$. Using this identity, we also have:
\begin{align*}
dim\left(\iota^*\calJ^k\left(\Rnzls\right)\right)&=(n-1)+\sum_{j=0}^k\binom{n+j-1}{j}\, ;\\
dim\left(\calJ^k\left(S^{n-1}\right)\right)&=(n-1)+\sum_{j=0}^k\binom{n+j-2}{j}\, .
\end{align*}
Therefore, firstly $dim\left(\widetilde{H^k_m}\right)=(n-1)+\binom{n+k-1}{k}$ and secondly
\begin{equation}\label{e.johf.2}
dim\left(\widetilde{H^k_m}\right)-dim\left(\calJ^k\left(S^{n-1}\right)\right)=\binom{n+k-1}{k}-\sum_{j=0}^{k}\binom{n+j-2}{j}=0\, .
\end{equation}
In the last equality we use a well known binomial formula which is easily checked by induction on $k$. The conclusion here is that $\widetilde{H^k_m}$ has the same dimension as $\calJ^k\left(S^{n-1}\right)$ so we have proved 2'. To prove 3' observe that $\rho$ is linear on each fiber of $\iota^*\calJ^k\left(\Rnzls\right)$ so that its derivative $d\rho$ is constant on each fiber. Moreover, it is equivariant with respect to the automorphisms of the base space $S^{n-1}$ so its derivative must have the same rank on different fibers. Since $\rho$ is surjective (see Equation \eqref{e.johf.1}) $d\rho$ must be of maximal rank. This proves that $\rho$ is a local diffeomorphism. But since it is a morphism of vector bundles, it must be a diffeomorphism, which is the claim of 3'. This concludes the proof of the lemma.
\end{prf}

In the following lemma, we build a submanifold of $H^k_m$ that describes the condition of non-admissibility and compute its codimension.

\begin{lemma}\label{l.jet_equations}
For each $k\in\N$, $k\geq 2$, define
\[
Y^k_m = \cap_{j=2}^k\left\{(\xi,\omega)\in\iota^*\calJ^k\left(\Rnzls\right)\, \Big |\, \omega_0>0,\,  \omega_0^{j-1}\omega_j|_{\xi^\perp}=\frac{m(m-1)\dots(m-j+1)}{m^j}\left(\omega_1|_{\xi^\perp}\right)^{\otimes j}\right\}\, .
\]
Then, $Y^k_m\cap H^k_m$ is a closed submanifold of $H^k_m$ of codimension $\sum_{j=2}^k\binom{n+j-2}{j}$.
\end{lemma}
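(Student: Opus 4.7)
The plan is to realise $Y^k_m\cap H^k_m$ as the zero set of a smooth submersion defined on the open subset $\{\omega_0>0\}\cap H^k_m$ of $H^k_m$, and then to invoke the regular value theorem. For each $j\in\{2,\dots,k\}$, set $C(m,j)=\frac{m(m-1)\cdots(m-j+1)}{m^j}$ and define
\[
G_j(\xi,\omega)=\omega_0^{j-1}\,\omega_j|_{\xi^\perp}-C(m,j)\bigl(\omega_1|_{\xi^\perp}\bigr)^{\otimes j}\in\Sym_{n-1}^j,
\]
and let $G=(G_2,\dots,G_k)$. By definition, $Y^k_m\cap H^k_m=G^{-1}(0)$, and the target bundle has fibre dimension $\sum_{j=2}^k\binom{n+j-2}{j}$, which is the claimed codimension.

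The key ingredient is to produce convenient coordinates on the fibres of $H^k_m$ over $S^{n-1}$. Using the diffeomorphism $\rho:H^k_m\to\calJ^k(S^{n-1})$ from Lemma \ref{l.jets_of_homogeneous_functions}, together with an identification $T_\xi S^{n-1}\simeq\xi^\perp$, the fibre of $H^k_m$ over $\xi\in S^{n-1}$ becomes canonically identified with $\R\oplus\bigoplus_{j=1}^k\Sym_{n-1}^j$: a tuple $(\eta_0,\eta_1,\dots,\eta_k)$ corresponds to the unique jet $(\xi,\omega)\in H^k_m$ with $\omega_0=\eta_0$ and $\omega_j|_{\xi^\perp}=\eta_j$ for $j\geq 1$, the remaining ``radial'' components of each $\omega_j$ being forced by the homogeneity relations $\omega_{j+1}(\xi,\cdot)=(m-j)\omega_j$ from Lemma \ref{l.jets_of_homogeneous_functions}. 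In particular, $(\omega_0,\omega_1|_{\xi^\perp},\dots,\omega_k|_{\xi^\perp})$ constitute independent smooth fibre coordinates on $H^k_m$.

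In these coordinates, for each fixed $\xi$, the component $G_j$ depends only on $\omega_0$, $\omega_1|_{\xi^\perp}$ and $\omega_j|_{\xi^\perp}$, and a direct computation gives
\[
\frac{\partial G_j}{\partial(\omega_j|_{\xi^\perp})}=\omega_0^{j-1}\operatorname{Id}_{\Sym_{n-1}^j},
\]
which is invertible on $\{\omega_0>0\}$. For $i,j\in\{2,\dots,k\}$ with $i\neq j$, the partial $\partial G_j/\partial(\omega_i|_{\xi^\perp})$ vanishes, so the fibrewise Jacobian of $G$ with respect to $(\omega_2|_{\xi^\perp},\dots,\omega_k|_{\xi^\perp})$ is block-diagonal with invertible diagonal blocks. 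Hence $dG$ is surjective at every point, already using only fibre directions.

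The regular value theorem then identifies $G^{-1}(0)=Y^k_m\cap H^k_m$ as an embedded submanifold of $\{\omega_0>0\}\cap H^k_m$ of codimension $\sum_{j=2}^k\binom{n+j-2}{j}$, closed in this open subset as the zero locus of a continuous map. The only subtle step is verifying that the restrictions $\omega_j|_{\xi^\perp}$ actually form free, independent parameters on each fibre of $H^k_m$; this is exactly what the diffeomorphism $\rho$ of Lemma \ref{l.jets_of_homogeneous_functions} encodes, and once this chart structure is in place the submersion check reduces to the one-line computation above.
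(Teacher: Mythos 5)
Your proof is correct and takes a genuinely different route from the paper. The paper works in the ambient jet space $\iota^*\calJ^k(\Rnzls)$ and shows that the combined system $(F_m^0,\dots,F_m^{k-1},G_m^2,\dots,G_m^k)$ is a submersion, controlling the block $(F_m^{l-1},G_m^l)$ by $\partial_{\omega_l}$ for each $l$ and then computing the codimension of $Y^k_m\cap H^k_m$ by subtracting the codimension of $H^k_m$. You instead pass directly into $H^k_m$ by exhibiting explicit fibre coordinates $(\omega_0,\omega_1|_{\xi^\perp},\dots,\omega_k|_{\xi^\perp})$, after which $G$ visibly has a block-diagonal Jacobian in the last $k-1$ slots with invertible diagonal blocks $\omega_0^{j-1}\operatorname{Id}$. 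This is arguably cleaner: the ``block-triangular'' bookkeeping that is implicit in the paper's argument is replaced by an actual elimination of the $F$-constraints, and the submersion check becomes a one-liner. The price is the extra preliminary step of verifying that the tangential jets form free coordinates on each fibre of $H^k_m$.

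Two small caveats. First, your justification of the coordinate identification via the diffeomorphism $\rho$ is slightly off as stated: $\rho$ sends the $k$-jet of $f$ at $\xi$ to the $k$-jet of $f\circ\iota$ at $\xi$, which involves higher-order chain-rule corrections and is \emph{not} the map $(\xi,\omega)\mapsto(\omega_0,\omega_j|_{\xi^\perp})_j$. What you actually use, and do state correctly, is that on $H^k_m$ the ``radial'' components of each $\omega_j$ are forced by the Euler relations; together with a dimension count (which $\rho$ does supply) this gives the chart. Second, as you note yourself, the regular value theorem gives that $Y^k_m\cap H^k_m$ is closed in the open subset $\{\omega_0>0\}\cap H^k_m$, not in $H^k_m$ itself — indeed it is not closed in $H^k_m$ (e.g.\ let $\omega_0\to 0$ with all $\omega_j|_{\xi^\perp}=0$). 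The paper's proof asserts closedness in $H^k_m$ without addressing the $\omega_0>0$ condition, so this is a small lapse in the source that you have in fact handled more carefully; it is harmless for the application in Proposition \ref{p.fat} because positive symbols never produce jets with $\omega_0\leq 0$.
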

\begin{prf}
For each $j\in\{0,\dots,k-1\}$, each $l\in\{2,\dots,k\}$ and each $(\xi,\omega)\in\iota^*\calJ^k\left(\Rnzls\right)$, let, as before, $F_m^j(\xi,\omega)=\omega_{j+1}(\xi,\dots)-(m-j)\omega_j\in Sym_n^{j-1}$. Moreover, let $\Sym_n^l|_{\xi^\perp}$ be the set of symmetric $l$-linear forms acting on the orthogonal of $\xi$ in $\R^n$ and let $G_m^l(\xi,\omega)=\omega_l|_{\xi^\perp}-\frac{m(m-1)\dots(m-l+1)}{m^l}\left(\omega_1|_{\xi^\perp}\right)^{\otimes l}\in \Sym_n^l|_{\xi^\perp}$. Then, $Y_m^k\cap H_m^k$ is the intersection of the zero sets of the functions $F_m^j$ and $G_m^l$ for $j\in\{0,\dots,k-1\}$ and $l\in\{2,\dots,k\}$. In particular, it is closed. Note first that $\partial_{\omega_0}F_m^0=m\neq 0\in\textup{Hom}\left(\Sym_n^0,\Sym_n^0\right)\simeq\R$. In particular this map is invertible. We will now prove that for each $l\in\{2,\dots,k\}$, the map $(\partial_{\omega_l} F_m^{l-1},\partial_{\omega_l} G_m^l)$ is of maximal rank on $Y^k_m$. For any $(\xi,\omega)\in Y^k_m$ and any $l\in\{2,\dots,k\}$, $(\partial_{\omega_l}F_m^{l-1}(\xi,\omega),\partial_{\omega_l}G_m^l(\xi,\omega))$ acts as follows.
\begin{align*}
\Sym_n^l&\rightarrow \Sym_n^{l-1}\bigoplus\Sym_n^l|_{\xi^\perp}\\
\eta_l&\mapsto(\eta_l(\xi,\dots),\omega_0^{l-1}\eta_l|_{\xi^\perp})\, .
\end{align*}
But this map is invertible. To see this, let $pr_{\xi^\perp}^*:\Sym_n^l|_{\xi^\perp}\rightarrow\Sym_n^l$ be the pull-back map by the orthogonal projection onto the orthogonal of $\xi$. Also, recall that on $Y^k_m$, we have $\omega_0>0$. Then, the inverse of $(\partial_{\omega_l}F_m^{l-1}(\xi,\omega),\partial_{\omega_l}G_m^l(\xi,\omega))$ is
\begin{align*}
 \Sym_n^{l-1}\bigoplus\Sym_n^l|_{\xi^\perp}&\rightarrow\Sym_n^l\\
(\eta_{l-1},\eta|_{\perp})&\mapsto |\xi|^{-2}\langle \xi,\cdot\rangle\otimes\eta_{l-1}+\omega_0^{1-l}pr_{\xi^\perp}^*\eta_\perp\, .
\end{align*}
All in all, we have shown so far that $\partial_{\omega_0} F_m^0$ is surjective and that for each $l\in\{2,\dots,k\}$, $(\partial_{\omega_l}F_m^{l-1},\partial_{\omega_l}G_m^l)$ is of maximal rank. Therefore, $Y_m^k\cap H^k_m$ is a submanifold of $H^k_m$ of codimension
\begin{align*}
codim_{H^k_m}(Y^k_m\cap H^k_m)&=codim_{\iota^*\calJ^k\left(\Rnzls\right)}\left(Y^k_m\cap H^k_m\right)-codim_{\iota^*\calJ^k\left(\Rnzls\right)}\left(H^k_m\right)\\
&=1+\sum_{l=2}^k\binom{n+l-1}{l}-\sum_{j=0}^{k-1}\binom{n+j-1}{j}\\
&=\binom{n+k-1}{k}-\binom{n+1-1}{1}\\
&=\sum_{j=2}^k\binom{n+j-2}{j}
\end{align*}
where in the last line we use the same binomial identity as in Equation \eqref{e.johf.2}.
\end{prf}

So far we have neglected the $U$ coordinate in the product $U\times S^{n-1}$. To take this coordinate into account, in the following lemma, we introduce a submersion $pr_2:\calJ^k\left(U\times S^{n-1}\right)\rightarrow \calJ^k\left(S^{n-1}\right)$ by which we will pull back the submanifold $\rho\left(Y^k_m\right)$.
\begin{lemma}\label{l.jets_back_home}
Let $k\in\N$. Let $\pi: U\times S^{n-1}\rightarrow S^{n-1}$ be the map $(x,\xi)\mapsto \xi$. Also, for each $x\in U$, let $\iota_x:S^{n-1}\rightarrow U\times S^{n-1}$ be the map $\xi\mapsto (x,\xi)$. Then, there exists a surjective vector bundle morphism $pr_2:\calJ^k\left(U\times S^{n-1}\right)\rightarrow\pi^*\calJ^k\left(S^{n-1}\right)$ such that for each $x\in U$, the following diagram commutes:
\[
\begin{tikzcd}
C^\infty\left(U\times S^{n-1}\right)\arrow{rr}{\iota_x^*} \arrow{d}{j^k} & & C^\infty\left(S^{n-1}\right) \arrow{d}{j^k}\\
\calJ^k\left(U\times S^{n-1}\right)\arrow{r}{pr_2} & \pi^*\calJ^k\left(S^{n-1}\right)\arrow{r}{\iota_x^*} & \calJ^k\left(S^{n-1}\right)\, .
\end{tikzcd}
\]
In particular, $pr_2$ is a submersion.
\end{lemma}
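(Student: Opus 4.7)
The plan is to define $pr_2$ by the natural ``freeze $x$'' operation: for a $k$-jet at $(x_0,\xi_0) \in U \times S^{n-1}$ represented by some $f \in C^\infty(U \times S^{n-1})$, set
\[
pr_2(j^kf(x_0,\xi_0)) := j^k(f \circ \iota_{x_0})(\xi_0) \in \calJ^k(S^{n-1})|_{\xi_0} = (\pi^*\calJ^k(S^{n-1}))|_{(x_0,\xi_0)}.
\]
Well-definedness is immediate: in any local product coordinates $(u,v)$ around $(x_0,\xi_0)$, the $k$-jet of $f$ encodes all partial derivatives $\partial_u^\alpha \partial_v^\beta f(x_0,\xi_0)$ for $|\alpha|+|\beta|\leq k$, and the pure $v$-derivatives of $f$ at $(x_0,\xi_0)$ (that is, $\alpha=0$) are exactly the coefficients of $j^k(f(x_0,\cdot))(\xi_0)$; therefore the right-hand side depends only on $j^k f(x_0,\xi_0)$ and not on the representative $f$.

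I would then verify the three properties in turn. Smoothness and fiberwise linearity of $pr_2$ follow from this same local coordinate formula, which simply projects the Taylor data of $f$ onto the sub-multi-indices with $\alpha=0$; the formula in another chart differs by a smooth linear change of variables, so $pr_2$ is intrinsic. Surjectivity is clear: given a $k$-jet $\sigma \in \calJ^k(S^{n-1})|_{\xi_0}$, pick $h \in C^\infty(S^{n-1})$ with $j^kh(\xi_0)=\sigma$ and consider $\tilde h(x,\xi):=h(\xi)$, for which $pr_2(j^k\tilde h(x_0,\xi_0)) = j^k h(\xi_0) = \sigma$. Commutativity of the diagram is then built into the definition of $pr_2$: applying $\iota_{x_0}^*$ to $pr_2(j^kf(x_0,\xi_0))$ returns $j^k(f\circ \iota_{x_0})(\xi_0) = j^k(\iota_{x_0}^*f)(\xi_0)$.

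Finally, the ``in particular'' statement is automatic: $pr_2$ is a surjective morphism of vector bundles over the identity on $U\times S^{n-1}$, and its differential at any point decomposes as the identity on the base directions plus a surjective linear map on the fibers, hence it is onto; this makes $pr_2$ a submersion. There is no serious obstacle in the argument; the lemma is really a formal packaging of the naturality of jet spaces under the embeddings $\iota_{x_0}$, and its only purpose is to produce a submersion whose pullback of the manifold $\rho(Y^k_m)$ will be transverse to the sections considered later.
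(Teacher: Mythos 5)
Your proposal is correct and follows essentially the same route as the paper: define $pr_2$ fiberwise by ``freezing'' the $U$-coordinate, observe that well-definedness, linearity and smoothness are visible in product coordinates, and read off commutativity of the diagram from the definition. The only minor deviation is in the surjectivity step: you produce an explicit preimage by lifting $h\in C^\infty(S^{n-1})$ to $\tilde h(x,\xi)=h(\xi)$, while the paper deduces surjectivity by chasing the commutative diagram ($j^k\circ\iota_x^*$ is onto, hence so is the composition of the bottom arrows, and since $\pi^*\calJ^k(S^{n-1})$ and $\calJ^k(S^{n-1})$ have the same rank, $pr_2$ must be onto); both are equally short and correct.
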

\begin{proof}
Given $f\in C^\infty\left(U\times S^{n-1}\right)$ and $x\in U$, the $k$-jet of $f(x,\cdot)$ at $\xi\in S^{n-1}$ depends only on the $k$-jet of $f$ at $(x,\xi)$. This allows us to define a map $pr_2|_{(x,\xi)}:\calJ^k\left(U\times S^{n-1}\right)|_{(x,\xi)}\rightarrow\pi^*\calJ^k\left(S^{n-1}\right)_{(x,\xi)}$. This defines a bundle morphism $pr_2:\calJ^k\left(U\times S^{n-1}\right)\rightarrow\pi^*\calJ^k\left(\right)$. The fact that the diagram commutes follows by construction. Finally, since the composition of the top and right arrows : $j^k\circ\iota_x^*$ is onto, so is the composition of the left and bottom arrows. But this implies that the composition of bottom arrows is onto. Since $\pi^*\calJ^k\left(S^{n-1}\right)$ and $\calJ^k\left(S^{n-1}\right)$ have the same rank, then $pr_2$ must also be onto. In particular, it defines a submersion from the manifold $\calJ^k\left(U\times S^{n-1}\right)$ to the manifold $\pi^*\calJ^k\left(S^{n-1}\right)$.
\end{proof}

In this last lemma, we check that the previous construction does indeed characterize non-admissibility of a symbol by the intersection of the $k$-jet with the submanifold constructed in Lemma \ref{l.jet_equations} and 'pushed down' by $\rho$.

\begin{lemma}\label{l.jets_we_want}
Let $k\in\N$, $k\geq 2$. Let $\sigma\in S^m_{h,+}(U)$. Then, there exists $(x,\xi)\in U\times(\Rnzls)$ such that for each $j\in\{2,\dots,k\}$
\begin{equation}\label{e.jww.1}
\sigma^{j-1}(x,\xi)\partial_\xi^j\sigma(x,\xi)=\frac{m(m-1)\dots(m-j+1)}{m^j}\left(\partial_\xi\sigma(x,\xi)\right)^{\otimes j}
\end{equation}
if and only if $pr_2\circ j^k\left(\sigma|_{U\times S^{n-1}}\right)\left(U\times S^{n-1}\right)\cap \rho\left(Y^k_m\right)\neq\emptyset$.
\end{lemma}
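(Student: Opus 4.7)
The plan is to unravel the definitions on the right-hand side and reduce the condition, step by step, to Equation \eqref{e.jww.1} with $(x,\xi)$ varying in $U \times (\Rnzls)$. First, applying the commuting diagram of Lemma \ref{l.jets_back_home} to $f = \sigma|_{U \times S^{n-1}}$, I obtain, for each $(x,\xi) \in U \times S^{n-1}$,
\[
pr_2\bigl(j^k(\sigma|_{U \times S^{n-1}})(x,\xi)\bigr) = j^k\bigl(\sigma(x,\cdot)|_{S^{n-1}}\bigr)(\xi)\in\calJ^k(S^{n-1})|_\xi.
\]
Since $\sigma(x,\cdot)$ is positive and $m$-homogeneous, the diagram of Lemma \ref{l.jet_restriction_map} applied to it together with Lemma \ref{l.jets_of_homogeneous_functions} give
\[
j^k\bigl(\sigma(x,\cdot)|_{S^{n-1}}\bigr)(\xi)=\rho\bigl(\iota^* j^k(\sigma(x,\cdot))(\xi)\bigr),\qquad \iota^* j^k(\sigma(x,\cdot))(\xi)\in H^k_m.
\]
Because $\rho|_{H^k_m}$ is a diffeomorphism onto $\calJ^k(S^{n-1})$, the intersection is nonempty if and only if there exists $(x,\xi) \in U \times S^{n-1}$ with $\iota^* j^k(\sigma(x,\cdot))(\xi) \in Y^k_m \cap H^k_m$.

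Next, writing the coordinates of $\iota^* j^k(\sigma(x,\cdot))(\xi)$ as $\omega_j = \partial_\xi^j \sigma(x,\xi)$ and observing that $\omega_0 = \sigma(x,\xi) > 0$ automatically because $\sigma \in S^m_{h,+}(U)$, membership in $Y^k_m$ reduces to the condition
\[
\forall j\in\{2,\dots,k\},\quad \sigma(x,\xi)^{j-1}\,\partial_\xi^j \sigma(x,\xi)\bigr|_{\xi^\perp}=\frac{m(m-1)\cdots(m-j+1)}{m^j}\bigl(\partial_\xi \sigma(x,\xi)|_{\xi^\perp}\bigr)^{\otimes j}.
\]
Since $\xi \notin \xi^\perp$, Claim \ref{cl.color} applied to the $m$-homogeneous function $\sigma(x,\cdot)$ with $H = \xi^\perp$ shows that this family of restricted identities is equivalent to the corresponding family of identities on the whole of $\R^n$, which is exactly Equation \eqref{e.jww.1}.

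Finally, both sides of \eqref{e.jww.1} are positively $j(m-1)$-homogeneous in $\xi$, so the existence of $(x,\xi) \in U \times S^{n-1}$ satisfying \eqref{e.jww.1} is equivalent to the existence of $(x,\xi) \in U \times (\Rnzls)$ satisfying it. Chaining these equivalences yields the lemma. The only real obstacle is careful bookkeeping across the three ambient jet bundles $\iota^*\calJ^k(\Rnzls)$, $\calJ^k(S^{n-1})$ and $\pi^*\calJ^k(S^{n-1})$ using the maps supplied by Lemmas \ref{l.jet_restriction_map}, \ref{l.jets_of_homogeneous_functions} and \ref{l.jets_back_home}; once those identifications are in place, the remainder is the algebraic content of Claim \ref{cl.color} combined with the scaling argument.
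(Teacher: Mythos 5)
Your argument follows the same route as the paper: reduce to $(x,\xi)\in U\times S^{n-1}$ by homogeneity, then unwind $pr_2$ and $\rho$ via Lemmas \ref{l.jets_back_home}, \ref{l.jet_restriction_map} and \ref{l.jets_of_homogeneous_functions} to translate membership in $\rho(Y^k_m)$ into the defining equations of $Y^k_m\cap H^k_m$, which are equivalent to Equation \eqref{e.jww.1}. The only (welcome) difference is that you make explicit the appeal to Claim \ref{cl.color} needed to pass from the $\xi^\perp$-restricted identities defining $Y^k_m$ to the full identities in \eqref{e.jww.1}, a step the paper only signals with the parenthetical remark that positivity and $m$-homogeneity are used.
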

\begin{prf}
Firstly, Equation \eqref{e.jww.1} is homogeneous in $\xi$ so there exists a pair $(x,\xi)\in U\times(\Rnzls)$ satisfying it if and only if there exists such a pair in $U\times S^{n-1}$. Now, since $\sigma$ is $m$-homogeneous, for each $x\in U$, $j^k(\sigma(x,\cdot))(S^{n-1})\subset H^k_m$. Therefore, $(x,\xi)\in U\times S^{n-1}$ satisfy Equation \eqref{e.jww.1} if and only if $j^k(\sigma(x,\cdot))(\xi)\in Y^k_m\cap H^k_m$ (here we use that the symbols are positive, as well as $m$-homogeneous). Since, moreover, by Lemma \ref{l.jet_equations}, $\rho|_{H^k_m}$ is bijective, this is equivalent to $\rho\circ j^k(\sigma(x,\cdot))(\xi)\in\rho(Y^k_m)$. But, by Lemmas \ref{l.jet_restriction_map} and \ref{l.jets_back_home}, $\rho\circ j^k(\sigma(x,\cdot))=j^k(\sigma(x,\cdot)|_{S^{n-1}})=pr_2\circ j^k\left(\sigma|_{U\times S^{n-1}}\right)(x,\cdot)$. To conclude, we have proved that for any $(x,\xi)\in U\times S^{n-1}$, $(x,\xi)$ satisfies Equation \eqref{e.jww.1} if and only if $pr_2\circ j^k\left(\sigma|_{S^{n-1}}\right)(x,\xi)\in\rho\left(H^k_m\right)$. This concludes the proof of the lemma.
\end{prf}

We are now ready to prove Proposition \ref{p.fat}.

\begin{prf}[ of Proposition \ref{p.fat}]
Firstly, by Lemma \ref{l.jets_we_want}, Equation \eqref{e.fat} has solutions in $U\times(\Rnzls)$ if and only if $j^k\left(\sigma|_{U\times S^{n-1}}\right)(U\times S^{n-1})\cap pr_2^{-1}\left(\rho(Y^k_m)\right)\neq\emptyset$. Now, by Lemmas \ref{l.jets_of_homogeneous_functions} and \ref{l.jet_equations}, $\rho\left(Y^k_m\right)$ is a closed submanifold of $\calJ^k\left(S^{n-1}\right)$ of codimension $\sum_{j=2}^k\binom{n+j-2}{j}$. Since moreover, by Lemma \ref{l.jets_back_home}, $pr_2$ is a submersion, $Z^k_m=pr_2^{-1}\left(\rho(Y^k_m)\right)$ has the same codimension in $\calJ^k\left(U\times S^{n-1}\right)$. At this point, we apply Thom's transversality theorem (Corollary 4.10 of Chapter II of \cite{gg_stable}). This theorem states that the functions $f\in C^\infty\left(U\times S^{n-1}\right)$ such that $j^k(f)(U\times S^{n-1})$ is transverse to $Z^k_m$ is open and dense. But $j^k(f)\left(U\times S^{n-1}\right)$ has dimension at most $2n-1$ so if $k$ is such that
\begin{equation}\label{e.fat.1}
2n-1<\sum_{j=2}^k\binom{n+j-2}{j}
\end{equation}
then such a transverse intersection must be empty. Inequality \eqref{e.fat.1} is satisfied for instance for $n=2$ and $k=5$, for $n\in\{3,4\}$ and $k=3$ and for $n\geq 5$ and $k=2$. This ends the proof of the proposition.
\end{prf}

\appendix

\section{Proof of Theorem \ref{t.hormander}}\label{s.hormander}

In this section, we prove Theorem \ref{t.hormander} by following closely the approach used in \cite{ho68} and in \cite{gawe14}. As explained above, \cite{gawe14} contains all the essential arguments for Theorem \ref{t.hormander} despite the focus on the case where $x=y$ and $\man$ is closed. In this section we merely wish to confirm this by revisiting the proof. We consider $A$, $\sigma_A$ and $E_L$ indifferently as in any of the two settings presented in Subection \ref{ss.setting}.

\subsection{Preliminaries}

The following lemma summarizes the results proved in Section 4 of \cite{ho68} for the closed manifold setting. For the boundary problem, this was proved in Section 3 of \cite{vas83}. We introduce the following notation. For each $L>0$, set $\tilde{E}_L=E_{L^m}$.

\begin{lemma}\label{l.source}
Firstly, the spectral function $\tilde{E}_L(x,y)$ defines a tempered distribution of the $L$ variable with values in $C^\infty(\man\times \man)$. In addition, for each set of local coordinates in which $d\mu_{\man}$ coincides with the Lebesgue measure on $\R^n$, there is an open neighborhood $U$ of $0\in\R^n$ such that there exist $\eps>0$, a proper phase function $\psi \in C^\infty(U\times U\times\R^n)$, a symbol $\sigma\in S^1(U,\R^n)$, a function $k\in C^\infty(U\times U\times]-\eps,\eps[)$ and a symbol $q\in S^0(U\times ]-\eps,\eps[\times U,\R^n)$, for which
\[\fou_L[\tilde{E}_L'(x,y)](t)=\frac{1}{(2\pi)^n}\int_{\R^n}q(x,t,y,\xi)e^{i(\psi(x,y,\xi)-t\sigma(y,\xi))}d\xi + k(x,y,t).\]
Here $\fou_L$ (resp. $'$) denotes the Fourier transform (resp. the derivative) with respect to the variable $L$, in the sense of temperate distributions, and the integral is to be understood in the sense of Fourier integral operators (see Theorem 2.4 of \cite{ho68}). We have
\begin{enumerate}
\item The function $\psi$ satisfies the Equation
\[\forall x,y\in U,\ \xi\in \R^n,\ \sigma(x,\partial_x\psi(x,y,\xi))=\sigma(y,\xi).\]
\item For each $t\in]-\eps,\eps[$ and $\xi\in\R^n$, the function $q(\cdot,t,\cdot,\xi)$ has compact support in $U\times U$ uniformly in $(t,\xi)$ and $q(x,0,y,\xi)-1$ is a symbol of order $-1$ as long as $x,y$ belong to some open neighborhood $U_0$ of $0$ in $U$.
\item $\sigma-\sigma_A^{\frac{1}{m}}\in S^0$.
\end{enumerate}
\end{lemma}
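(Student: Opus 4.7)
The plan is to identify $\fou_L[\tilde{E}_L'(x,y)](t)$ with the Schwartz kernel of a unitary wave propagator and build a local Fourier integral parametrix for it. Formally $\tilde{E}_L(x,y) = \sum_k \one[\lambda_k^{1/m}\leq L]\, e_k(x)\overline{e_k(y)}$, so that $\fou_L[\tilde{E}_L'](t) = \sum_k e^{-it\lambda_k^{1/m}}e_k(x)\overline{e_k(y)}$ is the Schwartz kernel of $e^{-itP}$ for any first-order self-adjoint pseudo-differential operator $P$ on $\man$ whose eigenvalues are the $\lambda_k^{1/m}$. Such a $P$ is produced from $A$ by functional calculus as in Chapter 29 of \cite{ho_apdo4}; its principal symbol $\sigma\in S^1$ then satisfies point (3) of the lemma, i.e.\ $\sigma-\sigma_A^{1/m}\in S^0$. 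The output $\fou_L[\tilde{E}_L'](t)$ is tempered in $t$ since the spectral counting function grows polynomially, which gives the tempered-distribution assertion.

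In a coordinate patch in which $d\mu_\man$ is Lebesgue, I would then seek a local parametrix $\tilde U(x,y,t)$ for $e^{-itP}$ of the form $\tilde U(x,y,t) = (2\pi)^{-n}\int q(x,t,y,\xi) e^{i(\psi(x,y,\xi)-t\sigma(y,\xi))}\,d\xi$ as an oscillatory integral. Imposing $(\partial_t + iP_x)\tilde U = 0$ modulo smoothing and extracting the leading order in $\xi$ produces the eikonal equation
\[
\sigma(x,\partial_x\psi(x,y,\xi)) = \sigma(y,\xi),
\]
with the diagonal conditions $\psi(x,x,\xi)=0$ and $\partial_x\psi(x,x,\xi)=\xi$ forcing $\tilde U|_{t=0}$ to be the kernel of the identity modulo lower order. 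The method of characteristics, applied to the Hamiltonian flow of $\sigma(y,\xi)$ in $T^*\man\zeroless$, yields a smooth solution $\psi$ on a neighborhood $U$ of $0$, positively $1$-homogeneous in $\xi$ in a suitable sense. Verifying the four axioms of Definition \ref{d.phase} for this $\psi$ is then a direct check, in particular the asymptotic homogeneity coming from the fact that $\sigma$ itself is a classical symbol. The amplitude $q$ is built as an asymptotic sum $q\sim\sum_{j\geq 0}q_j$ with $q_j\in S^{-j}$, determined recursively by first-order linear transport equations along the bicharacteristics of $\sigma$, with initial data $q_0(x,0,y,\xi)=1$ and $q_j(x,0,y,\xi)=0$ for $j\geq 1$; after truncation near infinity this gives $q(x,0,y,\xi)-1\in S^{-1}$ as required on a subneighborhood $U_0\subset U$, and a cutoff in $(x,y)$ ensures the compact support property in that pair.

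The last step is to show that $\fou_L[\tilde{E}_L'](t) - \tilde U(x,y,t)$ is smooth in $(x,y,t)$ on $U\times U\times ]-\eps,\eps[$ for some $\eps>0$, which will provide the remainder $k$. By the construction of $q$ the operator $(\partial_t + iP_x)\tilde U$ has a smoothing kernel, and the initial value $\tilde U|_{t=0}$ differs from the identity by a smoothing operator; Duhamel's formula combined with the $L^2$-boundedness of the wave group then identifies the difference $e^{-itP}-\tilde U(t)$ as a smoothing operator, uniformly for $|t|<\eps$, giving the smooth term $k$. The time horizon $\eps$ is dictated by the global solvability of the eikonal equation, which holds as long as the Lagrangian $\{(x,\partial_x\psi,y,-\partial_y\psi)\}$ projects diffeomorphically onto the base, i.e.\ for $|t|$ short compared to the injectivity radius of the bicharacteristic flow of $\sigma$; this is the main technical obstacle and is the point where the ellipticity of $A$ enters through the positive-definiteness of $\partial_\xi^2\sigma$ along fibers. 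In the Dirichlet boundary setting, one supplements this interior parametrix with a reflected one that handles bicharacteristics grazing $\partial\man$, as carried out in Section 3 of \cite{vas83}; the contribution of the boundary reflection can be absorbed into $k$ in the interior formula, so that the stated expression remains valid on any compact subset of $\man\times\man$ away from $\partial\man$.
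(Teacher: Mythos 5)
Your sketch faithfully reconstructs the FIO parametrix construction for the half-wave group $e^{-itA^{1/m}}$ that the paper simply cites from H\"ormander \cite{ho68} (closed case) and Vasil'ev \cite{vas83} (Dirichlet case), so the approach is the same. One minor inaccuracy: ellipticity of $A$ does \emph{not} give positive-definiteness of $\partial_\xi^2\sigma$ along fibers (consider $\sigma_A(x,\xi)=\xi_1^4+\xi_2^4$); the eikonal and transport machinery only needs $\partial_\xi\sigma\neq 0$ on $\R^n\zeroless$, which follows from positivity and Euler's identity, while positivity of the fiber Hessian is a separate, stronger property that the paper only invokes later in the admissibility discussion.
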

We will also need the following classical lemma. Here and below, $\mathcal{S}(\R)$ will denote the space of Schwartz functions.
\begin{lemma}\label{bump}
For each $\eps>0$ there is a function $\rho\in\mathcal{S}(\R)$ such that $\fou(\rho)$ has compact support contained in $]-\eps,\eps[$, $\rho>0$ and $\fou(\rho)(0)=1$.
\end{lemma}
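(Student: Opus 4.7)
The plan is to construct $\rho$ explicitly as (a normalized sum of two translates of) the squared modulus of the Fourier transform of a smooth compactly supported function. First I would pick any real-valued $\phi\in C^\infty_c(\R)$ with compact support in $]-\eps/4,\eps/4[$ and $\phi\not\equiv 0$, and set $F=\fou(\phi)$. Then $\rho_0(x)=|F(x)|^2$ is a non-negative Schwartz function, and a routine computation (using Fourier inversion and the fact that $\phi$ is real) gives
\[
\fou(\rho_0)(\xi)=2\pi\int_{\R}\phi(u)\phi(u+\xi)\,du,
\]
so $\fou(\rho_0)$ is real-valued, supported in $\textup{supp}(\phi)-\textup{supp}(\phi)\subset\,]-\eps/2,\eps/2[$, and strictly positive at $\xi=0$ where it equals $2\pi\|\phi\|_{L^2}^2$.

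The only delicate point will be enforcing strict positivity of the final $\rho$ on all of $\R$. Since $\phi$ has compact support, the Paley--Wiener theorem says that $F$ extends to an entire function on $\C$, so its zero set $Z\subset\R$ is discrete and hence countable, as is $Z-Z$. The plan is then to pick any $a\in\R$ with $2a\notin Z-Z$ and set $\rho_a(x)=\rho_0(x-a)+\rho_0(x+a)$. A common zero $x$ of the two summands would force $x\pm a\in Z$, hence $2a\in Z-Z$, contradicting the choice of $a$, so $\rho_a>0$ on $\R$. On the Fourier side, $\fou(\rho_a)(\xi)=2\cos(a\xi)\fou(\rho_0)(\xi)$ is still supported in $]-\eps/2,\eps/2[\,\subset\,]-\eps,\eps[$, and $\fou(\rho_a)(0)=2\fou(\rho_0)(0)>0$.

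Finally I would normalize by setting $\rho=\rho_a/\fou(\rho_a)(0)$, so that $\fou(\rho)(0)=1$ while $\rho>0$ and the support of $\fou(\rho)$ are preserved. The hard part of the argument is really the strict-positivity step, and it is the Paley--Wiener rigidity of $F$ (no accumulation of real zeros) combined with the cheap symmetrization trick that makes this possible; the rest is routine Fourier analysis.
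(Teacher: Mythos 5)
Your proof is correct but takes a genuinely different route from the paper's. The paper picks $f\in\mathcal{S}(\R)$ with $\fou(f)$ supported in $]-\eps/2,\eps/2[$ and simply sets $\rho=f^2*f^2$ (with an implicit normalization and the implicit choice of $f$ real); strict positivity of the convolution is then automatic, because for each $x$ the integrand $f^2(y)f^2(x-y)$ is nonnegative and positive off a discrete set of $y$'s (again using that $f$ is entire by Paley--Wiener), so the integral is strictly positive. You instead build the band-limited nonnegative function on the dual side, as $\rho_0=|\fou(\phi)|^2$ with $\phi$ compactly supported, and then force strict positivity by the translate-and-add trick $\rho_0(\cdot-a)+\rho_0(\cdot+a)$ with $2a$ chosen outside the countable difference set $Z-Z$. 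Both arguments lean on the same Paley--Wiener rigidity (the real zero set of the band-limited factor is discrete); yours makes this step explicit while the paper's one-liner leaves it, as well as the final normalization to achieve $\fou(\rho)(0)=1$, implicit. The convolution route is slightly cleaner because it sidesteps the choice of $a$; conversely, in your setup convolving $\rho_0$ with itself would have given strict positivity by the paper's argument, so the shift trick, while valid and a nice observation, is not strictly necessary.
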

\begin{prf}
Choose $f\in\mathcal{S}(\R)$ whose Fourier transform has support in $]-\frac{\eps}{2},\frac{\eps}{2}[$. Then it is easy to see that $\rho=f^2*f^2$ satisfies the required properties.
\end{prf}
Before we proceed, let us fix $U$, $\psi$, $q$, $k$ and $\rho$ as in Lemmas \ref{l.source} and \ref{bump} as well as a differential operator $P$ on $\man\times \man$ of order $d$ with principal symbol $\sigma_P$. Let $\tilde{E}_{L,P}=P\tilde{E}_L$. In order to estimate this $\tilde{E}_{L,P}$, we will first convolve it with $\rho$ in order to estimate it using Lemma \ref{l.source}. Then, we will compare $\tilde{E}_{L,P}$ to its convolution with $\rho$ which we denote - somewhat liberally - by
\[
\rho*\tilde{E}_{L,P}=\int_{\R}\rho(\lambda)\tilde{E}_{L-\lambda,P}d\lambda\, .
\]
The starting point of the following calculations will be the following Equation, which follows from Lemma \ref{l.source}.
\begin{align}\label{e.ap}
\frac{d}{d\lambda}(\rho*e_{\lambda,P}(x,y))|_{\lambda=L}=\frac{1}{(2\pi)^n}\int_{\R^n}\fou^{-1}_t\Big[\fou(\rho)(t)&P \left(q(x,t,y,\xi)e^{i(\psi(x,t,y,\xi)-t\sigma(y,\xi))}\right) \Big](L)d\xi\\\nonumber
&+\fou^{-1}_t\big[\fou(\rho)(t)Pk(x,t,y)\big](L). 
\end{align}

\subsection{Estimating the convolved kernel}

In this section we provide the following expression for $\rho*\tilde{E}_{L,P}$ in the local coordinates chosen in Lemma \ref{l.source}.
\begin{lemma}\label{hol0}
There is an open set $V\subset U$ containing $0$ such that, as $L\rightarrow\infty$ and uniformly for $(x,y)\in V\times V$,
\[\rho*\tilde{E}_{L,P}(x,y) = \frac{1}{(2\pi)^n}\int_{\sigma(y,\xi)\leq L} \sigma_P(x,y,\partial_{x,y}\psi(x,y,\xi)) e^{i\psi(x,y,\xi)}d\xi + O(L^{n+d-1})\,.\]
\end{lemma}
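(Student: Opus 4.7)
The plan is to integrate equation \eqref{e.ap} with respect to $\lambda$ from $-\infty$ up to $L$. Since $A$ is bounded below and $\rho \in \mathcal{S}(\R)$, the convolution $\rho * \tilde E_{\mu, P}(x,y)$ vanishes as $\mu \to -\infty$, so
\[
\rho * \tilde E_{L,P}(x,y) = \int_{-\infty}^L \tfrac{d}{d\mu}\bigl(\rho * \tilde E_{\mu,P}(x,y)\bigr)\, d\mu,
\]
and \eqref{e.ap} provides the integrand. The $k$-contribution $\fou^{-1}_t[\fou(\rho)(t)\, Pk(x,t,y)](\mu)$ is the inverse Fourier transform of a compactly supported smooth function of $t$, hence Schwartz in $\mu$ uniformly for $(x,y)$ in compact subsets of $U \times U$; its integral over $\mu \in (-\infty, L]$ is $O(1)$ and absorbs into the error.

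For the oscillatory term, I would factor
\[
P\bigl(q(x,t,y,\xi)\, e^{i(\psi - t\sigma)}\bigr) = e^{i\psi(x,y,\xi)}\, e^{-it\sigma(y,\xi)}\, p(x,y,t,\xi),
\]
where $p$ is a symbol of order $d$ in $\xi$, smooth in $t$, produced by Leibniz and the chain rule applied to $P$. Since $\partial_{x,y}(\psi - t\sigma) = \partial_{x,y}\psi - t\,(0, \partial_y\sigma)$ is a symbol of order one in $\xi$, the top-order-in-$\xi$ piece of $p$ equals $\sigma_P\bigl(x,y,\partial_{x,y}\psi - t(0,\partial_y\sigma)\bigr)\, q(x,t,y,\xi)$. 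Choosing $V \subset U_0$ so that $q(x,0,y,\xi) - 1 \in S^{-1}$ on $V$, Taylor expanding in $t$ around $0$, and using $d$-homogeneity of $\sigma_P$, I would decompose
\[
p(x,y,t,\xi) = \sigma_P\bigl(x,y,\partial_{x,y}\psi(x,y,\xi)\bigr) + r_1(x,y,\xi) + t\, \tilde r(x,y,t,\xi),
\]
with $r_1 \in S^{d-1}$ (and $t$-independent) and $\tilde r \in S^d$.

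Next I would apply the identities $\fou^{-1}_t[\fou(\rho)(t)\, e^{-it\sigma(y,\xi)}](\mu) = \rho(\mu - \sigma(y,\xi))$ and $\fou^{-1}_t[t\, g(t)](\mu) = -i\partial_\mu \fou^{-1}_t[g](\mu)$ to compute the inverse Fourier transforms, and then integrate over $\mu \in (-\infty, L]$. The principal contribution becomes
\[
\frac{1}{(2\pi)^n}\int_{\R^n} e^{i\psi(x,y,\xi)}\, \sigma_P\bigl(x,y,\partial_{x,y}\psi(x,y,\xi)\bigr)\, \rho_1\bigl(L - \sigma(y,\xi)\bigr)\, d\xi,
\]
where $\rho_1(u) = \int_{-\infty}^u \rho(s)\, ds$. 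Since $\rho_1 - \one_{[0,+\infty)}$ is Schwartz, replacing $\rho_1(L - \sigma)$ by $\one\{\sigma(y,\xi) \leq L\}$ produces an error concentrated in the shell $|\sigma(y,\xi) - L| = O(1)$, whose $\xi$-volume is $O(L^{n-1})$; multiplied by the symbol bound $O(L^d)$ on $\sigma_P$, this is $O(L^{n+d-1})$.

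The main bookkeeping — and the principal obstacle — is to show that all remainder contributions are likewise $O(L^{n+d-1})$. For $r_1 \in S^{d-1}$, integrating over $\{\sigma(y,\xi) \leq L\}$ a symbol of order $d-1$ yields $O(L^{n+d-1})$. For $t\,\tilde r$, the factor of $t$ turns into $-i\partial_\mu$ after $\fou^{-1}_t$, and $\int_{-\infty}^L \partial_\mu[\,\cdot\,]\, d\mu$ reduces to a boundary evaluation at $\mu = L$ of the form $-i\, \tilde r(x,y,0,\xi)\, \rho(L - \sigma(y,\xi))$ integrated in $\xi$, which is once more a shell contribution of size $O(L^{n-1}) \cdot O(L^d)$. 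Non-principal Leibniz terms in $P(q e^{i(\psi - t\sigma)})$ are of strictly lower order in $\xi$ and are handled identically. To justify the $\xi$-integrals uniformly in $\mu$, iterated integration by parts in $t$ gives $\fou^{-1}_t[\fou(\rho)\, r\, e^{-it\sigma}](\mu) = O_N\bigl(|\mu - \sigma(y,\xi)|^{-N}(1+|\xi|)^d\bigr)$ for every $N$, which localizes the effective $\xi$-support to $|\xi| = O(L)$ and ensures convergence.
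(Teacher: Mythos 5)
Your proposal is correct and reaches the same conclusion from essentially the same ingredients as the paper (equation~\eqref{e.ap}, the Leibniz expansion of $P$ acting on the oscillatory integrand, shell estimates using the rapid decay of $\rho$ and $\hat{\rho}(0)=1$, and the fact that $q(x,0,y,\xi)-1\in S^{-1}$), but organizes the reduction to the $t=0$ evaluation differently. The paper first splits the integrand by Leibniz into homogeneous symbol pieces $\sigma_j(x,t,y,\xi)$, introduces the convolved amplitudes $R_j$ and cumulants $S_j$, and then uses a Fubini domain decomposition $D_1,D_2,D_3$ to show that replacing the ``wedge'' $\{\lambda\le L,\ \sigma\le L\}$ by the strip $\{\sigma\le L\}\times\R$ costs only $O(L^{n+j-1})$, after which $\int_\R R_j(x,y,s,\xi)\,ds=\frac{1}{(2\pi)^n}\sigma_j(x,0,y,\xi)$ pulls out the $t=0$ value (Lemma~\ref{hol3}) and the $q$-removal is handled separately (Lemma~\ref{hol4}). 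You instead Taylor-expand the full amplitude in $t$ about $0$, push the $t=0$ constant-in-$t$ piece through $\fou^{-1}_t$ and the $\mu$-integral to produce $\rho_1(L-\sigma(y,\xi))$ which you then replace by $\one\{\sigma\le L\}$ with a shell error, and convert the $t$-linear remainder via $\fou^{-1}_t[t g]=-i\partial_\mu\fou^{-1}_t[g]$ into a single boundary evaluation at $\mu=L$, again a shell-type term. Both routes work and produce the same $O(L^{n+d-1})$ bookkeeping; your version avoids the explicit $I_1,I_2,I_3$ interchange at the cost of having to justify differentiating and integrating the inverse Fourier transform in $\mu$ (which your iterated integration-by-parts remark does). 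One small imprecision worth noting: the stated identity $\fou^{-1}_t[\hat\rho\,\tilde r\,e^{-it\sigma}](L)\approx\tilde r(x,y,0,\xi)\rho(L-\sigma(y,\xi))$ is not exact because $\tilde r$ depends on $t$; what one actually obtains is a function that is rapidly decaying in $L-\sigma(y,\xi)$ with symbol bound $O((1+|\xi|)^d)$, which is what the shell estimate requires, so the conclusion stands.
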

In order to do so we use the three lemmas stated below, whose proofs are given at the end of the section. To begin with, we use the information of Lemma \ref{l.source} to give a first expression for $\rho*\tilde{E}_{L,P}$.
\begin{lemma}\label{hol1}
The quantity
\[\rho*\tilde{E}_{L,P}(x,y)-\int_{-\infty}^L\frac{1}{(2\pi)^n}\int_{T^*_yM} \fou^{-1}_t\Big[\fou(\rho)P\left(q(x,t,y,\xi)e^{i(\psi(x,y,t,\xi)-t \sigma(y,\xi))}\right)\Big](\lambda)d\xi d\lambda\]
is bounded uniformly for $(x,y)\in U\times U$.
\end{lemma}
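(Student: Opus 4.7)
The plan is to integrate the identity \eqref{e.ap} in $\lambda$ from $-\infty$ up to $L$ and then show that the part of the resulting expression that does not appear in the statement of the lemma is uniformly bounded. This splits the argument into two simple steps.

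First, I would verify that $\lambda\mapsto \rho\ast\tilde{E}_{\lambda,P}(x,y)$ is smooth in $\lambda$ and tends to $0$ as $\lambda\rightarrow-\infty$. Since the spectrum of $A$ is bounded below, $\tilde{E}_{\lambda,P}$ vanishes for $\lambda$ sufficiently negative, and the fact that $\rho\in\mathcal{S}(\R)$ guarantees rapid decay of the convolution at $-\infty$. Integrating Equation \eqref{e.ap} in $\lambda$ from $-\infty$ to $L$ (and exchanging the order of integration by Fubini, interpreting the $\xi$-integral as an oscillatory integral in the sense of Theorem 2.4 of \cite{ho68}) then yields
\begin{align*}
\rho\ast\tilde{E}_{L,P}(x,y)=\ & \int_{-\infty}^L\frac{1}{(2\pi)^n}\int_{\R^n}\fou^{-1}_t\bigl[\fou(\rho)(t)\,P\bigl(q(x,t,y,\xi)e^{i(\psi(x,y,\xi)-t\sigma(y,\xi))}\bigr)\bigr](\lambda)\,d\xi\,d\lambda\\
& +\int_{-\infty}^L\fou^{-1}_t\bigl[\fou(\rho)(t)\,Pk(x,t,y)\bigr](\lambda)\,d\lambda.
\end{align*}
The first summand is exactly the integral subtracted from $\rho\ast\tilde{E}_{L,P}(x,y)$ in the statement of the lemma, so only the second summand needs to be controlled.

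For the second summand, I would exploit Lemma \ref{bump}, by which $\fou(\rho)$ has compact support inside $]-\eps,\eps[$, together with the smoothness of $k$ on $U\times U\times ]-\eps,\eps[$ provided by Lemma \ref{l.source}. Hence $t\mapsto \fou(\rho)(t)Pk(x,t,y)$ is a smooth, compactly supported function of $t$ that depends smoothly on $(x,y)\in U\times U$, and its inverse Fourier transform in $t$ is therefore a Schwartz function of $\lambda$ whose Schwartz seminorms are controlled uniformly in $(x,y)$ on compact subsets of $U\times U$. Consequently $\int_{-\infty}^L\fou^{-1}_t[\fou(\rho)Pk(x,\cdot,y)](\lambda)\,d\lambda$ is bounded uniformly in $L\in\R$ and in $(x,y)$, and the lemma follows.

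The main (mild) obstacle here is bookkeeping: one must justify the Fubini exchange and the convergence of the $\xi$-integral as an oscillatory integral, but both are standard consequences of the Fourier integral operator framework from \cite{ho68}. No new ideas beyond Lemmas \ref{l.source} and \ref{bump} are required.
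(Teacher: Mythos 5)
Your argument is essentially identical to the paper's: integrate Equation \eqref{e.ap} in $\lambda$ from $-\infty$ to $L$, identify the oscillatory-integral summand with the expression subtracted in the statement, and control the $k$-summand by observing that $\fou(\rho)(t)\,Pk(x,t,y)$ is smooth and compactly supported in $t$ (using Lemmas \ref{bump} and \ref{l.source}), so its inverse Fourier transform is Schwartz in $\lambda$ and integrates to a bounded quantity. One small caveat worth flagging: you claim uniform control of Schwartz seminorms only on compact subsets of $U\times U$ but then conclude uniformity over all of $U\times U$ as the lemma asserts; this jump is also implicit in the paper (the proof there simply writes ``is bounded''), and in practice is handled by shrinking $U$ or noting boundedness of $k$ and its derivatives, but it is worth being aware of the gap.
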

Here and below $\fou$ is the Fourier transform and the occasional subscript indicates the variable on which the transform is taken. Let us now investigate the effect of the differential operator $P$ on the right hand side of this expression. By the Leibniz rule, there is a finite family of symbols $(\sigma_j)_{0\leq j\leq d}\in C^{\infty}(U\times]-\eps,\eps[\times U,\R^n)^{d+1}$ such that for each $j$, $\sigma_j$ is homogeneous of degree $j$, such that
\[P\Big[q(x,t,y,\xi)e^{i(\psi(x,y,\xi)-t\sigma(y,\xi))}\Big]=\Big[\sum_{j=0}^{d} \sigma_j(x,t,y,\xi)\Big]e^{i(\psi(x,y,\xi)-t\sigma(y,\xi))}\]
and such that
\[\sigma_d(x,t,y,\xi)=q(x,t,y,\xi)\sigma_P(x,y,\partial_{x,y}(\psi(x,y,\xi)-t\sigma(y,\xi)))\,.\]
Now, for each $j$, let
\[R_j(x,y,L,\xi)=\frac{1}{(2\pi)^{n+1}}\int_{\R}\fou(\rho)(t)\sigma_j(x,t,y,\xi)e^{itL}dt\]
and
\[S_j(x,y,L)=\int_{-\infty}^L\int_{\R^n} R_j(x,y,\lambda-\sigma(y,\xi),\xi)e^{i\psi(x,y,\xi)}d\xi d\lambda\,.\]
Then,
\[\int_{-\infty}^L\frac{1}{(2\pi)^n}\int_{\R^n} \fou^{-1}_t\Big[\fou(\rho)P\left(q(x,t,y,\xi)e^{i(\psi(x,y,\xi)-t \sigma(y,\xi))}\right)\Big](\lambda)d\xi d\lambda=\sum_{j=0}^dS_j(x,y,L)\,.\]
Each $S_j$ will grow at an order corresponding to the degree of the associated symbol. This is shown in the following lemma.
\begin{lemma}\label{hol3}
There is an open set $V\subset U$ containing $0$ such that, as $L\rightarrow\infty$ and uniformly for $(x,y)\in V\times V$,
\[S_j(x,y,L)=\frac{1}{(2\pi)^n}\int_{\sigma(y,\xi)\leq L}\sigma_j(x,0,y,\xi)e^{i\psi(x,y,\xi)}d\xi+O(L^{n+j-1})\,.\]
\end{lemma}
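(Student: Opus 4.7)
The plan is to interchange the order of integration and reduce the statement to a uniform estimate on a single $\xi$-integral. Writing $\mu=\lambda-\sigma(y,\xi)$ in the inner integral defining $S_j$ and setting
\[
T_j(x,y,M,\xi)=\int_{-\infty}^M R_j(x,y,\mu,\xi)\,d\mu,
\]
I would first rewrite
\[
S_j(x,y,L)=\int_{\R^n} e^{i\psi(x,y,\xi)}\,T_j(x,y,L-\sigma(y,\xi),\xi)\,d\xi,
\]
then split $T_j$ into its limit at $M=+\infty$ plus a rapidly decaying remainder.

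The crucial preliminary estimate is that, since $\fou(\rho)$ has compact support in $]-\eps,\eps[$ and $\sigma_j(x,\cdot,y,\xi)$ is smooth in $t$, repeated integration by parts in $t$ in the defining formula for $R_j$ yields, for every $N\in\N$, a constant $C_N<+\infty$ (uniform for $x,y$ in a compact subset of $U$) such that
\[
|R_j(x,y,\mu,\xi)|\leq C_N(1+|\mu|)^{-N}(1+|\xi|)^j.
\]
In particular $R_j(x,y,\cdot,\xi)$ is absolutely integrable in $\mu$, and Fourier inversion together with the normalization $\fou(\rho)(0)=1$ yields $T_j(x,y,+\infty,\xi)=(2\pi)^{-n}\sigma_j(x,0,y,\xi)$. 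Integrating the tail bound then shows that the remainder
\[
D_j(x,y,M,\xi):=T_j(x,y,M,\xi)-\frac{\sigma_j(x,0,y,\xi)}{(2\pi)^n}\one_{M\geq 0}
\]
satisfies $|D_j(x,y,M,\xi)|\leq C_N'(1+|M|)^{-N}(1+|\xi|)^j$ for every $N$.

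The proof then reduces to bounding
\[
S_j(x,y,L)-\frac{1}{(2\pi)^n}\int_{\sigma(y,\xi)\leq L}\sigma_j(x,0,y,\xi)e^{i\psi(x,y,\xi)}d\xi=\int_{\R^n} e^{i\psi(x,y,\xi)}D_j(x,y,L-\sigma(y,\xi),\xi)\,d\xi.
\]
I would split this $\xi$-integral into three regions according to the size of $\sigma(y,\xi)$ relative to $L$: the low frequencies $\sigma(y,\xi)\leq L/2$, the high frequencies $\sigma(y,\xi)\geq 2L$, and the transition region $L/2\leq\sigma(y,\xi)\leq 2L$. The ellipticity of $\sigma$ gives $\sigma(y,\xi)\asymp |\xi|$, so choosing $N$ large in the decay estimate absorbs both $(1+|L-\sigma(y,\xi)|)^{-N}$ and $(1+|\xi|)^j$ in the two outer regions and produces contributions of size $O(L^{-A})$ for every $A$. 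The transition region carries the main contribution: the rapid decay in $L-\sigma(y,\xi)$ effectively localizes the integrand to a shell of $\sigma$-thickness $O(1)$ which, via the coarea formula applied to the smooth compact level sets of $\sigma(y,\cdot)$, has $d\xi$-volume $O(L^{n-1})$; on this shell $(1+|\xi|)^j\lesssim L^j$, giving the announced $O(L^{n+j-1})$ bound.

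The main technical obstacle I expect is to make all these constants uniform with respect to $(x,y)$ varying in an open neighborhood $V\subset U$ of $0$. This comes down to the facts that $\psi$, $\sigma$ and the $\sigma_j$ have controlled symbol seminorms on compact subsets of $U$ and that the level sets of $\sigma(y,\cdot)$ depend smoothly on $y$ and remain compact and strictly star-shaped for $y$ in a small enough neighborhood of $0$. With these uniform bounds in hand, all the Fubini interchanges are justified by absolute convergence, and shrinking $V$ once more if needed yields the estimate stated in the lemma.
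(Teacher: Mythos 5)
Your argument is correct and is essentially the paper's own: rewriting $S_j(x,y,L)=\int e^{i\psi}T_j(x,y,L-\sigma(y,\xi),\xi)\,d\xi$ and estimating the remainder $D_j(L-\sigma(y,\xi),\xi)$ is just a Fubini reorganization of the paper's decomposition of the $(\lambda,\xi)$-plane into $D_1,D_2,D_3$ (with the identity $S_j=I_1+I_2=I_1+I_3+O(L^{n+j-1})$), and the decisive ingredients---rapid decay of $R_j$ in its middle variable, polynomial growth in $\xi$, and the $O(\beta^{n-1})$ volume of the level set $\{\sigma(y,\cdot)=\beta\}$---coincide exactly. Your three-zone split at $\sigma\leq L/2$, $L/2<\sigma<2L$, $\sigma\geq 2L$ is a harmless refinement of the paper's $\sigma\leq L$ versus $\sigma>L$ cut, and the uniformity in $(x,y)$ is handled the same way, by restricting to a compact neighborhood of $0$.
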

Similarly since $q(x,0,y,\xi)-1\in S^{-1}(U_0\times U_0,\R^n)$, from a computation analogous to the proof of Lemma \ref{hol3} and left to the reader, replacing $\sigma_d$ by
\[(q(x,0,y,\xi)-1)\sigma_P(x,y,\partial_{x,y}(\psi(x,y,\xi)-t\sigma(y,\xi)))\in S^{d-1}\]
one can remove $q$ from the main term, which results in the following.
\begin{lemma}\label{hol4}
There is an open set $V\subset U$ containing $0$ such that, as $L\rightarrow\infty$ and uniformly for $(x,y)\in V\times V$,
\[
S_d(x,y,L)=\frac{1}{(2\pi)^n}\int_{\sigma(y,\xi)\leq L} \sigma_P(x,y,\partial_{x,y}(\psi(x,y,\xi)-t\sigma(y,\xi))) e^{i\psi(x,y,\xi)}d\xi + O(L^{n+d-1})\,.
\]
\end{lemma}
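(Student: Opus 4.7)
The plan is to reduce the claim to Lemma \ref{hol3} together with a crude volume bound on a leftover symbol of one lower order. I would start by applying Lemma \ref{hol3} with $j=d$ on some open neighborhood $V_1\subset U_0$ of the origin, which yields
\[
S_d(x,y,L)=\frac{1}{(2\pi)^n}\int_{\sigma(y,\xi)\leq L}\sigma_d(x,0,y,\xi)\,e^{i\psi(x,y,\xi)}d\xi+O(L^{n+d-1})
\]
uniformly for $(x,y)\in V_1\times V_1$. Observe that at $t=0$, the construction of $\sigma_d$ collapses to
\[
\sigma_d(x,0,y,\xi)=q(x,0,y,\xi)\,\sigma_P\!\left(x,y,\partial_{x,y}\psi(x,y,\xi)\right),
\]
so the only remaining task is to replace the prefactor $q(x,0,y,\xi)$ by $1$ in the main term.

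To do so I would invoke property 2 of Lemma \ref{l.source}, writing $q(x,0,y,\xi)=1+r(x,y,\xi)$ with $r\in S^{-1}(U_0\times U_0,\R^n)$. This splits $\sigma_d(x,0,y,\xi)$ into the desired main term $\sigma_P(x,y,\partial_{x,y}\psi(x,y,\xi))$ plus a remainder
\[
s(x,y,\xi):=r(x,y,\xi)\,\sigma_P\!\left(x,y,\partial_{x,y}\psi(x,y,\xi)\right).
\]
Since $\psi$ is a proper phase function, $\partial_{x,y}\psi\in S^1$, so $\sigma_P(x,y,\partial_{x,y}\psi(x,y,\xi))\in S^d$, and multiplication by $r\in S^{-1}$ puts $s$ in $S^{d-1}(U_0\times U_0,\R^n)$.

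The final step is a direct size estimate of the oscillatory integral with amplitude $s$: I do not need to exploit the oscillation at all. Because $\sigma$ is positive and $1$-homogeneous in $\xi$ and varies smoothly in $y$, there exists $c>0$ such that $\{\xi\,:\,\sigma(y,\xi)\leq L\}\subset\{|\xi|\leq cL\}$ uniformly for $y$ in any fixed compact subset of $V_1$; combined with $|s(x,y,\xi)|\leq C(1+|\xi|)^{d-1}$ on that compact, I bound
\[
\left|\int_{\sigma(y,\xi)\leq L}s(x,y,\xi)\,e^{i\psi(x,y,\xi)}d\xi\right|\leq C\int_{|\xi|\leq cL}(1+|\xi|)^{d-1}d\xi=O(L^{n+d-1}).
\]
Choosing $V$ to be any precompact open neighborhood of $0$ with $\overline{V}\subset V_1$ then combines the three ingredients and finishes the proof.

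The main obstacle is essentially bookkeeping: ensuring that the neighborhoods produced by Lemma \ref{l.source}, Lemma \ref{hol3}, and this lemma can all be taken inside a common $V$, so that the symbol estimate on $q(x,0,y,\xi)-1$ holds uniformly. No genuinely new oscillatory integral analysis is required; the point of the argument is precisely that lowering the symbol order by one is enough to absorb the remainder into the $O(L^{n+d-1})$ error term without invoking the phase.
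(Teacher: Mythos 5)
Your reconstruction matches the paper's intent. The paper's own ``proof'' is a one-line pointer (``a computation analogous to the proof of Lemma \ref{hol3}, replacing $\sigma_d$ by $(q(x,0,y,\xi)-1)\sigma_P(\cdots)\in S^{d-1}$''), and you fill it in concretely and correctly: invoke Lemma \ref{hol3} with $j=d$, evaluate $\sigma_d$ at $t=0$, peel off the factor $q(x,0,y,\xi)=1+r$ with $r\in S^{-1}$ using property 2 of Lemma \ref{l.source}, and absorb the $S^{d-1}$ remainder. That is the same route, just spelled out.

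Two caveats worth flagging. First, a small inaccuracy of phrasing: $\sigma$ from Lemma \ref{l.source} is \emph{not} $1$-homogeneous in $\xi$; it is a symbol of order $1$ whose principal part $\sigma_A^{1/m}$ is positive $1$-homogeneous. The inclusion $\{\sigma(y,\xi)\leq L\}\subset\{|\xi|\leq cL\}$ you want is still true, because $\sigma(y,\xi)=\sigma_A(y,\xi)^{1/m}+O(1)\geq c'|\xi|-C$ for large $|\xi|$, but the reason should be stated this way. Second, and more substantively: the crude volume bound
\[
\int_{|\xi|\leq cL}(1+|\xi|)^{d-1}\,d\xi=O\!\left(L^{n+d-1}\right)
\]
holds only when $n+d\geq 2$. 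In the borderline case $n+d=1$ (which forces $n=1$, $d=0$, i.e.\ the spectral projector on a one-dimensional manifold with $P=\mathrm{Id}$ --- a case the paper does use) the left-hand side is of size $\log L$, not $O(1)=O(L^{n+d-1})$. To cover that case one cannot simply discard the oscillation: one must exploit either the rapid decay in $\lambda$ built into the $R_j$-machinery or the one-dimensional integration by parts with the phase $\psi$, as is done elsewhere in the paper for $n=1$. The paper's own remark is equally silent on this constraint, so your argument is no worse than the intended one, but you should at least note that the volume bound requires $n+d\geq 2$ and that the remaining case needs a separate (if routine) argument.
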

The juxtaposition of these results yields Lemma \ref{hol0}.
\begin{prf}[ of Lemma \ref{hol1}]
Since $k\in C^\infty(U\times U\times]-\eps,\eps[)$ and $\fou(\rho)$ is supported in $]-\eps,\eps[$,
\[
\fou^{-1}_t\big[\fou(\rho)(t)Pk(x,t,y)\big](L)\in\mathcal{S}(\R)\, .
\]
Therefore, by Equation \eqref{e.ap},
\[
\rho*\tilde{E}_{L,P}(x,y)-\int_{-\infty}^L\frac{1}{(2\pi)^n}\int_{\R^n}\fou^{-1}_t\Big[\fou(\rho)P\left(q(x,t,y,\xi)e^{i(\psi(x,y,\xi)-t \sigma(y,\xi))}\right)\Big](\lambda)d\xi d\lambda
\]
is bounded.
\end{prf}
\begin{prf}[ of Lemma \ref{hol3}]
In this proof, all generic constants will be implicitly uniform with respect to $(x,y)\in V\times V$. Let us fix $y\in V$ and define the following three domains of integration.
\begin{align*}
D_1&=\{(\lambda,\xi)\in\R\times \R^n\ |\ \lambda\leq L,\ \sigma(y,\xi)\leq L\}\\
D_2&=\{(\lambda,\xi)\in\R\times \R^n\ |\ \lambda\leq L,\ \sigma(y,\xi)>L\}\\
D_3&=\{(\lambda,\xi)\in\R\times \R^n\ |\ \lambda>L,\ \sigma(y,\xi)\leq L\}\, .
\end{align*}
Moreover, for $l=1,2,3$, let $I_l=\int_{D_l}R_j(x,y,\lambda-\sigma(y,\xi),\xi)e^{i\psi(x,y,\xi)}d\xi d\lambda$. We will prove that $I_2$ and $I_3$ are $O(L^{n+j-1})$. The following calculation will then yield the desired identity. Here we use Fubini's theorem and the fact that $\fou(\rho)(0)=\int_{\R}\rho(\lambda)d\lambda=1$.
\begin{align*}
S_j(x,y,L)&= I_1+I_2= I_1+I_3+O(L^{n+j-1})\\
          &= \int_{\sigma(y,\xi)\leq L}\Big[\int_{\R}R_j(x,y,s,\xi)ds\Big]e^{i\psi(x,y,\xi)}d\xi+O(L^{n+j-1})\\
          &= \frac{1}{(2\pi)^n}\int_{\sigma(y,\xi)\leq L}\sigma_j(x,0,y,\xi)e^{i\psi(x,y,\xi)}d\xi+O(L^{n+j-1}).
\end{align*}
First of all, $R_j$ is rapidly decreasing in the third variable and, since $\sigma$ is elliptic of degree $1$, bounded by $\sigma(y,\xi)^j$ with respect to the last variable, $\xi$. Therefore, for each $N>0$ there is a constant $C>0$ such that
\[|R_j(x,y,\lambda,\xi)|\leq \frac{C \sigma(y,\xi)^j}{(1+|\lambda|)^N}\,.\]
Since $\sigma$ is elliptic of order $1$, the hypersurface $L^{-1}\{\sigma(y,\xi)=L\}\subset \R^n$ converges smoothly for $L\rightarrow\infty$ uniformly in $y$ to $S^*_y=\{\sigma_A(y,\xi)=1\}$ and the volume of $\{\sigma(x,\xi)= \beta\}\subset \R^n$ is $O(\beta^{n-1})$. Taking $N= 2n+j+1$, we deduce that
\begin{align*}
|I_2|&\leq C\int_{-\infty}^L\int_{\sigma(y,\xi)>L}\frac{\sigma(y,\xi)^j}{(1+|\lambda-\sigma(y,\xi)|)^{2n+j+1}}d\xi d\lambda\leq C\int_{-\infty}^L\int_L^{+\infty}\frac{\beta^{n+j-1}}{(1+|\lambda-\beta|)^{2n+j+1}}d\beta d\lambda\\
     &\leq C\int_L^{+\infty} \int_{-\infty}^{L-\beta}\frac{\beta^{n+j-1}}{(1+|s|)^{2n+j+1}}dsd\beta\leq C\int_L^\infty \frac{\beta^{n+j-1}}{(1+\beta-L)^{2n+j}}d\beta\\
     &\leq C\int_0^{+\infty} \frac{(\gamma+L)^{n+j-1}}{(1+\gamma)^{2n+j}}d\gamma\leq C L^{n+j-1}.
\end{align*}
Here we applied first the change of variables $s=\lambda-\beta$ and then $\gamma=\beta-L$. The case of $I_3$ is analogous and by a similar calculation we deduce that $I_1$ is well defined.
\end{prf}

\subsection{Comparison of the kernel and its convolution}

In this section we set about proving that $\tilde{E}_{L,P}$ is close enough to its convolution with $\rho$. This is encapsulated in the following lemma.
\begin{lemma}\label{hol6}
There is an open set $V\subset U$ containing $0$ such that, as $L\rightarrow\infty$ and uniformly for $(x,y)\in V\times V$,
\[\rho*\tilde{E}_{L,P}(x,y)-\tilde{E}_{L,P}(x,y)=O(L^{n+d-1})\,.\]
\end{lemma}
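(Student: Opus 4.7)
The plan is to follow H\"ormander's original Tauberian strategy: reduce the estimate to a one-dimensional statement on the diagonal, where the relevant partial spectral functions are monotone increasing, and then lift the resulting bound back to the off-diagonal case via Cauchy--Schwarz.

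First, I would decompose $P=\sum_{|\alpha|+|\beta|\leq d}a_{\alpha,\beta}(x,y)\partial_x^\alpha\partial_y^\beta$ into monomial pieces and rewrite each piece of $\tilde E_{L-\lambda,P}-\tilde E_{L,P}$ as $a_{\alpha,\beta}(x,y)\sum_{\tilde\lambda_k\in I(L,\lambda)}\partial^\alpha e_k(x)\overline{\partial^\beta e_k(y)}$, where $\tilde\lambda_k=\lambda_k^{1/m}$ and $I(L,\lambda)$ is the symmetric difference of $\{\tilde\lambda_k\leq L\}$ and $\{\tilde\lambda_k\leq L-\lambda\}$. Cauchy--Schwarz would then give
\[
\Big|\sum_{\tilde\lambda_k\in I(L,\lambda)}\partial^\alpha e_k(x)\overline{\partial^\beta e_k(y)}\Big|\leq\sqrt{\Delta G_\alpha(L,\lambda;x)\,\Delta G_\beta(L,\lambda;y)},
\]
where $G_\gamma(L;z):=\sum_{\tilde\lambda_k\leq L}|\partial^\gamma e_k(z)|^2$ is a nonnegative monotone increasing step function of $L$ and $\Delta G_\gamma(L,\lambda;z):=|G_\gamma(L-\lambda;z)-G_\gamma(L;z)|$.

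The heart of the proof would be to establish the diagonal bound
\[
\Delta G_\gamma(L,h;x)\leq C(1+|h|)(L+|h|)^{n+2|\gamma|-1}
\]
uniformly for $x$ in a neighborhood of $0$. For this I would apply Lemma \ref{hol0} with $P$ replaced by $\partial_x^\gamma\partial_y^\gamma$ (of order $2|\gamma|$) and evaluated at $y=x$; since $\psi(x,x,\xi)=0$ by Lemma \ref{l.toolbox}, this yields a smooth principal part $M_\gamma(L;x)=O(L^{n+2|\gamma|})$ with $M_\gamma'(L;x)=O(L^{n+2|\gamma|-1})$ and an error of the same order. Equivalently, differentiating Equation \eqref{e.ap} at $y=x$, where the phase $\psi-t\sigma$ collapses to $-t\sigma(x,\xi)$ and the inverse Fourier transform in $t$ becomes $\rho(L-\sigma(x,\xi))$, one reads off directly that $\rho*G_\gamma'(L;x)=O(L^{n+2|\gamma|-1})$. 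H\"ormander's Tauberian theorem for monotone functions (Lemma 2.3 of \cite{ho68}, or Theorem 17.5.3 of \cite{ho_apdo3}) then gives the pointwise bound $G_\gamma(L+h;x)-G_\gamma(L;x)=O(L^{n+2|\gamma|-1})$ for $|h|\leq 1$, and iterating over unit intervals extends the estimate to general $h$ at the expense of the prefactor $(1+|h|)(L+|h|)^{n+2|\gamma|-1}$.

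Finally, I would combine everything by writing
\[
\rho*\tilde E_{L,P}(x,y)-\tilde E_{L,P}(x,y)=\int_\R\rho(\lambda)\bigl(\tilde E_{L-\lambda,P}(x,y)-\tilde E_{L,P}(x,y)\bigr)d\lambda,
\]
applying Cauchy--Schwarz and the diagonal bound pointwise under the integral to dominate the integrand by $C(1+|\lambda|)(L+|\lambda|)^{n+d-1}$, and splitting the $\lambda$-integral into $|\lambda|\leq L$ (bounded directly by $O(L^{n+d-1})$) and $|\lambda|>L$ (negligible by the rapid decay of $\rho$). The main obstacle is the Tauberian step above: it is the only place where positivity and monotonicity of the kernel enter, and these properties are lost off the diagonal --- Cauchy--Schwarz is precisely the bridge that lets us transfer the diagonal monotone control to the off-diagonal estimate.
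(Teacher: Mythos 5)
Your proposal follows essentially the same route as the paper's proof: estimate the derivative of $\rho*\tilde E_{L,P}$ on the diagonal via Equation~\eqref{e.ap} and Lemma~\ref{hol2}, combine with monotonicity of the diagonal spectral function and positivity of $\rho$ to obtain an increment bound (this is the inline Tauberian argument, which you instead cite from H\"ormander), extend off-diagonal by Cauchy--Schwarz, and finish by integrating against $\rho$ and using its rapid decay. One small caution: the clause about applying Lemma~\ref{hol0} and differentiating its conclusion does not work as stated, since differentiating an $O(L^{n+d-1})$ error term carries no information; but the clause that follows it --- reading the derivative bound directly off \eqref{e.ap}, i.e.\ Lemma~\ref{hol2} --- is the correct step and is exactly what the paper does, so the argument as a whole is sound.
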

As before, the proofs are relegated to the end of the section. In order to prove Lemma \ref{hol6} we first estimate the growth of the $R_j$ as follows.
\begin{lemma}\label{hol2}
There is an open set $V\subset U$ containing $0$ such that, as $L\rightarrow\infty$ and uniformly for $(x,y)\in V\times V$,
\[\int_{\R^n} R_j(x,y,L-\sigma(y,\xi),\xi)e^{i\psi(x,y,\xi)}d\xi=O(L^{n+j-1})\,.\] 
\end{lemma}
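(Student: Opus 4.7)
\begin{prf}[ of Lemma \ref{hol2}]
The plan is to bound the integral in absolute value, exploiting the rapid decay of $R_j$ in its third variable. The oscillatory factor $e^{i\psi(x,y,\xi)}$ will not be needed here; a pointwise estimate on $|R_j|$ combined with polar-type coordinates adapted to $\sigma$ will suffice to produce the bound $O(L^{n+j-1})$.

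First I would establish the decay estimate: since $\fou(\rho)\in C^\infty_c(\R)$ and the map $t\mapsto\sigma_j(x,t,y,\xi)$ is smooth with derivatives controlled by $(1+|\xi|)^j$ uniformly for $(x,y)$ in compact subsets of $U$, an $N$-fold integration by parts in $t$ in the defining integral of $R_j$ yields, for any $N\in\N$ and any open neighborhood $V$ of $0$ in $U$ with compact closure in $U$, a constant $C_N<+\infty$ such that, uniformly for $(x,y)\in V\times V$, $\lambda\in\R$ and $\xi\in\R^n$,
\[
|R_j(x,y,\lambda,\xi)|\leq C_N\frac{(1+|\xi|)^j}{(1+|\lambda|)^N}.
\]
Since $\sigma(y,\cdot)$ is elliptic and positive $1$-homogeneous, $(1+|\xi|)$ is comparable to $(1+\sigma(y,\xi))$ uniformly for $y\in\overline V$, so this bound may equivalently be phrased in terms of $\sigma(y,\xi)$.

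Next, I would take absolute values inside the integral to be estimated and pass to polar-type coordinates with respect to $\sigma(y,\cdot)$. Since $\{\xi\in\R^n:\sigma(y,\xi)\leq t\}=tB_y$ where $B_y$ is the unit ball of $\sigma(y,\cdot)$, the volume of this set is $O(t^n)$ uniformly in $y\in\overline V$, and the coarea formula reduces the problem to a one-dimensional estimate of the form
\[
\int_0^{+\infty}\frac{(1+t)^{n+j-1}}{(1+|L-t|)^N}\,dt.
\]
Choosing $N=n+j+1$ and splitting the integration domain into the three regions $t\in[0,L/2]$, $t\in[L/2,2L]$ and $t\in[2L,+\infty[$, routine estimates produce a bound of $O(L^{n+j-1})$ for $L\geq 1$: the middle region contributes the leading term (the numerator is bounded by a constant times $L^{n+j-1}$ and the integral of $(1+|L-t|)^{-N}$ over this range is $O(1)$), while on the outer two regions the decay factor $(1+|L-t|)^{-N}$ absorbs the polynomial growth of the numerator and yields strictly smaller contributions. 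The only point requiring any care is choosing $N$ large enough to dominate the polynomial growth in the tail $[2L,+\infty[$; no genuine obstacle appears in the argument.
\end{prf}
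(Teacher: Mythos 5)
Your proposal is correct and follows essentially the same route the paper indicates: the paper proves Lemma \ref{hol2} by the same decay bound on $R_j$ plus the change of variables $\beta=\sigma(y,\xi)$ used to estimate $I_2$ and $I_3$ in the proof of Lemma \ref{hol3}, leaving the (routine) details to the reader. Your version is a bit more explicit about deriving the decay estimate by integrating by parts in $t$ and about the splitting of the resulting one-dimensional integral into three regions, and it uses the sharper choice $N=n+j+1$ where the paper's $I_2$/$I_3$ computation takes $N=2n+j+1$ to control an additional $\lambda$-integration; both choices are valid here.
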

This lemma follows from a computation analogous to the bound on $I_2$ and $I_3$ given in the proof of Lemma \ref{hol3} above and the details are left to the reader. It allows us to prove a second intermediate result from which we obtain Lemma \ref{hol6} directly.
\begin{lemma}\label{hol5}
There is an open set $V\subset U$ containing $0$ such that, as $L\rightarrow\infty$ and uniformly for $(x,y)\in V\times V$,
\[\tilde{E}_{L+1,P}(x,y)-\tilde{E}_{L,P}(x,y)=O(L^{n+d-1})\,.\]
\end{lemma}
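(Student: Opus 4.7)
The plan is to reduce the off-diagonal difference $\tilde{E}_{L+1,P}(x,y)-\tilde{E}_{L,P}(x,y)$ to a diagonal, sign-definite quantity via a polarization (Cauchy--Schwarz) argument, and then extract its unit-window bound from the smoothed estimate of Lemma \ref{hol0} by a Tauberian argument exploiting monotonicity.

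Working in the local coordinates provided by Lemma \ref{l.source}, I will write $P = \sum_I a_I(x,y)\,\partial_x^{\alpha_I}\partial_y^{\beta_I}$ with $|\alpha_I|+|\beta_I|\leq d$. Term by term,
\[
\tilde{E}_{L+1,P}(x,y)-\tilde{E}_{L,P}(x,y) = \sum_I a_I(x,y)\sum_{L<\lambda_k^{1/m}\leq L+1}\partial^{\alpha_I}e_k(x)\,\overline{\partial^{\beta_I}e_k(y)}.
\]
Cauchy--Schwarz bounds each inner sum by $T_{\alpha_I}(x)^{1/2}\,T_{\beta_I}(y)^{1/2}$, where $T_\gamma(z) := \sum_{L<\lambda_k^{1/m}\leq L+1}|\partial^\gamma e_k(z)|^2\geq 0$. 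An estimate $T_\gamma(z)=O(L^{n+2|\gamma|-1})$ would then yield $T_{\alpha_I}(x)^{1/2}T_{\beta_I}(y)^{1/2}=O(L^{n+|\alpha_I|+|\beta_I|-1})=O(L^{n+d-1})$, so it suffices to prove this diagonal estimate uniformly for $z$ in some neighborhood of $0$ and for every $|\gamma|\leq d$.

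Next, I set $P_\gamma := \partial_x^\gamma \partial_y^\gamma$, a differential operator of order $2|\gamma|$ on $\man\times\man$, and $f(L) := \tilde{E}_{L,P_\gamma}(z,z)=\sum_{\lambda_k\leq L^m}|\partial^\gamma e_k(z)|^2$. This function is non-decreasing in $L$ with $f(L+1)-f(L)=T_\gamma(z)$. I apply Lemma \ref{hol0} with $P = P_\gamma$ at the diagonal $x=y=z$: by points 2 and 3 of Lemma \ref{l.toolbox}, $\psi(z,z,\xi)=0$ and $\partial_{x,y}\psi(z,z,\xi)=(\xi,-\xi)$, so the principal symbol evaluated along the phase becomes $\sigma_{P_\gamma}(z,z,(\xi,-\xi)) = (i\xi)^\gamma(-i\xi)^\gamma = \xi^{2\gamma}\geq 0$. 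Therefore
\[
\rho * f(L) = \frac{1}{(2\pi)^n}\int_{\sigma(z,\xi)\leq L}\xi^{2\gamma}\,d\xi + O(L^{n+2|\gamma|-1}),
\]
and the difference $\rho*f(L+1)-\rho*f(L-1)$ reduces to an integral over the shell $L-1<\sigma(z,\xi)\leq L+1$, whose volume is $O(L^{n-1})$ with integrand $O(L^{2|\gamma|})$, so $\rho*f(L+1)-\rho*f(L-1) = O(L^{n+2|\gamma|-1})$.

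Finally, the Tauberian step: since $f$ is non-decreasing and $\rho>0$,
\[
\rho*f(L+1)-\rho*f(L-1) = \int_\R \rho(\mu)\bigl[f(L+1-\mu)-f(L-1-\mu)\bigr]d\mu \geq c_0\bigl[f(L+1)-f(L)\bigr],
\]
with $c_0 := \int_{-1}^0\rho(\mu)\,d\mu > 0$, because for $\mu\in[-1,0]$ one has $L+1-\mu\geq L+1$ and $L-1-\mu\leq L$, so monotonicity of $f$ gives $f(L+1-\mu)-f(L-1-\mu)\geq f(L+1)-f(L)$. Combining these estimates yields $T_\gamma(z)=O(L^{n+2|\gamma|-1})$ and hence Lemma \ref{hol5}. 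The main obstacle is this Tauberian step: the convolution $\rho*$ destroys sharp information at unit scale, and the sign-positivity obtained via the polarization $P\rightsquigarrow P_\gamma$ is essential to recover it; this is precisely why we pass through the operators $\partial_x^\gamma\partial_y^\gamma$ rather than applying Lemma \ref{hol0} directly to $P$.
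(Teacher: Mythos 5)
Your proof is correct and takes essentially the same route as the paper: reduce to diagonal, sign-definite quantities via Cauchy--Schwarz polarization, then run a Tauberian argument exploiting $\rho>0$ and monotonicity to extract a unit-window bound from the smoothed estimate. The only variation is that you bound the first difference $\rho*f(L+1)-\rho*f(L-1)$ using Lemma~\ref{hol0}, while the paper bounds the derivative $\tfrac{d}{dL}(\rho*u)$ using Equation~\eqref{e.ap} and Lemma~\ref{hol2}; these are equivalent implementations of the same Tauberian step.
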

\begin{prf}[ of Lemma \ref{hol5}]
We begin with the case where $x=y$ and $P$ is of the form $P_1\otimes P_1$. For brevity we define
\[u(L)=\tilde{E}_{L,P}(x,x)=\sum_{\lambda_k\leq L}|(P_1 e_k)(x)|^2\,.\]
Recall $\rho>0$ so it stays greater than some constant $a>0$ on the interval $[-1,0]$. Moreover $u$ is increasing so by Equation \eqref{e.ap} and Lemma \ref{hol2},
\begin{align*}
0&\leq u(L+1)-u(L)=\int_L^{L+1}u'(\lambda)d\lambda\leq \frac{1}{a}\int_L^{L+1}\rho(L-\lambda)u'(\lambda)d\lambda\\
 &\leq \frac{1}{a}\frac{d}{dL}(\rho*u)\leq \frac{1}{a}\sum_{j=0}^d \int_{\R^n} R_j(x,y,L-\sigma(y,\xi))d\xi +O(L^{n+d-1})=O(L^{n+d-1}).
\end{align*}
Now if $P$ is of the form $P_1\otimes P_2$, and for any $x$ and $y$, let $X=(P_1 e_k)_{L<\lambda_k\leq L+1}$ and $Y=(P_2 e_k)_{L<\lambda_k\leq L+1}$ be two vectors in some $\C^q$ which we equip with the standard hermitian product ``$\star$''. Then, $\tilde{E}_{L+1,P}(x,y)-\tilde{E}_{L,P}(x,y)=X\star \overline{Y}$ so
\begin{align*}
|\tilde{E}_{L+1,P}&(x,y)-\tilde{E}_{L,P}(x,y)|^2\leq |X|^2|Y|^2\\
                   &=|\tilde{E}_{L+1,P_1\otimes P_1}(x,y)-\tilde{E}_{L,P_1\otimes P_1}(x,y)||\tilde{E}_{L+1,P_2\otimes P_2}(x,y)-\tilde{E}_{L,P_2\otimes P_2}(x,y)|\\
                   &\leq \frac{1}{4}\left(\tilde{E}_{L+1,P_1\otimes P_1}(x,x)-\tilde{E}_{L,P_1\otimes P_1}(x,x)+\tilde{E}_{L+1,P_1\otimes P_1}(y,y)-\tilde{E}_{L,P_1\otimes P_1}(y,y)\right)\\
                   &\times\left(\tilde{E}_{L+1,P_2\otimes P_2}(x,x)-\tilde{E}_{L,P_2\otimes P_2}(x,x)+\tilde{E}_{L+1,P_2\otimes P_2}(y,y)-\tilde{E}_{L,P_2\otimes P_2}(y,y)\right)\\
                   &\leq C L^{2n+2d-2}.
\end{align*}
Here we used first the Cauchy-Schwarz inequality, then the mean value inequality, then on each factor,
\[2|P_1 e_k(x)\overline{P_1 e_k(y)}|\leq |P_1 e_k(x)|^2+|P_1 e_k(y)|^2\]
and finally the above estimate. In general $P$ is a locally finite sum of operators of the form $P_1\otimes P_2$.
\end{prf}
\begin{prf}[ of Lemma \ref{hol6}]
First of all, according to Lemma \ref{hol5} there is a constant $C$ such that for all $L\geq 0$ and $\lambda$,
\begin{equation*}
|\tilde{E}_{L+\lambda,P}(x,y)-\tilde{E}_{L,P}(x,y)|\leq C(1+|\lambda|+L)^{n+d-1}(1+|\lambda|).
\end{equation*}
Consequently
\begin{align*}
(\rho*\tilde{E}_{L,P}(x,y)-\tilde{E}_{L,P}(x,y)|&\leq|\int\rho(\lambda)\tilde{E}_{L+\lambda,P}(x,y)d\lambda-\tilde{E}_{L,P}(x,y)|\\
                                               &\leq\int\rho(\lambda)\Big|\tilde{E}_{L+\lambda,P}(x,y)-\tilde{E}_{L,P}(x,y)\Big|d\lambda\\
                                               &\leq C\int\rho(\lambda)(1+|\lambda|+L)^{n+d-1}(1+|\lambda|)d\lambda\\
                                               &\leq C'L^{n+d-1}
\end{align*}
for some $C'>0$. Here we used that $\rho>0$, $\rho$ is rapidly decreasing and $\int_{\R}\rho(\lambda)d\lambda=\fou(\rho)(0)=1$.
\end{prf}

\subsection{Conclusion}

Combining Lemmas \ref{hol0} and \ref{hol6} we obtain the following:
\[
\tilde{E}_{L,P}(x,y)=\frac{1}{(2\pi)^n}\int_{\sigma(y,\xi)\leq L} \sigma_P(x,y,\partial_{x,y}\psi(x,y,\xi)) e^{i\psi(x,y,\xi)}d\xi + O(L^{n+d-1})\, .
\]
Since $\sigma-\sigma_A^{\frac{1}{m}}\in S^0$, replacing one by the other adds only a $O(L^{n+d-1})$ term. Therefore,
\[
\tilde{E}_{L,P}(x,y)=\frac{1}{(2\pi)^n}\int_{\sigma_A(y,\xi)^{1/m}\leq L} \sigma_P(x,y,\partial_{x,y}\psi(x,y,\xi)) e^{i\psi(x,y,\xi)}d\xi + O(L^{n+d-1})\, .
\]
This estimate is valid and uniform for $x,y\in V$. To conclude, notice that $\sigma_A(x,\xi)^{1/m}\leq L$ is equivalent to $\sigma_A(x,\xi)\leq L^m$. Since $\tilde{E}_L=E_{L^m}$, replacing $L$ by $L^{1/m}$ in the last estimate we get
\[
E_{L,P}(x,y)=\frac{1}{(2\pi)^n}\int_{\sigma_A(y,\xi)
\leq L} \sigma_P(x,y,\partial_{x,y}\psi(x,y,\xi)) e^{i\psi(x,y,\xi)}d\xi + O(L^{(n+d-1)/m})
\]
as announced.

\nocite{*}
\bibliography{bib_43}
\bibliographystyle{alpha}

\end{document}